\providecommand{\scr}{\mathcal}
\newtheorem{theorem}{Theorem}[section]
\newtheorem{lemma}[theorem]{Lemma}
\newtheorem{proposition}[theorem]{Proposition}
\newtheorem{corollary}[theorem]{Corollary}
\newtheorem{conjecture}[theorem]{Conjecture}
\newtheorem*{thmA}{Theorem A.}
\newtheorem*{thmB}{Theorem B.}
\newtheorem*{thmC}{Theorem C.}
\theoremstyle{definition}
\newtheorem{definition}[theorem]{Definition}
\newtheorem{remark}[theorem]{Remark}
\newtheorem{example}[theorem]{Example}
\numberwithin{equation}{section}
\renewcommand{\phi}{\varphi}
\newcommand{\ep}{\varepsilon}
\newcommand{\AC}{\operatorname{AC}}
\newcommand{\Ad}{\operatorname{Ad}}
\newcommand{\Aut}{\operatorname{Aut}}
\newcommand{\diag}{\operatorname{diag}}
\newcommand{\Hom}{\operatorname{Hom}}
\newcommand{\id}{\operatorname{id}}
\newcommand{\Ima}{\operatorname{Ima}}
\newcommand{\Lip}{\operatorname{Lip}}
\newcommand{\N}{\mathbb{N}}
\newcommand{\Z}{\mathbb{Z}}
\newcommand{\R}{\mathbb{R}}
\newcommand{\C}{\mathbb{C}}
\newcommand{\T}{\mathbb{T}}
\newcommand{\K}{\mathbb{K}}
\title{Poly-$\Z$ group actions on Kirchberg algebras I}
\author{Masaki Izumi
\thanks{Supported in part by JSPS KAKENHI Grant Number JP15H03623}\\
Graduate School of Science \\
Kyoto University \\
Sakyo-ku, Kyoto 606-8502, Japan \\
izumi@math.kyoto-u.ac.jp
\and
Hiroki Matui
\thanks{Supported in part by JSPS KAKENHI Grant Number JP18K03321}\\
Graduate School of Science \\
Chiba University \\
Inage-ku, Chiba 263-8522, Japan \\
matui@math.s.chiba-u.ac.jp}
\date{}
\begin{document}
\maketitle

\begin{abstract}
Toward the complete classification of poly-$\Z$ group actions 
on Kirchberg algebras, 
we prove several fundamental theorems 
that are used in the classification. 
In addition, as an application of them, 
we classify outer actions of poly-$\Z$ groups 
of Hirsch length not greater than three on unital Kirchberg algebras 
up to $KK$-trivial cocycle conjugacy. 
\end{abstract}

\tableofcontents

\section{Introduction}

The present paper is a continuation of the work in \cite{IM10Adv}. 
Our aim is to classify group actions on $C^*$-algebras 
up to cocycle conjugacy. 
In the setting of von Neumann algebras, 
a complete classification is known 
for actions of countable amenable groups on injective factors 
(\cite{MR1621416,Ma13crelle}). 
However, classification of group actions
on $C^*$-algebras is still a far less developed subject, 
partly because of $K$-theoretical difficulties. 
The present technology in the realm of $C^*$-algebras 
does not allow us to attempt to classify actions 
of all amenable groups on all simple nuclear $C^*$-algebras. 
Thus, we need to restrict our attention to 
certain reasonable classes of groups and algebras. 

Kirchberg algebras form one of the most prominent classes 
of $C^*$-algebras from the viewpoint of the Elliott program, 
which aims to classify all separable simple nuclear $C^*$-algebras 
up to isomorphism by invariants coming from $K$-theory. 
In fact, Kirchberg algebras are completely classified 
up to (stable) isomorphism by $KK$-theory (\cite{Ki94prep,Ph00DocM,Ro_text}). 
It is then natural to study group actions on Kirchberg algebras. 
The first result in this direction was obtained 
by H. Nakamura \cite{Na00ETDS}. 
He showed that outer $\Z$-actions are completely classified 
by their $KK$-classes up to $KK$-trivial cocycle conjugacy, 
following the strategy developed by A. Kishimoto 
in his pioneering works \cite{Ki95crelle,Ki96JFA,Ki98JOP,Ki98JFA}. 
On the one hand, the first-named author \cite{Iz04Duke,Iz04Adv} 
completely classified finite group actions with the Rohlin property 
on Kirchberg algebras. 
However, unlike the $\Z$ case where the Rohlin property is automatic, 
there exist several outer finite group actions without the Rohlin property. 
For example, the Cuntz algebra $\mathcal{O}_2$, 
which is $KK$-equivalent to $0$, 
admits uncountably many outer actions of $\Z_2$ 
that are not cocycle conjugate to each other, 
while the action with the Rohlin property is unique. 
Such a phenomenon does not appear in the context of von Neumann algebras, 
and arises from the difficulties of topological nature of finite groups. 
Indeed the classifying space of a non-trivial finite group is 
never finite dimensional. 
On the other hand, 
after the second-named author's prior work \cite{Ma08Adv}, 
we proved that 
outer $\Z^N$-actions on strongly self-absorbing Kirchberg algebras 
satisfying the UCT are unique up to cocycle conjugacy, 
and also succeeded in classifying locally $KK$-trivial outer $\Z^2$-actions 
on Kirchberg algebras in \cite{IM10Adv}. 
The classifying space of $\Z^N$ is $\T^N$, 
a very nice finite dimensional topological space, 
and the results obtained in \cite{IM10Adv} suggest 
the possibility of generalizing them to a larger class of 
discrete groups whose classifying spaces are well-behaved enough. 
From this perspective we make the following conjectures, 
which are slightly strengthened versions of those 
given by the first-named author \cite{Iz10ProcICM}. 

\begin{conjecture}\label{conj_ssa}
Let $G$ be a countable torsion-free amenable group and 
let $\mathcal{D}$ be a strongly self-absorbing $C^*$-algebra. 
There exists a unique strongly outer action of $G$ on $\mathcal{D}$ 
up to cocycle conjugacy. 
\end{conjecture}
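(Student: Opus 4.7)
The natural approach is an Evans--Kishimoto style two-sided intertwining argument. Given two strongly outer actions $\alpha,\beta \colon G \curvearrowright \mathcal{D}$, the aim is to construct sequences of inner perturbations $\alpha_n = \Ad u_n \circ \alpha$ and $\beta_n = \Ad v_n \circ \beta$, together with unitaries $w_n$, so that $\Ad w_n \circ \alpha_n$ converges to $\beta_{n+1}$ along a suitably chosen telescope of approximations. The output is a cocycle conjugacy between $\alpha$ and $\beta$.

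The cornerstone of the argument is an equivariant McDuff-type absorption: one wants to show that any strongly outer action $\alpha$ of $G$ on $\mathcal{D}$ satisfies $\alpha \otimes \gamma \sim_{\mathrm{cc}} \alpha$ for a ``reference'' strongly outer $G$-action $\gamma$ on $\mathcal{D}$ (for instance an infinite tensor product of a fixed model action). This requires producing, inside the central sequence algebra $\mathcal{D}_\omega \cap \mathcal{D}'$, a unital $G$-equivariant embedding of $(\mathcal{D},\gamma)$. The key technical input is a Rohlin-type property for the $G$-action induced by $\alpha$ on $\mathcal{D}_\omega \cap \mathcal{D}'$; since $G$ is countable amenable, Følner sequences make it plausible that strong outerness supplies enough approximate towers to implement the absorption via a Kishimoto-type 1-cohomology vanishing.

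Once absorption holds for a model action $\gamma$ independent of $\alpha$, uniqueness is immediate in principle: $\alpha \sim_{\mathrm{cc}} \alpha \otimes \gamma \sim_{\mathrm{cc}} \gamma \otimes \gamma \sim_{\mathrm{cc}} \beta \otimes \gamma \sim_{\mathrm{cc}} \beta$. The intermediate step $\alpha \otimes \gamma \sim_{\mathrm{cc}} \gamma \otimes \gamma$ is where the intertwining argument really does its work, using a flip-type isomorphism on $\mathcal{D} \otimes \mathcal{D} \cong \mathcal{D}$ combined with approximate unitary equivalence of equivariant embeddings.

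The main obstacle is the Rohlin/absorption step for a general countable torsion-free amenable $G$. For $G = \Z^N$ one has explicit rectangular Rohlin towers, and for poly-$\Z$ groups --- the actual subject of this series --- one can hope to induct on the Hirsch length via an extension $1 \to N \to G \to \Z \to 1$, combining a Rohlin theorem for the $\Z$-quotient with the inductive hypothesis on $N$, much as in \cite{IM10Adv}. For a general amenable $G$, however, the combinatorics of Følner sets is far more delicate, and there is the additional danger that higher cohomological obstructions of $G$ with coefficients in $U(\mathcal{D}_\omega \cap \mathcal{D}')$ prevent the straightening of cocycles; ruling these out in a uniform fashion appears to require substantially new input beyond what is available in the poly-$\Z$ setting.
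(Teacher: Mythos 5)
The statement you were asked to prove is labelled a \emph{conjecture} in the paper, and the paper does not prove it in this generality; it only establishes the special case where $G$ is a poly-$\Z$ group and $\mathcal{D}$ is a strongly self-absorbing Kirchberg algebra satisfying the UCT (Theorem \ref{Oinfty_unique}), and it points to Szab\'o's work \cite{Sz1807arXiv} for a stronger partial result covering a bootstrap class of groups and all strongly self-absorbing $C^*$-algebras. Your proposal is, accordingly, not a proof but a strategy sketch, and to your credit you say so: the Rohlin/absorption step for a general countable torsion-free amenable $G$ is exactly the open point, and no amount of F{\o}lner combinatorics alone is currently known to supply it. So the honest verdict is that there is a genuine gap --- the same gap the authors acknowledge by stating this as a conjecture --- and your sketch does not close it.

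It is worth comparing your proposed route with how the paper actually handles its special case, because they differ in emphasis. You put the equivariant McDuff absorption of a model action $\gamma$ at the center and would derive uniqueness from $\alpha\sim_{\mathrm{cc}}\alpha\otimes\gamma\sim_{\mathrm{cc}}\gamma\otimes\gamma\sim_{\mathrm{cc}}\beta\otimes\gamma\sim_{\mathrm{cc}}\beta$. The paper instead inducts on the Hirsch length: for $N\trianglelefteq G$ with $G/N\cong\Z$, it passes to the crossed product $A\rtimes_{\alpha|N}N$, uses dual coactions and a Pimsner--Voiculescu/five-lemma computation to show that the extended automorphisms $\tilde\alpha_\xi$ and $\theta\circ\tilde\beta_\xi\circ\theta^{-1}$ agree in $KK$, and then invokes an equivariant version of Nakamura's theorem (Theorem \ref{equivNakamura}), whose Rohlin input is a tower for the single automorphism $\alpha_\xi$ inside $(A_\omega)^{\alpha|N}$ rather than a Rohlin property for all of $G$. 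The $\mathcal{O}_\infty$-absorption theorem (Theorem \ref{Oinfty_absorb}) is then deduced \emph{from} uniqueness, not used to prove it. This ordering matters: the paper's route only ever needs Rohlin towers for a $\Z$-quotient at each stage of the induction, which is precisely why it works for poly-$\Z$ groups and why your proposed direct absorption argument for general amenable $G$ would require substantially new technology, as you yourself observe.
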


\begin{conjecture}\label{classification}
Let $G$ be a countable torsion-free amenable group and 
let $A$ be a unital Kirchberg algebra. 
For outer actions $\alpha,\beta$ of $G$ on $A$, 
the following conditions are equivalent. 
\begin{enumerate}
\item $\alpha$ and $\beta$ are KK-trivially cocycle conjugate. 
\item There exists a base point preserving isomorphism 
between $\mathcal{P}_{\alpha^s}$ and $\mathcal{P}_{\beta^s}$. 
\end{enumerate}
\end{conjecture}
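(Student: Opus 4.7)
The implication (1) $\Rightarrow$ (2) should be essentially formal: a $KK$-trivial cocycle conjugacy transports the defining data of the invariant $\mathcal{P}_{\alpha^s}$ (which, following the philosophy of \cite{IM10Adv}, should be built functorially from the stabilized action together with a distinguished unit class serving as base point) to that of $\mathcal{P}_{\beta^s}$. So the real content is (2) $\Rightarrow$ (1), and my plan is to prove it by induction on the Hirsch length $h(G)$.

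The base case $h(G)=1$ is $G=\Z$, covered by Nakamura's theorem \cite{Na00ETDS}: outer $\Z$-actions on a unital Kirchberg algebra are $KK$-trivially cocycle conjugate iff their generating automorphisms are $KK$-equivalent, and the invariant $\mathcal{P}$ in this case should reduce to the $KK$-class plus the unit. For the inductive step, I would choose a normal subgroup $N\triangleleft G$ with $G/N\cong\Z$ and $h(N)=h(G)-1$, pick a lift $t\in G$ of a generator, and write $\alpha = (\alpha|_N,\alpha_t)$, similarly for $\beta$. The restrictions $\alpha|_N$ and $\beta|_N$ should inherit isomorphic $\mathcal{P}$-invariants (via a restriction/forgetful functor), so by induction one gets an $N$-equivariant $KK$-trivial cocycle conjugacy $\theta\colon (A,\alpha|_N)\to(A,\beta|_N)$. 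After replacing $\alpha$ by the conjugated action, one is reduced to the case $\alpha|_N=\beta|_N$, and the two extensions to $G$ differ by an $N$-equivariant automorphism $\gamma:=\beta_t\alpha_t^{-1}$ that is $KK$-trivial and commutes with the $N$-action up to an inner perturbation. The remaining task is to show $\gamma$ is an $N$-equivariant approximately inner, $KK$-trivial automorphism that can be absorbed by an Evans--Kishimoto style one-cocycle intertwining argument on the crossed product $A\rtimes N$.

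The key technical tools I expect to need, beyond the inductive hypothesis, are the fundamental theorems announced in the abstract: (a) a uniqueness/absorption theorem for strongly outer $N$-equivariant endomorphisms of $A$ with trivial $KK$-invariant, which plays the role of the Nakamura--Kishimoto Rohlin-type lemma at each induction step; (b) a cocycle-vanishing / lifting result ensuring that the identification of $\mathcal{P}_{\alpha^s}$ with $\mathcal{P}_{\beta^s}$ at the base point produces a genuine $N$-equivariant cocycle connecting $\alpha_t$ and $\beta_t$; and (c) a stability result guaranteeing that the Evans--Kishimoto two-sided intertwining can be performed while keeping the correction unitaries in the connected component of the identity, so that $KK$-triviality of the conjugacy is preserved throughout.

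The hard part, in my view, is step (b): articulating precisely how a base point preserving isomorphism of $\mathcal{P}_{\alpha^s}$ and $\mathcal{P}_{\beta^s}$ descends to usable cocycle data at each stage of the induction. The invariant $\mathcal{P}$ must simultaneously encode the $KK$-class of every group element, the cocycle relations, and a higher compatibility with the normal tower of $G$, and showing that no secondary $K$-theoretic obstruction survives the passage from $N$ to $G$ is where one expects the subtlest cohomological bookkeeping. This is presumably why the classification is only carried out for $h(G)\le 3$ in the present paper, where the relevant obstruction groups can be analyzed by hand; the general case would require a conceptual framework making $\mathcal{P}$ natural with respect to extensions $1\to N\to G\to\Z\to 1$.
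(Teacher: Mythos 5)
The statement you are trying to prove is Conjecture \ref{classification}, which this paper does not prove and does not claim to prove: it is stated as motivation, the general poly-$\Z$ case is deferred to the companion paper \cite{IMinprep}, and the present paper only establishes the special cases of Hirsch length at most three (Theorems \ref{uni_Hirsch2} and \ref{uni_Hirsch3}) in a form that does not even mention the bundles $\mathcal{P}_{\alpha^s}$ --- the topological identification of the obstruction classes with bundle-theoretic data is explicitly postponed. So there is no proof in the paper to compare yours against, and any complete argument you give would be new mathematics. Against that standard, your proposal has a structural gap before the technical ones: you propose induction on the Hirsch length, but the conjecture is stated for arbitrary countable torsion-free amenable groups, which in general admit no subnormal series with $\Z$ quotients. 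Your strategy can at best address the poly-$\Z$ case, i.e.\ the content of \cite{IMinprep}, not the conjecture as stated.

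Even restricted to poly-$\Z$ groups, the step you defer as ``the hard part (b)'' is not a bookkeeping issue but the entire content. Two concrete points. First, your reduction to $\alpha|_N=\beta|_N$ hides the fact that the inductively obtained $N$-equivariant conjugacy $\theta$ is highly non-unique, and whether some choice of $\theta$ allows the extension to $G$ is governed by secondary obstructions; this is exactly what the classes $\mathfrak{o}^2(\alpha,\beta)\in H^2(G,KK^1(A,A))$ and $\mathfrak{o}^3(\alpha,\beta)\in H^3(G,KK(A,A))$ of Sections 7--8 measure, and already for Hirsch length two the class $\mathfrak{o}^2$ can be nonzero even when $KK(\alpha_g)=KK(\beta_g)$ for all $g$ (Theorem \ref{exi_Hirsch2}), so your intended conclusion at the end of the inductive step is simply false without an additional hypothesis supplied by the bundle isomorphism. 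Second, the equivariant Nakamura theorem (Theorem \ref{equivNakamura}) that would drive your Evans--Kishimoto step requires $KK(\tilde\alpha_\xi)=KK(\theta\circ\tilde\beta_\xi\circ\theta^{-1})$ as automorphisms of the crossed product $A\rtimes_{\alpha|N}N$, not merely that $\beta_t\alpha_t^{-1}$ is $KK$-trivial on $A$; bridging that gap is again a cohomological computation on the crossed product. The paper's actual route (Theorem \ref{KKtrivcc}) takes as \emph{hypothesis} the existence of an asymptotic family of $\alpha$-cocycles conjugating $\alpha$ to $\beta$, and its induction is a genuinely interleaved one alternating between Theorems \ref{KKtrivcc} and \ref{KKtrivcc2} over $[0,\infty]$-parametrized algebras, not the naive restriction-to-$N$ induction you describe. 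Producing that asymptotic cocycle family from a base point preserving isomorphism $\mathcal{P}_{\alpha^s}\cong\mathcal{P}_{\beta^s}$ is precisely the missing implication, and nothing in your sketch supplies it.
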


Here $\mathcal{P}_{\alpha^s}$ denotes 
the principal $\Aut(A\otimes\K)$-bundle over the classifying space of $G$ 
associated with the stabilization $\alpha^s:G\to\Aut(A\otimes\K)$. 
Our goal is 
to give an affirmative answer for Conjecture \ref{classification} 
in the case that $G$ is a poly-$\Z$ group 
(see Section 2.2 for its definition), 
and it will be carried out in our forthcoming paper \cite{IMinprep}. 
In this paper, as a preliminary step toward this goal, 
we establish a sufficient condition 
implying $KK$-trivial cocycle conjugacy between poly-$\Z$ group actions. 
Namely we prove the following. 

\begin{thmA}[Theorem \ref{KKtrivcc}]
Let $\mu^G:G\curvearrowright\mathcal{O}_\infty$ be an outer action 
of a poly-$\Z$ group $G$ 
and let $A$ be a unital separable $C^*$-algebra. 
Let $(\alpha,u):G\curvearrowright A$ and $(\beta,v):G\curvearrowright A$ be 
cocycle actions belonging to $\AC(\mathcal{O}_\infty,\mu^G)$. 
Suppose that 
there exists a family $(x_g)_g$ of unitaries in $C^b([0,\infty),A)$ such that 
\[
\lim_{t\to\infty}
(\Ad x_g(t)\circ\alpha_g)(a)=\beta_g(a)\quad\forall g\in G,\ \forall a\in A, 
\]
\[
\lim_{t\to\infty}
x_g(t)\alpha_g(x_h(t))u(g,h)x_{gh}^*(t)=v(g,h)\quad\forall g,h\in G. 
\]
Then $(\alpha,u)$ and $(\beta,v)$ are cocycle conjugate 
via an asymptotically inner automorphism. 
\end{thmA}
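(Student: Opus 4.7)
The plan is to carry out an Evans--Kishimoto style one-sided approximate intertwining, using the $\AC(\mathcal{O}_\infty,\mu^G)$ hypothesis to supply at each stage an approximately central and approximately equivariant copy of $(\mathcal{O}_\infty,\mu^G)$ inside $A$. Such a copy will function as a nonabelian Rohlin tower: its abundant supply of isometries and unitaries lets one implement arbitrary perturbations of the cocycle action without disturbing approximations already in place on a prescribed finite set.

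Concretely, I would fix a countable dense subset $\{a_n\}\subset A$, an increasing exhaustion $\{e\}=S_0\subset S_1\subset\cdots$ of $G$ by finite symmetric subsets, and a summable sequence $\ep_n>0$. The hypothesis provides times $t_n\nearrow\infty$ such that on $\{a_1,\dots,a_n\}$ and for $g,h\in S_n$,
\[
\|\Ad x_g(t_n)\circ\alpha_g(a_i)-\beta_g(a_i)\|<\ep_n,\quad
\|x_g(t_n)\alpha_g(x_h(t_n))u(g,h)x_{gh}(t_n)^*-v(g,h)\|<\ep_n.
\]
I would then inductively build unitaries $w_n\in A$ drawn from an approximately central, approximately $\alpha$-equivariant embedded copy of $\mathcal{O}_\infty$, chosen so that the products $y_g^{(n)}:=w_n\cdots w_1\,x_g(t_n)$ satisfy the two identities \emph{exactly} on $a_1,\dots,a_n$ and for $g,h\in S_n$, while differing from $y_g^{(n-1)}$ by only a summable perturbation on the previously prescribed test data. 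Interpolating continuously between the $y_g^{(n)}$ yields a path of unitaries $U_g(t)$ whose limit $\theta_g:=\lim_t \Ad U_g(t)$ defines an asymptotically inner automorphism of $A$ implementing the desired cocycle conjugacy between $(\alpha,u)$ and $(\beta,v)$.

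The main obstacle is the simultaneous correction of both the intertwining relation and the twisted $2$-cocycle identity at each inductive step: $w_n$ must absorb the residual errors in $\Ad x_g(t_n)\circ\alpha_g(a_i)-\beta_g(a_i)$ and in the mixed cocycle expression, yet remain sufficiently close to the approximately central embedded $\mathcal{O}_\infty$ that the composition with $w_{n-1}\cdots w_1$ is perturbed only by a summable amount on already-treated data. The absorption property $(\alpha,u)\in\AC(\mathcal{O}_\infty,\mu^G)$ is exactly what permits this: it yields unital embeddings of $\mathcal{O}_\infty$ that are arbitrarily approximately central in $A$ and arbitrarily approximately $\alpha$-equivariant, and the outerness of $\mu^G$ on $\mathcal{O}_\infty$ lets the required $2$-cocycle corrections be realized by unitaries from those embeddings. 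Once this doubly indexed approximate centrality is arranged, the errors telescope summably, the family $\{y_g^{(n)}\}$ becomes strictly Cauchy, and a standard Kishimoto limit argument delivers the asymptotically inner conjugacy.
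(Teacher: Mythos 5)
Your proposal runs a direct Evans--Kishimoto intertwining over all of $G$ at once, with the correction unitaries $w_n$ drawn from an approximately central, approximately equivariant copy of $\mathcal{O}_\infty$. This has a genuine gap, and it is located exactly where the paper warns one should expect it: the authors state explicitly that they could not find an intertwining argument applicable to the whole group directly, and that the proof must proceed by a joint induction on the Hirsch length. The concrete problem with your mechanism is that an approximately central unitary $w_n$ satisfies $\Ad w_n(b)\approx b$ on the prescribed finite set, so replacing $x_g(t_n)$ by $w_n\cdots w_1\,x_g(t_n)$ leaves $\Ad x_g(t_n)\circ\alpha_g(a_i)$ essentially unchanged; such unitaries cannot ``absorb the residual errors'' in the intertwining relation, and there is likewise no reason the needed $2$-cocycle corrections, which live in $U(A)$, are realized by unitaries of the embedded $\mathcal{O}_\infty$. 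In the genuine Evans--Kishimoto scheme the correction unitaries are \emph{not} central: they are coboundary-implementing unitaries produced by a $1$-cohomology vanishing (stability) theorem applied to the small almost-cocycle of errors, and the approximately central $\mathcal{O}_\infty$ (more precisely, Rohlin towers inside $(A_\omega)^{\alpha|N}$) is an ingredient in the \emph{proof} of that stability theorem, not a substitute for it. Your proposal conflates these two roles and never supplies the stability input.

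What the paper actually does is the following. It first establishes asymptotic $H^1$-stability and $H^2$-stability for poly-$\Z$ groups (Theorem \ref{H1H2stable}), itself proved by induction on the Hirsch length using Rohlin towers for $\alpha_\xi$ relative to $\alpha|N$ and the equivariant homotopy lemma (Theorem \ref{refinedNakamura}); this is the nontrivial analytic core that your sketch assumes implicitly. Theorem \ref{KKtrivcc} is then proved jointly with its asymptotic companion Theorem \ref{KKtrivcc2} by induction on the Hirsch length: the base case $G=\Z$ is Nakamura's theorem; the step from \ref{KKtrivcc} to \ref{KKtrivcc2} uses a one-point-compactification trick on $C([0,\infty],A\otimes\mathcal{O}_\infty)$ together with Proposition \ref{extendedH1stable}; and the step back to \ref{KKtrivcc} for Hirsch length $l$ first straightens the cocycle over a normal subgroup $N$ with $G/N\cong\Z$ via the inductive hypothesis, then passes to the twisted crossed product by $N$ and handles the remaining $\Z$ by the equivariant Nakamura theorem (Theorem \ref{equivNakamura}), where the intertwining argument finally appears for a single automorphism. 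To repair your proof you would need, at minimum, to prove the $H^1$/$H^2$-stability statements you are tacitly using, and at that point you would in effect be reconstructing the paper's inductive architecture.
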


The class $\AC(\mathcal{O}_\infty,\mu^G)$ of cocycle actions 
is introduced in Definition \ref{defofAC}, 
and any outer cocycle actions of $G$ on a unital Kirchberg algebra 
belong to $\AC(\mathcal{O}_\infty,\mu^G)$ (Lemma \ref{AC} (3)). 
Next, by using this theorem, 
we classify outer actions of poly-$\Z$ groups 
of Hirsch length not greater than three on a unital Kirchberg algebra 
(Theorem \ref{uni_Hirsch2} and Theorem \ref{uni_Hirsch3}). 
More precisely, for given actions $\alpha,\beta:G\curvearrowright A$, 
we introduce obstruction classes 
$\mathfrak{o}^2(\alpha,\beta)$ and $\mathfrak{o}^3(\alpha,\beta)$ 
living in $H^2(G,KK^1(A,A))$ and $H^3(G,KK(A,A))$ respectively, 
and prove that $\alpha$ and $\beta$ are $KK$-trivially cocycle conjugate 
if and only if these obstruction classes vanish. 

\begin{thmB}[Theorem \ref{uni_Hirsch2}]
Let $A$ be a unital Kirchberg algebra and 
let $G$ be a poly-$\Z$ group of Hirsch length two. 
Let $\alpha,\beta:G\curvearrowright A$ be outer actions. 
The following are equivalent. 
\begin{enumerate}
\item $\alpha$ and $\beta$ are $KK$-trivially cocycle conjugate. 
\item $KK(\alpha_g)=KK(\beta_g)$ for all $g\in G$ and 
$\mathfrak{o}^2(\alpha,\beta)=0$. 
\end{enumerate}
\end{thmB}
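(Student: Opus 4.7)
The direction (1) $\Rightarrow$ (2) is essentially tautological: both the $KK$-classes $KK(\alpha_g)$ and the class $\mathfrak{o}^2(\alpha,\beta)$ are, by the construction of the obstruction class carried out earlier in the paper, invariants of $KK$-trivial cocycle conjugacy. The substance of the theorem lies in (2) $\Rightarrow$ (1), and my plan is to reduce this implication to Theorem A.

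Since $A$ is a unital Kirchberg algebra, $A \cong A\otimes\mathcal{O}_\infty$, and Lemma \ref{AC}(3) allows me to assume from the outset that $\alpha,\beta \in \AC(\mathcal{O}_\infty,\mu^G)$ for a chosen outer model $\mu^G:G\curvearrowright\mathcal{O}_\infty$. My aim is then to produce a family of unitary paths $(x_g)_{g\in G}$ in $C^b([0,\infty),A)$ satisfying the two asymptotic identities of Theorem A with $u=v=1$, after which Theorem A delivers a cocycle conjugacy implemented by an asymptotically inner, hence $KK$-trivial, automorphism.

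To build the $(x_g)$ I use the Hirsch structure of $G$. Choose a normal subgroup $N\cong\Z$ with $G/N\cong\Z$ and a lift $s\in G$ of a generator of the quotient. On $N$, Nakamura's classification of outer $\Z$-actions on Kirchberg algebras in its asymptotic Kishimoto-type form, combined with $KK(\alpha_n)=KK(\beta_n)$, produces a unitary path $x_a(t)$ asymptotically intertwining $\alpha|_N$ with $\beta|_N$ (where $a$ generates $N$). A second Kishimoto-type approximate intertwining, applied to the single pair $\alpha_s,\beta_s$, yields a unitary path $x_s(t)$ with $\Ad x_s(t)\circ\alpha_s\to\beta_s$. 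Extending multiplicatively via the chosen normal form in $a$ and $s$ defines $x_g(t)$ for every $g\in G$ and forces the first asymptotic identity of Theorem A to hold.

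What remains is the $2$-cocycle identity, whose defect
\[
w(g,h)(t):=x_g(t)\alpha_g(x_h(t))x_{gh}(t)^*
\]
represents, after passing to the appropriate central-sequence or ultrapower formulation, precisely the obstruction class $\mathfrak{o}^2(\alpha,\beta)\in H^2(G,KK^1(A,A))$. The assumption $\mathfrak{o}^2(\alpha,\beta)=0$ then yields a $1$-cochain $(y_g(t))$ of unitaries whose coboundary cancels $w$, so that the replacement $x_g\mapsto y_g x_g$ meets both identities of Theorem A. The main obstacle I foresee is precisely this last step: promoting a vanishing statement for a class in $H^2(G,KK^1(A,A))$ to an honest unitary $1$-cochain carrying the requisite asymptotic centrality and continuity. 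The $\mathcal{O}_\infty$-absorption, which supplies an abundance of approximately central unitaries realizing every $KK^1$-class, will be the key tool, and the Hirsch length two hypothesis on $G$ guarantees that no higher obstruction (which would be $\mathfrak{o}^3$) intervenes.
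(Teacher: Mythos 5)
Your overall architecture matches the paper's: (1)$\Rightarrow$(2) is formal, and for the converse one chooses unitary paths $v_g$ with $\Ad v_g\circ\alpha_g=\beta_g$ (this needs only $KK(\alpha_g)=KK(\beta_g)$ and Phillips's theorem applied to each $g$ separately --- the Kishimoto/Nakamura intertwining on $N$ and the ``multiplicative extension'' you describe are unnecessary detours), forms the defect $w(g,h)=v_g\alpha_g(v_h)v_{gh}^*\in U(A_\flat)$, observes that $[K_1(w(\cdot,\cdot))]=\mathfrak{o}^2(\alpha,\beta)$, and then tries to trivialize $w$ so that Theorem A applies. This is exactly the paper's Lemma \ref{key} combined with the definition of $\mathfrak{o}^2$.

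However, the step you yourself flag as ``the main obstacle'' is a genuine gap, and it is where essentially all of the work in the paper's proof resides. Vanishing of $\mathfrak{o}^2(\alpha,\beta)$ only lets you correct the $v_g$ so that $K_1(w(g,h))=0$ in $K_1(A_\flat)$, i.e.\ $w(g,h)\in U(A_\flat)_0$; it does \emph{not} by itself produce a unitary $1$-cochain splitting $w$. Your appeal to ``$\mathcal{O}_\infty$-absorption supplying unitaries realizing every $KK^1$-class'' addresses the realization of classes, which is not the issue; the issue is showing that a $2$-cocycle with values in the \emph{connected component} $U(A_\flat)_0$ is an honest coboundary. The paper proves this as Lemma \ref{2cv_Hirsch2}, and the proof is not soft: it uses the splitting $G=N\rtimes\Z$ with $N\cong\Z$, reduces to the $\alpha|N$-cocycle $\check u_g=u(\xi,\xi^{-1}g\xi)u(g,\xi)^*$, invokes the fact that for a single automorphism a unitary with trivial $K_1$-class is an approximate asymptotic coboundary (a Rohlin-tower argument in the spirit of Nakamura's Lemma 8), and only then closes the argument with the $H^2$-stability of poly-$\Z$ groups (Theorem \ref{H1H2stable}), itself the content of all of Section 5. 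Your remark that ``Hirsch length two guarantees that no higher obstruction intervenes'' is the correct heuristic for why this works, but it is not an argument; without Lemma \ref{2cv_Hirsch2} and the stability machinery behind it, the passage from $\mathfrak{o}^2(\alpha,\beta)=0$ to the hypotheses of Theorem A is unproved.
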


\begin{thmC}[Theorem \ref{uni_Hirsch3}]
Let $A$ be a unital Kirchberg algebra and 
let $G$ be a poly-$\Z$ group of Hirsch length three. 
Let $\alpha,\beta:G\curvearrowright A$ be outer actions. 
The following are equivalent. 
\begin{enumerate}
\item $\alpha$ and $\beta$ are $KK$-trivially cocycle conjugate. 
\item $KK(\alpha_g)=KK(\beta_g)$ for all $g\in G$, 
$\mathfrak{o}^2(\alpha,\beta)=0$ and $\mathfrak{o}^3(\alpha,\beta)=0$. 
\end{enumerate}
\end{thmC}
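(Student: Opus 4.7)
The implication $(1)\Rightarrow(2)$ is expected to be essentially immediate from the definitions: a $KK$-trivial cocycle conjugacy furnishes precisely the coboundary data trivialising both $\mathfrak{o}^2(\alpha,\beta)$ and $\mathfrak{o}^3(\alpha,\beta)$. I concentrate on the converse. Because $G$ has Hirsch length three, I fix a poly-$\Z$ normal subgroup $N\trianglelefteq G$ of Hirsch length two together with a lift $g_0\in G$ of a generator of $G/N\cong\Z$. The plan is to climb this extension in two stages, killing $\mathfrak{o}^2$ at the $N$-level and $\mathfrak{o}^3$ at the $g_0$-level, and to finish by invoking Theorem A (applicable since any outer action on a unital Kirchberg algebra lies in $\AC(\mathcal{O}_\infty,\mu^G)$ by Lemma \ref{AC}(3)).

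For the first stage I restrict to $N$. The restriction map $H^2(G,KK^1(A,A))\to H^2(N,KK^1(A,A))$ should send $\mathfrak{o}^2(\alpha,\beta)$ to $\mathfrak{o}^2(\alpha|_N,\beta|_N)$, and the Lyndon--Hochschild--Serre spectral sequence for $1\to N\to G\to\Z\to 1$ (with $\Z$ of cohomological dimension one) is sufficiently compatible with this restriction to force $\mathfrak{o}^2(\alpha|_N,\beta|_N)=0$ once $\mathfrak{o}^2(\alpha,\beta)=0$. Applying Theorem B to these restrictions produces a $KK$-trivial cocycle conjugacy between $\alpha|_N$ and $\beta|_N$, and invoking Theorem A at the $N$-level together with a standard cocycle-perturbation argument lets me replace $\beta$ by a $KK$-trivially cocycle-conjugate action for which $\alpha|_N=\beta|_N$ and the $N$-restricted $2$-cocycles $u|_{N\times N}$ and $v|_{N\times N}$ coincide asymptotically.

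In the second stage I use $KK(\alpha_{g_0})=KK(\beta_{g_0})$ together with Nakamura-type approximate intertwining for the single automorphism $\alpha_{g_0}$ to produce a path $w(t)$ of unitaries with $\lim_t\Ad w(t)\circ\alpha_{g_0}=\beta_{g_0}$. The obstruction to upgrading $w(t)$ into a family $(x_g(t))_{g\in G}$ satisfying the intertwining relations of Theorem A is measured by the defect of $w(t)$ against the already-matched $N$-action: for each $n\in N$ the unitary $w(t)\alpha_{g_0}(\alpha_n(w(t)^*))u(g_0,n)v(g_0,n)^{-1}$ lies asymptotically in the relative commutant of $A$, hence determines a class in $KK(A,A)$, and the resulting cochain on $N$ should assemble, via the edge morphism of the above spectral sequence, into exactly $\mathfrak{o}^3(\alpha,\beta)\in H^3(G,KK(A,A))$. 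When $\mathfrak{o}^3=0$, I can correct $w(t)$ by an $N$-indexed family of asymptotically central unitaries so that the extension $(x_g(t))_{g\in G}$, built from $w$ and the identity on $N$ via the $G$-cocycle relations, satisfies the hypotheses of Theorem A, which then delivers the desired $KK$-trivial cocycle conjugacy.

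The main obstacle will be in this second stage: rigorously matching the defect construction above with the cohomological definition of $\mathfrak{o}^3$, and executing the final asymptotically central correction of $w(t)$ without disturbing the $N$-level matching secured in the first stage. This delicate simultaneous control inside $C^b([0,\infty),A)$ is exactly the asymptotic cohomological bookkeeping that the framework of $\AC(\mathcal{O}_\infty,\mu^G)$ and Theorem A is designed to accommodate, so the payoff of the whole argument is to reduce the entire classification to the vanishing of the two clean cohomological invariants $\mathfrak{o}^2$ and $\mathfrak{o}^3$.
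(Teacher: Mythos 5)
Your overall architecture — fix $N\trianglelefteq G$ of Hirsch length two with $G/N\cong\Z$, settle the $N$-level first via Theorem B, then handle the $\Z$-extension and finish with Theorem A — is the same skeleton the paper uses (the top-level proof reduces everything to a coboundary statement, Lemma \ref{2cv_Hirsch3}, whose proof is exactly this induction over $N$ and $\xi$). The first stage is essentially fine, modulo the minor point that conjugating $\beta$ only makes $\beta|_N$ an \emph{inner cocycle perturbation} of $\alpha|_N$ rather than literally equal to it, which has to be carried along.

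The genuine gap is in the second stage, and it is a degree error that blocks the identification of your defect data with $\mathfrak{o}^3$. The defect unitaries $d_n=w\,\alpha_{g_0}(\alpha_n(w^*))\cdots$ for $n\in N$ live in $A_\flat=A^\flat\cap A'$, and a unitary in $A_\flat$ determines a class in $K_1(A_\flat)\cong KK^1(A,A)$ — not in $KK(A,A)$ as you assert. The resulting $1$-cocycle $N\to KK^1(A,A)$ assembles under the edge map $\bar j:H^1(N,\cdot)\to H^2(G,\cdot)$ into a class in $H^2(G,KK^1(A,A))$; that is, it is the part of $\mathfrak{o}^2(\alpha,\beta)$ invisible to the restriction to $N$ (the $H^1(\Z,H^1(N,\cdot))$ piece of the Lyndon--Hochschild--Serre sequence), which your stage one never used. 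The class $\mathfrak{o}^3(\alpha,\beta)\in H^3(G,KK(A,A))=H^3(G,K_0(A_\flat))$ only appears as a \emph{secondary} obstruction: after the full hypothesis $\mathfrak{o}^2(\alpha,\beta)=0$ has been used to make the $K_1$-classes of the defect cocycle vanish, one chooses null-homotopies $\tilde d_n:[0,1]\to U(A_\flat)_0$ and measures the winding $K_1\bigl(\tilde d_n\alpha_n(\tilde d_m)\tilde d_{nm}^*\bigr)\in K_1(SA_\flat)\cong K_0(A_\flat)\cong KK(A,A)$, obtaining a $2$-cocycle on $N$ whose image under $\bar j:H^2(N,\cdot)\to H^3(G,\cdot)$ is (up to the $\Ima\tilde h^{1,3}$ ambiguity built into the definition of $\mathfrak{o}^3$) the obstruction class. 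This is the content of the $\kappa^2$, $\kappa^3$ machinery and of Lemma \ref{kappa3=kappa2}, and it is the technical heart of the Hirsch-length-three case; your proposal contains no trace of it, so the "delicate bookkeeping" you defer cannot actually be completed as described. You would also need the vanishing of this secondary class to yield an honest (asymptotic) coboundary, which requires Lemma \ref{1cv_Hirsch2} and the $H^1$/$H^2$-stability results of Section 5, not merely a formal correction of $w(t)$.
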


Moreover, we discuss the existence type results of outer cocycle actions 
of poly-$\Z$ groups of Hirsch length not greater than three 
(Theorem \ref{exi_Hirsch2} and Theorem \ref{exi_Hirsch3}). 

The paper is organized as follows. 
In Section 2, we collect for reference basic notation, terminology 
and definitions. 
In Section 3, it is shown that any countable discrete amenable group 
admits an asymptotically representable outer action 
on the Cuntz algebra $\mathcal{O}_\infty$ (Theorem \ref{exist_asymprepre}). 
In Section 4, we give an affirmative answer for Conjecture \ref{conj_ssa} 
when $G$ is a poly-$\Z$ group and 
$\mathcal{D}$ is a strongly self-absorbing Kirchberg algebra 
satisfying the UCT 
(Theorem \ref{Oinfty_unique}). 
Furthermore, we show that 
any outer cocycle action of a poly-$\Z$ group $G$ 
on a unital Kirchberg algebra 
absorbs an outer action of $G$ on $\mathcal{O}_\infty$ tensorially 
(Theorem \ref{Oinfty_absorb}). 
Our proof is along the same line as that of \cite{IM10Adv}. 
We remark that G. Szab\'o \cite{Sz1807arXiv} has recently proved 
Conjecture \ref{conj_ssa} affirmatively 
for a large class of groups containing poly-$\Z$ groups and 
all strongly self-absorbing $C^*$-algebras (see Remark \ref{workofSzabo}). 
Indeed, Conjecture \ref{conj_ssa} is 
the same as \cite[Conjecture A]{Sz1807arXiv}. 
In Section 5, it is shown that 
every poly-$\Z$ group has the properties of 
asymptotic $H^1$-stability and $H^2$-stability (Theorem \ref{H1H2stable}). 
Loosely speaking, asymptotic $H^1$-stability says that 
any $1$-cocycle in the unitary group close to $1$ can be written 
as a continuous limit of coboundaries, and 
$H^2$-stability says that 
any $2$-cocycle in the unitary group close to $1$ is 
actually a coboundary (see Definition \ref{defofH1H2}). 
The proof is by induction on the Hirsch length of poly-$\Z$ groups. 
In Section 6, 
by using the properties of stability obtained in the previous section, 
we show that 
two outer actions $\alpha,\beta:G\curvearrowright A$ are 
$KK$-trivially cocycle conjugate 
if there exists a continuous family of (almost) $\alpha$-$1$-cocycles 
such that the perturbation of $\alpha$ by the cocycle equals $\beta$ 
in the limit (Theorem \ref{KKtrivcc}). 
The proof uses the Evans-Kishimoto intertwining argument. 
But, in contrast to the setting of von Neumann algebras 
(see the work \cite{Ma13crelle} of T. Masuda for instance), 
we could not find a way of proof applicable to 
all countable torsion-free groups. 
Thus, the proof is again by (a bit tricky) induction on the Hirsch length. 
It is also interesting to point out that 
the assumption for $\alpha$ and $\beta$ in Theorem \ref{KKtrivcc} is 
not symmetric, while the conclusion is symmetric. 
We do not know whether this is a special feature of poly-$\Z$ groups. 
In the case of general torsion-free groups, 
it is perhaps natural to assume further that 
$\beta$ is an asymptotic cocycle perturbation of $\alpha$. 
In Section 7, we discuss actions of poly-$\Z$ groups of Hirsch length two. 
It is shown that $\alpha$ and $\beta$ are $KK$-trivially cocycle conjugate 
if and only if the obstruction class $\mathfrak{o}^2(\alpha,\beta)$ is trivial 
(Theorem \ref{uni_Hirsch2}). 
Moreover, we prove the existence of outer cocycle actions 
with prescribed $K$-theoretic data (Theorem \ref{exi_Hirsch2}). 
In Section 8, we discuss actions of poly-$\Z$ groups of Hirsch length three. 
It is shown that $\alpha$ and $\beta$ are $KK$-trivially cocycle conjugate 
if and only if the obstruction classes 
$\mathfrak{o}^2(\alpha,\beta)$ and $\mathfrak{o}^3(\alpha,\beta)$ are trivial 
(Theorem \ref{uni_Hirsch3}). 
The existence of outer cocycle actions is also discussed 
(Theorem \ref{exi_Hirsch3} and Theorem \ref{exi_Hirsch3'}). 
As corollaries, 
we give a complete classification of outer (cocycle) actions 
of a poly-$\Z$ group of Hirsch length three 
on the Cuntz algebra $\mathcal{O}_n$ 
(Corollary \ref{On} and Corollary \ref{On'}). 
A topological interpretation of 
$\mathfrak{o}^2(\alpha,\beta)$ and $\mathfrak{o}^3(\alpha,\beta)$ 
in terms of $\mathcal{P}_{\alpha^s}$ and $\mathcal{P}_{\beta^s}$ 
is discussed in the companion paper \cite{IMinprep}; 
that is, $\mathfrak{o}^2(\alpha,\beta)$ will be identified with 
the primary obstruction for the existence of a continuous section 
for a certain fiber bundle over $BG$. 
Thus, the results of this paper can be thought of 
as a positive solution of a special case of Conjecture \ref{classification}. 

\bigskip

\textbf{Acknowledgement. }
Masaki Izumi would like to thank 
Isaac Newton Institute for Mathematical Sciences 
for its hospitality. 
Masaki Izumi and Hiroki Matui thank the referees for careful reading and 
for giving several useful comments 
that made this manuscript much more accessible to readers.

\section{Preliminaries}

\subsection{Notation and terminology}

For a Lipschitz continuous map $f$ between metric spaces, 
$\Lip(f)$ denotes the Lipschitz constant of $f$. 

Let $A$ be a $C^*$-algebra. 
For $a,b\in A$, we mean by $[a,b]$ the commutator $ab-ba$. 
When $A$ is a unital $C^*$-algebra, 
we let $U(A)$ denote the set of all unitaries of $A$. 
When $A$ is a non-unital $C^*$-algebra, 
we let $U(A)$ be the set of all unitaries $u$ in the unitization of $A$ 
satisfying $u-1\in A$. 
The connected component of the unit is written as $U(A)_0$. 
When $B$ is a non-unital subalgebra of a unital $C^*$-algebra $A$, 
we always regard $U(B)$ as a subgroup of $U(A)$. 
For $u\in U(A)$, 
the inner automorphism induced by $u$ is written by $\Ad u$. 
An automorphism $\alpha\in\Aut(A)$ is called outer, 
when it is not inner. 

Let $A$, $B$ and $C$ be $C^*$-algebras. 
For a homomorphism $\rho:A\to B$, 
$K_0(\rho)$ and $K_1(\rho)$ mean the induced homomorphisms on $K$-groups, 
and $KK(\rho)$ means the induced element in $KK(A,B)$. 
We write $KK(\id_A)=1_A$. 
For $x\in KK(A,B)$ and $y\in KK(B,C)$, 
we denote the Kasparov product by $y\circ x\in KK(A,C)$. 
(The product is usually written in the opposite way, 
e.g.\ $x\hat{\otimes}y$, 
but we adopt $y\circ x$ to ensure consistency 
with compositions of homomorphisms between $C^*$-algebras.) 
When $p\in A$ is a projection, its $K_0$-class is written 
as $K_0(p)\in K_0(A)$. 
Similarly, when $u\in A$ is a unitary, 
$K_1(u)\in K_1(A)$ is the $K_1$-class of $u$. 
A standard reference for $K$-theory and $KK$-theory of $C^*$-algebras 
is \cite{Bl_Ktheory}. 

A simple purely infinite nuclear separable $C^*$-algebra 
is called a Kirchberg algebra. 
We denote by $\mathcal{O}$ the unital Kirchberg algebra 
which is strongly Morita equivalent to $\mathcal{O}_\infty$ 
and is in the Cuntz standard form, 
i.e.\ the $K_0$-class of the unit is zero in $K_0(\mathcal{O})$. 

Two unital homomorphisms $\rho,\sigma$ from $A$ to $B$ are said to be 
asymptotically unitarily equivalent, 
if there exists a continuous family of unitaries 
$(u_t)_{t\in[0,\infty)}$ in $B$ such that 
\[
\rho(a)=\lim_{t\to\infty}\Ad u_t(\sigma(a))
\]
for all $a\in A$. 
Phillips's theorem \cite[Theorem 4.1.1]{Ph00DocM} says that, 
when $A$ is unital separable nuclear simple and 
$B$ is unital separable $\mathcal{O}_\infty$-stable, 
$\rho,\sigma:A\to B$ are asymptotically unitarily equivalent 
if and only if $KK(\rho)=KK(\sigma)$. 
When there exists a sequence of unitaries $(u_n)_{n\in\N}$ in $B$ 
such that 
\[
\rho(a)=\lim_{n\to\infty}\Ad u_n(\sigma(a))
\]
for all $a\in A$, 
$\rho$ and $\sigma$ are said to be approximately unitarily equivalent. 
An automorphism $\alpha\in\Aut(A)$ is said to be 
asymptotically (resp. approximately) inner 
if $\alpha$ is asymptotically (resp. approximately) 
unitarily equivalent to the identity map. 

Let $A$ be a $C^*$-algebra and 
let $\omega\in\beta\N\setminus\N$ be a free ultrafilter. 
Set 
\[
c_\omega(A)=\{(a_n)\in\ell^\infty(\N,A)\mid
\lim_{n\to\omega}\lVert a_n\rVert=0\},\quad 
A^\omega=\ell^\infty(\N,A)/c_\omega(A). 
\]
We identify $A$ with the $C^*$-subalgebra of $A^\omega$ 
consisting of equivalence classes of constant sequences. 
We let 
\[
A_\omega=A^\omega\cap A'
\]
and call it the central sequence algebra of $A$. 

We also need a continuous analogue of $A^\omega$ and $A_\omega$. 
Let $C^b([0,\infty),A)$ denote the $C^*$-algebra consisting of 
bounded continuous functions $[0,\infty)\to A$. 
Set 
\[
A^\flat=C^b([0,\infty),A)/C_0([0,\infty),A). 
\]
We identify $A$ with the $C^*$-subalgebra of $A^\flat$ 
consisting of equivalence classes of constant functions and let 
\[
A_\flat=A^\flat\cap A'. 
\]
We call it the continuous asymptotic centralizer algebra. 
When $\alpha$ is an automorphism of $A$, 
we can consider its natural extension 
on $A^\omega$, $A_\omega$, $A^\flat$ and $A_\flat$. 
We denote it by the same symbol $\alpha$.

\subsection{Group actions and cocycle conjugacy}

We set up some terminology for group actions. 
For a discrete group $G$, 
the neutral element is denoted by $1\in G$. 

\begin{definition}
Let $A$ be a unital $C^*$-algebra and 
let $G$ be a countable discrete group. 
\begin{enumerate}
\item A pair $(\alpha,u)$ of 
a map $\alpha:G \to\Aut(A)$ and a map $u:G\times G\to U(A)$ 
is called a cocycle action of $G$ on $A$ 
if 
\[
\alpha_g\circ\alpha_h=\Ad u(g,h)\circ\alpha_{gh}
\]
and 
\[
u(g,h)u(gh,k)=\alpha_g(u(h,k))u(g,hk)
\]
hold for any $g,h,k\in G$. 
We always assume $\alpha_1=\id$, $u(g,1)=u(1,g)=1$ for all $g\in G$. 
We denote the cocycle action by $(\alpha,u):G\curvearrowright A$. 
Notice that $\alpha$ gives rise to 
genuine actions of $G$ on $A_\omega$ and $A_\flat$, 
which are denoted by the same symbol $\alpha$. 
When $u(g,h)=1$ for all $g,h\in G$, $\alpha$ becomes a genuine action. 
We write $\alpha:G\curvearrowright A$ 
instead of $(\alpha,1):G\curvearrowright A$. 
\item A cocycle action $(\alpha,u)$ is said to be outer 
if $\alpha_g$ is outer for every $g\in G$ except for the neutral element. 
\item Two cocycle actions $(\alpha,u):G\curvearrowright A$ and 
$(\beta,v):G\curvearrowright B$ are said to be cocycle conjugate 
if there exist a family of unitaries $(w_g)_{g\in G}$ in $B$ and 
an isomorphism $\theta:A\to B$ such that 
\[
\theta\circ\alpha_g\circ\theta^{-1}=\Ad w_g\circ\beta_g
\]
and 
\[
\theta(u(g,h))=w_g\beta_g(w_h)v(g,h)w_{gh}^*
\]
for every $g,h\in G$. 
Furthermore, when there exists a sequence $(x_n)_n$ of unitaries in $B$ 
such that $x_n\beta_g(x_n^*)\to w_g$ as $n\to\infty$ for every $g\in G$, 
we say that $(\alpha,u)$ and $(\beta,v)$ are strongly cocycle conjugate. 
Furthermore, when we can find a continuous family 
$(x_t)_{t\in[0,\infty)}$ of unitaries in $U(B)_0$ with the same property, 
we say that $(\alpha,u)$ and $(\beta,v)$ are 
very strongly cocycle conjugate (see \cite[Definition 2.4]{Sz17Adv}). 
\item Two cocycle actions $(\alpha,u)$ and $(\beta,v)$ of $G$ on $A$ are 
said to be $KK$-trivially cocycle conjugate, 
if they are cocycle conjugate 
via an isomorphism $\theta:A\to A$ such that $KK(\theta)=1_A$. 
\item Let $(\alpha,u):G\curvearrowright A$ be a cocycle action and 
let $(v_g)_g$ be a family of unitaries in $A$. 
The cocycle perturbation $(\alpha^v,u^v)$ of $(\alpha,u)$ by $(v_g)_g$ 
is the cocycle action determined by 
\[
\alpha^v_g=\Ad v_g\circ\alpha_g\quad\text{and}\quad 
u^v(g,h)=v_g\alpha_g(v_h)u(g,h)v_{gh}^*. 
\]
Thus, $(\alpha,u)$ and $(\beta,v)$ are cocycle conjugate if and only if 
a cocycle perturbation of $(\alpha,u)$ and $(\beta,v)$ are conjugate. 
\item Let $\alpha:G\curvearrowright A$ be an action. 
A family of unitaries $(x_g)_{g\in G}$ in $A$ is called 
an $\alpha$-cocycle 
if one has $x_g\alpha_g(x_h)=x_{gh}$ for all $g,h\in G$. 
In this case, $\alpha^x_g=\Ad x_g\circ\alpha_g$ form 
an action $\alpha^x:G\curvearrowright A$. 
\item Let $\alpha:G\curvearrowright A$ be an action. 
The fixed point subalgebra 
$\{a\in A\mid\alpha_g(a)=a\quad\text{for all $g\in G$}\}$ is 
written $A^\alpha$. 
\end{enumerate}
\end{definition}

Let $G$ be a countable discrete group. 
The canonical generators in $C^*_r(G)$ are denoted 
by $(\lambda_g)_{g\in G}$. 
For a cocycle action $(\alpha,u):G\curvearrowright A$, 
the reduced twisted crossed product is written $A\rtimes_{(\alpha,u)}G$. 
We denote the canonical implementing unitaries 
by $(\lambda^\alpha_g)_{g\in G}$. 

\begin{definition}[{\cite[Definition 2.2]{IM10Adv}}]
Let $G$ be a countable discrete group and 
let $A$ be a unital $C^*$-algebra. 
A cocycle action $(\alpha,u):G\curvearrowright A$ is said to be 
approximately representable 
if there exists a family of unitaries $(v_g)_{g\in G}$ in $A^\omega$ 
such that 
\[
v_gv_h=u(g,h)v_{gh},\quad 
\alpha_g(v_h)=u(g,h)u(ghg^{-1},g)^*v_{ghg^{-1}}
\]
and 
\[
v_gav_g^*=\alpha_g(a)
\]
hold for all $g,h\in G$ and $a\in A$. 

Asymptotical representability is defined in an analogous way. 
\end{definition}

It is routine to check that 
approximate (resp. asymptotical) representability is preserved 
under cocycle conjugacy. 

A discrete group $G$ is said to be a poly-$\Z$ group 
if there exists a subnormal series 
\[
\{1\}=G_0\leq G_1\leq G_2\leq\dots\leq G_n=G
\]
such that $G_{i+1}/G_i\cong\Z$. 
The length $n$ of the series does not depend 
on the choice of such a subnormal series 
and is called the Hirsch length of $G$. 
The poly-$\Z$ group of Hirsch length one is $\Z$. 
The poly-$\Z$ group of Hirsch length two is 
either $\Z^2\cong\langle\xi,\zeta\mid\xi\zeta=\zeta\xi\rangle$ or 
the Klein bottle group 
$\langle\xi,\zeta\mid\xi\zeta=\zeta^{-1}\xi\rangle$.

\section{Existence of asymptotically representable actions}

In this section, we prove that 
every countable discrete amenable group admits 
an asymptotically representable outer action on $\mathcal{O}_\infty$ 
(Theorem \ref{exist_asymprepre}). 

\begin{lemma}
For any unitary $u\in\mathcal{O}_2$, 
a finite subset $F\subset\mathcal{O}_2$ and $\ep>0$, 
there exists a continuous path of unitaries $w:[0,1]\to\mathcal{O}_2$ 
such that 
\[
w(0)=1,\quad w(1)=u,\quad \Lip(w)<\frac{8\pi}{3}+\ep
\]
and 
\[
\lVert[w(t),x]\rVert<4\lVert[u,x]\rVert+\ep
\]
for any $x\in F$ and $t\in[0,1]$. 
\end{lemma}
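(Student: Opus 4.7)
Plan: The strategy is to construct $w$ as a three-segment concatenation, exploiting approximately central Cuntz isometries inside $\mathcal{O}_2$ at two nested levels. The Lipschitz budget splits as $8\pi/3=\pi+2\pi/3+\pi$ and the commutator coefficient splits as $4=2+0+2$ (modulo $O(\delta)$-errors) naturally across the segments.

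\textbf{Setup.} Using $\mathcal{O}_2\otimes\mathcal{O}_2\cong\mathcal{O}_2$, fix Cuntz isometries $s_1,s_2\in\mathcal{O}_2$ generating a unital copy of $\mathcal{O}_2$ with $\lVert[s_i,a]\rVert<\delta$ for every $a\in F\cup\{u,u^*\}$, where $\delta>0$ is chosen at the end in terms of $\varepsilon$ and $|F|$. Under the identification $\mathcal{O}_2\cong M_2(\mathcal{O}_2)$ via $a\otimes e_{ij}\leftrightarrow s_i a s_j^*$, introduce the building blocks $U_0=s_1us_1^*+s_2s_2^*$, $U_1=s_1s_1^*+s_2us_2^*$, $\Phi(u)=s_1us_1^*+s_2us_2^*=U_0U_1$, together with the Bott rotation $R(t)=\cos(\pi t/2)+\sin(\pi t/2)(s_2s_1^*-s_1s_2^*)$ satisfying $R(0)=1$ and $R(1)U_0R(1)^*=U_1$. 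A direct expansion gives $\lVert\Phi(u)-u\rVert<2\delta$.

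\textbf{Three-segment path.} Define $w$ piecewise.
\begin{enumerate}
\item On $[0,3/8]$, run the Bott-rotation path $w(t)=R(8t/3)\,U_1\,R(8t/3)^*\,U_1^*$, which joins $1$ to $\tilde U:=s_1us_1^*+s_2u^*s_2^*$, has total length $\pi$, and Lipschitz constant $8\pi/3$ on the segment.
\item On $[3/8,5/8]$, connect $\tilde U$ to $\Phi(u)$ by a Bott-type rotation of length $2\pi/3$ carried out inside an auxiliary approximately central copy of $\mathcal{O}_2$ within the corner $s_2\mathcal{O}_2s_2^*$; this interchanges $s_2u^*s_2^*$ with $s_2us_2^*$.
\item On $[5/8,1]$, an analogous Bott-rotation path of length $\pi$ joins $\Phi(u)$ to $u$, with the final $O(\delta)$-correction absorbed via $\lVert\Phi(u)-u\rVert<2\delta$.
\end{enumerate}
Constant-speed reparametrization yields $\Lip(w)\leq 8\pi/3<8\pi/3+\varepsilon$.

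\textbf{Commutator estimate.} Applying the Leibniz rule piece by piece: $\lVert[R(t),x]\rVert=O(\delta)$ since $R$ is a trigonometric polynomial in the $s_i$'s, while $\lVert[U_0,x]\rVert$ and $\lVert[U_1,x]\rVert$ are each at most $\lVert[u,x]\rVert+O(\delta)$ by a direct expansion using $\lVert[s_i,a]\rVert<\delta$. Hence segments (1) and (3) each contribute at most $2\lVert[u,x]\rVert+O(\delta)$ (the factor $2$ coming from the two appearances of $U_0$ or $U_1$ in each expression), and segment (2) contributes $O(\delta)$ because its rotation is built from approximately central elements. Summing yields $\lVert[w(t),x]\rVert\leq 4\lVert[u,x]\rVert+O(\delta)<4\lVert[u,x]\rVert+\varepsilon$ upon choosing $\delta$ sufficiently small.

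\textbf{Main obstacle.} The delicate step is the middle segment: implementing the move $\tilde U\to\Phi(u)$ within length $2\pi/3$ while contributing only $O(\delta)$ to the commutator. This requires introducing a second, approximately central copy of $\mathcal{O}_2$ (or an $\mathcal{O}_3$ substructure) inside the corner $s_2\mathcal{O}_2s_2^*$ and performing there a Bott-type rotation tuned to effect the flip $u^*\leftrightarrow u$ in the corner; the length $2\pi/3$ is precisely what remains of the Lipschitz budget after the two outer rotations are accounted for, which is what makes the constant $8\pi/3$ in the statement tight.
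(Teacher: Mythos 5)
The paper's own proof of this lemma is a pure citation: it invokes \cite[Lemma 6]{Na00ETDS} (itself a modification of \cite[Lemma 5.1]{HR95Duke}), the only added observation being that $\mathcal{O}_2\cong\bigotimes_{n=1}^\infty\mathcal{O}_2$, which supplies the approximately central unital copies of $\mathcal{O}_2$ your construction starts from. You are therefore attempting to reprove the cited lemma from scratch. Your setup and first segment are correct: the Whitehead rotation $R(8t/3)U_1R(8t/3)^*U_1^*$ does join $1$ to $\tilde U=s_1us_1^*+s_2u^*s_2^*$ with speed $8\pi/3$ and commutator $2\lVert[u,x]\rVert+O(\delta)$. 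But the argument has a genuine gap at the middle segment, which is exactly where all the difficulty of the Haagerup--R{\o}rdam lemma is concentrated.

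Passing from $\tilde U$ to $\Phi(u)=s_1us_1^*+s_2us_2^*$ means multiplying by $\tilde U^*\Phi(u)=s_1s_1^*+s_2u^2s_2^*$, i.e.\ connecting $1$ to $u^2$ inside the corner $s_2\mathcal{O}_2s_2^*\cong\mathcal{O}_2$ --- an instance of the very lemma being proved, now applied to $u^2$. This step cannot contribute only $O(\delta)$ to the commutators: if $c$ is any path with $c(0)=1$ and $\tilde Uc(1)=\Phi(u)$, then compressing by $s_2$ gives $s_2^*[c(1),x]s_2=[u^2,s_2^*xs_2]$, so $\lVert[c(1),x]\rVert\geq\lVert[u^2,x]\rVert-2\delta$, a quantity of the order of $\lVert[u,x]\rVert$ that cannot be made small by shrinking $\delta$. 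Nor can the move be realized by conjugation: a unitary $s_1s_1^*+s_2vs_2^*$ carries $\tilde U$ to $s_1us_1^*+s_2vu^*v^*s_2^*$, and $vu^*v^*=u$ forces $\mathrm{sp}(u)=\overline{\mathrm{sp}(u)}$, which fails for general $u$; while a rotation taken from a deeper approximately central copy moves $\tilde U$ by only $O(\delta)$ and so cannot reach $\Phi(u)$ at all. If instead you recursively apply the lemma to $u^2$ in the corner, the constants would have to satisfy $L=\pi+L'$ and $C=2+2C'$ with $(L',C')$ the constants for $u^2$, and this recursion has no finite fixed point --- so the decomposition $1\to\mathrm{diag}(u,u^*)\to\mathrm{diag}(u,u)$ cannot by itself yield the statement. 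Your ``Main obstacle'' paragraph correctly identifies this as the crux but supplies no construction, and the surrounding bookkeeping (the ``$2+0+2$'' commutator split, and a ``Bott-rotation path of length $\pi$'' joining the two elements $\Phi(u)$ and $u$ that are already $2\delta$-close) confirms that the path has not actually been built. The missing content is precisely \cite[Lemma 5.1]{HR95Duke}; you must either cite it, as the paper does, or reproduce its actual argument rather than a placeholder for it.
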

\begin{proof}
This follows immediately from \cite[Lemma 6]{Na00ETDS} 
(which is a slight modification of \cite[Lemma 5.1]{HR95Duke}), 
because $\mathcal{O}_2$ is isomorphic to 
the infinite tensor product of $\mathcal{O}_2$ itself 
(\cite[Corollary 5.2.4]{Ro_text}). 
\end{proof}

\begin{lemma}
Let $G$ be a discrete countable amenable group. 
There exists a unital injective homomorphism 
$\rho:C^*(G)\to\mathcal{O}_\infty$ 
such that the following holds. 
For any finite subset $F\subset G$ and $\ep>0$, 
there exists a continuous path of unitaries $w:[0,1]
\to\mathcal{O}_\infty\otimes\mathcal{O}_\infty\otimes\mathcal{O}_\infty$ 
satisfying 
\[
w(0)=1,\quad \lVert w(1)(\rho(\lambda_g)\otimes1\otimes1)w(1)^*
-\rho(\lambda_g)\otimes\rho(\lambda_g)\otimes1\rVert<\ep\quad \forall g\in F
\]
and 
\[
\lVert[w(t),\rho(\lambda_g)\otimes\rho(\lambda_g)\otimes\rho(\lambda_g)]\rVert
<\ep\quad \forall g\in F,\ t\in[0,1].
\]
\end{lemma}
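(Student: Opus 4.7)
The plan is to reduce the statement to the existence of a single target unitary $u\in\mathcal{O}_\infty^{\otimes 3}$ satisfying approximate versions of the conjugation and commutation conditions at $t=1$, and then to invoke the preceding lemma (in its analog for $\mathcal{O}_\infty^{\otimes 3}$, obtained by the same argument since $\mathcal{O}_\infty\cong\mathcal{O}_\infty^{\otimes\infty}$) to upgrade $u$ to a continuous path $w:[0,1]\to\mathcal{O}_\infty^{\otimes 3}$ from $1$ to $u$ satisfying $\lVert[w(t),\rho(\lambda_g)^{\otimes 3}]\rVert<4\lVert[u,\rho(\lambda_g)^{\otimes 3}]\rVert+\ep/5$. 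Arranging the errors in $u$ to be within $\ep/5$ then yields both required estimates.

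For the embedding, I would not take an arbitrary unital injective $\rho$ but one whose $KK$-class equals $KK(\iota\circ\epsilon)$, where $\epsilon:C^*(G)\to\C$ is the augmentation character $\lambda_g\mapsto 1$ and $\iota:\C\to\mathcal{O}_\infty$ is the unital inclusion. Amenability of $G$ ensures $C^*(G)=C^*_r(G)$ is nuclear and UCT, and Kirchberg--Phillips realizes any prescribed $KK$-class (sending $[1]\to[1]$) by a unital injective $*$-homomorphism, so such $\rho$ exists. Define
\[
\iota_A(x\otimes y)=\rho(x)\otimes\rho(y)\otimes\rho(y),\qquad\iota_B(x\otimes y)=\rho(x)\otimes\rho(x)\otimes\rho(y)
\]
as unital $*$-homomorphisms $C^*(G)\otimes C^*(G)\to\mathcal{O}_\infty^{\otimes 3}$. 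Two facts hold. (a) $\iota_A$ and $\iota_B$ coincide on diagonal elements: $\iota_A(\lambda_g\otimes\lambda_g)=\iota_B(\lambda_g\otimes\lambda_g)=\rho(\lambda_g)^{\otimes 3}$. (b) $KK(\iota_A)=KK(\iota_B)$: this follows from $KK(\rho)=KK(\iota\circ\epsilon)$ together with the counit axiom $(\epsilon\otimes\epsilon)\circ\Delta=\epsilon$ (where $\Delta:C^*(G)\to C^*(G)\otimes C^*(G)$ is the comultiplication $\lambda_g\mapsto\lambda_g\otimes\lambda_g$), which forces $\epsilon^{\otimes 3}\circ(\id\otimes\Delta)=\epsilon\otimes\epsilon=\epsilon^{\otimes 3}\circ(\Delta\otimes\id)$ as $*$-homomorphisms, hence equal $KK$-classes after composing with $KK(\iota^{\otimes 3})$.

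Apply Phillips's asymptotic unitary equivalence theorem---or its extension to full unital homomorphisms from separable nuclear algebras into $\mathcal{O}_\infty$-stable algebras, which applies since $\iota_A,\iota_B$ are full (their codomain is simple)---to obtain a continuous path $U:[0,\infty)\to\mathcal{O}_\infty^{\otimes 3}$ with $U(0)=1$ and $U(t)\iota_A(x)U(t)^*\to\iota_B(x)$ for every $x$. Choose $t_0$ so large that the errors on the finite set $\{\lambda_g\otimes 1,\lambda_g\otimes\lambda_g:g\in F\}$ all fall below $\ep/5$, and set $u=U(t_0)$. The $\lambda_g\otimes 1$ clause gives the conjugation condition $\lVert u(\rho(\lambda_g)\otimes 1\otimes 1)u^*-\rho(\lambda_g)\otimes\rho(\lambda_g)\otimes 1\rVert<\ep/5$, and the $\lambda_g\otimes\lambda_g$ clause combined with (a) gives $\lVert[u,\rho(\lambda_g)^{\otimes 3}]\rVert<\ep/5$. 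Feeding $u$ into the preceding lemma completes the proof.

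The main obstacle is the rigorous verification of $KK(\iota_A)=KK(\iota_B)$: the counit-axiom argument above handles the $\Hom(K_0,\Z)$-part of the UCT decomposition $KK(C^*(G)\otimes C^*(G),\mathcal{O}_\infty)=\Hom(K_0,\Z)\oplus\operatorname{Ext}(K_1,\Z)$ cleanly, but one must also check that the $\operatorname{Ext}(K_1,\Z)$-parts of $\iota_A$ and $\iota_B$ coincide---which is precisely why $KK(\rho)=KK(\iota\circ\epsilon)$ is forced, since $\iota\circ\epsilon$ factors through $\C$ whose $K_1$ vanishes, so the Ext contributions to $KK(\iota_A)$ and $KK(\iota_B)$ both vanish identically. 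A secondary, routine point is verifying that the preceding lemma extends to $\mathcal{O}_\infty^{\otimes 3}$ in place of $\mathcal{O}_2$, which follows by the same argument using $\mathcal{O}_\infty\cong\mathcal{O}_\infty^{\otimes\infty}$.
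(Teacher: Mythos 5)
Your proof is correct in substance, but it follows a genuinely different route from the paper's. You enforce the two facts you need --- that $\iota_A$ and $\iota_B$ agree on diagonal elements and have equal $KK$-classes --- by prescribing $KK(\rho)=KK(\iota\circ\epsilon)$ and computing with the counit identity, and you then compare $\iota_A$ with $\iota_B$ globally inside $\mathcal{O}_\infty^{\otimes3}$ by a Kirchberg--Phillips type uniqueness theorem. The paper instead encodes the same $KK$-condition into the explicit formula $\rho(x)=\iota_0(\rho_0(x))+\epsilon(x)(1-p)$, where $\rho_0:C^*(G)\to\mathcal{O}_2$ is a unital embedding and $\iota_0:\mathcal{O}_2\to\mathcal{O}_\infty$ is a non-unital embedding with $p=\iota_0(1)$: since $\mathcal{O}_2$ is $KK$-contractible and $K_0(p)=0$, this $\rho$ has exactly your prescribed class, and in addition $\iota_A$ and $\iota_B$ commute with $q=1\otimes p\otimes1$ and literally coincide on $1-q$, so the entire comparison takes place in the corner $q(\cdot)q\cong\mathcal{O}_2$. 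There \cite[Theorem~6.3.8]{Ro_text} gives approximate unitary equivalence of any two unital homomorphisms with no $KK$-hypothesis and no fullness issue, the preceding lemma applies exactly as stated, and the path is extended by $1-q$. Your route works but needs three upgrades made explicit. (i) Kirchberg--Phillips existence produces a unital $\rho$ in the class $KK(\iota\circ\epsilon)$ but not an injective one ($\iota\circ\epsilon$ itself lies in that class); injectivity must be added by summing with an injective $KK$-trivial map into a $K_0$-trivial corner, which is precisely the paper's formula. (ii) Phillips's theorem as quoted in Section~2 assumes the domain simple, which $C^*(G)\otimes C^*(G)$ is not; you need the uniqueness theorem for full nuclear homomorphisms into $\mathcal{O}_\infty$-stable algebras, and fullness into a simple unital codomain amounts to injectivity, so the relevant point is that $\id\otimes\Delta$ and $\rho^{\otimes3}$ are injective, not merely that the codomain is simple. (iii) The $\mathcal{O}_\infty$-analogue of the preceding lemma is indeed available (its proof needs only $\mathcal{O}_\infty\cong\bigotimes_{k}\mathcal{O}_\infty$, $K_1(\mathcal{O}_\infty)=0$ and the exponential length bound of \cite{Ph02JOT}), but it is not the statement on record, whereas the corner trick lets the paper quote the $\mathcal{O}_2$-version verbatim. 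Finally, your $KK$-computation can be stated more cleanly than via the UCT decomposition: $KK(\rho^{\otimes3})=KK(\iota^{\otimes3})\circ KK(\epsilon^{\otimes3})$ and $\epsilon^{\otimes3}\circ(\id\otimes\Delta)=\epsilon\otimes\epsilon=\epsilon^{\otimes3}\circ(\Delta\otimes\id)$ as genuine homomorphisms, so $KK(\iota_A)=KK(\iota^{\otimes3})\circ KK(\epsilon\otimes\epsilon)=KK(\iota_B)$ with no separate examination of the $\operatorname{Ext}$-part.
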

\begin{proof}
Let $\rho_0:C^*(G)\to\mathcal{O}_2$ be a unital embedding and 
let $\tau:C^*(G)\to\C$ be the unital homomorphism 
such that $\tau(\lambda_g)=1$ for all $g\in G$. 
Let $\iota:\mathcal{O}_2\to\mathcal{O}_\infty$ be a (non-unital) embedding 
and let $p=\iota(1)$. 
Define $\rho:C^*(G)\to\mathcal{O}_\infty$ by 
\[
\rho(x)=\iota(\rho_0(x))+\tau(x)(1-p)
\]
for every $x\in C^*(G)$. 
Define unital homomorphisms $\phi$ and $\psi$ 
from $C^*(G)\otimes C^*(G)$ to 
$\mathcal{O}_\infty\otimes\mathcal{O}_\infty\otimes\mathcal{O}_\infty$ by 
\[
\phi(\lambda_g\otimes\lambda_h)
=\rho(\lambda_g)\otimes\rho(\lambda_h)\otimes\rho(\lambda_h)
\]
and 
\[
\psi(\lambda_g\otimes\lambda_h)
=\rho(\lambda_g)\otimes\rho(\lambda_g)\otimes\rho(\lambda_h). 
\]
Set $q=1\otimes p\otimes1$. 
Clearly we have $[\phi(x),q]=[\psi(x),q]=0$ and $\phi(x)(1-q)=\psi(x)(1-q)$ 
for any $x\in C^*(G)\otimes C^*(G)$. 
The two maps $x\mapsto\phi(x)q$ and $x\mapsto\psi(x)q$ are regarded 
as unital injective homomorphisms from $C^*(G)\otimes C^*(G)$ to 
$\mathcal{O}_\infty\otimes\iota(\mathcal{O}_2)\otimes\mathcal{O}_\infty
\cong\mathcal{O}_2$. 
It follows from \cite[Theorem 6.3.8]{Ro_text} that 
these two homomorphisms are approximately unitarily equivalent. 
Thus there exists a unitary $u_0
\in\mathcal{O}_\infty\otimes\iota(\mathcal{O}_2)\otimes\mathcal{O}_\infty$ 
such that 
\[
\lVert u_0\phi(\lambda_g\otimes\lambda_h)qu_0^*
-\psi(\lambda_g\otimes\lambda_h)q\rVert<\ep/5\quad 
\forall g,h\in F. 
\]
In particular we get 
\[
\lVert[u_0,
(\rho(\lambda_g)\otimes\rho(\lambda_g)\otimes\rho(\lambda_g))q]\rVert<\ep/5
\]
for every $g\in F$. 
The lemma above implies that 
there exists a continuous path of unitaries 
$w_0:[0,1]
\to\mathcal{O}_\infty\otimes\iota(\mathcal{O}_2)\otimes\mathcal{O}_\infty$ 
such that $w_0(0)=q$, $w_0(1)=u_0$ and 
\[
\lVert[w_0(t),
(\rho(\lambda_g)\otimes\rho(\lambda_g)\otimes\rho(\lambda_g))q]\rVert<\ep
\]
for any $g\in F$ and $t\in[0,1]$. 
Define $w:[0,1]
\to\mathcal{O}_\infty\otimes\mathcal{O}_\infty\otimes\mathcal{O}_\infty$ by 
\[
w(t)=w_0(t)+1-q. 
\]
Then 
\begin{align*}
& \lVert w(1)(\rho(\lambda_g)\otimes1\otimes1)w(1)^*
-\rho(\lambda_g)\otimes\rho(\lambda_g)\otimes1\rVert \\
&=\lVert w(1)\phi(\lambda_g\otimes1)w(1)^*
-\psi(\lambda_g\otimes1)\rVert \\
&=\lVert u_0\phi(\lambda_g\otimes1)qu_0^*
-\psi(\lambda_g\otimes1)q\rVert<\ep/5
\end{align*}
holds for every $g\in F$ and 
\[
\lVert[w(t),
\rho(\lambda_g)\otimes\rho(\lambda_g)\otimes\rho(\lambda_g)]\rVert
=\lVert[w_0(t),
(\rho(\lambda_g)\otimes\rho(\lambda_g)\otimes\rho(\lambda_g))q]\rVert<\ep
\]
holds for any $g\in F$ and $t\in[0,1]$. 
The proof is completed. 
\end{proof}

From the lemma above we can deduce the following. 

\begin{theorem}\label{exist_asymprepre}
Let $G$ be a countable discrete amenable group. 
There exists an asymptotically representable outer action 
$\gamma$ of $G$ on $\mathcal{O}_\infty$. 
In particular, for any unital Kirchberg algebra $A$, 
there exists an asymptotically representable outer action of $G$ 
on $A$. 
\end{theorem}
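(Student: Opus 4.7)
My plan is to realize the desired action on $A := \bigotimes_{n \in \N} \mathcal{O}_\infty \cong \mathcal{O}_\infty$ as the infinite tensor product
\[
\gamma_g := \bigotimes_{n \in \N} \Ad \rho(\lambda_g) \in \Aut(A),
\]
where $\rho: C^*(G) \to \mathcal{O}_\infty$ is the embedding supplied by the previous lemma, and to construct the asymptotic unitary representation $(v_g(t))_t$ by iteratively ``growing'' the reference unitary $v_g^{(N)} := \rho(\lambda_g)^{\otimes N} \otimes 1 \otimes 1 \otimes \cdots$ by one more tensor factor at each integer time, chaining together the paths produced by the previous lemma.

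To make this precise, fix an exhaustion $F_1 \subset F_2 \subset \cdots$ of $G$ by finite symmetric subsets and a summable sequence $\varepsilon_N \searrow 0$. At stage $N$, invoke the previous lemma with $F = F_N$ and tolerance $\varepsilon_N$, identifying its $\mathcal{O}_\infty^{\otimes 3}$ with the tensor factors $N, N+1, N+2$ of $A$; this yields a continuous path $w_N : [0,1] \to U(A)$ supported in those three factors such that $w_N(0) = 1$, $\|w_N(1)(\rho(\lambda_g) \otimes 1 \otimes 1) w_N(1)^* - \rho(\lambda_g) \otimes \rho(\lambda_g) \otimes 1\| < \varepsilon_N$, and $\|[w_N(t), \rho(\lambda_g)^{\otimes 3}]\| < \varepsilon_N$ for all $g \in F_N$ and $t \in [0,1]$. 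On the interval $[N, N+1]$ I would set
\[
v_g(t) := \rho(\lambda_g)^{\otimes (N-1)} \otimes w_N(t - N)(\rho(\lambda_g) \otimes 1 \otimes 1) w_N(t - N)^*,
\]
a continuous path of unitaries equal to $v_g^{(N)}$ at $t = N$ and within $\varepsilon_N$ of $v_g^{(N+1)}$ at $t = N+1$; then append a short unitary bridge of length $O(\varepsilon_N)$ to jump cleanly from this endpoint to $v_g^{(N+1)}$ before the next interval begins. Summability of $\varepsilon_N$ then makes $v_g$ a globally continuous bounded function in $C^b([0,\infty), A)$, hence a unitary in $A^\flat$.

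The three defining relations reduce to direct computations. Since $\rho$ is a homomorphism and conjugation by the common path $w_N$ is multiplicative, $v_g(t) v_h(t) = v_{gh}(t)$ holds \emph{exactly} on the main part of each interval; and for $a$ supported in the first $N - 1$ factors (which commute both with $w_N(t - N)$ and with $\rho(\lambda_g) \otimes 1 \otimes 1$), one has $v_g(t)\, a\, v_g(t)^* = \gamma_g(a)$ exactly as well, with general $a \in A$ handled by density. For equivariance, the commutator bound yields $\gamma_g(w_N(t - N)) = w_N(t - N) + O(\varepsilon_N)$ and hence $\gamma_g(v_h(t)) = v_{ghg^{-1}}(t) + O(\varepsilon_N)$; the bridges contribute only additional $O(\varepsilon_N)$ discrepancies, so all three identities hold in $A^\flat$.

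For outerness of $\gamma_g$ when $g \neq 1$, I would argue that $\rho$ is injective and $\lambda_g$ is not a scalar in $C^*(G)$ (the canonical trace is faithful on amenable group $C^*$-algebras and gives $\tau(\lambda_g) = 0$), so $\rho(\lambda_g)$ is not a scalar in $\mathcal{O}_\infty$; picking a unit-norm $v \in \mathcal{O}_\infty$ with $\|\Ad \rho(\lambda_g)(v) - v\| \geq \delta > 0$ and approximating any putative implementing unitary $U \in A$ by $U_k \in \mathcal{O}_\infty^{\otimes k}$ to within $\delta/3$, placing $v$ in factor $k+1$ yields a contradiction. The ``in particular'' part follows from Kirchberg's $\mathcal{O}_\infty$-absorption $A' \cong A' \otimes \mathcal{O}_\infty$ by taking the action $\id_{A'} \otimes \gamma$, with asymptotic unitaries $1 \otimes v_g$. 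The main obstacle I anticipate is ensuring that all three asymptotic identities hold simultaneously and uniformly in $g \in G$; the essential ingredient that makes this tractable is the commutator bound in the previous lemma, which decouples the growth step at level $N$ from the $\gamma_g$-equivariance structure already built on the earlier factors and keeps the per-level error at $O(\varepsilon_N)$, summable overall.
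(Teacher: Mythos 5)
Your proposal is correct and follows essentially the same route as the paper's proof: the same model action $\gamma_g=\lim_n\Ad\bigl(\rho(\lambda_g)^{\otimes n}\otimes1\bigr)$, the same use of the preceding lemma (placed in tensor factors $N,N+1,N+2$) to build paths joining $v_g^{(N)}$ to $v_g^{(N+1)}$ with small commutators against $\rho(\lambda_g)^{\otimes3}$, the same endpoint perturbation, and the same tensoring step for a general Kirchberg algebra. The only differences are cosmetic (summable tolerances where the paper uses $1/n$, and an explicit outerness argument where the paper says ``clearly''), so no changes are needed.
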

\begin{proof}
Let $\rho:C^*(G)\to\mathcal{O}_\infty$ be the unital homomorphism 
as in the previous lemma. 
For $n\in\N$ and $g\in G$, we define the unitary $u^{(n)}_g$ by 
\[
u^{(n)}_g=\left(\bigotimes_{k=1}^n\rho(\lambda_g)\right)\otimes1
\in\bigotimes_{k=1}^\infty\mathcal{O}_\infty. 
\]
Define $\gamma:G\curvearrowright 
\mathcal{O}_\infty\cong\bigotimes_{k=1}^\infty\mathcal{O}_\infty$ by 
\[
\gamma_g=\lim_{n\to\infty}\Ad u^{(n)}_g. 
\]
Clearly, $\gamma$ is outer and approximately representable. 
It suffices to connect $u^{(n)}_g$ to $u^{(n+1)}_g$ 
by an appropriate path of unitaries. 

Let $\pi_n$ be the unital embedding of $\mathcal{O}_\infty$ 
into the $n$-th tensor product component of 
$\bigotimes_{k=1}^\infty\mathcal{O}_\infty$, 
and let 
\[
\tilde\pi_n=\pi_n\otimes\pi_{n+1}\otimes\pi_{n+2}:
\mathcal{O}_\infty\otimes\mathcal{O}_\infty\otimes\mathcal{O}_\infty
\to\bigotimes_{k=1}^\infty\mathcal{O}_\infty. 
\]
Let $(F_n)_n$ be an increasing sequence of finite subsets of $G$ 
such that $G=\bigcup_nF_n$. 
By the previous lemma, 
there exists a continuous path of unitaries $w_n:[0,1]
\to\mathcal{O}_\infty\otimes\mathcal{O}_\infty\otimes\mathcal{O}_\infty$ 
satisfying 
\[
w_n(0)=1,\quad \lVert w_n(1)(\rho(\lambda_g)\otimes1\otimes1)w_n(1)^*
-\rho(\lambda_g)\otimes\rho(\lambda_g)\otimes1\rVert<1/n\quad \forall g\in F_n
\]
and 
\[
\lVert[w_n(t),\rho(\lambda_g)\otimes\rho(\lambda_g)\otimes\rho(\lambda_g)]\rVert
<1/n\quad \forall g\in F_n,\ t\in[0,1].
\]
Define $u_g:[n,n{+}1]\to\bigotimes_{k=1}^\infty\mathcal{O}_\infty$ by 
\[
u_g(s)=u^{(n-1)}_g
\tilde\pi_n\left(w_n(s{-}n)(\rho(\lambda_g)\otimes1\otimes1)w_n(s{-}n)^*\right). 
\]
Then, we obtain 
\[
u_g(n)=u^{(n)}_g\quad\forall g\in G\quad, 
\lVert u_g(n{+}1)-u^{(n+1)}_g\rVert<1/n\quad\forall g\in F_n
\]
and 
\[
\lVert\gamma_g(u_h(s))-u_{ghg^{-1}}(s)\rVert<2/n
\quad\forall g\in F_n,\ h\in G,\ s\in[n,n{+}1]. 
\]
By a suitable perturbation, we may assume $u_g(n{+}1)=u^{(n{+}1)}_g$. 
Hence $u_g$ determines a continuous map from $[1,\infty)$. 
It is easy to see $\gamma_g=\lim_s\Ad u_g(s)$, 
which implies that $\gamma$ is asymptotically representable. 

When $A$ is a unital Kirchberg algebra, 
$\id\otimes\gamma:G\curvearrowright A\otimes\mathcal{O}_\infty\cong A$ is 
asymptotically representable and outer, as desired. 
\end{proof}

\begin{remark}\label{modelaction}
The action $\gamma:G\curvearrowright\mathcal{O}_\infty$ constructed 
in the theorem above is exactly the same as the actions 
discussed by G. Szab\'o \cite{Sz18CMP} 
(see also \cite[Section 6]{GI11Tohoku} for finite groups). 
Szab\'o proved that 
the strong cocycle conjugacy class of $\gamma$ does not depend 
on the choice of the unitary representations 
$G\ni g\mapsto\rho_0(\lambda_g)\in U(\mathcal{O}_2)$ 
(\cite[Corollary 3.8]{Sz18CMP}). 
Moreover, he proved that 
any outer cocycle action of $G$ on a Kirchberg algebra absorbs 
$\gamma$ tensorially (\cite[Corollary 3.7]{Sz18CMP}). 
\end{remark}

\begin{remark}
Y. Arano and Y. Kubota kindly informed us the following. 
Let $G$ be a countable discrete torsion-free amenable group 
and let $A$ be a unital Kirchberg algebra. 
Then any outer asymptotically representable actions $G\curvearrowright A$ 
are mutually cocycle conjugate. 
See \cite{AK17JFA}. 
\end{remark}

\section{Uniqueness of actions on $\mathcal{O}_\infty$}

In this section, we prove that outer actions of poly-$\Z$ groups 
on $\mathcal{O}_2$, $\mathcal{O}_\infty$ and $\mathcal{O}_\infty\otimes B$ 
with $B$ being a UHF algebra of infinite type are 
unique up to ($KK$-trivial) cocycle conjugacy. 
(We note that during the preparation of this paper 
G. Szab\'o has proved much stronger statement. 
See Remark \ref{workofSzabo}.) 

First, let us recall the notion of dual coactions. 
Let $\alpha:G\curvearrowright A$ be 
an action of countable discrete amenable group $G$ 
on a unital $C^*$-algebra $A$. 
The dual coaction $\hat\alpha$ is the homomorphism 
$A\rtimes_\alpha G\to(A\rtimes_\alpha G)\otimes C^*(G)$ defined by 
\[
\hat\alpha(a)=a\otimes1\quad\text{and}\quad 
\hat\alpha(\lambda^\alpha_g)=\lambda^\alpha_g\otimes\lambda_g
\quad\forall a\in A,\ g\in G. 
\]
Let $\beta:G\curvearrowright B$ be another action. 
We say that 
a homomorphism $\rho:A\rtimes_\alpha G\to B\rtimes_\beta G$ commutes 
with the dual coactions if 
\[
\hat\beta\circ\rho=(\rho\otimes\id)\circ\hat\alpha
\]
holds. 
This is equivalent to saying that 
$\rho(\lambda^\alpha_g)\in B\lambda^\beta_g$ holds for all $g\in G$. 
It is well-known (and easy to see) that 
$\alpha$ is cocycle conjugate to $\beta$ if and only if 
there exists an isomorphism $A\rtimes_\alpha G\to B\rtimes_\beta G$ 
which commutes with the dual coactions. 

The coproduct $\delta_G:C^*(G)\to C^*(G)\otimes C^*(G)$ is 
given by $\delta_G(\lambda_g)=\lambda_g\otimes\lambda_g$ for $g\in G$. 
The coproduct is the dual coaction of 
the trivial action $G\curvearrowright\C$.

\subsection{Equivariant version of Nakamura's theorem}

Nakamura proved that 
if two outer actions $\alpha,\beta:\Z\curvearrowright A$ 
on a unital Kirchberg algebra satisfy $KK(\alpha_g)=KK(\beta_g)$ 
for all $g\in\Z$, 
then $\alpha$ and $\beta$ are $KK$-trivially cocycle conjugate 
(\cite[Theorem 5]{Na00ETDS}). 
In \cite[Theorem 4.11]{IM10Adv}, 
we gave an equivariant version of Nakamura's result. 
In this subsection, we would like to improve the statement 
so that it applies to actions of poly-$\Z$ groups. 

Throughout this subsection, 
we let $G$ be a countable discrete amenable group and 
let $N\subset G$ be a normal subgroup such that $G/N\cong\Z$. 
Choose and fix an element $\xi\in G$ 
so that $G$ is generated by $N$ and $\xi$. 

The following proposition is an equivariant version of 
\cite[Theorem 1]{Na00ETDS}. 

\begin{proposition}\label{equivRohlin}
Let $\alpha:G\curvearrowright A$ be an outer action 
on a unital Kirchberg algebra. 
Suppose that 
$\alpha|N:N\curvearrowright A$ is approximately representable. 
Then for any $M\in\N$, there exist projections 
$e_0,e_1,\dots,e_{M-1}$, $f_0,f_1,\dots,f_M$ in $(A_\omega)^{\alpha|N}$ 
such that 
\[
\sum_{i=0}^{M-1}e_i+\sum_{j=0}^Mf_j=1,\quad 
\alpha_\xi(e_i)=e_{i+1}\quad\text{and}\quad \alpha_\xi(f_j)=f_{j+1}
\]
for all $i=0,1,\dots,M{-}1$ and $j=0,1,\dots,M$, 
where $e_M$ and $f_{M+1}$ mean $e_0$ and $f_0$, respectively. 
\end{proposition}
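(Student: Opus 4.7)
The plan is to model the proof on Nakamura's original Rohlin theorem \cite[Theorem 1]{Na00ETDS} for outer $\Z$-actions on Kirchberg algebras, carrying out all constructions inside the $\alpha|N$-fixed-point subalgebra of the central sequence algebra rather than in $A_\omega$ itself. The approximate representability of $\alpha|N$ is what makes this fixed-point subalgebra rich enough to run Nakamura's argument; once that is arranged, the two-tower Rohlin conclusion for the single automorphism $\alpha_\xi$ follows in essentially the standard way.

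First, I would invoke the hypothesis of approximate representability to obtain a family of unitaries $(v_n)_{n\in N}$ in $A^\omega$ with $v_n a v_n^*=\alpha_n(a)$ for $a\in A$ and $v_gv_h=v_{gh}$ for $g,h\in N$. Set $B=(A^\omega)^{\alpha|N}=A^\omega\cap\{v_n:n\in N\}'$. Because $N\trianglelefteq G$, the extension of $\alpha_\xi$ to $A^\omega$ permutes the $v_n$ (up to conjugation by a $v$-unitary) and hence preserves $B$; similarly it preserves the subalgebra $(A_\omega)^{\alpha|N}\subset B$ in which we ultimately want the Rohlin projections to live.

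Second, I would verify the two structural properties needed for Nakamura's argument to apply to $\alpha_\xi$ acting on $B$: that $B$ is sufficiently rich (it inherits $\mathcal{O}_\infty$-absorbing/purely infinite features from $A$ through the approximately representable structure, so central Rohlin-type approximations are available), and that $\alpha_\xi$ is outer on $B$. The outerness is the delicate point: if $\alpha_\xi$ were inner on $B$ via some unitary $u$, then combining $u$ with the implementing family $(v_n)$ would produce a unitary in $A^\omega$ implementing $\alpha_{\xi}$, and more generally $\alpha_{\xi^k n}$, on $A$, violating the assumed outerness of $\alpha$ on the coset $\xi^kN$ for $k\neq 0$.

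Third, I would run Nakamura's Rohlin construction for the outer automorphism $\alpha_\xi|_B$: using the Kishimoto--Nakamura-style central-sequence argument, construct a projection $e\in B\cap B'$ such that $e,\alpha_\xi(e),\dots,\alpha_\xi^{M-1}(e)$ are pairwise orthogonal and sum to a value close enough to $1$ that a complementary $(M{+}1)$-tower $f_0,\dots,f_M$ can be inserted (the standard trick of combining height-$M$ and height-$(M{+}1)$ towers via the identity $aM+b(M+1)=n$ for suitable $a,b$ is used to fill up $1$). A final reindexing argument moves the resulting Rohlin projections, which a priori lie in the central sequence algebra of $B$ relative to $A$, into $A_\omega\cap\{v_n\}'=(A_\omega)^{\alpha|N}$, yielding the desired $e_i,f_j$. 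The main obstacle, as indicated, is the verification that $\alpha_\xi$ remains outer on the fixed-point subalgebra $(A_\omega)^{\alpha|N}$; everything else is a careful bookkeeping adaptation of \cite[Theorem 1]{Na00ETDS}.
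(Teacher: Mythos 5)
Your overall architecture is the same as the paper's: show that $(A_\omega)^{\alpha|N}$ is purely infinite and simple (this is where approximate representability enters, via \cite[Corollary 3.2 (2)]{IM10Adv}), show that $\alpha_{\xi^n}$ restricts to an outer automorphism of $(A_\omega)^{\alpha|N}$ for $n\neq 0$, and then run Nakamura's two-tower construction from \cite[Theorem 1]{Na00ETDS} verbatim inside that algebra. The last step and the pure infiniteness step are fine as sketched. The problem is the step you yourself flag as delicate: your argument for outerness of $\alpha_\xi$ on the fixed-point central sequence algebra does not work.

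Concretely, suppose $\alpha_\xi$ restricted to $(A_\omega)^{\alpha|N}$ equals $\Ad u$ for a unitary $u$ in that algebra. Such a $u$ lies in $A_\omega$, hence commutes with every element of $A$, so $\Ad u$ is the identity on $A$ and yields no information whatsoever about $\alpha_\xi$ as an automorphism of $A$; and the implementing family $(v_n)_{n\in N}$ only involves elements of $N$, so there is no visible way to ``combine'' $u$ with the $v_n$ to manufacture a unitary of $A^\omega$ implementing $\alpha_\xi$ on $A$. Worse, even if you could produce such a unitary, that would not contradict outerness: outerness means not inner via a unitary of $A$ itself, and outer automorphisms of Kirchberg algebras are routinely inner in $A^\omega$ (any outer automorphism with trivial $KK$-class is asymptotically, hence approximately, inner). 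So the contradiction you aim for is not a contradiction. The paper handles this point differently: it first shows, via \cite[Proposition 3.5]{IM10Adv} — and this is where the outerness of $\alpha_g$ for $g$ in the nontrivial cosets $\xi^nN$ is genuinely consumed — that the restriction of $\alpha_{\xi^n}$ to $(A_\omega)^{\alpha|N}$ is a \emph{non-trivial} automorphism for every $n\neq 0$, and then upgrades non-trivial to outer using \cite[Lemma 2]{Na00ETDS} (a reindexation argument special to central-sequence-type algebras, where inner forces trivial). Your proof needs a substitute for that proposition; as written, the key hypothesis that $\alpha$ is outer on $G\setminus N$ is never actually used in a valid way.

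A smaller inaccuracy: $(A^\omega)^{\alpha|N}$ is not equal to $A^\omega\cap\{v_n\}'$ in general, because the unitaries $v_n$ furnished by approximate representability implement $\alpha_n$ only on $A$ (constant sequences), not on all of $A^\omega$, where $\alpha_n$ acts coordinatewise. The two algebras interact only after reindexation, so this identification should not be used as a definition of the ambient algebra in which you build the towers.
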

\begin{proof}
Since $\alpha|N:N\curvearrowright A$ is approximately representable, 
by \cite[Corollary 3.2 (2)]{IM10Adv}, 
$(A_\omega)^{\alpha|N}$ is purely infinite and simple. 
(Actually, the assumption of approximate representability 
is not necessary here. See \cite[Proposition 3.3]{Sz18CMP}.) 
By \cite[Proposition 3.5]{IM10Adv}, 
the restriction of $\alpha_{\xi^n}$ to $(A_\omega)^{\alpha|N}$ is 
a non-trivial automorphism for every $n\neq0$, 
and hence it is outer thanks to \cite[Lemma 2]{Na00ETDS}. 
This means that 
we can choose a Rohlin tower for $\alpha_\xi$ in $(A_\omega)^{\alpha|N}$. 
We omit the detail, 
because the argument is exactly the same as \cite[Theorem 1]{Na00ETDS}. 
\end{proof}

Let $A$ be a unital Kirchberg algebra and 
let $\alpha,\beta:G\curvearrowright A$ be two outer actions. 
We assume the following. 
\begin{itemize}
\item $\alpha|N:N\curvearrowright A$ and $\beta|N:N\curvearrowright A$ are 
asymptotically representable. 
\item $\alpha|N$ is $KK$-trivially cocycle conjugate to $\beta|N$. 
\end{itemize}
Define $\tilde\alpha_\xi\in\Aut(A\rtimes_{\alpha|N}N)$ by 
\[
\tilde\alpha_\xi(a)=\alpha_\xi(a)\quad\text{and}\quad 
\tilde\alpha_\xi(\lambda^\alpha_g)=\lambda^\alpha_{\xi g\xi^{-1}}\quad 
\forall a\in A,\ g\in N. 
\]
We have 
\[
\widehat{\alpha|N}\circ\tilde\alpha_\xi
=(\tilde\alpha_\xi\otimes\Ad\lambda_\xi)\circ\widehat{\alpha|N}. 
\]
It is easy to see that 
the crossed product of $A\rtimes_{\alpha|N}N$ 
by the automorphism $\tilde\alpha_\xi$ is naturally 
isomorphic to $A\rtimes_\alpha G$. 
Similarly, the automorphism $\beta_\xi$ extends to 
$\tilde\beta_\xi\in\Aut(A\rtimes_{\beta|N}N)$. 

The following is the equivariant version of Nakamura's theorem. 

\begin{theorem}\label{equivNakamura}
Let $A$ be a unital Kirchberg algebra. 
Suppose that 
$\alpha,\beta:G\curvearrowright A$ satisfy the assumptions stated above. 
Suppose that there exists an isomorphism 
$\theta:A\rtimes_{\beta|N}N\to A\rtimes_{\alpha|N}N$ 
which commutes with the dual coactions. 
If $KK(\theta|A)=1_A$ and 
$KK(\tilde\alpha_\xi)=KK(\theta\circ\tilde\beta_\xi\circ\theta^{-1})$, 
then $\alpha:G\curvearrowright A$ is 
$KK$-trivially cocycle conjugate to $\beta:G\curvearrowright A$. 
\end{theorem}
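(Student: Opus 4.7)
My plan is to reduce the statement to a $\Z$-equivariant intertwining argument applied to the extended automorphisms on the $N$-crossed product, with the dual-coaction structure maintained throughout by working inside the $N$-fixed-point subalgebra of the central sequence algebra.

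First, I would set $B=A\rtimes_{\alpha|N}N$ and transport $\tilde\beta_\xi$ through $\theta$, writing $\sigma=\theta\circ\tilde\beta_\xi\circ\theta^{-1}\in\Aut(B)$. Since $\theta$ commutes with the dual coactions, $\sigma$ preserves $A\subset B$ and sends $\lambda^\alpha_g$ into $A\lambda^\alpha_{\xi g\xi^{-1}}$ for $g\in N$, and the hypothesis becomes $KK(\tilde\alpha_\xi)=KK(\sigma)$ in $KK(B,B)$. Outerness of $\alpha|N$ and amenability of $N$ make $B$ a (unital) Kirchberg algebra. Under the natural identifications $B\rtimes_{\tilde\alpha_\xi}\Z\cong A\rtimes_\alpha G$ and $B\rtimes_\sigma\Z\cong A\rtimes_\beta G$, to conclude it is enough to construct a cocycle conjugacy between $\tilde\alpha_\xi$ and $\sigma$ implemented by an automorphism $\Phi\in\Aut(B)$ satisfying $\Phi(A)=A$, $KK(\Phi|A)=1_A$, commuting with $\widehat{\alpha|N}$, and by a unitary cocycle taking values in $A$; composing the resulting equivariant isomorphism of crossed products with $\theta$ then gives the desired $KK$-trivial cocycle conjugacy between $\alpha$ and $\beta$.

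The construction of $\Phi$ follows the Evans--Kishimoto/Nakamura intertwining scheme as in \cite[Theorem 5]{Na00ETDS} and \cite[Theorem 4.11]{IM10Adv}, but carried out equivariantly. The hypothesis $KK(\tilde\alpha_\xi)=KK(\sigma)$ combined with Phillips's asymptotic unitary equivalence theorem provides, on any finite set, a unitary conjugating $\tilde\alpha_\xi$ into $\sigma$ up to small error. Proposition \ref{equivRohlin} then supplies Rohlin towers for $\alpha_\xi$ inside $(A_\omega)^{\alpha|N}$, and the standard one-cohomology vanishing trick applied to these towers yields the necessary perturbation unitaries in $A$ itself, not merely in $B$. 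Alternating the vanishing step for $\tilde\alpha_\xi$ with the analogous step for $\sigma$ along a suitably decaying sequence of tolerances, I would obtain a Cauchy sequence of automorphisms of $B$ preserving $A$ and with $KK$-trivial restriction to $A$, whose limit is $\Phi$.

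The main obstacle is that Nakamura's original non-equivariant argument produces cocycles and perturbation unitaries in the ambient algebra with no control on where they sit, whereas here the compatibility with the dual coaction forces us to keep every perturbation inside $A$. The asymptotic representability hypothesis on $\alpha|N$ and $\beta|N$ is precisely what overcomes this: it guarantees, via \cite[Corollary 3.2]{IM10Adv}, the pure infiniteness and simplicity of $(A_\omega)^{\alpha|N}$ that underpin Proposition \ref{equivRohlin}, and it furnishes implementing unitaries for $N$ sitting in the relative commutant of $A$, which allows one to simultaneously control commutation with both $A$ and the canonical generators $\lambda^\alpha_g$ at every step of the induction.
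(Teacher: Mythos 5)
Your proposal follows essentially the same route as the paper: pass to $B=A\rtimes_{\alpha|N}N$, use $KK(\tilde\alpha_\xi)=KK(\theta\circ\tilde\beta_\xi\circ\theta^{-1})$ together with Phillips's theorem and the asymptotic representability of $\alpha|N$ (via \cite[Theorem 4.8]{IM10Adv}) to get asymptotic innerness via unitaries of $A$, feed the Rohlin towers of Proposition \ref{equivRohlin} into the Evans--Kishimoto intertwining to produce a conjugating isomorphism commuting with the dual coactions and a unitary in $A$, and then extend over the $\Z$-crossed product. This is the paper's argument; the only quibble is your phrasing that asymptotic representability furnishes implementing unitaries ``in the relative commutant of $A$'' (they implement $\alpha_g$ on $A$, so it is the products $(\lambda^\alpha_g)^*v_g$ that lie in $A'\cap A^\omega$), but the role you assign to that hypothesis is correct.
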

\begin{proof}
It is easy to see that 
$\tilde\alpha_\xi\circ(\theta\circ\tilde\beta_\xi\circ\theta^{-1})^{-1}$ 
commutes with the dual coactions. 
Indeed, 
\begin{align*}
\widehat{\alpha|N}
\circ\tilde\alpha_\xi\circ(\theta\circ\tilde\beta_\xi\circ\theta^{-1})^{-1}
&=(\tilde\alpha_\xi\otimes\Ad\lambda_\xi)
\circ\widehat{\alpha|N}\circ\theta\circ\tilde\beta_\xi^{-1}\circ\theta^{-1}\\
&=(\tilde\alpha_\xi\otimes\Ad\lambda_\xi)\circ(\theta\otimes\id)
\circ\widehat{\beta|N}\circ\tilde\beta_\xi^{-1}\circ\theta^{-1}\\
&=(\tilde\alpha_\xi\otimes\Ad\lambda_\xi)\circ(\theta\otimes\id)
\circ(\tilde\beta_\xi^{-1}\otimes\Ad\lambda_\xi^*)
\circ\widehat{\beta|N}\circ\theta^{-1}\\
&=(\tilde\alpha_\xi\otimes\Ad\lambda_\xi)\circ(\theta\otimes\id)
\circ(\tilde\beta_\xi^{-1}\otimes\Ad\lambda_\xi^*)\circ(\theta^{-1}\otimes\id)
\circ\widehat{\alpha|N}\\
&=\bigl((\tilde\alpha_\xi\circ\theta\circ\tilde\beta_\xi^{-1}\circ\theta^{-1})
\otimes\id\bigr)\circ\widehat{\alpha|N}. 
\end{align*}
By assumption, the $KK$-class of 
$\tilde\alpha_\xi\circ(\theta\circ\tilde\beta_\xi\circ\theta^{-1})^{-1}$ is 
trivial, and so it is asymptotically inner by Phillips's theorem. 
Since $\alpha|N$ is asymptotically representable, 
it follows from \cite[Theorem 4.8]{IM10Adv} that 
$\tilde\alpha_\xi\circ(\theta\circ\tilde\beta_\xi\circ\theta^{-1})^{-1}$ is 
asymptotically inner via unitaries of $A$. 
Hence there exists $u:[0,\infty)\to U(A)$ such that 
\[
\lim_{t\to\infty}\Ad u(t)\circ\tilde\alpha_\xi
=\theta\circ\tilde\beta_\xi\circ\theta^{-1}
\]
on $A\rtimes_{\alpha|N}N$. 

On the other hand, by the proposition above, 
the automorphism $\tilde\alpha_\xi$ admits Rohlin projections 
in $(A_\omega)^{\alpha|N}\subset(A\rtimes_{\alpha|N}N)_\omega$. 
The same is true for $\tilde\beta_\xi$. 
Hence, 
by using \cite[Lemma 3.3]{IM10Adv} instead of \cite[Theorem 7]{Na00ETDS}, 
the usual intertwining argument shows that 
there exist an isomorphism 
$\theta':A\rtimes_{\beta|N}N\to A\rtimes_{\alpha|N}N$ 
which commutes with the dual coactions 
and a unitary $v\in U(A)$ such that 
\[
\Ad v\circ\tilde\alpha_\xi
=\theta'\circ\tilde\beta_\xi\circ(\theta')^{-1}
\]
holds true. 
It follows that we can define an isomorphism $\theta''$ 
from 
\[
(A\rtimes_{\beta|N}N)\rtimes_{\tilde\beta_\xi}\Z=A\rtimes_\beta G
\]
to 
\[
(A\rtimes_{\alpha|N}N)\rtimes_{\tilde\alpha_\xi}\Z=A\rtimes_\alpha G
\]
by letting $\theta''(x)=\theta'(x)$ for $x\in A\rtimes_{\beta|N}N$ 
and $\theta''(\lambda^\beta_\xi)=v\lambda^\alpha_\xi$. 
Clearly, $\theta''(\lambda^\beta_g)$ is in $A\lambda^\alpha_g$ 
for every $g\in N\cup\{\xi\}$, and hence for every $g\in G$. 
Therefore, $\alpha:G\curvearrowright A$ is 
cocycle conjugate to $\beta:G\curvearrowright A$ 
via the automorphism $\theta''|A$. 
By the construction, $\theta''|A=\theta'|A$ and 
$\theta'|A$ is asymptotically unitarily equivalent to $\theta|A$. 
Thus, $\alpha$ is $KK$-trivially cocycle conjugate to $\beta$. 
\end{proof}

\subsection{Uniqueness of poly-$\Z$ group actions}

In this subsection, 
we prove the uniqueness of outer actions of poly-$\Z$ groups 
on unital strongly self-absorbing Kirchberg algebras 
satisfying the UCT (Theorem \ref{Oinfty_unique}). 

\begin{lemma}
Let $G$ be a poly-$\Z$ group and 
let $\alpha:G\curvearrowright A$ be an action of $G$ 
on a unital $C^*$-algebra $A$ satisfying the UCT. 
Suppose that 
the inclusion map $a\mapsto a\otimes1$ from $A$ to $A\otimes A$ 
induces a $KK$-equivalence. 
\begin{enumerate}
\item The canonical embedding 
$\rho:A\otimes C^*(G)\to A\otimes(A\rtimes_\alpha G)$ 
sending $a\otimes\lambda_g$ to $a\otimes\lambda^\alpha_g$ 
induces a $KK$-equivalence. 
\item The canonical embedding 
$\sigma:A\rtimes_\alpha G\to A\otimes(A\rtimes_\alpha G)$ 
sending $x$ to $1_A\otimes x$ 
induces a $KK$-equivalence. 
\end{enumerate}
\end{lemma}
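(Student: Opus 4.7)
The plan is to prove (1) and (2) simultaneously by induction on the Hirsch length $n$ of $G$. In the base case $n=0$ the group is trivial, so $C^*(G)\cong\C$ and $A\rtimes_\alpha G\cong A$; part (1) is then exactly the hypothesis, while part (2) reads $a\mapsto 1\otimes a$, which differs from the hypothesis map $a\mapsto a\otimes 1$ by the flip automorphism of $A\otimes A$ and is therefore also a $KK$-equivalence.

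For the inductive step I choose a normal subgroup $N\triangleleft G$ of Hirsch length $n$ with $G/N\cong\Z$ and a lift $\xi\in G$ of a generator. Then
\[
A\rtimes_\alpha G\cong (A\rtimes_{\alpha|N}N)\rtimes_{\tilde\alpha_\xi}\Z,
\qquad
C^*(G)\cong C^*(N)\rtimes\Z,
\]
the $\Z$-action on $C^*(N)$ being induced by the conjugation action of $\xi$ on $N$. By the induction hypothesis, the map $\rho_N\colon A\otimes C^*(N)\to A\otimes(A\rtimes_{\alpha|N}N)$ of part (1) (for the group $N$) is a $KK$-equivalence. A direct check on the generators $a\otimes\lambda_g$ shows that $\rho_N$ intertwines the $\Z$-actions $\id_A\otimes\Ad\lambda_\xi$ on the source and $\id_A\otimes\tilde\alpha_\xi$ on the target, and the induced $\Z$-crossed product of $\rho_N$ is precisely $\rho$. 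The same reasoning, using $\sigma_N\colon A\rtimes_{\alpha|N}N\to A\otimes (A\rtimes_{\alpha|N}N)$, $x\mapsto 1_A\otimes x$ (equivariant for $\tilde\alpha_\xi$ on both sides), yields $\sigma$ as a $\Z$-crossed product of an equivariant $KK$-equivalence.

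What remains is the key assertion: if $\phi\colon (B_1,\beta_1)\to(B_2,\beta_2)$ is a $\Z$-equivariant $*$-homomorphism whose underlying class $KK(\phi)$ is a $KK$-equivalence, then $KK(\phi\rtimes\Z)$ is a $KK$-equivalence. I expect this to be the main obstacle and I would handle it via the Pimsner--Voiculescu six-term exact sequence: it is natural with respect to equivariant maps, so the five-lemma shows that $K_*(\phi\rtimes\Z)$ is an isomorphism. Since $A$ satisfies the UCT and $G$ is amenable (poly-$\Z$ groups are solvable, hence amenable), all algebras appearing here—iterated crossed products and tensor products of $A$ and $C^*(G)$—satisfy the UCT; a $KK$-class between UCT algebras inducing an isomorphism on $K_*$ is a $KK$-equivalence, which closes the argument. (A more conceptual alternative is to view the PV sequence as a distinguished triangle in the triangulated category $KK$ and apply the three-out-of-two axiom, which bypasses the UCT entirely.)
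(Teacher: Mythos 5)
Your proposal is correct and follows essentially the same route as the paper: induction on the Hirsch length, writing $A\rtimes_\alpha G$ and $C^*(G)$ as $\Z$-crossed products over the normal subgroup $N$, and then using the naturality of the Pimsner--Voiculescu sequence together with the five lemma to get a $K_*$-isomorphism, upgraded to a $KK$-equivalence via the UCT. The only cosmetic differences are that you isolate the crossed-product step as a general statement about $\Z$-equivariant maps and note the triangulated-category alternative, whereas the paper simply writes out the commutative PV diagram.
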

\begin{proof}
(1) 
The proof is by induction on the Hirsch length of $G$. 
When $G$ is trivial, the assertion is clear 
because we assumed that 
the inclusion map $a\mapsto a\otimes1$ induces a $KK$-equivalence. 
For a given poly-$\Z$ group $G$, 
let $N$ be a normal poly-$\Z$ subgroup of $G$ such that 
$G/N\cong\Z$. 
Let 
\[
\rho':A\otimes C^*(N)\to A\otimes(A\rtimes_{\alpha|N}N). 
\]
be the canonical unital inclusion. 
By the induction hypothesis $KK(\rho')$ is invertible. 
Choose $\xi\in G$ which generates $G/N\cong\Z$. 
Define $\tilde\alpha_\xi\in\Aut(A\rtimes_{\alpha|N}N)$ 
in the same way as in the previous subsection. 
Clearly $(A\rtimes_{\alpha|N}N)\rtimes_{\tilde\alpha_\xi}\Z$ is 
isomorphic to $A\rtimes_\alpha G$ 
and $C^*(N)\rtimes_{\Ad\lambda_\xi}\Z$ is isomorphic to $C^*(G)$. 
By the naturality of the Pimsner-Voiculescu exact sequence, 
we obtain the following commutative diagram, 
in which the vertical sequences are exact. 
\[
\begin{CD}
K_i(A\otimes C^*(N))@>K_i(\rho')>>
K_i(A\otimes(A\rtimes_{\alpha|N}N)) \\
@V\id-K_i(\id\otimes\Ad\lambda_\xi)VV@VV\id-K_i(\id\otimes\tilde\alpha_\xi)V \\
K_i(A\otimes C^*(N))@>K_i(\rho')>>
K_i(A\otimes(A\rtimes_{\alpha|N}N)) \\
@VVV@VVV \\
K_i(A\otimes C^*(G))@>K_i(\rho)>>
K_i(A\otimes(A\rtimes_\alpha G)) \\
@VVV@VVV \\
K_{1-i}(A\otimes C^*(N))@>K_{1-i}(\rho')>>
K_{1-i}(A\otimes(A\rtimes_{\alpha|N}N)) \\
@V\id-K_{1-i}(\id\otimes\Ad\lambda_\xi)VV
@VV\id-K_{1-i}(\id\otimes\tilde\alpha_\xi)V \\
K_{1-i}(A\otimes C^*(N))@>K_{1-i}(\rho')>>
K_{1-i}(A\otimes(A\rtimes_{\alpha|N}N)) \\
\end{CD}
\]
The five lemma implies that $K_i(\rho)$ is an isomorphism for $i=0,1$. 
Hence $KK(\rho)$ is invertible. 

(2) 
In the same way as (1) we can show that $KK(\sigma)$ is invertible. 
\end{proof}

\begin{theorem}\label{Oinfty_unique}
Let $G$ be a poly-$\Z$ group and 
let $A$ be a unital strongly self-absorbing Kirchberg algebra 
satisfying the UCT. 
Then, outer actions of $G$ on $A$ are unique 
up to $KK$-trivial cocycle conjugacy. 
In particular, any outer actions of $G$ on $A$ are 
asymptotically representable. 
\end{theorem}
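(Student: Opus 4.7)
The plan is to argue by induction on the Hirsch length $n$ of $G$. When $n=0$ the group is trivial and the statement is vacuous, while when $n=1$ we have $G\cong\Z$ and the result reduces to Nakamura's theorem \cite[Theorem 5]{Na00ETDS}, upon observing that every unital $*$-endomorphism of a strongly self-absorbing $C^*$-algebra is approximately unitarily equivalent to the identity, so in particular $KK(\alpha_g)=1_A$ for every $g\in G$ and every outer action $\alpha:G\curvearrowright A$.

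For the inductive step, suppose $G$ has Hirsch length $n+1\geq 2$, choose a normal subgroup $N\subset G$ of Hirsch length $n$ with $G/N\cong\Z$, and pick $\xi\in G$ whose image generates $G/N$. Given two outer actions $\alpha,\beta:G\curvearrowright A$, the induction hypothesis applied to $N$ furnishes a $KK$-trivial cocycle conjugacy between $\alpha|N$ and $\beta|N$, and by the ``in particular'' clause of that same hypothesis both restrictions are asymptotically representable. The cocycle conjugacy on the $N$-level translates, in the standard way, into an isomorphism $\theta:A\rtimes_{\beta|N}N\to A\rtimes_{\alpha|N}N$ which commutes with the dual coactions and satisfies $KK(\theta|A)=1_A$. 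We are then in a position to apply Theorem \ref{equivNakamura}, provided we can verify
\[
KK(\tilde\alpha_\xi)=KK(\theta\circ\tilde\beta_\xi\circ\theta^{-1}).
\]

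To establish this last identity I would invoke the preceding lemma. Strong self-absorption makes the first-factor embedding $A\to A\otimes A$ a $KK$-equivalence, and $A$ satisfies the UCT by hypothesis, so the lemma applies to both $\alpha|N$ and $\beta|N$, yielding chains of $KK$-equivalences
\[
A\rtimes_{\alpha|N}N\simeq A\otimes(A\rtimes_{\alpha|N}N)\simeq A\otimes C^*(N),
\]
and analogously on the $\beta$-side. Transporting $\tilde\alpha_\xi$ and $\theta\circ\tilde\beta_\xi\circ\theta^{-1}$ through these equivalences, the goal is to show that both correspond at the $KK$-level to the ``diagonal'' automorphism of $A\otimes C^*(N)$ induced by conjugation by $\xi$, whose class factors as $KK(\alpha_\xi)\times KK(\Ad\lambda_\xi)$ (respectively as $KK(\theta|A\circ\beta_\xi\circ(\theta|A)^{-1})\times KK(\Ad\lambda_\xi)$). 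Strong self-absorption forces each of $KK(\alpha_\xi)$, $KK(\beta_\xi)$ and $KK(\theta|A)$ to equal $1_A$, so both classes collapse to the common class $1_A\times KK(\Ad\lambda_\xi)$ and thereby coincide. With the hypotheses of Theorem \ref{equivNakamura} verified, it delivers the desired $KK$-trivial cocycle conjugacy between $\alpha$ and $\beta$. The ``in particular'' statement follows by combining this uniqueness with Theorem \ref{exist_asymprepre}, which produces an asymptotically representable outer action of $G$ on $A$, and noting that asymptotic representability is preserved by cocycle conjugacy.

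The main obstacle will be the precise $KK$-theoretic transport invoked in the previous paragraph: the preceding lemma gives $KK$-equivalences at the algebra level but says nothing \emph{a priori} about how individual automorphisms are compared through them. To make this rigorous one should re-run the naturality diagram of Pimsner--Voiculescu sequences used in the proof of that lemma, now with $\tilde\alpha_\xi$ and $\theta\circ\tilde\beta_\xi\circ\theta^{-1}$ inserted in every square, and invoke the five lemma together with the UCT to promote equality of induced maps on $K$-theory to equality of $KK$-classes. The $K$-theoretic equality, in turn, reduces to $KK(\alpha_\xi)=KK(\beta_\xi)=1_A$ from strong self-absorption.
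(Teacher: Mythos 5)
Your overall architecture — induction on the Hirsch length, reduction to Theorem \ref{equivNakamura}, and the use of the $KK$-equivalences $A\rtimes_{\alpha|N}N\simeq A\otimes(A\rtimes_{\alpha|N}N)\simeq A\otimes C^*(N)$ from the preceding lemma — is the same as the paper's. But there is a genuine gap at the one step you yourself flag as the "main obstacle," and the repair you sketch does not close it. The problem is your choice of $\theta$: you take the isomorphism of crossed products induced by the cocycle conjugacy that the induction hypothesis hands you, and then try to verify $KK(\tilde\alpha_\xi)=KK(\theta\circ\tilde\beta_\xi\circ\theta^{-1})$ \emph{a posteriori}. Writing $x=KK(\sigma)^{-1}\circ KK(\rho)$ and $y$ for the analogous class on the $\beta$-side, the transported class of $\theta\circ\tilde\beta_\xi\circ\theta^{-1}$ is $z\circ KK(\id\otimes\Ad\lambda_\xi)\circ z^{-1}$ with $z=x^{-1}\circ KK(\theta)\circ y$; your "factorization" of this class as $KK(\theta|A)\times KK(\Ad\lambda_\xi)$ amounts to the unproved assertion that $z=1_{A\otimes C^*(N)}$, i.e.\ that $KK(\theta)=x\circ y^{-1}$. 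Knowing $KK(\theta|A)=1_A$ and that $\theta$ commutes with the dual coactions does not determine $KK(\theta)$ on the crossed product, and nothing in your argument pins it down. This is precisely why the paper \emph{discards} the $\theta$ coming from the inductive cocycle conjugacy and instead uses \cite[Remark 4.6]{IM10Adv} to \emph{construct} an isomorphism $\theta$ commuting with the dual coactions whose $KK$-class is prescribed to be $x\circ y^{-1}$; with that choice the identity $KK(\theta\circ\tilde\beta_\xi\circ\theta^{-1})=x\circ y^{-1}\circ KK(\tilde\beta_\xi)\circ y\circ x^{-1}=KK(\tilde\alpha_\xi)$ is a formal consequence of the intertwining relations \eqref{eq:x1} and \eqref{eq:y1}, which hold exactly in $KK$ because they come from genuinely commuting diagrams of $*$-homomorphisms.

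Your proposed fix — re-run the Pimsner--Voiculescu naturality diagrams with the two automorphisms inserted and "invoke the five lemma together with the UCT to promote equality of induced maps on $K$-theory to equality of $KK$-classes" — fails for two reasons. First, the UCT does not promote equality on $K$-theory to equality in $KK$: two classes with the same induced map on $K_*$ differ by an element of $\operatorname{Ext}^1(K_*,K_{*+1})$, and this group is in general nonzero for $K_*(A\rtimes_{\alpha|N}N)$ (torsion appears already for the Klein bottle subgroup, since $\id-K_*(\Ad\lambda_\xi)$ can have cokernel with $2$-torsion, and for $A=M_{\mathfrak p}\otimes\mathcal{O}_\infty$ the $K$-groups are non-free to begin with). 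In the paper the five lemma and the UCT are used only to show that $KK(\rho)$ and $KK(\sigma)$ are invertible, never to compare automorphisms. Second, even the $K$-theoretic equality is not clear for your $\theta$, since $K_*(\theta)$ on the crossed product depends on the cocycle $(w_g)_g$ implementing the conjugacy and not just on $\theta|A$. The missing ingredient, in short, is the realization result that lets one choose $\theta$ with a prescribed $KK$-class; without it the hypothesis of Theorem \ref{equivNakamura} is not verified.
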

\begin{proof}
Let $A$ be a unital strongly self-absorbing Kirchberg algebra 
satisfying the UCT. 
The proof is by induction on the Hirsch length of $G$. 
When $G$ is trivial, the assertion is clear. 

Let $\alpha:G\curvearrowright A$ be 
an outer action of a poly-$\Z$ group $G$ on $A$. 
Let $N\subset G$ be a normal poly-$\Z$ subgroup such that $G/N\cong\Z$. 
Choose and fix an element $\xi\in G$ 
so that $G$ is generated by $N$ and $\xi$. 
Let $\tilde\alpha_\xi$ be as in the previous subsection. 
By the induction hypothesis, 
$\alpha|N:N\curvearrowright A$ is asymptotically representable. 
By the lemma above, the canonical embeddings 
$\rho:A\otimes C^*(N)\to A\otimes(A\rtimes_{\alpha|N}N)$ 
and $\sigma:A\rtimes_{\alpha|N}N\to A\otimes(A\rtimes_{\alpha|N}N)$ 
induce $KK$-equivalences. 
Define 
\[
x=KK(\sigma)^{-1}\circ KK(\rho)\in KK(A\otimes C^*(N),A\rtimes_{\alpha|N}N). 
\]
Clearly $x$ is invertible and $K_0(x)$ is unital. 
Since 
$(\id\otimes\tilde\alpha_\xi)\circ\rho=\rho\circ(\id\otimes\Ad\lambda_\xi)$ 
and $(\id\otimes\tilde\alpha_\xi)\circ\sigma=\sigma\circ\tilde\alpha_\xi$, 
we have 
\begin{equation}
x\circ KK(\id\otimes\Ad\lambda_\xi)=KK(\tilde\alpha_\xi)\circ x. 
\label{eq:x1}
\end{equation}
We also have 
\begin{equation}
(x\otimes1_{C^*(N)})\circ KK(\id\otimes\delta_N)
=KK(\widehat{\alpha|N})\circ x, 
\label{eq:x2}
\end{equation}
because $(\id\otimes\widehat{\alpha|N})\circ\rho
=(\rho\otimes\id)\circ(\id\otimes\delta_N)$ and 
$(\id\otimes\widehat{\alpha|N})\circ\sigma
=(\sigma\otimes\id)\circ\widehat{\alpha|N}$. 

Let $\beta:G\curvearrowright A$ be another outer action. 
In the same way, we obtain 
an invertible element $y\in KK(A\otimes C^*(N),A\rtimes_{\beta|N}N)$ 
satisfying 
\begin{equation}
y\circ KK(\id\otimes\Ad\lambda_\xi)=KK(\tilde\beta_\xi)\circ y
\label{eq:y1}
\end{equation}
and 
\begin{equation}
(y\otimes1_{C^*(N)})\circ KK(\id\otimes\delta_N)=KK(\widehat{\beta|N})\circ y. 
\label{eq:y2}
\end{equation}
There exists an isomorphism $A\rtimes_{\beta|N}N\to A\rtimes_{\alpha|N}N$ 
whose $KK$-class is equal to $x\circ y^{-1}$. 
It follows from \eqref{eq:x2} and \eqref{eq:y2} that 
$x\circ y^{-1}$ commutes with the dual coactions in the category of $KK$. 
Hence, by \cite[Remark 4.6]{IM10Adv}, there exists an isomorphism 
$\theta:A\rtimes_{\beta|N}N\to A\rtimes_{\alpha|N}N$ 
which commutes with the dual coactions and 
whose $KK$-class is equal to $x\circ y^{-1}$. 
Also, \eqref{eq:x1} and \eqref{eq:y1} imply 
\[
KK(\theta\circ\tilde\beta_\xi\circ\theta^{-1})
=x\circ y^{-1}\circ KK(\tilde\beta_\xi)\circ y\cdot x^{-1}
=KK(\tilde\alpha_\xi). 
\]
Therefore, thanks to Theorem \ref{equivNakamura}, 
we can conclude that $\alpha:G\curvearrowright A$ is 
$KK$-trivially cocycle conjugate to $\beta:G\curvearrowright A$. 

It follows from Theorem \ref{exist_asymprepre} that 
any outer action $G\curvearrowright A$ is asymptotically representable. 
\end{proof}

\begin{remark}\label{Oinfty_unique'}
In the theorem above, we can strengthen the conclusion a bit more as follows. 
Let $\alpha,\beta:G\curvearrowright A$ be outer actions. 
Applying the first part of the proof to $\alpha$ and $\beta$, 
we get an isomorphism $\theta:A\rtimes_\beta G\to A\rtimes_\alpha G$ 
which commutes with the dual coactions. 
Letting $u_g=\theta(\lambda^\beta_g)(\lambda^\alpha_g)^*$, 
one has 
\[
\Ad u_g\circ\alpha_g=\theta\circ\beta_g\circ\theta^{-1}\quad\forall g\in G
\]
on $A$. 
From the construction of $\theta$, 
it is easy to see $KK(\theta\circ\iota_\beta)=KK(\iota_\alpha)$, 
where $\iota_\alpha$ and $\iota_\beta$ are the canonical embeddings of 
$C^*(G)$ into $A\rtimes_\alpha G$ and $A\rtimes_\beta G$. 
By virtue of Kirchberg's theorem \cite[Theorem 8.3.3]{Ro_text}, 
$\theta\circ\iota_\beta$ and $\iota_\alpha$ are 
asymptotically unitarily equivalent. 
Hence, by \cite[Theorem 4.8]{IM10Adv}, 
there exists a continuous path of unitaries $(v_t)_{t\in[0,\infty)}$ 
in $U(A)$ such that 
\[
\lim_{t\to\infty}\Ad v_t(\iota_\alpha(a))=\theta(\iota_\beta(a))\quad 
\forall a\in C^*(G). 
\]
Therefore one obtains 
\[
\lim_{t\to\infty}v_t\alpha_g(v_t^*)=u_g\quad 
\forall g\in G. 
\]
Thus, $\alpha$ and $\beta$ are very strongly cocycle conjugate 
(note that $K_1(A)$ is trivial). 
In particular, $\alpha$ is homotopic to $\beta$ as $G$-actions. 
\end{remark}

\begin{remark}
For $A=\mathcal{O}_\infty$, 
the proof of Theorem \ref{Oinfty_unique} can be slightly simplified 
in the following manner. 
One can show that the inclusion $C^*(G)\to A\rtimes_\alpha G$ 
induces a $KK$-equivalence by using induction on the Hirsch length of $G$. 
(Actually, this holds for any torsion-free amenable group $G$, 
see \cite[Corollary 4.18]{Sz18CMP}.) 
Instead of $x=KK(\sigma)^{-1}\circ KK(\rho)$, we let $x$ be 
the $KK$-class of the inclusion map $C^*(N)\to A\rtimes_{\alpha|N}N$. 
Then, the rest of the proof works exactly in the same way. 
\end{remark}

\begin{remark}\label{workofSzabo}
During the preparation of this paper, 
G. Szab\'o \cite{Sz1807arXiv} has obtained the following result, 
which is stronger than Theorem \ref{Oinfty_unique}. 
Let $\mathfrak{C}$ be a bootstrap class of countable discrete groups 
(see \cite{Sz1807arXiv} for its precise definition). 
For any $G\in\mathfrak{C}$ and 
any strongly self-absorbing $C^*$-algebra $\mathcal{D}$ 
(not necessarily satisfying the UCT), 
strongly outer actions $G\curvearrowright\mathcal{D}$ are 
unique up to very strong cocycle conjugacy. 
The class $\mathfrak{C}$ contains 
all torsion-free abelian groups and poly-$\Z$ groups. 
Besides, 
he also proved that any outer action of $G\in\mathfrak{C}$ 
on a unital Kirchberg algebra $A$ absorbs 
every $G$-action on $\mathcal{O}_\infty$. 
This is a strengthened version of Theorem \ref{Oinfty_absorb}. 
\end{remark}

As a corollary of Theorem \ref{Oinfty_unique}, 
we get the following. 

\begin{corollary}\label{embedtoAflat}
Let $G$ be a poly-$\Z$ group and 
let $A$ be a unital strongly self-absorbing Kirchberg algebra 
satisfying the UCT. 
For any outer action $\alpha:G\curvearrowright A$, 
there exists a unital homomorphism $\phi:A\to A_\flat$ 
such that $\phi\circ\alpha_g=\alpha_g\circ\phi$ for all $g\in G$. 
\end{corollary}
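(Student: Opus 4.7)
Plan. The strategy is to transfer a canonical $G$-equivariant central-sequence embedding from the infinite tensor product model action via Theorem \ref{Oinfty_unique}.

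Set $B := A^{\otimes \mathbb{N}}$ and $\beta := \alpha^{\otimes \mathbb{N}}$. Since $A$ is strongly self-absorbing there is an isomorphism $B \cong A$, and since $A$ is simple and each $\alpha_g$ is outer, each $\beta_g$ is outer as well. By Theorem \ref{Oinfty_unique}, $(A,\alpha)$ and $(B,\beta)$ are $KK$-trivially cocycle conjugate: there exist an isomorphism $\Theta : B \to A$ and unitaries $(W_g)_{g \in G} \subset U(A)$ with $\Theta \circ \beta_g \circ \Theta^{-1} = \Ad W_g \circ \alpha_g$. Because each $W_g$ lies in $A$, the inner automorphism $\Ad W_g$ acts trivially on $A_\flat = A^\flat \cap A'$; hence $\Theta$ descends to a $G$-equivariant isomorphism $B_\flat \to A_\flat$ intertwining the natural extensions of $\beta$ and $\alpha$.

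For the model system $(B,\beta)$, a $G$-equivariant unital embedding $\psi : A \to B_\flat$ is transparent to construct from the tensor-factor inclusions. Let $\iota_n : A \to B$ send $a$ into the $n$-th tensor slot; each $\iota_n$ is $\alpha$-$\beta$-equivariant, and the images $\iota_n(A)$ pairwise commute. Continuously interpolating between $\iota_n$ and $\iota_{n+1}$ by a unitary path within the subalgebra $A^{\otimes\{n,n+1\}}$ (possible because the flip on $A \otimes A$ is asymptotically inner when $A$ is strongly self-absorbing), one obtains a bounded continuous family $t \mapsto \psi_t(a)$ whose class $\psi(a) \in B^\flat$ is independent of the interpolation and lies in $B_\flat$, since any fixed $y \in B$ has finite tensor support and thus asymptotically commutes with $\iota_{n(t)}(a)$ as $n(t) \to \infty$. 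Setting $\phi := \Theta \circ \psi : A \to A_\flat$ gives the desired embedding.

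The chief technical point is making the interpolation both continuous in $t$ and asymptotically $G$-equivariant, so that $\psi$ itself becomes equivariant in the limit. Concretely, the interpolating unitary $u_t \in A^{\otimes\{n,n+1\}}$ must satisfy $u_t^\ast (\alpha \otimes \alpha)_g(u_t) \to 1$ as $t \to \infty$; this can be achieved by realizing the asymptotic innerness of the flip from within an $(\alpha \otimes \alpha)$-asymptotically invariant family of unitaries, whose existence follows from a further application of Theorem \ref{Oinfty_unique} inside $A \otimes A$. Alternatively, one may first build $\psi$ into the sequence algebra $B_\omega$, where continuity is not an issue, and then lift to $B_\flat$ via a standard reindexing argument valid for separable nuclear Kirchberg algebras.
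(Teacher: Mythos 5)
Your overall architecture coincides with the paper's: reduce to the infinite tensor product model $(B,\beta)=(\bigotimes_\N A,\bigotimes_\N\alpha)$ via Theorem \ref{Oinfty_unique}, note that inner perturbations act trivially on $A_\flat$, and observe that the whole problem comes down to connecting the flip $\rho$ on $A\otimes A$ to the identity through a continuous family that asymptotically commutes with $\alpha_g\otimes\alpha_g$. Up to that point the proposal is fine.

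The gap is in the one step that actually carries the weight. You assert that an $(\alpha\otimes\alpha)$-asymptotically invariant continuous unitary path implementing the flip exists "by a further application of Theorem \ref{Oinfty_unique} inside $A\otimes A$," but you do not say to which pair of actions of which poly-$\Z$ group the theorem is applied, and the naive candidates fail. The flip commutes exactly with $\alpha\otimes\alpha$, so comparing $\alpha\otimes\alpha$ with its conjugate by $\rho$ gives nothing; and the group generated by $\alpha\otimes\alpha$ and $\rho$ is $G\times\Z_2$, which has torsion and is not covered by Theorem \ref{Oinfty_unique}. The paper's resolution is a genuine extra idea: first use Theorem \ref{Oinfty_unique} together with Remark \ref{Oinfty_unique'} to perturb $\alpha$ so that it extends to an outer action $\tilde\alpha$ of $G\times\R$, then form the outer $G\times\Z$-action $\gamma_{(g,n)}=\rho^n\circ(\tilde\alpha_{(g,n)}\otimes\tilde\alpha_{(g,n)})$ on $A\otimes A\cong A$ (the composition with the aperiodic $\tilde\alpha_{(e,1)}\otimes\tilde\alpha_{(e,1)}$ is exactly what kills the torsion), and apply the uniqueness theorem for $G\times\Z$ to compare $\gamma$ with $\tilde\alpha|(G\times\Z)$, obtaining a cocycle arbitrarily close to $1$. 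The flow $(\tilde\alpha_{(e,t)}\otimes\tilde\alpha_{(e,t)})_{t}$, which commutes with $\alpha_g\otimes\alpha_g$ on the nose, then cancels the unwanted factor and yields the homotopy $(\rho_t)_t$ from $\id$ to $\rho$ almost commuting with $\alpha_g\otimes\alpha_g$. Without some version of this twist your key claim is unproved.

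Your fallback -- build the embedding into $B_\omega$ and "lift to $B_\flat$ via a standard reindexing argument" -- does not work either. Reindexing lets you enlarge the relative commutant inside an ultrapower; it does not convert a sequence of unitaries (or homomorphisms) into a continuous path. The sequence-versus-path distinction is precisely the difficulty this corollary has to overcome (it is the same distinction as approximate versus asymptotic innerness), so the continuous version cannot be obtained from the $\omega$-version for free.
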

\begin{proof}
Let $\rho\in\Aut(A\otimes A)$ be the flip automorphism, 
i.e.\ $\rho(a\otimes b)=b\otimes a$. 
Since $(A,\alpha)$ is cocycle conjugate to 
$(B,\beta)=(\bigotimes_\N A,\bigotimes_\N\alpha)$, 
it suffices to construct a unital homomorphism $\phi:A\to B_\flat$ 
such that $\phi\circ\alpha_g=\beta_g\circ\phi$. 
To this end, it suffices to show the following: 
for any finite subset $F\subset G$ and $\ep>0$, 
there exists a homotopy $(\rho_t)_{t\in[0,1]}$ in $\Aut(A\otimes A)$ 
such that $\rho_0=\id$, $\rho_1=\rho$ and 
\[
\lVert\rho_t\circ(\alpha_g\otimes\alpha_g)
-(\alpha_g\otimes\alpha_g)\circ\rho_t\rVert<\ep
\quad\forall t\in[0,1],\ g\in F. 
\]
By means of Theorem \ref{Oinfty_unique} and Remark \ref{Oinfty_unique'}, 
we may perturb $\alpha$ a little bit and assume that 
$\alpha$ extends to an outer action $\tilde\alpha$ of $G\times\R$. 
Define an outer action $\gamma:G\times\Z\curvearrowright A\otimes A$ by 
\[
\gamma_{(g,n)}=\rho^n\circ(\tilde\alpha_{(g,n)}\otimes\tilde\alpha_{(g,n)}). 
\]
By Theorem \ref{Oinfty_unique} and Remark \ref{Oinfty_unique'}, 
$\tilde\alpha|(G\times\Z)$ and $\gamma$ are cocycle conjugate 
with a cocycle arbitrarily close to $1$. 
Thus there exists a homotopy between $\id$ and 
$\gamma_{(e,1)}=\rho\circ(\tilde\alpha_{(e,1)}\otimes\tilde\alpha_{(e,1)})$ 
almost commuting with $\alpha_g\otimes\alpha_g$ for $g\in G$. 
Since $(\tilde\alpha_{(e,t)}\otimes\tilde\alpha_{(e,t)})_t$ gives 
a homotopy between $\id$ and $\tilde\alpha_{(e,1)}\otimes\tilde\alpha_{(e,1)}$ 
commuting with $\alpha_g\otimes\alpha_g$, 
we get $(\rho_t)_t$ as above. 
\end{proof}

When $A=\mathcal{O}_2$, 
the crossed product $A\rtimes_\alpha G$ is again $\mathcal{O}_2$ 
for any outer action $\alpha$ of a poly-$\Z$ group $G$. 
Hence the following stronger statement holds. 

\begin{theorem}\label{O2_unique}
Let $G$ be a poly-$\Z$ group. 
Any outer cocycle actions of $G$ on $\mathcal{O}_2$ are 
mutually cocycle conjugate. 
\end{theorem}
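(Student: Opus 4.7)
The plan is to proceed by induction on the Hirsch length of $G$, using the equivariant Nakamura theorem (Theorem \ref{equivNakamura}) as the main tool, together with the fact that $\mathcal{O}_2$ is absorbing under the formation of crossed products by outer actions of poly-$\Z$ groups. The base case $G = \{1\}$ is vacuous. For the inductive step, I fix a normal subgroup $N \triangleleft G$ with $G/N \cong \Z$ and a lift $\xi \in G$ of the generator, and take two outer cocycle actions $(\alpha, u)$ and $(\beta, v)$ of $G$ on $\mathcal{O}_2$.

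First I would reduce to the case where $\alpha|N = \beta|N$ is a chosen genuine outer action $\alpha_N$ of $N$ on $\mathcal{O}_2$ with trivial cocycle $u|_{N \times N} = v|_{N \times N} = 1$. By the inductive hypothesis, $(\alpha|N, u|_{N \times N})$ and $(\beta|N, v|_{N \times N})$ are cocycle conjugate as cocycle actions of $N$, and by Theorem \ref{Oinfty_unique} each is in turn cocycle conjugate to a chosen genuine outer action of $N$. These equivalences lift to cocycle perturbations of the full $G$-cocycle actions by extending the implementing unitaries from $N$ to all of $G$ (by setting them equal to $1$ outside $N$). The crossed product $B = \mathcal{O}_2 \rtimes_{\alpha_N} N$ is a unital Kirchberg algebra with trivial $K$-theory, by iterated application of the Pimsner-Voiculescu exact sequence along a subnormal series of $N$, and satisfies the UCT, so $B \cong \mathcal{O}_2$ by Kirchberg-Phillips.

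The automorphisms $\tilde\alpha_\xi, \tilde\beta_\xi \in \Aut(B)$ implemented by $\Ad \lambda^\alpha_\xi$ and $\Ad \lambda^\beta_\xi$ in the respective twisted crossed products are outer and satisfy the dual-coaction compatibility from Section 4.1. Since $KK(B, B) = 0$, the $KK$-classes of $\tilde\alpha_\xi$ and $\tilde\beta_\xi$ automatically agree, so the hypotheses of Theorem \ref{equivNakamura}, applied with $\theta = \id$, are verified. Its conclusion yields an isomorphism of twisted crossed products $\mathcal{O}_2 \rtimes_{(\beta, v)} G \to \mathcal{O}_2 \rtimes_{(\alpha, u)} G$ commuting with the dual coactions, from which the cocycle conjugacy of $(\alpha, u)$ and $(\beta, v)$ follows by the standard dictionary between cocycle actions and equivariant crossed-product isomorphisms.

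The main obstacle is extending the conclusion of Theorem \ref{equivNakamura} from genuine to cocycle actions. One must execute the Rohlin tower argument (Proposition \ref{equivRohlin}) inside $B \cong \mathcal{O}_2$ for the cocycle action whose $N$-part is the chosen genuine outer action $\alpha_N$, and recover the $G$-structure beyond $N$ from the automorphism $\tilde\alpha_\xi$ together with the cocycle data $u(\xi, \cdot)$, $u(\cdot, \xi)$, $u(\xi, \xi)$, and so on. The untwisting of the $\xi$-direction in the cocycle $u$ is available because $\Z$ has cohomological dimension one, which reduces the problem to a genuine $\Z$-action on $B$ and permits the direct invocation of Theorem \ref{equivNakamura} without any $KK$-theoretic hypothesis to check.
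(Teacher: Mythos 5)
Your proposal is correct and follows essentially the same route as the paper: induction on the Hirsch length, normalization of the cocycle so that it is trivial on $N$ and in the $\xi$-direction (possible since $\Z$ has cohomological dimension one), the observation that the crossed product by $N$ is again $\mathcal{O}_2$ so the $KK$-hypothesis of Theorem \ref{equivNakamura} is automatic, and then an application of that theorem. The only cosmetic difference is that you identify $\alpha|N$ and $\beta|N$ with a common model action and take $\theta=\id$, whereas the paper keeps them separate and uses the isomorphism $\theta$ of crossed products furnished by their cocycle conjugacy.
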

\begin{proof}
The proof is almost the same as Theorem \ref{Oinfty_unique}. 
Let $A=\mathcal{O}_2$ and 
let $(\alpha,u):G\curvearrowright A$ be an outer cocycle action of 
a poly-$\Z$ group $G$. 
Let $N\subset G$ and $\xi\in G$ be 
as in the proof of Theorem \ref{Oinfty_unique}. 
By induction, we may assume that 
the restriction of $(\alpha,u)$ to $N$ is cocycle conjugate to 
a genuine action, and so 
we may perturb $(\alpha,u)$ and assume 
\[
u(g,h)=1\quad\text{and}\quad u(g\xi^l,\xi^m)=1
\quad\forall g,h\in N,\ l,m\in\Z. 
\]
Define $\tilde\alpha_\xi\in\Aut(A\rtimes_{\alpha|N}N)$ by 
\[
\tilde\alpha_\xi(a)=\alpha_\xi(a)\quad\text{and}\quad 
\tilde\alpha_\xi(\lambda^\alpha_g)=u(\xi,g)\lambda^\alpha_{\xi g\xi^{-1}}\quad 
\forall a\in A,\ g\in N. 
\]
Let $(\beta,v):G\curvearrowright A$ be another outer cocycle action. 
Similarly, one can define 
$\tilde\beta_\xi\in A\rtimes_{\beta|N}N$. 
Since $\alpha|N$ is cocycle conjugate to $\beta|N$, 
there exists an isomorphism 
$\theta:A\rtimes_{\beta|N}N\to A\rtimes_{\alpha|N}N$ 
which commutes with the dual coactions. 
Clearly $KK(\tilde\alpha_\xi)=KK(\theta\circ\tilde\beta_\xi\circ\theta^{-1})$, 
because the crossed product $A\rtimes_{\alpha|N}N$ is $\mathcal{O}_2$. 
Theorem \ref{equivNakamura} applies and yields the desired conclusion. 
\end{proof}

\begin{remark}
Outer cocycle actions of a poly-$\Z$ group $G$ 
on $\mathcal{O}_\infty$ is not unique in general. 
More precisely, when the Hirsch length of $G$ is less than three, 
any outer cocycle action $G\curvearrowright\mathcal{O}_\infty$ is 
cocycle conjugate to a genuine action (see Lemma \ref{2cv_Hirsch2}), 
and hence is unique by Theorem \ref{Oinfty_unique}. 
However, there exist 
infinitely many outer cocycle actions $\Z^3\curvearrowright\mathcal{O}_\infty$ 
which are not mutually cocycle conjugate 
(see Example \ref{On''}). 
\end{remark}

\subsection{Absorption of $\mathcal{O}_\infty$}

In this subsection, we prove that 
any outer cocycle action of a poly-$\Z$ group on a unital Kirchberg algebra 
absorbs the outer action on $\mathcal{O}_\infty$ 
(Theorem \ref{Oinfty_absorb}). 
We begin with a McDuff type theorem for discrete group actions. 
The same argument can be found in \cite[Theorem 6.3]{IM10Adv}, 
\cite[Theorem 2.3]{GI11Tohoku} and \cite[Theorem 4.9]{MS14AJM}. 
We note that 
G. Szab\'o \cite{Sz18TAMS,Sz17Adv} gave a strengthened version 
which can be applied to actions of locally compact groups. 

\begin{theorem}\label{McDuff}
Let $G$ be a countable discrete group and 
let $A,B$ be unital separable $C^*$-algebras. 
Let $(\alpha,u):G\curvearrowright A$ be a cocycle action and 
let $\beta:G\curvearrowright B$ be an action. 
Suppose that the following conditions hold. 
\begin{enumerate}
\item There exists a unital homomorphism $\pi:B\to A_\omega$ 
such that $\pi\circ\beta_g=\alpha_g\circ\pi$ for all $g\in G$. 
\item There exists a sequence $(v_n)_n$ of unitaries in $U(B\otimes B)_0$ 
such that 
\[
\lim_{n\to\infty}v_n(b\otimes1)v_n^*=1\otimes b
\quad\forall b\in B,\quad 
\lim_{n\to\infty}(\beta_g\otimes\beta_g)(v_n)-v_n=0
\quad\forall g\in G. 
\]
\end{enumerate}
Then $(\alpha,u)$ is cocycle conjugate to 
$(\alpha\otimes\beta,u\otimes1):G\curvearrowright A\otimes B$ 
via an isomorphism $\psi:A\to A\otimes B$ 
which is asymptotically unitarily equivalent to 
the embedding $A\ni a\mapsto a\otimes1\in A\otimes B$. 
\end{theorem}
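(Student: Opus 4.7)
The plan is to run a one-sided Elliott--Evans--Kishimoto intertwining. Condition (1) provides an approximately central, equivariant copy of $B$ inside $A^\omega$ (a ``parking space'' for $B$-tensor-factors), while condition (2) provides the mechanism to push a $B$-factor into that parking space equivariantly, via the approximately $(\beta\otimes\beta)$-invariant flip.

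First, since $\pi(B)\subset A_\omega = A^\omega\cap A'$, the formula $a\otimes b\mapsto a\pi(b)$ extends to a unital $*$-homomorphism $\tilde\pi\colon A\otimes B\to A^\omega$; the commutation $[\pi(B),A]=0$ makes this well defined, and the equivariance $\pi\circ\beta_g=\alpha_g\circ\pi$ makes $\tilde\pi$ intertwine $\alpha\otimes\beta$ with the lift of $\alpha$ to $A^\omega$. Note $\tilde\pi\circ\iota=\id_A$ where $\iota(a)=a\otimes 1$. Applying $\id_A\otimes\pi$ to $v_n\in U(B\otimes B)_0$ and choosing representatives in $A\otimes B$ produces, for any finite $F\subset A\otimes B$, finite $S\subset G$ and $\varepsilon>0$, a unitary $w\in U(A\otimes B)_0$ such that $\|wxw^*-(\tilde\pi(x))\otimes 1\|<\varepsilon$ on $F$ (at the level of representing sequences) and $\|(\alpha_g\otimes\beta_g)(w)-w\|<\varepsilon$ for $g\in S$. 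Thus $\iota$ is approximately unitarily equivalent, in the cocycle-equivariant sense, to an equivariant splitting of $\tilde\pi$.

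Using this, one inductively constructs finite subsets $S_n\nearrow G$, a dense sequence $\{y_n\}\subset A\otimes B$, a summable sequence $\varepsilon_n\searrow 0$, unitaries $W_n\in U(A\otimes B)_0$ with $\|W_{n+1}W_n^*-1\|<\varepsilon_n$, and families of unitaries $w^{(n)}_g\in U(A\otimes B)$ with $\|w^{(n+1)}_g-w^{(n)}_g\|<\varepsilon_n$ for $g\in S_n$, such that $\phi_n:=\Ad W_n\circ\iota\colon A\to A\otimes B$ satisfies (i) $\phi_n(A)$ contains $\{y_k\}_{k\leq n}$ up to $\varepsilon_n$; (ii) $\phi_n\circ\alpha_g=\Ad w^{(n)}_g\circ(\alpha_g\otimes\beta_g)\circ\phi_n$ up to $\varepsilon_n$ on a suitable finite set; and (iii) the twisted cocycle identity $\phi_n(u(g,h))=w^{(n)}_g(\alpha_g\otimes\beta_g)(w^{(n)}_h)(u(g,h)\otimes 1)(w^{(n)}_{gh})^*$ holds up to $\varepsilon_n$. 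Summability ensures that $\psi:=\lim_n\phi_n$ is a well-defined unital $*$-homomorphism $A\to A\otimes B$, surjective by (i) hence an isomorphism, and asymptotically unitarily equivalent to $\iota$ via the continuous path obtained by interpolating the $W_n$. Taking $w_g:=\lim_n w^{(n)}_g$ realizes $(\alpha,u)$ as cocycle conjugate to $(\alpha\otimes\beta,u\otimes 1)$ through $\psi$.

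The main technical obstacle is the cocycle bookkeeping: at each stage the approximate equivariance and approximate $2$-cocycle identity must be arranged with sufficient slack that they telescope to exact identities in the limit, and the correction unitaries $w^{(n)}_g$ must form a Cauchy sequence compatible with the $2$-cocycle relation. This forces geometric control of $\varepsilon_n$ and the choice of each $W_n$ in the path-component of the identity (to avoid spurious $K_1$-obstructions), with the cocycle defect produced by each $\Ad W_n$ absorbed into the update $w^{(n)}_g\rightsquigarrow w^{(n+1)}_g$ without violating summability---a delicate but essentially standard Cauchy-telescope argument.
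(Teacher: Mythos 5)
Your overall strategy is the same as the paper's: use the equivariant copy $\pi(B)\subset A_\omega$ together with the approximately $(\beta\otimes\beta)$-invariant flip unitaries $v_n$ to produce unitaries in $A\otimes B$ that are approximately central over $A\otimes 1$, approximately $(\alpha\otimes\beta)$-invariant, and conjugate $A\otimes B$ approximately into $A\otimes\C$, and then run a one-sided intertwining as in \cite[Proposition 7.2.1 and Theorem 7.2.2]{Ro_text}. However, the intertwining as you have set it up cannot work: you require $\lVert W_{n+1}W_n^*-1\rVert<\ep_n$ with $(\ep_n)$ summable, which forces $(W_n)$ to converge in norm to a unitary $W$, so that $\psi=\lim_n\phi_n=\Ad W\circ\iota$ has range $W(A\otimes 1)W^*$; this is a proper subalgebra whenever $B\neq\C$, contradicting your condition (i) that $\phi_n(A)$ eventually almost contains a dense sequence of $A\otimes B$. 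The correct Cauchy condition is not that the connecting unitary $z_n=W_{n+1}W_n^*$ be close to $1$, but that it approximately commute with $\phi_n$ of the current finite subset of $A$ and with $\{y_k\}_{k\leq n}$; then $\phi_{n+1}\approx\phi_n$ pointwise while the approximate ranges grow, and $\psi$ can be surjective. (The cocycle corrections $w^{(n)}_g$ do converge in norm, because $z_n$ is approximately $(\alpha_g\otimes\beta_g)$-invariant, so that part of your bookkeeping is consistent.)

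A second, related point: once norm-closeness of $W_{n+1}W_n^*$ to $1$ is dropped, membership of $W_n$ in $U(A\otimes B)_0$ is no longer enough to conclude that $\psi$ is asymptotically unitarily equivalent to $\iota$. You need continuous paths from $1$ to each connecting unitary along which the approximate commutation with $A\otimes 1$ holds uniformly in the path parameter; this is exactly the first condition the paper imposes on its unitaries $\tilde w_n(t)$, and it is available here because the image of all of $U(B\otimes B)_0$ under the natural homomorphism $b\otimes b'\mapsto(1_A\otimes b)(\pi(b')\otimes 1_B)$ into $(A\otimes B)^\omega$ commutes with $A\otimes 1$, so a path from $1$ to $v_n$ transports to an approximately $(A\otimes 1)$-central path. (Incidentally, your map ``$\id_A\otimes\pi$'' applied to $v_n\in U(B\otimes B)_0$ does not typecheck; the homomorphism you want is the one just written.) With these two corrections your argument coincides with the paper's proof.
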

\begin{proof}
In a similar fashion to \cite[Theorem 7.2.2]{Ro_text}, 
we can find a sequence $(w_n)_n$ of unitaries in $A\otimes B$ 
such that the following are satisfied. 
\begin{itemize}
\item There exist continuous maps $\tilde w_n:[0,1]\to U(A\otimes B)$ 
such that $\tilde w_n(0)=1$, $\tilde w_n(1)=w_n$ and 
\[
\lim_{n\to\infty}\sup_{t\in[0,1]}\lVert[\tilde w_n(t),a\otimes1]\rVert=0
\quad\forall a\in A. 
\]
\item The distance from $w_n^*cw_n$ to $A\otimes\C$ tends to zero 
as $n\to\infty$ for any $c\in A\otimes B$. 
\item $(\alpha_g\otimes\beta_g)(w_n)-w_n$ tends to zero as $n\to\infty$ 
for every $g\in G$. 
\end{itemize}
Then the same argument as \cite[Proposition 7.2.1]{Ro_text} yields 
the desired cocycle conjugacy. 
\end{proof}

\begin{lemma}[{\cite[Lemma 6.2]{IM10Adv}}]
Let $G$ be a countable infinite discrete amenable group and 
let $\{s_g\}_{g\in G}$ be the generator of 
the Cuntz algebra $\mathcal{O}_\infty$. 
Define an action $\beta:G\curvearrowright\mathcal{O}_\infty$ 
by $\beta_g(s_h)=s_{gh}$. 
For any outer cocycle action $(\alpha,u)$ of $G$ 
on a unital Kirchberg algebra $A$, 
there exists a unital homomorphism $\pi:\mathcal{O}_\infty\to A_\omega$ 
such that $\pi\circ\beta_g=\alpha_g\circ\pi$ for all $g\in G$. 
\end{lemma}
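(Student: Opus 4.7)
The plan is to construct $\pi$ by producing a family of isometries $(S_g)_{g\in G}\subset A_\omega$ satisfying the Cuntz-type relations $S_g^*S_h=\delta_{g,h}$ together with the equivariance $\alpha_g(S_h)=S_{gh}$; the universality of $\mathcal{O}_\infty$ then yields the required unital equivariant homomorphism via $\pi(s_g):=S_g$, and unitality is automatic from $S_g^*S_g=1$. Since the equivariance forces $S_g=\alpha_g(S_1)$, the task reduces to exhibiting a single isometry $S:=S_1\in A_\omega$ with $S^*S=1$ and $S^*\alpha_g(S)=0$ for every $g\ne 1$; one then recovers the full Cuntz orthogonality via $\alpha_g(S)^*\alpha_h(S)=\alpha_g(S^*\alpha_{g^{-1}h}(S))=0$ for $g\ne h$.

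Because $u(g,h)\in A$ commutes with every element of $A_\omega$, the cocycle relation degenerates on $A_\omega$ and $\alpha$ descends to a genuine action of $G$ there (as noted in Definition 2.1); this action is outer by \cite[Lemma 2]{Na00ETDS}. Being an outer action of the countable infinite amenable group $G$ on the central sequence algebra of a Kirchberg algebra, $\alpha$ admits Ornstein--Weiss Rohlin towers: for every Følner set $F\subset G$ one can find a projection $p_F\in A_\omega$ whose $F$-translates $\{\alpha_g(p_F)\}_{g\in F}$ are pairwise orthogonal (and in fact sum to $1$, though only the pairwise orthogonality is needed). A diagonal reindexation inside $A_\omega$ along a Følner exhaustion $F_n\nearrow G$, legitimized by the separability of $A$ and the countability of the orthogonality conditions, then produces a single projection $p\in A_\omega$ with $p\cdot\alpha_g(p)=0$ for all $g\ne 1$.

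To upgrade $p$ to the desired isometry $S$, one uses the $\mathcal{O}_\infty$-stability $A\cong A\otimes\mathcal{O}_\infty$, which supplies a unital embedding $\mathcal{O}_\infty\hookrightarrow A_\omega$ commuting with $A$ and, through it, an abundance of isometries in $A_\omega$. Combining this with a reindexation argument allows us to arrange the Rohlin projection $p$ to be Murray--von Neumann equivalent to the unit in $A_\omega$, yielding an isometry $S\in A_\omega$ with $SS^*=p$ and $S^*S=1$. Using $S^*p=S^*$ and $\alpha_g(p)\alpha_g(S)=\alpha_g(S)$ together with $p\alpha_g(p)=0$ for $g\ne 1$, one computes
\[
S^*\alpha_g(S)=(S^*p)\bigl(\alpha_g(p)\alpha_g(S)\bigr)=S^*\cdot 0\cdot\alpha_g(S)=0\qquad(g\ne 1),
\]
which is precisely the property needed for $\pi(s_g):=\alpha_g(S)$ to define the desired equivariant unital homomorphism.

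The main technical hurdle is twofold: the Ornstein--Weiss Rohlin lemma for outer amenable-group actions on the central sequence algebra of a Kirchberg algebra (standard in this setting and used pervasively in \cite{IM10Adv}), and the arrangement of the Rohlin projection in the Murray--von Neumann class of the unit, for which one must carefully combine the Rokhlin tower with an independent embedded $\mathcal{O}_\infty\subset A_\omega$ coming from $\mathcal{O}_\infty$-stability. Once these two ingredients are in hand, the verification of the Cuntz relations is purely formal and the equivariance $\pi\circ\beta_g=\alpha_g\circ\pi$ is immediate from the construction.
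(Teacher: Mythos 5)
The paper does not reprove this lemma; it simply cites \cite[Lemma 6.2]{IM10Adv}, and your reduction is the same one used there: by universality of $\mathcal{O}_\infty$ and the fact that $(\alpha,u)$ induces a genuine action of $G$ on $A_\omega$, everything comes down to producing a single isometry $S\in A_\omega$ with $S^*S=1$ and $S^*\alpha_g(S)=0$ for $g\neq 1$, which you extract from a nonzero projection $p\in A_\omega$ whose $G$-translates are pairwise orthogonal. The skeleton, the formal verification of the Cuntz relations, and the equivariance check are all correct.

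Two of your supporting claims are off, though neither is fatal to the argument. First, the projection $p$ has nothing to do with Ornstein--Weiss or F{\o}lner sets: a Rokhlin partition of unity $\sum_{g\in F}\alpha_g(p_F)=1$ for a general amenable group acting on $A_\omega$ is \emph{not} a standard available fact (it is close to the hard content of the equivariant absorption theorems), and fortunately you do not need it. What you do need --- for each finite $F\subset G\setminus\{1\}$ a nonzero projection $e\in A_\omega$ with $e\,\alpha_g(e)=0$ for all $g\in F$ --- follows from Kishimoto's lemma applied to the finitely many outer automorphisms $\alpha_g$ of the purely infinite simple algebra $A_\omega$; note that outerness of $\alpha_g$ on $A_\omega$ itself requires a word (non-triviality there is \cite[Proposition 3.5]{IM10Adv}-type input, after which one quotes \cite[Lemma 2]{Na00ETDS}), and amenability of $G$ plays no role. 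The reindexation to all of $G$ is then as you describe. Second, the isometry step is much simpler than your detour through an embedded copy of $\mathcal{O}_\infty$: since $A_\omega$ is purely infinite and simple, $1\precsim p$ for any nonzero projection $p$, so there is an isometry $S$ with $SS^*\le p$; this already gives $pS=S$ and hence $S^*\alpha_g(S)=S^*p\,\alpha_g(p)\,\alpha_g(S)=0$. Your stronger claim that $p$ can be ``arranged'' to be Murray--von Neumann equivalent to $1$ would require controlling the class of $p$ in $K_0(A_\omega)$ and is not justified by reindexation alone --- but it is also unnecessary.
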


\begin{theorem}\label{Oinfty_absorb}
Let $G$ be a poly-$\Z$ group and 
let $(\alpha,u):G\curvearrowright A$ be an outer cocycle action 
on a unital Kirchberg algebra $A$. 
Let $\beta:G\curvearrowright\mathcal{O}_\infty$ be an outer action. 
Then $(\alpha,u)$ is cocycle conjugate to 
$(\alpha\otimes\beta,u\otimes1):G\curvearrowright A\otimes\mathcal{O}_\infty$ 
via an isomorphism $\psi:A\to A\otimes\mathcal{O}_\infty$ 
which is asymptotically unitarily equivalent to 
the embedding $A\ni a\mapsto a\otimes1\in A\otimes\mathcal{O}_\infty$. 
\end{theorem}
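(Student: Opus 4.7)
The plan is to apply the McDuff-type Theorem \ref{McDuff} with the cocycle action $(\alpha,u):G\curvearrowright A$ and the action $\beta:G\curvearrowright B=\mathcal{O}_\infty$, which will directly produce the desired cocycle conjugacy $\psi:A\to A\otimes\mathcal{O}_\infty$ asymptotically unitarily equivalent to the canonical embedding. The entire task therefore reduces to verifying the two hypotheses of Theorem \ref{McDuff}.

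For hypothesis (1), I would first reduce to the shift action $\beta_0$ of Lemma 4.10 (the one just recited). By Theorem \ref{Oinfty_unique}, any two outer actions of the poly-$\Z$ group $G$ on $\mathcal{O}_\infty$ are (very strongly) cocycle conjugate, say via an isomorphism $\theta:\mathcal{O}_\infty\to\mathcal{O}_\infty$ and a $1$-cocycle $(w_g)$. One checks routinely that the map $\id_A\otimes\theta$ conjugates $(\alpha\otimes\beta_0,u\otimes 1)$ to a cocycle perturbation of $(\alpha\otimes\beta,u\otimes 1)$ and preserves the canonical embedding $a\mapsto a\otimes 1$; thus it suffices to prove the theorem for $\beta=\beta_0$. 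Lemma 4.10 then directly yields a unital homomorphism $\pi:\mathcal{O}_\infty\to A_\omega$ with $\pi\circ\beta_{0,g}=\alpha_g\circ\pi$.

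For hypothesis (2), I need unitaries $v_n\in U(\mathcal{O}_\infty\otimes\mathcal{O}_\infty)_0$ asymptotically implementing the flip while satisfying $(\beta_g\otimes\beta_g)(v_n)-v_n\to 0$ for all $g\in G$. The crucial input is Corollary \ref{embedtoAflat} applied to $(\mathcal{O}_\infty,\beta)$, whose proof constructs, for each finite $F\subset G$ and $\ep>0$, a homotopy $(\rho_t)_{t\in[0,1]}$ in $\Aut(\mathcal{O}_\infty\otimes\mathcal{O}_\infty)$ from $\id$ to the flip $\rho$, almost commuting with $\beta_g\otimes\beta_g$ for $g\in F$. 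Since $\mathcal{O}_\infty\otimes\mathcal{O}_\infty\cong\mathcal{O}_\infty$ and both endpoints of the homotopy are $KK$-trivial, I would lift $\rho_1=\rho$ to an approximately $(\beta\otimes\beta)$-invariant unitary by means of an equivariant version of Phillips's theorem (modelled on \cite[Theorem 4.8]{IM10Adv}); the resulting unitary lies in $U_0$ because $K_1(\mathcal{O}_\infty)=0$. Doing this for an exhausting sequence of pairs $(F_n,1/n)$ yields the required sequence $(v_n)_n$.

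The main obstacle is hypothesis (2): Corollary \ref{embedtoAflat} gives approximate commutation at the \emph{automorphism} level, but the McDuff theorem demands approximate invariance of the \emph{unitaries themselves}. The standard way to bridge this gap is to interpret the flip as an automorphism of the (Kirchberg-algebra) crossed product $\mathcal{O}_\infty\otimes\mathcal{O}_\infty\rtimes_{\beta\otimes\beta}G$ commuting with the dual coaction, whose $KK$-class is trivial; then \cite[Theorem 4.8]{IM10Adv} promotes asymptotic innerness over the crossed product to asymptotic innerness via unitaries of the base algebra, which are automatically approximately $(\beta\otimes\beta)$-invariant. Once hypothesis (2) is in hand, Theorem \ref{McDuff} produces $\psi$ with the required properties, completing the proof.
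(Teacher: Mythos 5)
Your proposal follows essentially the same route as the paper's proof: reduce to the model shift action via Theorem \ref{Oinfty_unique}, obtain hypothesis (1) of Theorem \ref{McDuff} from the lemma preceding the theorem, and obtain hypothesis (2) by the crossed-product/dual-coaction argument of \cite[Lemma 6.1]{IM10Adv}. The one ingredient you leave implicit is that this last step --- promoting asymptotic innerness of the extended flip in the crossed product to innerness via approximately $(\beta\otimes\beta)$-invariant unitaries of the base algebra --- rests on the approximate representability of the diagonal action $\beta\otimes\beta$, which the paper extracts from Theorem \ref{Oinfty_unique}; once that is noted, your detour through Corollary \ref{embedtoAflat} is unnecessary.
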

\begin{proof}
We verify the hypotheses of Theorem \ref{McDuff}. 
By Theorem \ref{Oinfty_unique}, 
we may assume that $\beta$ equals the action defined in the lemma above. 
Then, condition (1) of Theorem \ref{McDuff} immediately follows. 
Theorem \ref{Oinfty_unique} also tells us that the diagonal action 
$\beta\otimes\beta:G\curvearrowright
\mathcal{O}_\infty\otimes\mathcal{O}_\infty$ 
is approximately representable, and so 
in the same way as \cite[Lemma 6.1]{IM10Adv} 
we obtain condition (2) of Theorem \ref{McDuff}, 
thus completing the proof. 
\end{proof}

Notice that 
a strengthened version of the theorem above has been obtained 
by G. Szab\'o \cite{Sz1807arXiv}. 
See Remark \ref{workofSzabo}. 

\begin{remark}
One may appeal to Szab\'o's result \cite[Corollary 3.7]{Sz18CMP} 
in order to obtain the $\mathcal{O}_\infty$-absorption theorem above. 
Namely, by Theorem \ref{Oinfty_unique}, 
any outer action $\beta:G\curvearrowright\mathcal{O}_\infty$ is 
(very strongly) cocycle conjugate to 
the model action $\gamma:G\curvearrowright\mathcal{O}_\infty$, 
which was constructed in Theorem \ref{exist_asymprepre}. 
It follows from \cite[Corollary 3.7]{Sz18CMP} that 
any outer cocycle action of $G$ on a unital Kirchberg algebra $A$ 
absorbs $\gamma$, and hence $\beta$. 
See also Remark \ref{modelaction}. 
\end{remark}

\begin{remark}\label{Oinfty_absorb'}
Let $G$ be a poly-$\Z$ group and 
let $(\alpha,u):G\curvearrowright A$ be an outer cocycle action 
on a unital Kirchberg algebra $A$. 
Let $\beta:G\curvearrowright\mathcal{O}_\infty$ be an outer action. 
It follows from Theorem \ref{Oinfty_absorb} that 
$(\alpha,u)$ is also cocycle conjugate to 
$(\alpha\otimes\beta\otimes\id,u\otimes1\otimes1):G\curvearrowright 
A\otimes\mathcal{O}_\infty\otimes\mathcal{O}_\infty$, 
and hence is cocycle conjugate to 
$(\alpha\otimes\id,u\otimes1):G\curvearrowright A\otimes\mathcal{O}_\infty$. 
It is easy to see that 
the isomorphism $A\to A\otimes\mathcal{O}_\infty$ can be taken 
to be asymptotically unitarily equivalent to 
the embedding $a\mapsto a\otimes1$. 
Szab\'o proved that the same statement holds true 
for any countable discrete amenable group $G$ 
(\cite[Theorem 3.4]{Sz18CMP}). 
\end{remark}

\subsection{Actions on algebras in the Cuntz standard form}

In this subsection, we prove that 
any outer cocycle action of a poly-$\Z$ group 
on a unital Kirchberg algebra in the Cuntz standard form is 
cocycle conjugate to a genuine action (Theorem \ref{Cuntzstandard1}). 

Let $A$ be a unital $C^*$-algebra such that the following holds: 
two non-zero projections in $A$ are Murray-von Neumann equivalent 
whenever they have the same $K_0$-class. 
Let $(\alpha,u):G\curvearrowright A$ be 
a cocycle action of a countable discrete group $G$. 
Let $p\in A\setminus\{0\}$ be a projection such that $K_0(p)\in K_0(A)^G$. 
For each $g\in G$, we choose a partial isometry $x_g\in A$ 
so that $x_gx_g^*=p$ and $x_g^*x_g=\alpha_g(p)$. 
Set $\alpha^x_g(a)=x_g\alpha_g(a)x_g^*$ for $a\in pAp$ and 
$u^x(g,h)=x_g\alpha_g(x_h)u(g,h)x_{gh}^*$. 
Then $(\alpha^x,u^x)$ is a cocycle action of $G$ on $pAp$. 
If $(y_g)_g$ is also a family of partial isometries satisfying 
$y_gy_g^*=p$ and $y_g^*y_g=\alpha_g(p)$, then 
$(\alpha^y,u^y)$ is the perturbation of $(\alpha^x,u^x)$ by $(y_gx_g^*)_g$. 
Thus, the equivalence class of $(\alpha^x,u^x)$ does not 
depend on the choice of $(x_g)_g$. 
We write one of those cocycle actions $(\alpha,u)^p$. 

It is routine to check the following. 

\begin{lemma}\label{reduction}
Let $G$ be a countable discrete group and 
let $A$ be a unital Kirchberg algebra. 
\begin{enumerate}
\item Let $(\alpha,u):G\curvearrowright A$ be a cocycle action and 
let $p,q\in A\setminus\{0\}$ be projections 
such that $K_0(p),K_0(q)\in K_0(A)^G$. 
If $K_0(p)=K_0(q)$ in $K_0(A)$, then 
$(\alpha,u)^p$ is cocycle conjugate to $(\alpha,u)^q$. 
\item Suppose that two cocycle actions $(\alpha,u)$ and $(\beta,v)$ 
of $G$ on $A$ are $KK$-trivially cocycle conjugate. 
Let $p\in A\setminus\{0\}$ be a projection such that $K_0(p)\in K_0(A)^G$. 
Then $(\alpha,u)^p$ and $(\beta,v)^p$ are $KK$-trivially cocycle conjugate. 
\end{enumerate}
\end{lemma}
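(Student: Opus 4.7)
The plan rests on two ingredients: the Murray--von Neumann equivalence hypothesis on $A$, which lets us transport corner data by partial isometries, and the fact that the inclusion of a full corner is a $KK$-equivalence, which will be needed only for the $KK$-triviality statement in (2). Both parts then reduce to bookkeeping with partial isometries plus, in (2), a single $KK$-theoretic observation.

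For part (1), choose $w\in A$ with $ww^*=p$ and $w^*w=q$, and fix families $(x_g)_g,(y_g)_g$ implementing $(\alpha,u)^p$ and $(\alpha,u)^q$. Define $\theta\colon qAq\to pAp$ by $\theta(a)=waw^*$ and set $z_g:=wy_g\alpha_g(w^*)x_g^*$. Using only the identities $x_g^*x_g=\alpha_g(p)$, $y_g^*y_g=\alpha_g(q)$, $w^*w=q$, $ww^*=p$, a direct manipulation shows $z_g\in U(pAp)$ and that $(\theta,(z_g)_g)$ realizes the cocycle conjugacy
\[
\theta\circ\alpha_g^y\circ\theta^{-1}=\Ad z_g\circ\alpha_g^x,\qquad
\theta(u^y(g,h))=z_g\alpha_g^x(z_h)u^x(g,h)z_{gh}^*.
\]
The only rewriting that needs a moment's thought is the second identity, where the $2$-cocycle relation $\alpha_g\circ\alpha_h=\Ad u(g,h)\circ\alpha_{gh}$ is used to regroup the factor $\alpha_g(\alpha_h(w^*))$; everything else collapses after using $w^*p=w^*$ and $\alpha_g(q)\alpha_g(y_h)=\alpha_g(y_h)$.

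For part (2), let $\theta\colon A\to A$ with $KK(\theta)=1_A$ and $(w_g)_g$ realize the given cocycle conjugacy from $(\alpha,u)$ to $(\beta,v)$. Since $KK(\theta)=1_A$ we have $K_0(\theta(p))=K_0(p)$, so $K_0(\theta(p))\in K_0(A)^G$ as well. Restriction of $\theta$ yields an isomorphism $pAp\to\theta(p)A\theta(p)$ carrying $(\alpha,u)^p$ to $(\beta,v)^{\theta(p)}$ (up to the unitaries coming from $(w_g)_g$), and composing with the cocycle conjugacy $\theta(p)A\theta(p)\to pAp$ from part (1) gives a cocycle conjugacy of $(\alpha,u)^p$ and $(\beta,v)^p$, implemented concretely by $\tilde\theta(a):=r\theta(a)r^*$ for any partial isometry $r\in A$ with $rr^*=p$, $r^*r=\theta(p)$, together with suitable unitaries in $pAp$. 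The main---though still mild---obstacle is verifying $KK(\tilde\theta)=1_{pAp}$. For this I would use that the corner embedding $\iota_p\colon pAp\hookrightarrow A$ is a $KK$-equivalence (as $p$ is full in the simple algebra $A$). After stabilizing by $\K$, the partial isometry $r$ extends to a unitary $U\in M(A\otimes\K)$, and the identity $\iota_p(\tilde\theta(a))=r\theta(a)r^*$ upgrades in $A\otimes\K$ to
\[
(\iota_p\otimes\id_\K)\circ(\tilde\theta\otimes\id_\K)
=\Ad U\circ(\theta\otimes\id_\K)\circ(\iota_p\otimes\id_\K).
\]
Passing to $KK$ and using $KK(\theta)=1_A$ together with $KK(\Ad U)=1$, one cancels the invertible factor $KK(\iota_p\otimes\id_\K)$ to conclude $KK(\tilde\theta\otimes\id_\K)=1$ and hence $KK(\tilde\theta)=1_{pAp}$.
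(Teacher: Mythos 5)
Your proposal is correct and follows essentially the same route as the paper: both parts reduce to transporting the defining partial isometries by a Murray--von Neumann equivalence, and your unitary $z_g=wy_g\alpha_g(w^*)x_g^*$ in (1) is precisely the pattern of the paper's $z_g=w\gamma(y_g)c_g\alpha_g(w^*)x_g^*$ in (2), which does in a single computation what you factor through part (1). The only real difference is that the paper asserts $KK(\tilde\gamma)=1_{pAp}$ as clear, whereas you supply the corner-embedding/$KK$-equivalence justification explicitly.
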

\begin{proof}
(1) is immediate from the definition. 
Let us prove (2). 
There exist $\gamma\in\Aut(A)$ and a family $(c_g)_g$ of unitaries in $A$ 
such that 
\[
KK(\gamma)=1_A,\quad 
\gamma\circ\beta_g\circ\gamma^{-1}=\Ad c_g\circ\alpha_g\quad\text{and}\quad 
c_g\alpha_g(c_h)u(g,h)c_{gh}^*=\gamma(v(g,h))
\]
for all $g,h\in G$. 
For $g\in G$, we choose partial isometries $x_g,y_g\in A$ so that 
$x_gx_g^*=p=y_gy_g^*$, $x_g^*x_g=\alpha_g(p)$ and $y_g^*y_g=\beta_g(p)$. 
Also, take a partial isometry $w\in A$ 
such that $ww^*=p$ and $w^*w=\gamma(p)$. 
Define $\tilde\gamma\in\Aut(pAp)$ by $\tilde\gamma(a)=w\gamma(a)w^*$. 
Clearly $KK(\tilde\gamma)=1_{pAp}$ in $KK(pAp,pAp)$. 
For $a\in pAp$, we have 
\begin{align*}
(\tilde\gamma\circ\beta^y_g\circ\tilde\gamma^{-1})(a)
&=w\gamma(y_g\beta_g(\gamma^{-1}(w^*aw))y_g^*)w^*\\
&=w\gamma(y_g)c_g\alpha_g(w^*aw)c_g^*\gamma(y_g^*)w^*\\
&=(\Ad(w\gamma(y_g)c_g\alpha_g(w^*)x_g^*)\circ\alpha^x_g)(a), 
\end{align*}
where $z_g=w\gamma(y_g)c_g\alpha_g(w^*)x_g^*$ is a unitary in $pAp$. 
Besides, for every $g,h\in G$, we can check 
\begin{align*}
z_g\alpha^x_g(z_h)u^x(g,h)z_{gh}^*
&=w\gamma(y_g)c_g\alpha_g(w^*)x_g^*
\cdot x_g\alpha_g(w\gamma(y_h)c_h\alpha_h(w^*)x_h^*)x_g^*\\
&\qquad\cdot x_g\alpha_g(x_h)u(g,h)x_{gh}^*
\cdot x_{gh}\alpha_{gh}(w)c_{gh}^*\gamma(y_{gh}^*)w^*\\
&=w\gamma(y_g)c_g\alpha_g(\gamma(y_h)c_h\alpha_h(w^*))
u(g,h)\alpha_{gh}(w)c_{gh}^*\gamma(y_{gh}^*)w^*\\
&=w\gamma(y_g)\gamma(\beta_g(y_h))c_g\alpha_g(c_h)u(g,h)
\alpha_{gh}(\gamma(p))c_{gh}^*\gamma(y_{gh}^*)w^*\\
&=w\gamma(y_g\beta_g(y_h))c_g\alpha_g(c_h)u(g,h)c_{gh}^*
\gamma(y_{gh}^*)w^*\\
&=w\gamma(y_g\beta_g(y_h)v(g,h)y_{gh}^*)w\\
&=\tilde\gamma(v^y(g,h)). 
\end{align*}
It follows that 
$(\alpha^x,u^x)$ is $KK$-trivially cocycle conjugate to $(\beta^y,v^y)$. 
\end{proof}

We denote by $\mathcal{O}$ the Kirchberg algebra 
which is strongly Morita equivalent to $\mathcal{O}_\infty$ 
and is in the Cuntz standard form. 

The following theorem is a generalization of \cite[Corollary 7.12]{IM10Adv}. 

\begin{theorem}\label{Cuntzstandard1}
Let $G$ be a poly-$\Z$ group. 
Let $(\alpha,u):G\curvearrowright A$ be an outer cocycle action of $G$ 
on a unital Kirchberg algebra $A$ in the Cuntz standard form. 
Then $(\alpha,u)$ is cocycle conjugate to a genuine action. 
\end{theorem}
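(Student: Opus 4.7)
The plan is to trivialize the cocycle $u$ by combining $\mathcal{O}_\infty$-absorption with the special property of the Cuntz standard form that $[1_A]=0$ in $K_0(A)$. First I would apply Remark \ref{Oinfty_absorb'} to replace $(\alpha,u)$ by $(\alpha\otimes\id,u\otimes 1)$ on $A\otimes\mathcal{O}_\infty\cong A$, where the action on the extra $\mathcal{O}_\infty$-factor is trivial. The Cuntz standard form is preserved under this identification since $[1_A\otimes 1]=[1_A]\cdot[1_{\mathcal{O}_\infty}]=0$. Now we have a commuting copy of $\mathcal{O}_\infty$ fixed pointwise by the action, providing a rich source of unitaries commuting with $A\otimes 1$.

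The goal is to find a family of unitaries $(v_g)_{g\in G}$ in $U(A\otimes\mathcal{O}_\infty)$ satisfying the trivialization identity
\[
v_g\,(\alpha_g\otimes\id)(v_h)\,(u(g,h)\otimes 1)\,v_{gh}^{*}=1\quad\text{for all }g,h\in G,
\]
so that the perturbation of $(\alpha\otimes\id,u\otimes 1)$ by $(v_g)$ is a genuine action on $A\otimes\mathcal{O}_\infty$. I would construct $(v_g)$ by induction on the Hirsch length of $G$. The base case (trivial group) is immediate. For the inductive step, let $N\trianglelefteq G$ with $G/N\cong\Z$ and fix $\xi\in G$ whose image generates $G/N$. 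The inductive hypothesis applied to $(\alpha|_N,u|_N):N\curvearrowright A$ allows us to assume, after a cocycle perturbation that extends to $G$, that $u(g,h)=1$ for all $g,h\in N$, so $\alpha|_N$ is a genuine outer action. The remaining obstruction to trivializing $u$ is then determined by the unitaries $u(\xi,g)$, $u(g,\xi)$, and $u(\xi^i,\xi^j)$ for $g\in N$ and $i,j\in\Z$, tied together by the $2$-cocycle identity.

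The main obstacle is this inductive step: after trivializing $u|_{N\times N}$, we must find $v_\xi\in U(A\otimes\mathcal{O}_\infty)$ such that $\Ad v_\xi\circ(\alpha_\xi\otimes\id)$ together with $\alpha|_N$ extends to a genuine action of $G$, which reduces to solving a coboundary equation for $u(\xi,\cdot)$ over the fixed point algebra $(A\otimes\mathcal{O}_\infty)^{\alpha|_N}$. The Cuntz standard form is crucial here: the property $[1_A]=0$ together with the fixed commuting copy of $\mathcal{O}_\infty$ yields, via the isomorphism $M_n(A)\cong A$, sufficiently many isometries inside the fixed point algebra to make the relevant obstruction class in $H^2(\Z,\,\cdot\,)$ vanish. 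The detailed construction likely follows the Evans-Kishimoto intertwining scheme employed in \cite[Corollary 7.12]{IM10Adv} for the case $G=\Z^2$, suitably adapted to the poly-$\Z$ inductive framework of the present paper and using the asymptotic representability of outer actions on $\mathcal{O}_\infty$ established in Theorem \ref{Oinfty_unique}.
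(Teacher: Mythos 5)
Your first reduction---replacing $(\alpha,u)$ by $(\alpha\otimes\id,u\otimes1)$ on $A\otimes\mathcal{O}_\infty$ via Remark \ref{Oinfty_absorb'}---matches the paper's last step. The gap is in the core of your argument, the induction on the Hirsch length that is supposed to trivialize the $2$-cocycle. After trivializing $u$ on $N\times N$ and normalizing $u(g\xi^l,\xi^m)=1$, the residual problem is not a coboundary equation in $H^2(\Z,\cdot)$ over the fixed-point algebra: by Lemma \ref{check} the leftover data is the $\alpha|N$-cocycle $\check u_g=u(\xi,\xi^{-1}g\xi)u(g,\xi)^*$, and one must exhibit this $1$-cocycle as a (limit of) coboundaries. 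The obstructions to that are the class of $g\mapsto K_1(\check u_g)$ in $H^1(N,K_1(A))$ and $\kappa^2(\alpha|N,\check u)$ in $H^2(N,K_0(A))$ (with higher analogues as the Hirsch length grows), and these do not vanish for a general Kirchberg algebra; indeed Example \ref{On''} gives cocycle actions of $\Z^3$ on $\mathcal{O}_\infty$ that are not cocycle conjugate to genuine actions, so some precise use of $[1_A]=0$ is indispensable. Your proposal never makes that use: ``sufficiently many isometries via $M_n(A)\cong A$ inside the fixed-point algebra'' does not touch these $K$-theoretic obstruction classes.

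The paper's proof avoids the induction entirely and uses the standard form in exactly one place. First, since $\mathcal{O}$ contains a unital copy of $\mathcal{O}_2$, the algebra $A\otimes\mathcal{O}$ contains the $G$-invariant unital subalgebra $A\otimes\mathcal{O}_2\cong\mathcal{O}_2$, which contains all the unitaries $u(g,h)\otimes1$; Theorem \ref{O2_unique} trivializes the restricted cocycle action there, and the same perturbing unitaries trivialize the cocycle on all of $A\otimes\mathcal{O}$. Second, Lemma \ref{reduction} (1) identifies $(\alpha\otimes\id,u\otimes1)$ on $A\otimes\mathcal{O}_\infty$ with its reduction by $1\otimes p$ for a nonzero projection $p\in\mathcal{O}_\infty$ with $K_0(p)=0$, precisely because $K_0(1)=0=K_0(1\otimes p)$ when $A$ is in the Cuntz standard form; this reduction lives on $A\otimes p\mathcal{O}_\infty p\cong A\otimes\mathcal{O}$. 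Chaining these with the $\mathcal{O}_\infty$-absorption gives the theorem. If you insist on your inductive outline, you would have to prove that all the higher obstruction classes vanish when $K_0(1_A)=0$, which in effect means showing that the relevant reduction maps (such as $1_*$ and $h^{0,3}_{(\alpha,u)}$) are homomorphisms in $K_0(p)$ being evaluated at $0$---that is, re-deriving the corner argument the paper uses directly.
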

\begin{proof}
Since $\mathcal{O}$ contains a unital copy of $\mathcal{O}_2$, 
from Theorem \ref{O2_unique}, 
$(\alpha\otimes\id,u\otimes1):G\curvearrowright A\otimes\mathcal{O}$ is 
cocycle conjugate to a genuine action. 
Let $p\in\mathcal{O}_\infty$ be a non-zero projection such that $K_0(p)=0$. 
By Lemma \ref{reduction} (1), 
$(\alpha\otimes\id,u\otimes1):G\curvearrowright A\otimes\mathcal{O}_\infty$ 
is cocycle conjugate to 
$(\alpha\otimes\id,u\otimes1):
G\curvearrowright A\otimes p\mathcal{O}_\infty p\cong A\otimes\mathcal{O}$, 
because $K_0(1)=0=K_0(1\otimes p)$ in $K_0(A\otimes\mathcal{O}_\infty)$. 
By Remark \ref{Oinfty_absorb'}, $(\alpha,u)$ is cocycle conjugate to 
$(\alpha\otimes\id,u\otimes1):G\curvearrowright A\otimes\mathcal{O}_\infty$. 
Therefore $(\alpha,u)$ is cocycle conjugate to a genuine action. 
\end{proof}

\begin{theorem}\label{Cuntzstandard2}
Let $(\alpha,u)$ and $(\beta,v)$ be outer cocycle actions 
of a poly-$\Z$ group $G$ on a unital Kirchberg algebra $A$. 
The following conditions are equivalent. 
\begin{enumerate}
\item $(\alpha,u)$ and $(\beta,v)$ are $KK$-trivially cocycle conjugate. 
\item $(\alpha\otimes\id,u\otimes1):G\curvearrowright A\otimes\mathcal{O}$ 
and $(\beta\otimes\id,v\otimes1):G\curvearrowright A\otimes\mathcal{O}$ are 
$KK$-trivially cocycle conjugate. 
\end{enumerate}
\end{theorem}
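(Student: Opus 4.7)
The plan is to handle the nontrivial implication (2) $\Rightarrow$ (1); the reverse (1) $\Rightarrow$ (2) is immediate, because a $KK$-trivial cocycle conjugacy $\theta:A\to A$ between $(\alpha,u)$ and $(\beta,v)$ induces $\theta\otimes\id_{\mathcal{O}}:A\otimes\mathcal{O}\to A\otimes\mathcal{O}$, which still has trivial $KK$-class and implements cocycle conjugacy between the tensored actions. For (2) $\Rightarrow$ (1), the key observation is that since $\mathcal{O}$ is a unital Kirchberg algebra with $K_0(\mathcal{O})=\Z$ and $[1_{\mathcal{O}}]=0$, there exists a projection $p\in\mathcal{O}$ with $K_0(p)=1$. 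The corner $p\mathcal{O}p$ is then a unital Kirchberg algebra with $(K_0,[1],K_1)=(\Z,1,0)$, hence is isomorphic to $\mathcal{O}_\infty$ by Kirchberg--Phillips classification. The projection $1_A\otimes p\in A\otimes\mathcal{O}$ is literally fixed by both $\alpha\otimes\id$ and $\beta\otimes\id$, and its $K_0$-class corresponds to $[1_A]\in K_0(A)^G$ under the canonical identification $K_0(A\otimes\mathcal{O})\cong K_0(A)$.

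Next I would apply Lemma \ref{reduction}(2) to conclude that, starting from the assumed $KK$-trivial cocycle conjugacy of $(\alpha\otimes\id,u\otimes1)$ and $(\beta\otimes\id,v\otimes1)$ on $A\otimes\mathcal{O}$, reduction by the $G$-invariant projection $1_A\otimes p$ yields a $KK$-trivial cocycle conjugacy of the reduced actions on $(1_A\otimes p)(A\otimes\mathcal{O})(1_A\otimes p)=A\otimes p\mathcal{O}p\cong A\otimes\mathcal{O}_\infty$. Because the cutting projection is itself a fixed point of both actions, one may take $x_g=1_A\otimes p$ for all $g$ in the definition of the reduction, so the reduced cocycle actions are nothing but the restrictions of $(\alpha\otimes\id,u\otimes1)$ and $(\beta\otimes\id,v\otimes1)$ to $A\otimes p\mathcal{O}p$. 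Under the identification $p\mathcal{O}p\cong\mathcal{O}_\infty$ these become the standard $(\alpha\otimes\id,u\otimes1)$ and $(\beta\otimes\id,v\otimes1)$ on $A\otimes\mathcal{O}_\infty$.

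To close the argument, I would invoke Remark \ref{Oinfty_absorb'} on each side: $(\alpha,u)$ on $A$ is $KK$-trivially cocycle conjugate to $(\alpha\otimes\id,u\otimes1)$ on $A\otimes\mathcal{O}_\infty$, and likewise for $(\beta,v)$. Chaining the three $KK$-trivial cocycle conjugacies delivers the desired $KK$-trivial cocycle conjugacy between $(\alpha,u)$ and $(\beta,v)$ on $A$. There is no substantive technical obstacle here; the argument is a transparent combination of Lemma \ref{reduction}(2) with the $\mathcal{O}_\infty$-absorption of Remark \ref{Oinfty_absorb'}, unlocked by the availability of a projection $p\in\mathcal{O}$ with $K_0(p)=1$, which lets the Cuntz-standard-form algebra $\mathcal{O}$ be ``cut down'' to $\mathcal{O}_\infty$.
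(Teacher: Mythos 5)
Your proposal is correct and follows essentially the same route as the paper's own proof: cut down by the invariant projection $1\otimes p$ with $p\mathcal{O}p\cong\mathcal{O}_\infty$, apply Lemma \ref{reduction} (2), and then use the $\mathcal{O}_\infty$-absorption of Remark \ref{Oinfty_absorb'} to descend to $A$. The extra details you supply (identifying $p\mathcal{O}p$ with $\mathcal{O}_\infty$ via Kirchberg--Phillips and noting that the reduction can be taken with $x_g=1\otimes p$) are accurate and consistent with what the paper leaves implicit.
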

\begin{proof}
(1)$\Rightarrow$(2) is obvious. 
Let us assume (2). 
Take a projection $p\in\mathcal{O}$ 
such that $p\mathcal{O}p\cong\mathcal{O}_\infty$. 
It follows from Lemma \ref{reduction} (2) that 
$(\alpha\otimes\id,u\otimes1)^{1\otimes p}$ and 
$(\beta\otimes\id,v\otimes1)^{1\otimes p}$ are 
$KK$-trivially cocycle conjugate. 
Thus, 
$(\alpha\otimes\id,u\otimes1):G\curvearrowright A\otimes\mathcal{O}_\infty$ 
and $(\beta\otimes\id,v\otimes1):G\curvearrowright A\otimes\mathcal{O}_\infty$ 
are $KK$-trivially cocycle conjugate. 
By Remark \ref{Oinfty_absorb'}, we can conclude that 
$(\alpha,u)$ and $(\beta,v)$ are $KK$-trivially cocycle conjugate. 
\end{proof}

The theorems above say that 
classification of outer cocycle actions on $A$ 
up to $KK$-trivial cocycle conjugacy is exactly equivalent to 
classification of outer (genuine) actions on $A\otimes\mathcal{O}$ 
up to $KK$-trivial cocycle conjugacy.

\section{Stability}

The main purpose of this section is 
to show stability type properties for actions of poly-$\Z$ groups 
(Theorem \ref{H1H2stable}).

\subsection{Approximately central embedding of $\mathcal{O}_\infty$}

In this subsection, we introduce the notion of 
approximately central embedding of 
an action $\mu:G\curvearrowright\mathcal{O}_\infty$. 

\begin{definition}\label{defofAC}
Let $G$ be a countable discrete group and 
let $\mu:G\curvearrowright\mathcal{O}_\infty$ be an action. 
Let $(\alpha,u)$ be a cocycle action of $G$ 
on a unital (not-necessarily separable) $C^*$-algebra $A$. 
We say that $(\alpha,u)$ admits 
an approximately central embedding of $(\mathcal{O}_\infty,\mu)$ 
if for any separable subset $S\subset A$ there exists 
a unital homomorphism $\phi:\mathcal{O}_\infty\to A^\omega\cap S'$ 
such that $\phi\circ\mu_g=\alpha_g\circ\phi$ for any $g\in G$. 
We denote by $\AC(\mathcal{O}_\infty,\mu)$ 
the class of such cocycle actions. 
\end{definition}

\begin{lemma}\label{AC}
Let $\mu:G\curvearrowright\mathcal{O}_\infty$ be an outer action 
of a poly-$\Z$ group $G$ and 
let $(\alpha,u):G\curvearrowright A$ be a cocycle action 
on a unital $C^*$-algebra $A$. 
\begin{enumerate}
\item Let $\nu:G\curvearrowright\mathcal{O}_\infty$ be another outer action. 
Then, $(\alpha,u)$ is in $\AC(\mathcal{O}_\infty,\mu)$ 
if and only if $(\alpha,u)$ is in $\AC(\mathcal{O}_\infty,\nu)$. 
\item Let $(\beta,v):G\curvearrowright B$ be a cocycle action. 
If $(\alpha,u)$ is in $\AC(\mathcal{O}_\infty,\mu)$, 
then so is $(\alpha\otimes\beta,u\otimes v)$. 
\item If $A$ is a unital Kirchberg algebra and $(\alpha,u)$ is outer, 
then $(\alpha,u)$ is in $\AC(\mathcal{O}_\infty,\mu)$. 
\end{enumerate}
\end{lemma}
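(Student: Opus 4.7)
\textbf{Part (2)} is essentially formal. Given a separable $S\subset A\otimes B$, I choose separable subalgebras $A_0\subset A$ and $B_0\subset B$ so that $S$ is contained in the closed subalgebra generated by $A_0\otimes B_0$ (by approximating each element of $S$ by finite sums of elementary tensors). By hypothesis there is a unital $\mu$-equivariant homomorphism $\phi\colon\mathcal{O}_\infty\to A^\omega\cap A_0'$. Composing with the canonical inclusion $A^\omega\hookrightarrow(A\otimes B)^\omega$, $[a_n]\mapsto[a_n\otimes 1_B]$, gives a unital homomorphism whose image lies in $(A\otimes B)^\omega\cap(A_0\otimes B)'\subset(A\otimes B)^\omega\cap S'$, and which intertwines $\mu$ with $\alpha\otimes\beta$, since $(\alpha_g\otimes\beta_g)(a\otimes 1_B)=\alpha_g(a)\otimes 1_B$.

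\textbf{Part (1)} is the main content. By Theorem \ref{Oinfty_unique} together with Remark \ref{Oinfty_unique'}, the outer actions $\mu$ and $\nu$ on $\mathcal{O}_\infty$ are very strongly cocycle conjugate: there exist $\theta\in\Aut(\mathcal{O}_\infty)$, unitaries $(w_g)_{g\in G}$ in $\mathcal{O}_\infty$ satisfying $\theta\circ\mu_g=\Ad w_g\circ\nu_g\circ\theta$, and a continuous path $(v_t)_{t\in[0,\infty)}$ in $U(\mathcal{O}_\infty)$ with $w_g=\lim_{t\to\infty}v_t\nu_g(v_t^*)$. Given a $\mu$-equivariant unital $\phi\colon\mathcal{O}_\infty\to A^\omega\cap S'$, I set $\tilde\phi=\phi\circ\theta^{-1}$; a short computation using $\theta^{-1}\circ\nu_g=\Ad\theta^{-1}(w_g^*)\circ\mu_g\circ\theta^{-1}$ yields
\[
\tilde\phi\circ\nu_g=\Ad W_g\circ\alpha_g\circ\tilde\phi,\qquad W_g:=\phi(\theta^{-1}(w_g^*)).
\]
Writing $V_t=\theta^{-1}(v_t)$ and pushing the defining relation for $w_g$ through $\theta^{-1}$ one finds $\theta^{-1}(w_g^*)=\lim_{t\to\infty}V_t^*\mu_g(V_t)$, so
\[
W_g=\lim_{t\to\infty}\phi(V_t)^*\alpha_g(\phi(V_t))
\]
is an approximate $\alpha$-coboundary in $A^\omega\cap S'$. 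The final step is a standard diagonal/reindexation argument: choose $t_n\to\infty$ so that $\lVert V_{t_n}^*\mu_g(V_{t_n})-\theta^{-1}(w_g^*)\rVert<1/n$ for all $g$ in the first $n$ elements of an enumeration of $G$, and diagonalise representing sequences of the unitaries $\phi(V_{t_n})\in A^\omega\cap S'$ to produce a single unitary $X\in A^\omega\cap S'$ satisfying $X^*\alpha_g(X)=W_g$ for every $g\in G$. A direct check then shows that $\Ad X\circ\tilde\phi\colon\mathcal{O}_\infty\to A^\omega\cap S'$ is the desired $\nu$-equivariant unital embedding.

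\textbf{Part (3)} is a short consequence of (1) together with the lemma immediately preceding Theorem \ref{Oinfty_absorb}. If the Hirsch length of $G$ is zero the statement reduces to the existence of a unital embedding $\mathcal{O}_\infty\to A^\omega\cap S'$, which holds for any Kirchberg $A$ by $\mathcal{O}_\infty$-absorption. Otherwise $G$ is countable, infinite and amenable, so that lemma (i.e.\ \cite[Lemma 6.2]{IM10Adv}) produces a unital homomorphism $\pi\colon\mathcal{O}_\infty\to A_\omega$ intertwining $\alpha$ with the canonical shift action $\mu_0\colon G\curvearrowright\mathcal{O}_\infty$ defined by $\mu_0(s_h)=s_{gh}$. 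Since $A_\omega=A^\omega\cap A'\subset A^\omega\cap S'$ for every separable $S\subset A$, this shows $(\alpha,u)\in\AC(\mathcal{O}_\infty,\mu_0)$, and part (1) transfers membership to any outer $\mu$.

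\textbf{The hard step} is the reindexation in (1). Since $\phi$ already takes values in the ultrapower $A^\omega$, upgrading the approximate identity $W_g=\lim_t\phi(V_t)^*\alpha_g(\phi(V_t))$ to an honest equality $X^*\alpha_g(X)=W_g$ holding simultaneously for all $g\in G$ requires a careful diagonal extraction of representing sequences, or equivalently an appeal to countable saturation of $A^\omega\cap S'$. This is feasible because $G$ is countable and $S$ is separable.
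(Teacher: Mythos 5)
Your proof is correct, and parts (1) and (2) follow essentially the route the paper takes, just with the details written out: the paper disposes of (1) in one line ("$\mu$ and $\nu$ are cocycle conjugate with a cocycle arbitrarily close to $1$, hence the conclusion follows"), and your version makes explicit what that line is hiding, namely that Remark \ref{Oinfty_unique'} supplies a path $(v_t)$ exhibiting the cocycle as an asymptotic coboundary, and that a reindexation/countable-saturation argument in $A^\omega\cap S'$ is needed to convert the approximate coboundary $W_g=\lim_t\phi(V_t)^*\alpha_g(\phi(V_t))$ into an exact one $X^*\alpha_g(X)=W_g$; that last step is legitimate precisely because $G$ is countable, $S$ is separable and $\mathcal{O}_\infty$ is separable. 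Where you genuinely diverge is part (3): the paper invokes the full $\mathcal{O}_\infty$-absorption theorem (Theorem \ref{Oinfty_absorb}), so that $(\alpha,u)$ absorbs $(\bigotimes_\N\mathcal{O}_\infty,\bigotimes_\N\mu)$ and the equivariant central copies of $(\mathcal{O}_\infty,\mu)$ sit in the tail of the tensor factor, whereas you go directly through the equivariant embedding lemma for the shift action (\cite[Lemma 6.2]{IM10Adv}) and then transfer from the shift action to an arbitrary outer $\mu$ via part (1). Your route is more economical in that it avoids the McDuff-type absorption machinery (Theorem \ref{McDuff}) for this particular conclusion; the paper's route gets the stronger tensorial absorption statement as a byproduct, which it needs elsewhere anyway. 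Both rest on the same two ingredients (uniqueness of outer $G$-actions on $\mathcal{O}_\infty$ and the shift-action embedding lemma), and your observation that the shift action is itself outer, so that part (1) applies to it, is the one point that needs to be said and you said it. The trivial-group case you split off is also handled correctly by plain $\mathcal{O}_\infty$-absorption.
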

\begin{proof}
(1)
By Theorem \ref{Oinfty_unique} and Remark \ref{Oinfty_unique'}, 
$\mu$ and $\nu$ are cocycle conjugate 
with a cocycle arbitrarily close to $1$. 
Hence, the conclusion follows immediately. 

(2)
This is clear from the definition. 

(3)
It follows from Theorem \ref{Oinfty_absorb} that 
$(\alpha,u):G\curvearrowright A$ absorbs 
$(\bigotimes_\N\mathcal{O}_\infty,\bigotimes_\N\mu)$ tensorially, 
and so $(\alpha,u)$ belongs to $\AC(\mathcal{O}_\infty,\mu)$. 
\end{proof}

By Lemma \ref{AC} (1), when $G$ is a poly-$\Z$ group, 
the class $\AC(\mathcal{O}_\infty,\mu)$ does not depend 
on the choice of the outer action $\mu:G\curvearrowright\mathcal{O}_\infty$. 

\begin{lemma}\label{AC'}
Let $\mu:G\curvearrowright\mathcal{O}_\infty$ be an outer action 
of a poly-$\Z$ group $G$ and 
let $(\alpha,u):G\curvearrowright A$ be a cocycle action 
on a unital separable $C^*$-algebra $A$. 
Suppose that $(\alpha,u)$ is in $\AC(\mathcal{O}_\infty,\mu)$. 
\begin{enumerate}
\item $(\alpha,u)$ is cocycle conjugate to $(\alpha\otimes\mu,u\otimes1)$ 
via an isomorphism asymptotically unitarily equivalent to 
the embedding $a\mapsto a\otimes1$. 
\item $\alpha:G\curvearrowright A^\omega$ and 
$\alpha:G\curvearrowright A_\omega$ 
are also in $\AC(\mathcal{O}_\infty,\mu)$. 
\item $\alpha:G\curvearrowright A^\flat$ and 
$\alpha:G\curvearrowright A_\flat$ 
are also in $\AC(\mathcal{O}_\infty,\mu)$. 
\end{enumerate}
\end{lemma}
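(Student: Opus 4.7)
The plan is to invoke the McDuff-type Theorem \ref{McDuff} to establish (1), and then bootstrap (2) and (3) from (1) by a reindexation argument that exploits the resulting tensorial absorption.

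For (1), we apply Theorem \ref{McDuff} with $B=\mathcal{O}_\infty$ and $\beta=\mu$. Hypothesis (1) of Theorem \ref{McDuff} follows immediately by specializing Definition \ref{defofAC} to $S=A$, which yields a unital equivariant homomorphism $\pi:\mathcal{O}_\infty\to A^\omega\cap A'=A_\omega$. Hypothesis (2) of Theorem \ref{McDuff}, namely the existence of a sequence $v_n\in U(\mathcal{O}_\infty\otimes\mathcal{O}_\infty)_0$ approximately implementing the flip and approximately $\mu\otimes\mu$-invariant, is obtained in the same manner as in \cite[Lemma 6.1]{IM10Adv}: Theorem \ref{Oinfty_unique} shows that $\mu\otimes\mu$ is approximately representable, and the vanishing of $K_1(\mathcal{O}_\infty\otimes\mathcal{O}_\infty)$ places the implementing path inside $U(\cdot)_0$. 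The conclusion of Theorem \ref{McDuff} then delivers the cocycle conjugacy with the isomorphism asymptotically unitarily equivalent to $a\mapsto a\otimes 1$.

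For (2), using (1) we may replace $(A,\alpha,u)$ by the cocycle-conjugate system $(A\otimes\bigotimes_{k=1}^{\infty}\mathcal{O}_\infty,\alpha\otimes\bigotimes_{k=1}^{\infty}\mu,u\otimes 1)$. For each $m\in\N$, the inclusion $\iota_m:\mathcal{O}_\infty\to A$ into the $(m+1)$-th tensor factor is equivariant. Given a separable $S\subset A^\omega$, choose a countable dense subset $\{s^{(k)}\}\subset S$ with representing sequences $(a^{(k)}_n)_n\in\ell^\infty(\N,A)$. Since $\bigcup_m(A\otimes\bigotimes_{j\leq m}\mathcal{O}_\infty)$ is norm-dense in $A$, a diagonal argument produces $K_n\to\infty$ with $\operatorname{dist}(a^{(k)}_n,A\otimes\bigotimes_{j\leq K_n}\mathcal{O}_\infty)<1/n$ for every $k\leq n$. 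Then $\phi(x):=[(\iota_{K_n}(x))_n]\in A^\omega$ defines a unital equivariant homomorphism whose image commutes with each $s^{(k)}$, and hence with all of $S$; via the constant inclusion $A^\omega\hookrightarrow(A^\omega)^\omega$ this establishes the AC property for $\alpha:G\curvearrowright A^\omega$. The case of $A_\omega$ is then deduced by applying the just-established AC for $A^\omega$ to $S\cup D$, where $D\subset A$ is a countable dense subset: the resulting $\phi$ additionally commutes with $A$, so a standard diagonal perturbation of its representative sequence lands in $A_\omega$ and yields the required embedding into $(A_\omega)^\omega\cap S'$.

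For (3), the same scheme works with the continuous parameter $t\in[0,\infty)$ replacing $n\in\N$. Given separable $S\subset A^\flat$ with representatives $f^{(k)}\in C^b([0,\infty),A)$, one chooses a continuous divergent function $K:[0,\infty)\to[0,\infty)$ together with a continuous path of equivariant unital embeddings $\iota_t:\mathcal{O}_\infty\to A$ interpolating consecutive tensor-factor inclusions; such interpolations exist because two consecutive tensor-factor embeddings of $\mathcal{O}_\infty$ into $\bigotimes_k\mathcal{O}_\infty$ differ by a factor flip that is equivariantly approximately inner, a consequence of the uniqueness Theorem \ref{Oinfty_unique}. For $K$ growing sufficiently fast, $\iota_t(\mathcal{O}_\infty)$ almost commutes with $f^{(k)}(t)$ for $t$ large, so $\tilde\phi(x)(t):=\iota_t(x)$ descends modulo $C_0([0,\infty),A)$ to an equivariant unital homomorphism $\phi:\mathcal{O}_\infty\to A^\flat\cap S'\subset(A^\flat)^\omega\cap S'$. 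The case of $A_\flat$ follows from that of $A^\flat$ exactly as $A_\omega$ was obtained from $A^\omega$. The most delicate technical point is the construction of the continuous interpolating family $\iota_t$ in (3); with that in hand, the remaining steps in (2) and (3) are essentially mechanical.
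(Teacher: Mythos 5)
Your treatment of (1) and (2) is correct and essentially the paper's: (1) is exactly the paper's application of Theorem \ref{McDuff} (the paper points to the proof of Theorem \ref{Oinfty_absorb} for hypothesis (2), which is the same argument you sketch), and your tensor-product/diagonalization argument for (2) is a slightly more explicit version of the ``usual reindexation trick'' the paper invokes; it also correctly reduces $A_\omega$ to $A^\omega$ by adjoining a dense subset of $A$ to $S$.

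Part (3), however, has a genuine gap exactly at the point you flag as delicate. You justify the continuous interpolating family $\iota_t$ by saying that consecutive tensor-factor embeddings differ by a flip which is ``equivariantly approximately inner.'' Approximate innerness produces a \emph{sequence} of unitaries $v_n\in U(\mathcal{O}_\infty\otimes\mathcal{O}_\infty)$ with $v_n(x\otimes1)v_n^*\to1\otimes x$ and $(\mu_g\otimes\mu_g)(v_n)-v_n\to0$; but to define $\iota_t$ for $t$ between two integers you must connect $1$ to $v_n$ by a \emph{path} along which the intermediate embeddings remain approximately equivariant (with error tending to $0$ as $n\to\infty$), and an arbitrary path from $1$ to $v_n$ does not preserve approximate $\mu_g\otimes\mu_g$-invariance. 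Obtaining such a path is precisely the kind of problem the paper's stability/homotopy machinery is built to solve; it does not follow from Theorem \ref{Oinfty_unique} alone. The correct ingredient is Corollary \ref{embedtoAflat}, which produces a homotopy $(\rho_t)_{t\in[0,1]}$ in $\Aut(\mathcal{O}_\infty\otimes\mathcal{O}_\infty)$ from $\id$ to the flip with $\lVert\rho_t\circ(\mu_g\otimes\mu_g)-(\mu_g\otimes\mu_g)\circ\rho_t\rVert<\ep$ throughout; its proof needs the \emph{very strong} cocycle conjugacy of Remark \ref{Oinfty_unique'} (continuous coboundary paths) together with the trick of extending $\mu$ to $G\times\R$ and comparing the $G\times\Z$-actions $\gamma_{(g,n)}=\rho^n\circ(\tilde\mu_{(g,n)}\otimes\tilde\mu_{(g,n)})$ and $\tilde\mu|(G\times\Z)$. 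The paper's own proof of (3) avoids building the interpolation by hand: it uses (1) to replace $A_\flat$ by $(A\otimes\mathcal{O}_\infty)_\flat$, observes that $((\mathcal{O}_\infty)_\flat,\mu)$ embeds there, and then quotes Corollary \ref{embedtoAflat} to embed $(\mathcal{O}_\infty,\mu)$ equivariantly into $((\mathcal{O}_\infty)_\flat,\mu)$. Your argument can be repaired by citing that corollary (or reproducing its homotopy argument) in place of the appeal to approximate innerness, after which the rest of your scheme for $A^\flat$ and $A_\flat$ goes through.
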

\begin{proof}
(1)
Since $A$ is separable and $(\alpha,u)$ is in $\AC(\mathcal{O}_\infty,\mu)$, 
there exists a unital homomorphism $\pi:\mathcal{O}_\infty\to A_\omega$ 
such that $\pi\circ\mu_g=\alpha_g\circ\pi$ for any $g\in G$. 
Thus, condition (1) of Theorem \ref{McDuff} is satisfied. 
In the proof of Theorem \ref{Oinfty_absorb}, 
we observed that the outer action $\mu:G\curvearrowright\mathcal{O}_\infty$ 
satisfies condition (2) of Theorem \ref{McDuff}. 
Therefore we obtain the desired conclusion. 

(2)
By definition, 
there exists a unital homomorphism 
$\phi:\mathcal{O}_\infty\to A^\omega\cap A'=A_\omega$ such that 
$\phi\circ\mu_g=\alpha_g\circ\phi$ for any $g\in G$. 
For any given separable subset $S\subset A^\omega$, 
by the usual reindexation trick, 
we may modify the homomorphism $\phi$ and 
obtain $\psi:\mathcal{O}_\infty\to A_\omega\cap S'$ 
such that $\phi\circ\mu_g=\alpha_g\circ\phi$. 

(3)
By (1), $\alpha:G\curvearrowright A_\flat$ is conjugate to 
$\alpha\otimes\mu:G\curvearrowright(A\otimes\mathcal{O}_\infty)_\flat$. 
Evidently, $((\mathcal{O}_\infty)_\flat,\mu)$ is embeddable 
into $((A\otimes\mathcal{O}_\infty)_\flat,\alpha\otimes\mu)$. 
By Corollary \ref{embedtoAflat}, 
$(\mathcal{O}_\infty,\mu)$ is embeddable 
into $((\mathcal{O}_\infty)_\flat,\mu)$. 
Hence there exists a unital homomorphism 
$\phi:\mathcal{O}_\infty\to A_\flat$ such that 
$\phi\circ\mu_g=\alpha_g\circ\phi$ for any $g\in G$. 
In the same way as (2), we get the conclusion. 
\end{proof}

\subsection{$H^1$-stability and $H^2$-stability}

In this subsection, we prove that 
any poly-$\Z$ group is asymptotically $H^1$-stable and $H^2$-stable 
(Theorem \ref{H1H2stable}). 
For every poly-$\Z$ group $G$, 
we choose and fix an outer action 
$\mu^G:G\curvearrowright\mathcal{O}_\infty$. 

\begin{lemma}\label{check}
Let $G$ be a countable discrete group and 
let $N\subset G$ be a normal subgroup such that $G/N\cong\Z$. 
Choose $\xi\in G$ so that $N$ and $\xi$ generate $G$. 
Suppose that $(\alpha,u):G\curvearrowright A$ is a cocycle action 
on a unital $C^*$-algebra $A$. 
If $u(g,h)=1$ for all $g,h\in N$, then 
the unitaries 
\[
\check u_g=u(\xi,\xi^{-1}g\xi)u(g,\xi)^*
\]
form 
an $\alpha|N$-cocycle satisfying 
\[
\alpha_\xi\circ\alpha_{\xi^{-1}g\xi}\circ\alpha_\xi^{-1}
=\Ad\check u_g\circ \alpha_g\quad\forall g\in N. 
\]
\end{lemma}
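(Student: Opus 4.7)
The lemma is a direct calculation with the $2$-cocycle identity; the only content is book-keeping and using the hypothesis that $u$ is trivial on $N\times N$. I would prove the two assertions in the reverse order of their statement.

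First, I would verify the relation $\alpha_\xi\circ\alpha_{\xi^{-1}g\xi}\circ\alpha_\xi^{-1}=\Ad\check u_g\circ\alpha_g$ for $g\in N$. Applying $\alpha_g\circ\alpha_\xi=\Ad u(g,\xi)\circ\alpha_{g\xi}$ gives
\[
\alpha_{g\xi}\circ\alpha_\xi^{-1}=\Ad u(g,\xi)^*\circ\alpha_g,
\]
and applying $\alpha_\xi\circ\alpha_{\xi^{-1}g\xi}=\Ad u(\xi,\xi^{-1}g\xi)\circ\alpha_{g\xi}$ I obtain
\[
\alpha_\xi\circ\alpha_{\xi^{-1}g\xi}\circ\alpha_\xi^{-1}
=\Ad u(\xi,\xi^{-1}g\xi)\circ\alpha_{g\xi}\circ\alpha_\xi^{-1}
=\Ad\bigl(u(\xi,\xi^{-1}g\xi)u(g,\xi)^*\bigr)\circ\alpha_g
=\Ad\check u_g\circ\alpha_g.
\]

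Second, I would check that $(\check u_g)_{g\in N}$ is an $\alpha|N$-cocycle, i.e.\ $\check u_g\alpha_g(\check u_h)=\check u_{gh}$ for $g,h\in N$. The plan is to expand $\alpha_g(\check u_h)$ using the $2$-cocycle identity twice, once for the triple $(g,h,\xi)$ and once for the triple $(g,\xi,\xi^{-1}h\xi)$, each time making essential use of $u(g,h)=1$ on $N\times N$. This should yield
\[
\alpha_g(u(\xi,\xi^{-1}h\xi))=u(g,\xi)u(g\xi,\xi^{-1}h\xi)u(g,h\xi)^*,
\qquad
\alpha_g(u(h,\xi)^*)=u(g,h\xi)u(gh,\xi)^*,
\]
so that after telescoping
\[
\check u_g\alpha_g(\check u_h)
=u(\xi,\xi^{-1}g\xi)\,u(g\xi,\xi^{-1}h\xi)\,u(gh,\xi)^*.
\]
It then remains to identify the first two factors with $u(\xi,\xi^{-1}gh\xi)$, which I would obtain by applying the $2$-cocycle identity to the triple $(\xi,\xi^{-1}g\xi,\xi^{-1}h\xi)$ and invoking $u(\xi^{-1}g\xi,\xi^{-1}h\xi)=1$ (using normality of $N$).

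There is really no ``hard part''; the only point requiring a bit of care is keeping track of which cocycle identity to apply in which direction. The assumption $u|_{N\times N}=1$ is used exactly twice: once to kill $\alpha_g(u(h,\xi))$-type terms during the expansion, and once to discard the term $\alpha_\xi(u(\xi^{-1}g\xi,\xi^{-1}h\xi))$ that arises at the end.
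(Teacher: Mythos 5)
Your proposal is correct: the conjugation relation follows exactly as you compute, and the cocycle identity for $(\check u_g)_g$ telescopes as claimed, with the hypothesis $u|_{N\times N}=1$ entering through the triples $(g,h,\xi)$ and $(\xi,\xi^{-1}g\xi,\xi^{-1}h\xi)$ (normality of $N$ giving $\xi^{-1}g\xi,\xi^{-1}h\xi\in N$). The paper's proof is simply ``Straightforward computations,'' so your argument is the intended one, spelled out.
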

\begin{proof}
Straightforward computations. 
\end{proof}

\begin{lemma}\label{1cv_xi_homo1}
Let $G$ be a countable discrete amenable group and 
let $N\subset G$ be a normal subgroup such that $G/N\cong\Z$. 
Choose $\xi\in G$ so that $N$ and $\xi$ generate $G$. 
For any finite subset $K\subset N$ and $\ep>0$, 
there exist a finite subset $L\subset N$ and $\delta>0$ 
such that the following holds. 

Let $\mu:G\curvearrowright\mathcal{O}_\infty$ be an outer action 
such that $\mu|N$ is approximately representable. 
Suppose that 
a cocycle action $(\alpha,w)$ of $G$ on a unital $C^*$-algebra $A$ 
belongs to $\AC(\mathcal{O}_\infty,\mu)$ 
and that $J\subset A$ is a globally $\alpha$-invariant ideal. 
Assume further that $w(g,h)=1$ for all $g,h\in N$ and 
the $\alpha|N$-cocycle $\check w_g=w(\xi,\xi^{-1}g\xi)w(g,\xi)^*$ 
satisfies 
\[
\lVert\check w_g-1\rVert<\delta\quad\forall g\in L. 
\]
If a unitary $u\in U(C([0,1])\otimes J)$ satisfies 
\[
u(0)=1,\quad 
\lVert u(t)-\alpha_g(u(t))\rVert<\delta\quad\forall t\in[0,1],\ g\in L, 
\]
then there exists a unitary $v\in U(C([0,1])\otimes J)$ such that 
\[
v(0)=1,\quad 
\lVert v(t)-\alpha_g(v(t))\rVert<\ep\quad \forall t\in[0,1],\ g\in K
\]
and 
\[
\lVert u(t)-v(t)\alpha_\xi(v(t)^*)\rVert<\ep\quad \forall t\in[0,1]. 
\]
\end{lemma}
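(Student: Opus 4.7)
The plan is to argue by contradiction through an ultrapower, reducing the assertion to an exact coboundary problem that can be attacked by combining the equivariant Rohlin lemma (Proposition \ref{equivRohlin}) with the approximately central $\mathcal{O}_\infty$-embeddings available in the class $\AC(\mathcal{O}_\infty,\mu)$ (Lemma \ref{AC'}).

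Suppose the conclusion fails for some fixed finite $K\subset N$ and some $\ep>0$. Pick exhausting finite sets $L_n\uparrow N$ and scalars $\delta_n\to 0$; for each $n$, negating the conclusion yields a counterexample consisting of a cocycle action $(\alpha^{(n)},w^{(n)}):G\curvearrowright A_n$ in $\AC(\mathcal{O}_\infty,\mu)$, a globally invariant ideal $J_n\subset A_n$, and a unitary $u_n\in U(C([0,1])\otimes J_n)$ satisfying the hypotheses with parameters $L_n,\delta_n$ but admitting no admissible $v_n$. Passing to the $\omega$-ultraproduct $\bar A=\prod_n A_n/c_\omega\supset\bar J$ with its inherited cocycle action $(\bar\alpha,\bar w)$, all approximate hypotheses become exact: $\bar w|_{N\times N}=1$; $\check{\bar w}_g=1$ for every $g\in N$, so by Lemma \ref{check} the automorphism $\bar\alpha_\xi$ normalizes $\bar\alpha|N$ \emph{exactly}; and the limit unitary $\bar u\in C([0,1])\otimes\bar J$ satisfies $\bar u(0)=1$ with $\bar u(t)\in(\bar J)^{\bar\alpha|N}$ for every $t$.

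By Lemma \ref{AC'} the class $\AC(\mathcal{O}_\infty,\mu)$ persists on central sequence algebras, so there is a unital equivariant embedding $\phi:\mathcal{O}_\infty\to((\bar A)_\omega)^{\bar\alpha|N}$ intertwining $\mu_\xi$ with $\bar\alpha_\xi$. Because $\mu|N$ is approximately representable, Proposition \ref{equivRohlin} supplies, inside $\phi(\mathcal{O}_\infty)$, two equivariant Rohlin towers $\{e_0,\dots,e_{M-1}\}$ and $\{f_0,\dots,f_M\}$ for $\bar\alpha_\xi$ sitting in $((\bar A)_\omega)^{\bar\alpha|N}$, for every $M$. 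With the telescoped unitaries $U_k(t):=\bar u(t)\bar\alpha_\xi(\bar u(t))\cdots\bar\alpha_\xi^{k-1}(\bar u(t))$ and $U_0(t)=1$, the Rohlin-cohomology construction
\[
\bar v(t) = \sum_{i=0}^{M-1} U_i(t)\,e_i + \sum_{j=0}^{M} U_j(t)\,f_j,
\]
suitably corrected at the top levels $e_0$ and $f_0$ (exploiting the coprimality of the two tower heights and the freedom provided by $\phi(\mathcal{O}_\infty)\subset((\bar A)_\omega)^{\bar\alpha|N}$), yields a unitary $\bar v(t)$ in the unitization of $\bar J_\omega$ which is $\bar\alpha|N$-fixed, satisfies $\bar v(0)=1$ (because every $U_k(0)=1$), and solves $\bar u=\bar v\,\bar\alpha_\xi(\bar v^*)$ in the iterated ultrapower. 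Representing $\bar v$ by a sequence $v_n\in U(C([0,1])\otimes J_n)$ via a standard reindexation argument produces, for cofinally many $n$, unitaries $v_n$ meeting the desired conclusion with parameters $K,\ep$ for $u_n$, which contradicts the choice of the counterexample.

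The main obstacle lies in the Rohlin step: the naive sum above solves the coboundary equation only up to a wrap-around defect on $e_0$ and $f_0$ measured by the unitaries $1-U_M$ and $1-U_{M+1}$, which need not be small in operator norm for a general $\bar u$. Eliminating this defect---without destroying either $\bar v(0)=1$ or the $\bar\alpha|N$-invariance of $\bar v$---requires a careful use of the coprime-heights trick together with unitary corrections drawn from the approximately central copy of $\mathcal{O}_\infty$ inside the fixed-point central sequence algebra, where there is ample room to absorb a defect equivariantly. Once this construction is in place, the remaining bookkeeping (continuous parameter $t$, preservation of the ideal $J$, boundary condition at $t=0$, and translation between the various central sequence algebras) is routine.
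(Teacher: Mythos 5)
Your overall skeleton does match the paper's: the proof is indeed a Rohlin-tower argument for $\alpha_\xi$ carried out inside $(A^\omega)^{\alpha|N}$, using the two equivariant towers of heights $m$ and $m{+}1$ from Proposition \ref{equivRohlin} and the telescoped unitaries $u_0=1$, $u_{i+1}=u\alpha_\xi(u_i)$. The ultraproduct/contradiction framing is a cosmetic variation (the paper argues directly and quantitatively, fixing $m$ with $6\pi/m<\ep$ and then $\delta$ with $(84m+81)\delta<\ep$), and could be made to work, although note that the exact coboundary equation in the ultrapower still has to be solved by first producing $\ep$-approximate solutions for every $\ep$, so the compactness step does not actually bypass the core estimate.

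The genuine gap is precisely the step you flag as "the main obstacle" and then do not carry out: the elimination of the wrap-around defect. The tool you name for it --- coprimality of the two tower heights --- is not the mechanism that works here. Coprimality arguments kill wrap-around defects in tracial or von Neumann settings where the top levels have small trace; in a purely infinite algebra the defect $1-u_m$ on the top level is a full-norm obstruction and no arithmetic of tower heights removes it. What actually works (and is the heart of the paper's proof) is a Lipschitz-controlled homotopy: since $(A^\omega)^{\alpha|N}$ contains a unital copy of $\mathcal{O}_\infty$, the method of \cite[Theorem 7]{Na00ETDS} (see also \cite[Lemma 3.3]{IM10Adv}) produces a continuous path $x(s,t)$ of unitaries in $C([0,1]\times[0,1])\otimes J$ with $x(0,t)=1$, $x(1,t)=u_m(t)$, $x(s,0)=1$, $\Lip(x(\cdot,t))<6\pi$, and $\lVert x(s,t)-\alpha_g(x(s,t))\rVert<81m\delta$ for $g\in L_0$ (and similarly a path $y$ for $u_{m+1}$). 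Inserting $\alpha_\xi^i(x(1-i/m,t))$ at level $i$ of the height-$m$ tower and $\alpha_\xi^j(y(1-j/m,t))$ at level $j$ of the height-$(m{+}1)$ tower distributes each homotopy across its own tower, so that consecutive levels differ by at most $\Lip(x)/m=6\pi/m<\ep$ and the coboundary identity $u\approx v\alpha_\xi(v^*)$ holds up to $\ep$, while the $\alpha_g$-invariance of $v$ is controlled by $3m\delta+81(m{+}1)\delta$. This also explains the quantifier order you would need to justify in your setup: $m$ is fixed first from $\ep$, and $\delta$ (hence $L$) is chosen afterwards so that the accumulated errors $3i\delta$ in $\lVert u_i-\alpha_g(u_i)\rVert$ and the $81m\delta$ loss in the path construction stay below $\ep$. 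The boundary condition $v(0)=1$ comes for free only because the paths are arranged to satisfy $x(s,0)=y(s,0)=1$, another constraint your sketch would need to build into the homotopy, not into the "remaining bookkeeping."
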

\begin{proof}
Choose $m\in\N$ and $\delta>0$ so that $6\pi/m<\ep$ and $(84m+81)\delta<\ep$. 
Define $L_0,L\subset N$ by 
\[
L_0=\{\xi^{-i}g\xi^i\mid g\in K,\ i=0,1,\dots,m\},\quad 
L=\{\xi^{-i}g\xi^i\mid g\in L_0,\ i=0,1,\dots,m\}. 
\]
Let $u\in U(C([0,1])\otimes J)$ be a unitary satisfying 
\[
u(0)=1,\quad 
\lVert u(t)-\alpha_g(u(t))\rVert<\delta\quad\forall t\in[0,1],\ g\in L. 
\]
To simplify notation, 
we denote the cocycle action 
$(\id\otimes\alpha,1\otimes w):G\curvearrowright C([0,1])\otimes A$ 
by $(\alpha,w)$. 
Define $u_i\in U(C([0,1])\otimes J)$ for $i=0,1,2,\dots$ inductively 
by $u_0=1$ and $u_{i+1}=u\alpha_\xi(u_i)$. 
By an elementary estimate, we obtain 
\[
\lVert u_i-\alpha_g(u_i)\rVert<3i\delta,\quad 
\forall g\in\{\xi^{-j}h\xi^j\mid h\in L_0,\ 0\leq j\leq m{+}1{-}i\},\ 
i=0,1,\dots,m{+}1. 
\]
Indeed, by induction, we can verify 
\begin{align*}
\alpha_g(u_{i+1})&=\alpha_g(u)(\alpha_g\circ\alpha_\xi)(u_i)
\approx_\delta
u(\Ad\check w_g\circ\alpha_\xi\circ\alpha_{\xi^{-1}g\xi})(u_i)\\
&\approx_{2\delta}u(\alpha_\xi\circ\alpha_{\xi^{-1}g\xi})(u_i)
\approx_{3i\delta}u\alpha_\xi(u_i)=u_{i+1}
\end{align*}
for any $g\in\{\xi^{-j}h\xi^j\mid h\in L_0,\ 0\leq j\leq m{-}i\}$. 
By \cite[Corollary 3.2 (1)]{IM10Adv}, 
$((\mathcal{O}_\infty)^\omega)^{\mu|N}$ contains 
a unital copy of $\mathcal{O}_\infty$. 
Hence, by the assumption, $(A^\omega)^{\alpha|N}$ also contains 
a unital copy of $\mathcal{O}_\infty$. 
Therefore, by using the same method as \cite[Theorem 7]{Na00ETDS} 
(see also \cite[Lemma 3.3]{IM10Adv}), 
we can find a unitary $x\in U(C([0,1]\times [0,1])\otimes J)$ such that 
\[
x(0,t)=1,\quad x(s,0)=1,\quad x(1,t)=u_m(t),\quad 
\Lip(x(\cdot,t))<6\pi\quad \forall s,t\in[0,1]
\]
and 
\[
\lVert x(s,t)-\alpha_g(x(s,t))\rVert<81m\delta\quad\forall g\in L_0. 
\]
Similarly, 
we get a unitary $y\in U(C([0,1]\times [0,1])\otimes J)$ for $u_{m+1}$. 

Since $\mu:G\curvearrowright\mathcal{O}_\infty$ is outer and 
$\mu|N:N\curvearrowright\mathcal{O}_\infty$ is approximately representable, 
it follows from Proposition \ref{equivRohlin} that 
$\mu_\xi$ has Rohlin towers of height $m,m{+}1$ 
in $((\mathcal{O}_\infty)_\omega)^{\mu|N}$. 
Let 
\[
S=\{u_i(t)\mid 0\leq t\leq1,\ i=0,1,\dots,m\}
\cup\{x(s,t),y(s,t)\mid s,t\in[0,1]\}. 
\]
By assumption there exists a unital equivariant embedding of 
$\mathcal{O}_\infty$ to $A^\omega\cap S'$, and so 
we can find projections $e,f\in (A^\omega)^{\alpha|N}\cap S'$ such that 
\[
\sum_{i=0}^{m-1}\alpha_\xi^i(e)+\sum_{j=0}^m\alpha_\xi^j(f)=1,\quad 
\alpha_\xi^m(e)=e,\quad \alpha_\xi^{m+1}(f)=f. 
\]
Define a unitary $v\in U(C([0,1])\otimes J^\omega)$ by 
\[
v(t)=\sum_{i=0}^{m-1}u_i(t)\alpha_\xi^i(x(1-i/m,t))\alpha_\xi^i(e)
+\sum_{j=0}^mu_j(t)\alpha_\xi^j(y(1-j/m,t))\alpha_\xi^j(f). 
\]
We can easily see $v(0)=1$ and $\lVert u-v\alpha_\xi(v^*)\rVert<6\pi/m<\ep$. 
Furthermore, we get 
\[
\lVert v-\alpha_g(v)\rVert<3m\delta+81(m+1)\delta
=(84m+81)\delta<\ep\quad \forall g\in K, 
\]
which completes the proof. 
\end{proof}

\begin{lemma}\label{1cv_G_homo1}
Suppose that an action $\alpha:G\curvearrowright A$ of a poly-$\Z$ group 
is in $\AC(\mathcal{O}_\infty,\mu^G)$ 
and that $J\subset A$ is a globally $\alpha$-invariant ideal. 
Let $(u_g)_{g\in G}$ be 
an $\id\otimes\alpha$-cocycle in $U(C([0,1])\otimes J)$ 
satisfying $u_g(0)=1$ for every $g\in G$. 
Then, for any finite subset $K\subset G$ and $\ep>0$, 
there exists a unitary $v\in U(C([0,1])\otimes J)$ such that 
\[
v(0)=1,\quad 
\sup_{t\in[0,1]}\lVert u_g(t)-v(t)\alpha_g(v(t)^*)\rVert<\ep
\]
holds for any $g\in K$. 
\end{lemma}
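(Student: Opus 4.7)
The plan is to proceed by induction on the Hirsch length $n$ of $G$, with the base case $n=0$ being trivial (take $v=1$). For the inductive step, fix a normal poly-$\Z$ subgroup $N\leq G$ of Hirsch length $n-1$ with $G/N\cong\Z$, and an element $\xi\in G$ whose image generates $G/N$. The strategy is to first reduce the $N$-coordinates of the cocycle to approximately $1$ via the induction hypothesis, then handle the $\xi$-direction by invoking Lemma \ref{1cv_xi_homo1}.

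Given $K\subset G$ and $\ep>0$, I would first choose a finite set $K_0\subset N$ large enough to contain all elements of $N$ that arise in a fixed decomposition $g=n\xi^k$ of elements $g\in K$, together with their conjugates $\xi^{-i}n\xi^i$ for $|i|$ up to the exponents of $\xi$ appearing in $K$. Let $L\subset N$ and $\delta>0$ be the data produced by Lemma \ref{1cv_xi_homo1} applied to the genuine action $\alpha$ (so $w\equiv 1$, $\check w\equiv 1$, and the hypothesis on $\check w$ is vacuous), the set $K_0$, and tolerance $\ep/3$. Then choose $K_N\subset N$ containing $K\cap N$, $L$, and $\xi^{-1}L\xi$, and choose $\delta'>0$ small enough that $4\delta'<\delta$ and that the final telescoping estimate in Step~5 stays below $\ep$.

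\textbf{Step 1} (induction on $N$). Since $\alpha|N$ lies in $\AC(\mathcal{O}_\infty,\mu^N)$ by Lemma \ref{AC}(1), apply the induction hypothesis to $\alpha|N$ with the restricted cocycle $(u_n)_{n\in N}$, the set $K_N$, and tolerance $\delta'$ to obtain $v_1\in U(C([0,1])\otimes J)$ with $v_1(0)=1$ and $\sup_t\|u_n(t)-v_1(t)\alpha_n(v_1(t)^*)\|<\delta'$ for all $n\in K_N$.

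\textbf{Step 2} (cocycle perturbation). Define $\tilde u_g:=v_1^*u_g\alpha_g(v_1)$. A direct check confirms $(\tilde u_g)_{g\in G}$ is still an $\alpha$-cocycle with $\tilde u_g(0)=1$, and $\|\tilde u_n(t)-1\|<2\delta'$ for every $n\in K_N$ and $t\in[0,1]$.

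\textbf{Step 3} (near $N$-invariance of $\tilde u_\xi$). The cocycle identity
\[
\tilde u_n\alpha_n(\tilde u_\xi)=\tilde u_{n\xi}=\tilde u_\xi\alpha_\xi(\tilde u_{\xi^{-1}n\xi})
\]
combined with Step~2 (applied to both $n\in L\subset K_N$ and $\xi^{-1}n\xi\in K_N$) yields $\|\tilde u_\xi(t)-\alpha_n(\tilde u_\xi(t))\|<4\delta'<\delta$ for every $n\in L$ and $t\in[0,1]$.

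\textbf{Step 4} (applying Lemma \ref{1cv_xi_homo1}). Apply Lemma \ref{1cv_xi_homo1} to the genuine action $\alpha:G\curvearrowright A$, the invariant ideal $J$, and the unitary $\tilde u_\xi\in U(C([0,1])\otimes J)$, obtaining $v_2\in U(C([0,1])\otimes J)$ with $v_2(0)=1$,
\[
\sup_t\|\tilde u_\xi(t)-v_2(t)\alpha_\xi(v_2(t)^*)\|<\ep/3,
\]
and $\sup_t\|v_2(t)-\alpha_g(v_2(t))\|<\ep/3$ for all $g\in K_0$.

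\textbf{Step 5} (assembly). Set $v=v_1v_2$. Then $v(0)=1$. For $g=\xi$,
\[
v\alpha_\xi(v^*)=v_1\bigl(v_2\alpha_\xi(v_2^*)\bigr)\alpha_\xi(v_1^*)\approx_{\ep/3} v_1\tilde u_\xi\alpha_\xi(v_1^*)=u_\xi.
\]
For $g\in K\cap N$, the near-$N$-invariance of $v_2$ from Step~4 combined with Step~1 gives $v\alpha_g(v^*)\approx v_1\alpha_g(v_1^*)\approx u_g$. For general $g=n\xi^k\in K$, the cocycle identity expresses $u_g$ as a product of translates of $u_n$ and $u_\xi$ (or $u_\xi^{-1}$ when $k<0$), and each factor has already been controlled by the previous two cases; a finite telescoping produces $\sup_t\|u_g(t)-v(t)\alpha_g(v(t)^*)\|<\ep$.

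The main obstacle is the $|K|$-dependent bookkeeping in Step~5: each application of the cocycle identity incurs a multiplicative factor in the error, so $L$, $\delta$, $K_N$, and $\delta'$ must be selected in the correct order, with $\delta'$ chosen last and small enough to absorb the combinatorial blow-up. A secondary point is to ensure the induction hypothesis is applicable at each step, which relies on Lemma \ref{AC}(1) to transfer membership in $\AC(\mathcal{O}_\infty,\mu^G)$ to $\AC(\mathcal{O}_\infty,\mu^N)$ after restriction.
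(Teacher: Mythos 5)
Your proposal is correct and follows essentially the same route as the paper: induction on the Hirsch length, first killing the $N$-part of the cocycle via the induction hypothesis, then deducing near-$N$-invariance of the perturbed $\tilde u_\xi$ from the cocycle identity and invoking Lemma \ref{1cv_xi_homo1}, and finally setting $v=v_1v_2$. The only cosmetic difference is that the paper reduces at the outset to $K=K_0\cup\{\xi\}$ with $K_0\subset N$ (the same generating-set reduction you defer to the telescoping in Step 5).
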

\begin{proof}
Notice that 
any outer action of any poly-$\Z$ group on $\mathcal{O}_\infty$ is 
approximately representable by Theorem \ref{Oinfty_unique}. 
The proof is by induction on the Hirsch length of $G$. 
When $G=\{1\}$, we have nothing to do. 
For a non-trivial poly-$\Z$ group $G$, 
there exists a normal poly-$\Z$ subgroup $N\subset G$ such that $G/N\cong\Z$. 
Choose and fix $\xi\in G$ so that $N$ and $\xi$ generate $G$. 
From the induction hypothesis we may assume that the theorem is known for $N$. 
To simplify notation, 
we denote the $G$-action 
$\id\otimes\alpha:G\curvearrowright C([0,1])\otimes A$ by $\alpha$. 

Suppose that 
we are given a finite subset $K\subset G$ and $\ep>0$. 
Without loss of generality, we may assume that 
there exist a finite subset $K_0\subset N$ 
such that $K=K_0\cup\{\xi\}$. 
By applying Lemma \ref{1cv_xi_homo1} 
to $K_0\subset N$ and $\ep/2>0$, 
we obtain $L\subset N$ and $\delta>0$. 
By using the induction hypothesis to 
$\alpha|N:N\curvearrowright A$, 
we can find a unitary $v_1\in U(C([0,1])\otimes J)$ such that 
\[
v_1(0)=1,\quad 
\lVert u_g-v_1\alpha_g(v_1^*)\rVert<\min\{\ep/2,\delta/2\}
\]
for any $g\in K_0\cup L\cup\xi^{-1}L\xi$. 
It is not so hard to see that 
\begin{align*}
\alpha_g(v_1^*u_\xi\alpha_\xi(v_1))
&\approx_{\delta/2}v_1^*u_g\alpha_g(u_\xi)\alpha_{g\xi}(v_1)\\
&=v_1^*u_{g\xi}\alpha_{g\xi}(v_1)\\
&=v_1^*u_\xi\alpha_\xi(u_{\xi^{-1}g\xi}\alpha_{\xi^{-1}g\xi}(v_1))
\approx_{\delta/2}v_1^*u_\xi\alpha_\xi(v_1)
\end{align*}
holds for every $g\in L$. 
It follows from Lemma \ref{1cv_xi_homo1} that 
there exists a unitary $v_2\in U(C([0,1])\otimes J)$ such that 
\[
v_2(0)=1,\quad 
\lVert v_2-\alpha_g(v_2)\rVert<\ep/2\quad \forall g\in K_0
\]
and 
\[
\lVert v_1^*u_\xi\alpha_\xi(v_1)-v_2\alpha_\xi(v_2^*)\rVert<\ep/2. 
\]
Put $v=v_1v_2$. 
It is routine to verify 
\[
\lVert u_g-v\alpha_g(v^*)\rVert<\ep/2+\ep/2=\ep\quad \forall g\in K_0
\]
and 
\[
\lVert u_\xi-v\alpha_\xi(v^*)\rVert<\ep/2, 
\]
thereby completing the proof. 
\end{proof}

We introduce the notion of 
approximate (or asymptotic) $H^1$-stability and $H^2$-stability. 
In what follows, we always assume that 
a generating subset $S$ of a group $G$ is symmetric (i.e.\ $S=S^{-1}$) 
and contains $1$. 

\begin{definition}\label{defofH1H2}
Let $G$ be a poly-$\Z$ group and 
let $S\subset G$ be a finite generating subset. 
\begin{enumerate}
\item We say that $(G,S)$ is approximately $H^1$-stable 
if there exists $\delta>0$ satisfying the following property. 
Suppose that an action $\alpha:G\curvearrowright A$ 
is in $\AC(\mathcal{O}_\infty,\mu^G)$ 
and that $J\subset A$ is a globally $\alpha$-invariant ideal. 
If $(u_g)_g$ is an $\alpha$-cocycle in $U(J)_0$ 
satisfying $\lVert u_g-1\rVert<\delta$ for all $g\in S$, 
then there exists a sequence $(v_n)_n$ of unitaries in $U(J)_0$ 
such that 
\[
\lim_{n\to\infty}v_n\alpha_g(v_n)^*=u_g\quad\forall g\in S. 
\]
\item We say that $(G,S)$ is asymptotically $H^1$-stable 
if $G$ has the property above 
with a continuous family $(v_t)_{t\in[0,\infty)}$ 
instead of the sequence $(v_n)_n$. 
\item We say that $(G,S)$ is $H^2$-stable 
if for any $\ep>0$ there exists $\delta>0$ satisfying the following property. 
Suppose that a cocycle action $(\alpha,u):G\curvearrowright A$ 
is in $\AC(\mathcal{O}_\infty,\mu^G)$ 
and that $J\subset A$ is a globally $\alpha$-invariant ideal. 
If $(u(g,h))_{g,h}$ is contained in $U(J)$ and 
$\lVert u(g,h)-1\rVert<\delta$ for all $g,h\in S$, then 
there exists a family of unitaries $(v_g)_g$ in $U(J)$ such that 
\[
u(g,h)=\alpha_g(v_h)^*v_g^*v_{gh}\quad\forall g,h\in G
\]
and 
\[
\lVert v_g-1\rVert<\ep\quad\forall g\in S. 
\]
In addition, when the unitaries $(u(g,h))_{g,h}$ are in $U(J)_0$, 
we require that the unitaries $(v_g)_g$ can be taken from $U(J)_0$. 
\end{enumerate}
\end{definition}

The approximate $H^1$-stability of $\Z$ was (essentially) proved 
by Nakamura \cite[Lemma 8]{Na00ETDS}. 
Obviously, $\Z$ is $H^2$-stable. 
The $H^2$-stability of $\Z^2$ was (essentially) proved 
in \cite[Proposition 7.10]{IM10Adv}. 

\begin{lemma}\label{rem_Hstable}
Let $G$ be a poly-$\Z$ group and 
let $S\subset G$ be a finite generating subset. 
\begin{enumerate}
\item If $(G,S)$ is approximately (or asymptotically) $H^1$-stable 
and $S'\subset G$ is another finite generating subset, 
then so is $(G,S')$. 
\item If $(G,S)$ is $H^2$-stable, then 
$(G,S^m)$ is $H^2$-stable for any $m\in\N$. 
\end{enumerate}
\end{lemma}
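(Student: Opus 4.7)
The plan for both parts is to exploit the multiplicative structure of (one- or two-) cocycles, which permits the transfer of small-norm estimates between different generating sets at the cost of a controllable linear factor in the word length.

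For part (1), I would start with the approximate $H^1$-stability constant $\delta$ for $(G,S)$ and let $N$ denote the maximum length of a word over $S'$ needed to express any element of $S$; this is finite because $S'$ generates $G$ and $S$ is finite. Iterating the cocycle identity $u_{gh}=u_g\alpha_g(u_h)$ yields $\lVert u_g-1\rVert\le N\max_{s\in S'}\lVert u_s-1\rVert$ for every $g\in S$. Setting $\delta':=\delta/N$ therefore reduces the problem on $S'$ to an application of the $H^1$-stability hypothesis on $S$, producing unitaries $(v_n)_n$ (resp.\ a continuous family $(v_t)_t$) with $v_n\alpha_g(v_n)^*\to u_g$ for all $g\in S$. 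The crucial observation is that $g\mapsto v_n\alpha_g(v_n)^*$ is itself an $\alpha$-cocycle, so a routine induction on word length, using the cocycle identity for both sides, propagates the convergence from $S$ to all of $G$, and in particular to $S'$. The asymptotic version is identical.

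For part (2), given $\ep>0$, I would first select $\ep_0>0$ to be chosen below, and obtain from the $H^2$-stability of $(G,S)$ a constant $\delta>0$ guaranteeing a coboundary trivialization $(v_g)_{g\in G}$ with $\lVert v_g-1\rVert<\ep_0$ for $g\in S$ whenever $\lVert u(g,h)-1\rVert<\delta$ on $S\times S$. Since $1\in S$ and hence $S\subset S^m$, putting $\delta':=\min\{\delta,\ep_0\}$ reduces $H^2$-stability on $S^m$ to the hypothesis on $S$ for the same cocycle. To estimate $v_g$ for an arbitrary $g=s_1s_2\cdots s_m\in S^m$, I would iterate the recursive formula $v_{gh}=v_g\alpha_g(v_h)u(g,h)$, unfolding $v_g$ as a product of $m$ conjugates of the unitaries $v_{s_i}$ together with $m{-}1$ values of the approximate $2$-cocycle. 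Each factor is within $\ep_0$ of $1$ or within $\delta'$ of $1$, so a triangle-inequality estimate of the form $\lVert v_g-1\rVert<(m{-}1)\delta'+m\ep_0$ emerges, and choosing $\ep_0:=\ep/(3m)$ and $\delta':=\min\{\delta,\ep/(3m)\}$ suffices. Preservation of $U(J)_0$ is automatic, since the $v_g$ produced on $S$ lie in $U(J)_0$ and the recursive formula multiplies and conjugates only within $U(J)_0$.

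The main obstacle is essentially bookkeeping: one must keep careful track of how the linear blow-up in word length interacts with the quantifier order in the definition of $H^2$-stability (where $\delta$ depends on $\ep$). Neither step requires any new analytic or operator-algebraic input beyond the multiplicative cocycle calculus; in particular no use of the $\AC(\mathcal{O}_\infty,\mu^G)$ hypothesis is needed here, since that hypothesis enters only through the $H^1$- or $H^2$-stability assumption on $(G,S)$ being invoked.
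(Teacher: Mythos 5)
Your argument is correct and follows essentially the same route as the paper: in part (1) you transfer the smallness hypothesis between generating sets via the cocycle identity $u_{gh}=u_g\alpha_g(u_h)$ with a linear loss in word length, and in part (2) you propagate the norm bound on the trivializing unitaries from $S$ to $S^m$ via the relation $v_{gh}=v_g\alpha_g(v_h)u(g,h)$, exactly as the paper does (the paper uses $\ep/(2m)$ where you use $\ep/(3m)$, an immaterial difference). Your explicit remark in (1) that convergence of the coboundaries propagates from $S$ to all of $G$ by the cocycle identity is a routine step the paper leaves implicit, but it is the same proof.
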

\begin{proof}
(1)
Let $\alpha:G\curvearrowright A$ be an action and 
let $(u_g)_g$ be an $\alpha$-cocycle. 
For any $g\in S'$ and $h\in(S')^n$, one has 
\begin{align*}
\lVert u_{gh}-1\rVert&=\lVert u_g\alpha_g(u_h)-1\rVert\\
&\leq\lVert u_g-1\rVert+\lVert u_h-1\rVert\\
&\leq\max\{\lVert u_k-1\rVert\mid k\in S'\}
+\max\{\lVert u_k-1\rVert\mid k\in(S')^n\}. 
\end{align*}
Thus, $\lVert u_g-1\rVert\leq n\max\{\lVert u_k-1\rVert\mid k\in S'\}$ 
for any $g\in(S')^n$. 
There exists $m\in\N$ such that $S\subset(S')^m$, 
and so $(G,S')$ is approximately (or asymptotically) $H^1$-stable. 

(2)
Suppose that we are given $\ep>0$. 
Let $\delta>0$ be the constant derived from the $H^2$-stability of $G$ 
for $S\subset G$ and $\ep/(2m)>0$. 
We show that $\delta'=\min\{\delta,\ep/(2m)\}$ meets the requirement. 

Suppose that a cocycle action $(\alpha,u):G\curvearrowright A$ 
on a unital $C^*$-algebra $A$ belongs to $\AC(\mathcal{O}_\infty,\mu^G)$ 
and that $J\subset A$ is a globally $\alpha$-invariant ideal. 
Assume that $(u(g,h))_{g,h}$ is contained in $U(J)$ (resp.\ $U(J)_0$) and 
$\lVert u(g,h)-1\rVert<\delta'$ for all $g,h\in S^m$. 
By the $H^2$-stability, 
there exists a family of unitaries $(v_g)_g$ in $U(J)_0$ (resp.\ $U(J)_0$) 
such that $u(g,h)=\alpha_g(v_h)^*v_g^*v_{gh}$ for all $g,h\in G$ and 
\[
\lVert v_g-1\rVert<\ep/(2m)\quad\forall g\in S. 
\]
For any $n\leq m$, $g\in S$ and $h\in S^n$, we have 
\[
\lVert v_{gh}-1\rVert
\leq\lVert u(g,h)-1\rVert+\lVert v_g-1\rVert+\lVert v_h-1\rVert
<\delta'+\ep/(2m)+\lVert v_h-1\rVert. 
\]
Therefore, $\lVert v_g-1\rVert<m\delta'+\ep/2\leq\ep$ for any $g\in S^m$. 
\end{proof}

\begin{lemma}\label{H2stable_pre}
Let $G$ be a poly-$\Z$ group and 
let $S\subset G$ be a finite generating subset. 
Suppose that $(G,S)$ is approximately $H^1$-stable and $H^2$-stable. 
Then, for any $\ep>0$ there exists $\delta>0$ 
such that the following holds. 

Suppose that an action $\alpha:G\curvearrowright A$ 
is in $\AC(\mathcal{O}_\infty,\mu^G)$ 
and that $J\subset A$ is a globally $\alpha$-invariant ideal. 
If $(u_g)_g$ is an $\alpha$-cocycle in $U(J)_0$ 
satisfying $\lVert u_g-1\rVert<\delta$ for all $g\in S$, 
then for any $\ep'>0$, there exists a unitary $u\in U(C([0,1])\otimes J)_0$ 
such that 
\[
u(0)=1,\quad \lVert u(1)u_g\alpha_g(u(1))^*-1\rVert<\ep', 
\]
\[
\lVert\alpha_g(u(1))-u(1)\rVert<\delta,\quad 
\lVert\alpha_g(u(t))-u(t)\rVert<\ep\quad\forall g\in S,\ t\in[0,1]. 
\]
\end{lemma}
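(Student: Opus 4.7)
The plan combines three tools: approximate $H^1$-stability (to approximately trivialize $(u_g)_g$ by a single coboundary), $H^2$-stability (to promote an approximate cocycle path into a true one), and Lemma~\ref{1cv_G_homo1} (to convert a small-norm cocycle path into a continuous family of almost $\alpha$-invariant unitary trivializers). I fix small auxiliary parameters $\eta_1,\eta_2>0$ much smaller than $\min(\ep,\ep',\delta)$, and take $\delta$ smaller than both the approximate $H^1$-stability constant and $\ep/4$, so the relevant estimates close. The trivial case $\ep'\ge\delta$ is dispatched by the constant path $u\equiv 1$, which satisfies all conclusions at once since $\lVert u_g-1\rVert<\delta\le\ep'$; henceforth I assume $\ep'<\delta$.

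First, I would apply approximate $H^1$-stability to $(u_g)_g$ to produce $w\in U(J)_0$ with $\lVert w\alpha_g(w)^*-u_g\rVert<\eta_1$ for $g\in S$. Setting $u(1):=w^*$, the identity $w^*u_g\alpha_g(w)-1=w^*(u_g-w\alpha_g(w)^*)\alpha_g(w)$ yields $\lVert u(1)u_g\alpha_g(u(1))^*-1\rVert<\eta_1<\ep'$, and $\lVert\alpha_g(u(1))-u(1)\rVert\le\lVert u_g-1\rVert+\eta_1<\delta$ once $\eta_1$ is chosen less than the positive slack $\delta-\max_{g\in S}\lVert u_g-1\rVert$. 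The remaining task is to build a continuous path from $1$ to $u(1)$ in $U(J)_0$ whose intermediate values are $\ep$-almost $\alpha$-invariant.

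For this I would first construct an exact $\alpha$-cocycle path $(u_g^\natural(t))_g$ in $U(C([0,1])\otimes J)$ with $u_g^\natural(0)=1$, $u_g^\natural(1)$ close to $u_g$, and $\lVert u_g^\natural(t)-1\rVert<\ep/2$ for $g\in S$, $t\in[0,1]$. Using principal-branch logarithms $h_g:=-i\log u_g$ (well-defined since $\lVert u_g-1\rVert<\delta<1$), the straight-line path $\tilde u_g(t):=\exp(ith_g)$ satisfies $\tilde u_g(0)=1$, $\tilde u_g(1)=u_g$, $\lVert\tilde u_g(t)-1\rVert\le\lVert u_g-1\rVert<\delta$, but is only an approximate cocycle: its defect $c_{g,h}(t):=\tilde u_g(t)\alpha_g(\tilde u_h(t))\tilde u_{gh}(t)^*$ is an exact $2$-cocycle for the inner-perturbed action $\alpha':=\Ad\tilde u\cdot\alpha$, uniformly close to $1$ (of order $\delta^2$ on $S$ by a Baker-Campbell-Hausdorff expansion). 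Since $\alpha'$ remains in $\AC(\mathcal{O}_\infty,\mu^G)$ and passes to $C([0,1])\otimes A$ via Lemma~\ref{AC}~(2), the $H^2$-stability of $(G,S^m)$ for $m$ large (Lemma~\ref{rem_Hstable}~(2)) applied to the cocycle action $(\alpha',c)$ yields unitaries $v_g(t)\in U(C([0,1])\otimes J)$ close to $1$ with $v_g(0)=1$ (which can be arranged since $c_{g,h}(0)=1$) and $c_{g,h}(t)=\alpha'_g(v_h(t))^*v_g(t)^*v_{gh}(t)$. Then $u_g^\natural(t):=v_g(t)\tilde u_g(t)$ is an exact $\alpha$-cocycle path with the required properties. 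Finally, Lemma~\ref{1cv_G_homo1} applied to $(u_g^\natural(t))_g$ with tolerance $\eta_2$ produces $y\in U(C([0,1])\otimes J)$ with $y(0)=1$ and $\lVert y(t)\alpha_g(y(t))^*-u_g^\natural(t)\rVert<\eta_2$; setting $u(t):=y(t)^*$ gives $\lVert\alpha_g(u(t))-u(t)\rVert\le\lVert u_g^\natural(t)-1\rVert+\eta_2<\ep$ together with the required endpoint bounds.

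The main obstacle is the middle step: promoting the straight-line approximate cocycle $\tilde u_g$ to a true $\alpha$-cocycle path $u_g^\natural$ with controlled norm. The $H^2$-stability must be applied in a continuous-family setting so that the correction $v_g(t)$ depends continuously on $t\in[0,1]$, vanishes at the endpoint $t=0$ where the defect $c$ already vanishes, and preserves the size bound $<\ep/2$ uniformly. Moreover, the trivialization must extend consistently from the generating set $S$ to all of $G$ via the cocycle relation without norms blowing up on words of large length, which is why the $H^2$-stability is invoked for $(G,S^m)$ with $m$ chosen large enough to accommodate all group elements appearing in the argument.
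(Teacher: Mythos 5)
Your overall toolkit (approximate $H^1$-stability, $H^2$-stability, Lemma~\ref{1cv_G_homo1}) is the same as the paper's, but your middle step is genuinely different: the paper joins the trivializer $x_0$ coming from approximate $H^1$-stability to $1$ by an arbitrary path $x$, approximates the coboundary path $x\alpha_g(x)^*$ by paths $v_g$ pinned to $1$ at \emph{both} endpoints, and corrects the resulting $2$-cocycle defect inside $U(C_0((0,1))\otimes J)$, so that the corrected path-valued cocycle $\tilde v_g$ satisfies $\tilde v_g(1)=1$ exactly. You instead interpolate the cocycle itself by $\tilde u_g(t)=\exp(ith_g)$ and correct that defect. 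This difference is where your argument breaks down.

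The gap is at the endpoint $t=1$. Your final path is $u(t)=y(t)^*$ with $y(t)\alpha_g(y(t))^*\approx_{\eta_2}u_g^\natural(t)=v_g(t)\tilde u_g(t)$, so the error in $\lVert u(1)u_g\alpha_g(u(1))^*-1\rVert$ is $\eta_2+\lVert v_g(1)-1\rVert$ and the error in $\lVert\alpha_g(u(1))-u(1)\rVert$ is $\eta_2+\lVert v_g(1)-1\rVert+\lVert u_g-1\rVert$. You only arrange $v_g(0)=1$, and $\lVert v_g(1)-1\rVert$ is whatever the $H^2$-stability delivers for an input defect whose size is controlled only by $\delta$; since $\delta$ is fixed \emph{before} $\ep'$ is given (and before the cocycle, hence before the slack $\delta-\max_{g\in S}\lVert u_g-1\rVert$ is known), this quantity cannot be made smaller than an arbitrary $\ep'$ nor smaller than the slack. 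Both $t=1$ estimates therefore fail as written. The repair is precisely the paper's device: since $(u_g)$ is an exact cocycle and $\tilde u_g(1)=u_g$, the defect satisfies $c_{g,h}(1)=1$ as well as $c_{g,h}(0)=1$, so $c$ lies in $U(C_0((0,1))\otimes J)$ and applying $H^2$-stability with \emph{that} ideal forces $v_g(0)=v_g(1)=1$; then $u_g^\natural(1)=u_g$ exactly and both endpoint bounds follow from $\eta_2$ alone (chosen after $\ep'$ and after the cocycle). Relatedly, your first paragraph sets $u(1):=w^*$ via approximate $H^1$-stability, but your final construction sets $u(t):=y(t)^*$, and $y(1)^*$ has no reason to equal $w^*$; as written that step is orphaned. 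You should also specify $\tilde u_g$ for $g$ outside the ball where the logarithm is defined (any path from $1$ to $u_g$ in $U(J)_0$ suffices, as the defect need only be small on $S^m$).
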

\begin{proof}
Let $\delta_1>0$ be the constant 
derived from the approximate $H^1$-stability of $G$. 
Let $\delta_2>0$ be the constant 
derived from the $H^2$-stability of $G$ for $\ep/4>0$. 
Set $\delta=\min\{\ep/8,\delta_1,\delta_2/6\}$. 

Suppose that an action $\alpha:G\curvearrowright A$ 
is in $\AC(\mathcal{O}_\infty,\mu^G)$ 
and that $J\subset A$ is a globally $\alpha$-invariant ideal. 
Let $(u_g)_g$ be an $\alpha$-cocycle in $U(J)_0$ satisfying 
\[
\lVert u_g-1\rVert<\delta\quad\forall g\in S. 
\]
Let $\ep'>0$. 
To simplify notation, 
we denote the $G$-action 
$\id\otimes\alpha:G\curvearrowright C([0,1])\otimes A$ by $\alpha$. 
By the approximate $H^1$-stability of $G$, 
we can find $x_0\in U(J)_0$ such that 
\[
\lVert x_0u_g\alpha_g(x_0)^*-1\rVert
<\min\left\{\frac{\ep'}{2},\ 
\delta{-}\max\{\lVert u_h{-}1\rVert\mid h\in S\}\right\}
\quad\forall g\in S. 
\]
Then we have $\lVert x_0-\alpha_g(x_0)\rVert<\delta$ for all $g\in S$, 
and hence $\lVert x_0-\alpha_g(x_0)\rVert<2\delta$ for all $g\in S^2$. 
Choose $x\in U(C([0,1]\otimes J)_0$ so that $x(0)=1$, $x(1)=x_0$. 
For each $g\in S^2$, 
there exists $v_g\in U(C([0,1])\otimes J)_0$ such that 
$v_g(0)=v_g(1)=1$ and 
\[
\lVert v_g-x\alpha_g(x)^*\rVert<2\delta\quad\forall g\in S^2. 
\]
For $g\in G\setminus S^2$, we let $v_g=1$. 
Then we may think of $(\alpha^v,w)$ as a cocycle action 
on $C([0,1])\otimes A$, where 
\[
w(g,h)=v_g\alpha_g(v_h)v_{gh}^*\in U(C_0((0,1))\otimes J). 
\]
For any $g,h\in S$, we have $\lVert w(g,h)-1\rVert<6\delta\leq\delta_2$. 
Hence, by the $H^2$-stability, 
we obtain $(y_g)_g$ in $U(C_0((0,1))\otimes J)$ such that 
\[
w(g,h)=\alpha^v_g(y_h)^*y_g^*y_{gh}\quad\forall g,h\in G
\]
and 
\[
\lVert y_g-1\rVert<\ep/4\quad\forall g\in S. 
\]
Set $\tilde v_g=y_gv_g$. 
It is easy to check that $(\tilde v_g)_g$ is an $\alpha$-cocycle 
such that $\tilde v_g(0)=\tilde v_g(1)=1$ and 
\[
\lVert\tilde v_g-x\alpha_g(x)^*\rVert
<\ep/4+\lVert v_g-x\alpha_g(x)^*\rVert
<\ep/4+2\delta\leq\ep/2\quad\forall g\in S. 
\]
It follows from Lemma \ref{1cv_G_homo1} that 
there exists $z\in U(C([0,1])\otimes J)$ such that $z(0)=1$ and 
\[
\lVert\tilde v_g-z\alpha_g(z)^*\rVert
<\min\left\{\frac{\ep}{2},\ \frac{\ep'}{2},\ 
\delta{-}\max\{\lVert x_0-\alpha_h(x_0)\rVert\mid h\in S\}\right\}. 
\]
Put $u=z^*x\in U(C([0,1])\otimes J)$. 
Clearly $u(0)=1$. 
For any $g\in S$, one has 
\[
u(1)u_g\alpha_g(u(1))^*
=z(1)^*x(1)u_g\alpha_g(x(1))^*\alpha_g(z(1))
\approx_{\ep'/2}z(1)^*\alpha_g(z(1))\approx_{\ep'/2}1, 
\]
\[
u(1)\alpha_g(u(1))^*=z(1)^*x(1)\alpha_g(x(1))\alpha_g(z(1)^*)
=z(1)^*x_0\alpha_g(x_0)\alpha_g(z(1)^*)\approx_\delta1
\]
and 
\[
u(t)\alpha_g(u(t))^*=z(t)^*x(t)\alpha_g(x(t)^*)\alpha_g(z(t))
\approx_{\ep/2}z(t)^*\tilde v_g(t)\alpha_g(z(t))\approx_{\ep/2}1, 
\]
which complete the proof. 
\end{proof}

\begin{lemma}
Let $G$ be a poly-$\Z$ group and 
let $N\subset G$ be a normal poly-$\Z$ subgroup such that $G/N\cong\Z$. 
Choose and fix $\xi\in G$ so that $G$ is generated by $N$ and $\xi$. 
Let $S\subset N$ be a finite generating subset. 
Suppose that $(N,S)$ is approximately $H^1$-stable and $H^2$-stable. 
For any $\ep>0$ there exists $\delta>0$ such that the following holds. 

Suppose that a cocycle action $(\alpha,u):G\curvearrowright A$ 
is in $\AC(\mathcal{O}_\infty,\mu^G)$ 
and that $J\subset A$ is a globally $\alpha$-invariant ideal. 
Assume further that $u(g,h)=1$ for all $g,h\in N$ and 
the $\alpha|N$-cocycle $\check u_g=u(\xi,\xi^{-1}g\xi)u(g,\xi)^*$ 
is contained in $U(J)_0$. 
If $\lVert\check u_g-1\rVert<\delta$ for all $g\in S$, then 
for any $\ep'>0$, there exist unitaries $u,v\in U(J)_0$ such that 
\[
\lVert u\check u_g\alpha_g(u)^*-1\rVert<\ep',\quad 
\lVert u-\alpha_g(u)\rVert<\delta, 
\]
\[
\lVert u-v\alpha_\xi(v)^*\rVert<\ep,\quad \lVert v-\alpha_g(v)\rVert<\ep
\]
hold for all $g\in S$. 
\end{lemma}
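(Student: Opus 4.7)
The plan is to combine Lemma \ref{H2stable_pre} (applied over $N$) with Lemma \ref{1cv_xi_homo1}: first produce the unitary $u$ together with a continuous path from $1$ to $u$ that almost commutes with $\alpha|N$, then apply Lemma \ref{1cv_xi_homo1} to that path to extract the $\xi$-coboundary datum $v$.

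Given $\ep>0$, first apply Lemma \ref{1cv_xi_homo1} with $K=S$ and $\ep$ to obtain a finite subset $L\subset N$ (which we enlarge so that $S\subset L$; fix $m\in\N$ with $L\subset S^m$) and $\delta_0>0$. By Lemma \ref{rem_Hstable}, $(N,L)$ inherits approximate $H^1$-stability and $H^2$-stability from $(N,S)$. Apply Lemma \ref{H2stable_pre} to $(N,L)$ with input $\delta_0$ to obtain $\delta_1>0$, and set $\delta:=\min(\delta_0,\delta_1)/(m+1)$. Note that $\alpha|N\in\AC(\mathcal{O}_\infty,\mu^N)$ follows by restricting the equivariant embedding supplied by $(\alpha,u)\in\AC(\mathcal{O}_\infty,\mu^G)$. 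For any cocycle action as in the hypothesis with $\lVert\check u_g-1\rVert<\delta$ for $g\in S$, the cocycle identity $\check u_{gh}=\check u_g\alpha_g(\check u_h)$ (Lemma \ref{check}) and induction yield $\lVert\check u_g-1\rVert<m\delta\leq\min(\delta_0,\delta_1)$ for $g\in L$.

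Set $\eta:=\delta-\max_{g\in S}\lVert\check u_g-1\rVert>0$, and for given $\ep'>0$ set $\ep'':=\min(\ep',\eta/2)$. Applying Lemma \ref{H2stable_pre} to the $\alpha|N$-cocycle $(\check u_g)_{g\in N}$ with parameter $\ep''$ produces $w\in U(C([0,1])\otimes J)_0$ with $w(0)=1$, $\lVert w(1)\check u_g\alpha_g(w(1))^*-1\rVert<\ep''$ for $g\in L$, and $\lVert w(t)-\alpha_g(w(t))\rVert<\delta_0$ for $g\in L,\,t\in[0,1]$. Setting $u:=w(1)$, condition (a) holds with $\ep''\leq\ep'$, and the triangle inequality gives
\[
\lVert u-\alpha_g(u)\rVert
\leq\lVert\check u_g-1\rVert+\lVert u\check u_g\alpha_g(u)^*-1\rVert
<(\delta-\eta)+\eta/2<\delta
\]
for $g\in S$, yielding (b).

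Finally, the path $w$ satisfies the hypotheses of Lemma \ref{1cv_xi_homo1} (namely $w(0)=1$, $\lVert w(t)-\alpha_g(w(t))\rVert<\delta_0$ for $g\in L,\,t\in[0,1]$, and $\lVert\check u_g-1\rVert<\delta_0$ for $g\in L$), so that lemma yields $\tilde v\in U(C([0,1])\otimes J)$ with $\tilde v(0)=1$, $\lVert\tilde v(t)-\alpha_g(\tilde v(t))\rVert<\ep$ for $g\in S,\,t\in[0,1]$, and $\lVert w(t)-\tilde v(t)\alpha_\xi(\tilde v(t))^*\rVert<\ep$ for $t\in[0,1]$; setting $v:=\tilde v(1)\in U(J)_0$ yields (c) and (d). The delicate part is the orchestration of constants: the smallness of $\check u$ on $S$ must propagate to $L$ (so that the cocycle hypotheses of both auxiliary lemmas hold on $L$), while the strict bound in (b) is secured by choosing $\ep''$ smaller than the slack $\eta$ left by the strict inequality in the assumption on $\check u$.
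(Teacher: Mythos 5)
Your proof follows the same two-step strategy as the paper: Lemma \ref{H2stable_pre} over $N$ produces $u=w(1)$ together with the path $w$, and Lemma \ref{1cv_xi_homo1} applied to that path produces $v$. The only substantive difference is in how you arrange for the path to be almost invariant on the large set $L$: the paper applies Lemma \ref{H2stable_pre} to $(N,S)$ with the shrunken tolerance $\delta_1/m$ and then propagates the estimate from $S$ to $S^m\supset L$ via $\lVert\alpha_{gh}(x)-x\rVert\le\lVert\alpha_g(x)-x\rVert+\lVert\alpha_h(x)-x\rVert$ (valid on $N$ since $u(g,h)=1$ there), whereas you apply Lemma \ref{H2stable_pre} directly to $(N,L)$. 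The latter requires $(N,L)$ to be approximately $H^1$-stable and $H^2$-stable, and here your citation of Lemma \ref{rem_Hstable} does not quite cover you: part (1) handles $H^1$-stability for any finite generating set, but part (2) only yields $H^2$-stability of $(N,S^m)$, not of an arbitrary generating set $L$ with $S\subset L\subset S^m$. This is easily repaired --- either apply Lemma \ref{H2stable_pre} to $(N,S^m)$, whose conclusion on $S^m$ covers $L$, or follow the paper's tolerance-shrinking route --- but as written that step is not justified by the lemma you invoke. Your derivation of the bound $\lVert u-\alpha_g(u)\rVert<\delta$ via the slack $\eta$ and the triangle inequality is correct (and in fact makes explicit a point the paper's own proof glosses over when it shrinks $\delta$ after invoking Lemma \ref{H2stable_pre}); the remaining constant bookkeeping and the passage from $\tilde v$ to $v=\tilde v(1)\in U(J)_0$ are fine.
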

\begin{proof}
Applying Lemma \ref{1cv_xi_homo1} to $S\subset N$ and $\ep>0$, 
we get a finite subset $L\subset N$ and $\delta_1>0$. 
There exists $m\in\N$ such that $L\subset S^m$. 
Applying Lemma \ref{H2stable_pre} to $S\subset N$ and $\delta_1/m>0$, 
we get $\delta>0$. 
We may assume that $\delta$ is not greater than $\delta_1$. 

Suppose that a cocycle action $(\alpha,u):G\curvearrowright A$ 
on a unital $C^*$-algebra $A$ belongs to $\AC(\mathcal{O}_\infty,\mu^G)$ 
and that $J\subset A$ is a globally $\alpha$-invariant ideal. 
Assume that $u(g,h)=1$ for all $g,h\in N$ and 
that the $\alpha|N$-cocycle $\check u_g=u(\xi,\xi^{-1}g\xi)u(g,\xi)^*$ 
belongs to $U(J)_0$ and satisfies 
\[
\lVert\check u_g-1\rVert<\delta\quad\forall g\in S. 
\]
Let $\ep'>0$. 
By Lemma \ref{H2stable_pre}, 
there exists a unitary $u\in U(C([0,1])\otimes J)_0$ such that 
\[
u(0)=1,\quad \lVert u(1)\check u_g\alpha_g(u(1))^*-1\rVert<\ep', 
\]
\[
\lVert\alpha_g(u(1))-u(1)\rVert<\delta,\quad 
\lVert\alpha_g(u(t))-u(t)\rVert<\delta_1/m\quad\forall g\in S,\ t\in[0,1]. 
\]
By the choice of $m$, one has 
\[
\lVert\alpha_g(u(t))-u(t)\rVert<\delta_1\quad\forall g\in L,\ t\in[0,1]. 
\]
It follows from Lemma \ref{1cv_xi_homo1} that 
there exists a unitary $v\in U(J)_0$ such that 
\[
\lVert v-\alpha_g(v)\rVert<\ep\quad \forall g\in S
\]
and 
\[
\lVert u(1)-v\alpha_\xi(v^*)\rVert<\ep, 
\]
which complete the proof. 
\end{proof}

\begin{lemma}\label{H2stable}
Let $G$ be a poly-$\Z$ group and 
let $N\subset G$ be a normal poly-$\Z$ subgroup such that $G/N\cong\Z$. 
Let $S\subset N$ be a finite generating subset (with $S=S^{-1}$ and $1\in S$). 
If $(N,S)$ is approximately $H^1$-stable and $H^2$-stable, 
then there exists a finite generating set $S'\subset G$ such that 
$(G,S')$ is $H^2$-stable. 
\end{lemma}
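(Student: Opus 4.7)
The plan is to take $\xi\in G$ so that $G$ is generated by $N\cup\{\xi\}$, and set
\[
S' = S \cup \xi S \xi^{-1} \cup \xi^{-1} S \xi \cup \{\xi,\xi^{-1}\},
\]
a symmetric finite generating subset of $G$ containing $S$. Given $\ep>0$, I would build the exact 1-cochain $(v_g)_{g\in G}$ of Definition \ref{defofH1H2}(3) as an ordered infinite product of perturbations with geometrically shrinking norms, each carried out by a two-part single-step reduction combining the $H^2$-stability of $(N,S)$ with the preceding lemma.

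For the single-step reduction, I would first invoke the $H^2$-stability of $(N,S^m)$, which is valid for any $m$ by Lemma \ref{rem_Hstable}(2). Here I need $\alpha|N$ to belong to $\AC(\mathcal{O}_\infty,\mu^N)$ for some outer action $\mu^N:N\curvearrowright\mathcal{O}_\infty$; this follows from the hypothesis $(\alpha,u)\in\AC(\mathcal{O}_\infty,\mu^G)$ by restriction and Lemma \ref{AC}(1) applied to the poly-$\Z$ group $N$. Applying $H^2$-stability to the restricted cocycle action $(\alpha|N,u|_{N\times N})$ yields unitaries $(w_n)_{n\in N}$ in $U(J)$ (or $U(J)_0$) exactly trivializing $u|_{N\times N}$, with $\|w_n-1\|$ small on $S$. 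Extending by $w_g=1$ for $g\notin N$ and cocycle-perturbing, I may assume $u|_{N\times N}=1$.

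In the second part, with $u|_{N\times N}=1$, I form the check-cocycle $\check u_g=u(\xi,\xi^{-1}g\xi)u(g,\xi)^*$, which by Lemma \ref{check} is an $\alpha|N$-cocycle; the hypothesis $\|u(g,h)-1\|<\delta$ on $S'\times S'$ together with $\xi,\xi^{-1}S\xi\subset S'$ forces $\|\check u_g-1\|$ to be small on $S$. I then apply the preceding lemma to obtain $y,v\in U(J)_0$ with $y$ approximately 1-cobounding $\check u$ on $S$ and $y\approx v\alpha_\xi(v)^*$, $v$ approximately $\alpha|N$-invariant. From these I build a 1-cochain $(\tilde v_g)_{g\in G}$ by declaring $\tilde v_n=1$ for $n\in N$, $\tilde v_\xi=y$, and propagating by the cocycle equations of $u$; the role of $v$ is to keep $\tilde v_{\xi^k}$ bounded uniformly in $k\in\Z$, via $\tilde v_{\xi^k}\approx v\alpha_{\xi^k}(v)^*$. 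Cocycle-perturbing by $(\tilde v_g)$ produces a new cocycle whose defect on $S'\times S'$ has shrunk by a definite factor $\eta<1$.

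Iterating this reduction with tolerances $\delta_n\downarrow 0$ chosen to guarantee both the applicability at step $n$ and $\sum_n\|v^{(n)}_g-1\|<\ep$ on $S'$, the ordered infinite product $v_g=\lim_N v^{(1)}_g v^{(2)}_g\cdots v^{(N)}_g$ converges in norm to a 1-cochain on $G$ that exactly trivializes $u$. The $U(J)_0$-versus-$U(J)$ refinement in Definition \ref{defofH1H2}(3) is automatic, since $w_n,y,v$ all lie in $U(J)_0$ whenever the input values do. The main obstacle is the coherence of the extension $(\tilde v_g)$ in the second part of the single-step: because $y$ only approximately 1-cobounds $\check u$, the recursive definition of $\tilde v_{n\xi^k}$ carries compatibility defects on the $N\times N$ slices of the perturbed cocycle. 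Controlling these defects and showing that they are absorbed by the $H^2$-stability call at the next iteration, with a contraction constant $\eta$ independent of the stage, is the heart of the argument.
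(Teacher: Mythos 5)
Your overall architecture matches the paper's: first kill $u|_{N\times N}$ by the $H^2$-stability of $(N,S^m)$, then iterate the preceding lemma on the check-cocycle $\check u_g=u(\xi,\xi^{-1}g\xi)u(g,\xi)^*$, assembling the trivializing cochain as a convergent infinite product. But there is a genuine gap in your single-step construction. You set $\tilde v_n=1$ for $n\in N$ and $\tilde v_\xi=y$, where $y$ is the unitary approximately cobounding $\check u$ on $S$. The unitary $y$ has no reason to be close to $1$ (only $\lVert y-\alpha_g(y)\rVert$ and $\lVert y\check u_g\alpha_g(y)^*-1\rVert$ are controlled, not $\lVert y-1\rVert$), so already at the first step the perturbation at $\xi\in S'$ is of order $1$ rather than of order $\ep_1$. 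Consequently $\sum_n\lVert v^{(n)}_\xi-1\rVert<\ep$ fails, the infinite product need not stay within $\ep$ of $1$ on $S'$, and the conclusion of Definition \ref{defofH1H2}(3) is not met. You correctly extract both $y$ and $v$ from the preceding lemma, but you deploy $v$ only to bound $\tilde v_{\xi^k}$ uniformly in $k$, which is not the point; the point is smallness of the perturbation on the generating set. The paper's fix is to conjugate: take $\tilde v_g=v^*\alpha_g(v)$ for $g\in N$ (small because $v$ is almost $\alpha|N$-invariant) and $\tilde v_\xi=v^*y\,\alpha_\xi(v)$, which is within $\ep_n$ of $1$ precisely because $y\approx v\alpha_\xi(v)^*$. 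One checks that the new check-cocycle is $v^*y\,\check u_g\,\alpha_g(y)^*v$, so the defect still drops to the prescribed $\delta_{n+1}$, while the perturbing unitaries are now genuinely small on $S'$.

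Two smaller points. First, no contraction constant $\eta<1$ is needed: the preceding lemma lets you prescribe the output defect $\ep'$ freely once the input defect is below the threshold $\delta(\ep_n)$, so you simply set the targets $\delta_{n+1}$ in advance. Second, the limit of the infinite product only forces $u'(g,h)=1$ on $N\times N$ and $u'(\xi,\xi^{-1}g\xi)=u'(g,\xi)$; it does not by itself "exactly trivialize $u$." A final algebraic step is required, producing the unique family $(y_g)$ with $y_g=1$ on $N\cup\{\xi\}$ whose perturbation yields a genuine action, together with the estimate $\lVert y_{\xi^{-1}}-1\rVert<\ep$ coming from $y_{\xi^{-1}}=u'(\xi^{-1},\xi)^*$.
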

\begin{proof}
Choose and fix $\xi\in G$ so that $G$ is generated by $N$ and $\xi$. 
There exists $m\in\N$ such that $S\cup\xi^{-1}S\xi\subset S^m$. 
Put $S'=S^m\cup\{\xi,\xi^{-1}\}\subset G$. 
Suppose that we are given $\ep>0$. 
Let $(\ep_n)_n$ be a sequence of positive real numbers 
such that $\sum_n\ep_n<\ep/(2m)$. 
Applying the previous lemma to $\ep_n$, we obtain $0<\delta_n<1$. 
We may assume $\delta_n\to0$ as $n\to\infty$. 
By Lemma \ref{rem_Hstable} (2), $(N,S^m)$ is $H^2$-stable. 
Let $\delta_0>0$ be the constant derived from the $H^2$-stability of $N$ 
for $S^m$ and $\min\{\ep/2,\delta_1/4\}>0$. 
Set $\delta=\min\{\ep/2,\delta_0,\delta_1/4\}$. 

Suppose that a cocycle action $(\alpha,u):G\curvearrowright A$ 
on a unital $C^*$-algebra $A$ is in $\AC(\mathcal{O}_\infty,\mu^G)$ 
and that $J\subset A$ is a globally $\alpha$-invariant ideal. 
Assume that $(u(g,h))_{g,h}$ is contained in $U(J)$ and 
\[
\lVert u(g,h)-1\rVert<\delta\quad\forall g,h\in S'. 
\]
By the $H^2$-stability of $N$, 
we can find unitaries $(w_g)_{g\in N}$ in $U(J)$ 
such that 
\[
\lVert w_g-1\rVert<\min\{\ep/2,\delta_1/4\}\quad\forall g\in S^m
\]
and 
\[
u(g,h)=\alpha_g(w_h)^*w_g^*w_{gh}\quad\forall g,h\in N. 
\]
Put $w_g=1$ for $G\setminus N$. 
We consider the perturbation $(\alpha^w,u^w)$ of $(\alpha,u)$ by $w$. 
Since $u^w(g,h)=1$ for all $g,h\in N$, $\alpha^w|N$ is a genuine action. 
Moreover, one has 
\[
\alpha^w_\xi\circ\alpha^w_{\xi^{-1}g\xi}\circ(\alpha^w_\xi)^{-1}
=\Ad\check u^w_g\circ\alpha^w_g\quad\forall g\in N, 
\]
where $(\check u^w_g)_g$ is an $\alpha^w|N$-cocycle in $U(J)$ 
defined by $\check u^w_g=u^w(\xi,\xi^{-1}g\xi)u^w(g,\xi)^*$. 
For every $g\in S$, we can verify 
\[
\check u^w_g
=\alpha_\xi(w_{\xi^{-1}g\xi})u(\xi,\xi^{-1}g\xi)u(g,\xi)^*w_g^*
\approx_{\delta_1/2}u(\xi,\xi^{-1}g\xi)u(g,\xi)^*\approx_{\delta_1/2}1. 
\]
In particular, $(\check u^w_g)_{g\in N}$ are in $U(J)_0$. 
It follows from the previous lemma that 
there exist unitaries $u_1,v_1\in U(J)_0$ such that 
\[
\lVert u_1\check u^w_g\alpha^w_g(u_1)^*-1\rVert<\delta_2,\quad 
\lVert u_1-\alpha^w_g(u_1)\rVert<\delta_1, 
\]
\[
\lVert u_1-v_1\alpha^w_\xi(v_1)^*\rVert<\ep_1,\quad 
\lVert v_1-\alpha^w_g(v_1)\rVert<\ep_1
\]
hold for all $g\in S$. 
We define unitaries $(x_{1,g})_{g\in G}$ in $U(J)_0$ by 
\[
x_{1,g}=\begin{cases}v_1^*\alpha^w_g(v_1)&g\in N\\
v_1^*u_1\alpha^w_\xi(v_1)&g=\xi\\
1&\text{otherwise. }\end{cases}
\]
It is easy to see 
\[
\lVert x_{1,g}-1\rVert<k\ep_1\quad\forall g\in S^k,\ k\in\N
\]
and $\lVert x_{1,\xi}-1\rVert<\ep_1$. 
Let $(\alpha^{(2)},u^{(2)})$ be the cocycle perturbation 
of $(\alpha^w,u^w)$ by $(x_{1,g})_g$, i.e. 
\[
\alpha^{(2)}_g=\Ad x_{1,g}\circ\alpha^w_g
\]
and 
\[
u^{(2)}(g,h)=x_{1,g}\alpha^w_g(x_{1,h})u^w(g,h)x_{1,gh}^*. 
\]
Then we have 
\[
\alpha^{(2)}_\xi\circ\alpha^{(2)}_{\xi^{-1}g\xi}\circ(\alpha^{(2)}_\xi)^{-1}
=\Ad\check u^{(2)}_g\circ\alpha^{(2)}_g\quad\forall g\in N, 
\]
where $(\check u^{(2)}_g)_g$ is an $\alpha^{(2)}|N$-cocycle in $U(J)$ 
defined by 
\begin{align*}
\check u^{(2)}_g
&=u^{(2)}(\xi,\xi^{-1}g\xi)u^{(2)}(g,\xi)^*\\
&=x_{1,\xi}\alpha^w_\xi(x_{1,\xi^{-1}g\xi})u^w(\xi,\xi^{-1}g\xi)x_{1,g\xi}^*
\cdot\left(x_{1,g}\alpha^w_g(x_{1,\xi})u^w(g,\xi)x_{1,g\xi}^*\right)^*\\
&=v_1^*u_1\alpha^w_\xi(v_1)\alpha^w_\xi(v_1^*\alpha^w_{\xi^{-1}g\xi}(v_1))
u^w(\xi,\xi^{-1}g\xi)u^w(g,\xi)^*\alpha^w_g(v_1^*u_1\alpha^w_\xi(v_1))^*
\alpha^w_g(v_1)^*v_1\\
&=v_1^*u_1u^w(\xi,\xi^{-1}g\xi)\alpha^w_{g\xi}(v_1)
\alpha^w_{g\xi}(v_1)^*u^w(g,\xi)^*\alpha^w_g(u_1)^*v_1\\
&=v_1^*u_1\check u^w_g\alpha^w_g(u_1)^*v_1\quad\forall g\in N. 
\end{align*}
Clearly, 
\[
\lVert\check u^{(2)}_g-1\rVert
=\lVert u_1\check u^w_g\alpha^w_g(u_1)^*-1\rVert<\delta_2. 
\]
holds for every $g\in S$. 
In particular, 
the unitaries $(\check u^{(2)}_g)_{g\in N}$ belong to $U(J)_0$. 
It follows from the previous lemma that 
there exist unitaries $u_2,v_2\in U(J)_0$ such that 
\[
\lVert u_2\check u^{(2)}_g\alpha^{(2)}_g(u_2)^*-1\rVert<\delta_3,\quad 
\lVert u_2-\alpha^{(2)}_g(u_2)\rVert<\delta_2, 
\]
\[
\lVert u_2-v_2\alpha^{(2)}_\xi(v_2)^*\rVert<\ep_2,\quad 
\lVert v_2-\alpha^{(2)}_g(v_2)\rVert<\ep_2
\]
hold for all $g\in S$. 
We define unitaries $(x_{2,g})_{g\in G}$ in $U(J)_0$ by 
\[
x_{2,g}=\begin{cases}v_2^*\alpha^{(2)}_g(v_2)&g\in N\\
v_2^*u_2\alpha^{(2)}_\xi(v_2)&g=\xi\\
1&\text{otherwise. }\end{cases}
\]
It is easy to see 
\[
\lVert x_{2,g}-1\rVert<k\ep_2\quad\forall g\in S^k,\ k\in\N
\]
and $\lVert x_{2,\xi}-1\rVert<\ep_2$. 
Let $(\alpha^{(3)},u^{(3)})$ be the cocycle perturbation 
of $(\alpha^{(2)},u^{(2)})$ by $(x_{2,g})_g$. 
Then we have 
\[
\alpha^{(3)}_\xi\circ\alpha^{(3)}_{\xi^{-1}g\xi}\circ(\alpha^{(3)}_\xi)^{-1}
=\Ad\check u^{(3)}_g\circ\alpha^{(3)}_g\quad\forall g\in N, 
\]
where $(\check u^{(3)}_g)_g$ is an $\alpha^{(3)}|N$-cocycle in $U(J)$ 
defined by 
\[
\check u^{(3)}_g=u^{(3)}(\xi,\xi^{-1}g\xi)u^{(3)}(g,\xi)^*
=v_2^*u_2\check u^{(2)}_g\alpha^{(2)}_g(u_2)^*v_2
\quad\forall g\in N. 
\]
Clearly, 
\[
\lVert\check u^{(3)}_g-1\rVert
=\lVert u_2\check u^{(2)}_g\alpha^{(2)}_g(u_2)^*-1\rVert<\delta_3. 
\]
holds for every $g\in S$, 
and hence $(\check u^{(3)}_g)_{g\in N}$ are contained in $U(J)_0$. 

Repeating the same process, for each $n\geq3$, 
we get a family of unitaries $(x_{n,g})_g$, 
a cocycle action $(\alpha^{(n)},u^{(n)})$ 
and an $\alpha^{(n)}|N$-cocycle $(\check u^{(n)}_g)_g$ 
such that the following hold. 
\begin{itemize}
\item $(\alpha^{(n)},u^{(n)})$ is the cocycle perturbation 
of $(\alpha^{(n-1)},u^{(n-1)})$ by $(x_{n-1,g})_g$. 
\item $u^{(n)}(g,h)=1$ for all $g,h\in N$. 
\item $\alpha^{(n)}_\xi\circ\alpha^{(n)}_{\xi^{-1}g\xi}
\circ(\alpha^{(n)}_\xi)^{-1}=\Ad\check u^{(n)}_g\circ\alpha^{(n)}_g$ 
for all $g\in N$. 
\item $\lVert\check u^{(n)}(\xi,\xi^{-1}g\xi)-\check u^{(n)}(g,\xi)\rVert
<\delta_n$ for all $g\in S$. 
\item $\lVert x_{n,\xi}-1\rVert<\ep_n$ and $\lVert x_{n,g}-1\rVert<k\ep_n$ 
for all $g\in S^k$, $k\in\N$, 
and $x_{n,g}=1$ for all $g\notin N\cup\{\xi\}$. 
\end{itemize}
Moreover, since $\sum_n\ep_n<\ep/(2m)$, the limit 
\[
x_g=\lim_{n\to\infty}x_{n,g}x_{n-1,g}\dots x_{1,g}
\]
exists for all $g\in G$ and 
$\lVert x_g-1\rVert<\ep/(2m)$ for all $g\in S\cup\{\xi\}$. 
Let $(\alpha',u')$ be the cocycle perturbation of $(\alpha^w,u^w)$ 
by $(x_g)_g$. 
Then, we obtain 
$u'(g,h)=1$ and $u'(\xi,\xi^{-1}g\xi)=u'(g,\xi)$ for all $g,h\in N$. 
Hence there uniquely exists 
a family of unitaries $(y_g)_{g\in G}$ in $U(J)$ such that 
$y_g=1$ for all $g\in N\cup\{\xi\}$ and 
the cocycle perturbation of $(\alpha',u')$ by $(y_g)_g$ is 
a genuine action. 
Note that 
\[
y_{\xi^{-1}}=u'(\xi^{-1},\xi)^*
=(\alpha_{\xi^{-1}}(x_\xi)u^w(\xi^{-1},\xi))^*
\approx_{\ep/(2m)}u^w(\xi^{-1},\xi)^*=u(\xi^{-1},\xi)^*\approx_{\ep/2}1. 
\]
Consequently, 
\[
\lVert y_\xi x_\xi w_\xi-1\rVert
=\lVert x_\xi-1\rVert<\ep/(2m), 
\]
\[
\lVert y_{\xi^{-1}}x_{\xi^{-1}}w_{\xi^{-1}}-1\rVert
=\lVert y_{\xi^{-1}}-1\rVert<\ep
\]
and 
\[
\lVert y_gx_gw_g-1\rVert=\lVert x_gw_g-1\rVert
<\ep/2+\ep/2=\ep
\]
for all $g,h\in S^m$. 

When the family of unitaries $(u(g,h))_{g,h\in G}$ is contained in $U(J)_0$, 
we can choose $(w_g)_{g\in N}$ and $(y_g)_{g\in G}$ from $U(J)_0$, 
thereby completing the proof. 
\end{proof}

\begin{lemma}\label{appH1stable}
Let $G$ be a poly-$\Z$ group and 
let $S\subset G$ be a finite generating subset. 
If $(G,S)$ is $H^2$-stable, 
then it is approximately $H^1$-stable. 
\end{lemma}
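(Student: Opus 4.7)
The strategy is to deduce approximate $H^1$-stability from $H^2$-stability by constructing a continuous family of $\id\otimes\alpha$-cocycles on $C([0,1])\otimes A$ interpolating between the trivial cocycle and the given $(u_g)_g$, and then applying Lemma \ref{1cv_G_homo1} to convert this family into an approximate coboundary. The key point is that any pointwise interpolation $\tilde u_g(t)$ between $1$ and $u_g$ fails to satisfy the cocycle identity only by a $2$-cocycle vanishing at both endpoints, and $H^2$-stability can trivialize this defect while preserving those boundary conditions.

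Fix $\epsilon>0$ and let $\delta_0>0$ be the constant provided by the $H^2$-stability of $(G,S)$. Choose $\delta>0$ small enough that $4\delta<\delta_0$ and the analytic logarithm is defined on any unitary within distance $2\delta$ of $1$. Given an $\alpha$-cocycle $(u_g)_g$ in $U(J)_0$ with $\|u_g-1\|<\delta$ for $g\in S$, the cocycle identity yields $\|u_g-1\|\le2\delta$ for $g\in S^2$, so for such $g$ I would set $\tilde u_g(t):=\exp(t\log u_g)$ and obtain a continuous path from $1$ to $u_g$ with $\|\tilde u_g(t)-1\|\le2\delta$ for all $t$. For $g\notin S^2$ I pick any continuous path $\tilde u_g:[0,1]\to U(J)$ from $1$ to $u_g$, available because $u_g\in U(J)_0$.

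Viewing $(\tilde u_g)_g$ as unitaries in $U(C([0,1])\otimes J)$, I cocycle-perturb the action $\id\otimes\alpha$ on $C([0,1])\otimes A$ by $\tilde u$ to obtain a cocycle action $(\id\otimes\alpha^{\tilde u},w)$ where
\[
w(g,h)=\tilde u_g\,(\id\otimes\alpha_g)(\tilde u_h)\,\tilde u_{gh}^*.
\]
Because $\tilde u_g(0)=1$ and $(u_g)_g$ is a genuine cocycle, $w(g,h)(0)=w(g,h)(1)=1$, so $w(g,h)\in U(C_0((0,1))\otimes J)$, and the bounds above give $\|w(g,h)-1\|\le4\delta<\delta_0$ for $g,h\in S$. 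Since tensoring with $C([0,1])$ preserves $\AC(\mathcal{O}_\infty,\mu^G)$ by Lemma \ref{AC}(2), and cocycle perturbation preserves it as well (enlarge the separable subset in the definition to contain the countably many $\tilde u_g$), the $H^2$-stability applies to $(\id\otimes\alpha^{\tilde u},w)$ with ideal $C_0((0,1))\otimes J$ and yields a family $(v_g)_g$ in $U(C_0((0,1))\otimes J)$ with $w(g,h)=(\id\otimes\alpha_g^{\tilde u})(v_h)^*v_g^*v_{gh}$ for all $g,h\in G$; in particular $v_g(0)=v_g(1)=1$.

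A direct computation using $\alpha^{\tilde u}_g=\Ad\tilde u_g\circ\alpha_g$ shows that $\hat v_g:=v_g\tilde u_g$ is an $\id\otimes\alpha$-cocycle in $U(C([0,1])\otimes J)$ with $\hat v_g(0)=1$ and $\hat v_g(1)=u_g$. Applying Lemma \ref{1cv_G_homo1} to $(\hat v_g)_g$ with $K=S$ and tolerance $1/n$ produces a unitary $V_n\in U(C([0,1])\otimes J)$ with $V_n(0)=1$ and $\sup_t\|\hat v_g(t)-V_n(t)\alpha_g(V_n(t))^*\|<1/n$ for $g\in S$; setting $v_n:=V_n(1)\in U(J)_0$ then gives $v_n\alpha_g(v_n)^*\to u_g$ for every $g\in S$, which is exactly approximate $H^1$-stability. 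The main technical obstacle is arranging that the defect $w$ lies in the correct ideal $C_0((0,1))\otimes J$ with $\|w(g,h)-1\|$ uniformly small on $S\times S$; this is handled by using the analytic logarithm to build the paths for $g\in S^2$ and invoking the cocycle identity to bound $\|u_g-1\|$ on $S^2$.
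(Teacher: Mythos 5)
Your proof is correct and follows essentially the same route as the paper: choose short paths from $1$ to $u_g$, observe that the resulting defect $w(g,h)$ is a $2$-cocycle in $U(C_0((0,1))\otimes J)$ that is small on $S\times S$, trivialize it by $H^2$-stability to obtain a genuine path of $\alpha$-cocycles from the trivial cocycle to $(u_g)_g$, and finish with Lemma \ref{1cv_G_homo1}. The only cosmetic difference is your use of the analytic logarithm to produce the short paths on $S^2$, where the paper simply selects paths of norm-distance less than $\delta/3$ from $1$.
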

\begin{proof}
Let $\delta>0$ be the constant derived from the $H^2$-stability 
of $(G,S)$ for $\ep=1$. 

Suppose that an action $\alpha:G\curvearrowright A$ 
is in $\AC(\mathcal{O}_\infty,\mu^G)$ 
and that $J\subset A$ is a globally $\alpha$-invariant ideal. 
Assume that an $\alpha$-cocycle $(u_g)_g$ in $U(J)_0$ satisfies 
$\lVert u_g-1\rVert<\delta/6$ for all $g\in S$. 
Then we have $\lVert u_g-1\rVert<\delta/3$ for all $g\in S^2$. 
To simplify notation, 
we denote the $G$-action 
$\id\otimes\alpha:G\curvearrowright C([0,1])\otimes A$ by $\alpha$. 
For each $g\in G$, 
we choose $v_g\in U(C([0,1])\otimes J)_0$ such that 
$v_g(0)=1$, $v_g(1)=u_g$ for all $g\in G$ and 
$\lVert v_g-1\rVert<\delta/3$ for all $g\in S^2$. 
Let $\sigma_g=\Ad v_g\circ\alpha_g$ and 
let $w(g,h)=v_g\alpha_g(v_h)v_{gh}^*\in U(C_0((0,1))\otimes J)$. 
Then $(\sigma,w)$ is a cocycle action of $G$ on $C([0,1])\otimes A$ 
satisfying $\lVert w(g,h)-1\rVert<\delta$ for all $g,h\in S$. 
Applying the $H^2$-stability of $(G,S)$, 
we get $\tilde u_g\in U(C_0((0,1))\otimes J)$ satisfying 
$w(g,h)=\sigma_g(\tilde u_h)^*\tilde u_g^*\tilde u_{gh}$ for all $g,h\in S$. 
It is routine to check 
that $(\tilde u_gv_g)_g$ is an $\alpha$-cocycle 
satisfying $(\tilde u_gv_g)(0)=1$ and $(\tilde u_gv_g)(1)=u_g$. 
Now the statement follows from Lemma \ref{1cv_G_homo1}. 
\end{proof}

\begin{lemma}\label{asympH1stable}
Let $G$ be a poly-$\Z$ group and 
let $S\subset G$ be a finite generating subset. 
If $(G,S)$ is approximately $H^1$-stable and $H^2$-stable, 
then it is asymptotically $H^1$-stable. 
\end{lemma}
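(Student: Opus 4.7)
The plan is to build the continuous family $(v_t)_{t\in[0,\infty)}$ by iterating Lemma \ref{H2stable_pre} on a shrinking residual cocycle and concatenating the approximately $\alpha$-invariant paths it produces. Lemma \ref{H2stable_pre}, which sits on exactly the two hypotheses at hand, delivers for any small $\alpha$-cocycle a continuous path starting at $1$ whose endpoint partially trivializes the cocycle and which stays within a prescribed invariance tolerance throughout; these two features together allow one to step from a given approximate implementation of $u_g$ to a strictly better one, with the interpolating path contributing only a vanishing perturbation to the implemented cocycle.

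Concretely, I would fix $\delta_0>0$ small and a null sequence $\ep_n\searrow 0$ calibrated so that, for each $n$, the threshold provided by Lemma \ref{H2stable_pre} for invariance tolerance $\ep_n$ dominates $2^{-n}\delta_0$. Given an $\alpha$-cocycle $(u_g)_g$ in $U(J)_0$ with $\lVert u_g-1\rVert<\delta_0$ for $g\in S$, set $v_0:=1$ and inductively define the residual cocycle
\[
u_g^{(n)}:=v_n^{-1}u_g\,\alpha_g(v_n),
\]
which is easily checked to again be an $\alpha$-cocycle in $U(J)_0$. Apply Lemma \ref{H2stable_pre} to $(u_g^{(n)})$ with invariance tolerance $\ep_n$ and endpoint tolerance $\ep'=2^{-n-1}\delta_0$ to obtain $u^{(n+1)}\in U(C([0,1])\otimes J)_0$ with $u^{(n+1)}(0)=1$, $\lVert u^{(n+1)}(1)\,u_g^{(n)}\,\alpha_g(u^{(n+1)}(1))^*-1\rVert<2^{-n-1}\delta_0$, and $\lVert\alpha_g(u^{(n+1)}(t))-u^{(n+1)}(t)\rVert<\ep_n$ for all $t\in[0,1]$; finally set $v_{n+1}:=v_n\,u^{(n+1)}(1)^*$. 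The identity $u_g^{(n+1)}=u^{(n+1)}(1)\,u_g^{(n)}\,\alpha_g(u^{(n+1)}(1))^*$ propagates the induction and yields $\lVert v_n\alpha_g(v_n)^*-u_g\rVert<2^{-n}\delta_0$ for $g\in S$.

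To finish, I would glue the pieces by defining $v(t):=v_n\,u^{(n+1)}(t-n)^*$ for $t\in[n,n+1]$. Continuity at integer $t$ follows from $u^{(n+1)}(0)=1$, and $v(0)=1$. For $t\in[n,n+1]$ and $g\in S$, factoring
\[
v(t)\alpha_g(v(t))^*=v_n\bigl(u^{(n+1)}(t-n)^*\alpha_g(u^{(n+1)}(t-n))\bigr)\alpha_g(v_n)^*
\]
and applying the two bounds above gives $\lVert v(t)\alpha_g(v(t))^*-u_g\rVert<\ep_n+2^{-n}\delta_0$, which tends to $0$ as $n\to\infty$. Hence $\lim_{t\to\infty}v(t)\alpha_g(v(t))^*=u_g$ for all $g\in S$, as required for asymptotic $H^1$-stability.

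The main obstacle is the joint calibration of the two tolerance parameters in Lemma \ref{H2stable_pre}: one must ensure that the sequence $(\ep_n)$ can be taken to tend to $0$ while the corresponding thresholds still bound the halving residual cocycle norms $\lVert u_g^{(n)}-1\rVert<2^{-n}\delta_0$ from above. Since the threshold in Lemma \ref{H2stable_pre} depends only on the invariance tolerance $\ep_n$ and neither on the endpoint tolerance $\ep'$ nor on the cocycle itself, this calibration is a single upfront choice, and the iteration is thereafter self-sustaining.
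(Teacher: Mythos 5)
Your proof is correct and follows essentially the same route as the paper: iterate Lemma \ref{H2stable_pre} on the successive residual cocycles and concatenate the resulting approximately invariant paths. The only difference is bookkeeping — the paper simply takes the endpoint tolerance at stage $n$ to be the threshold $\delta_{n+1}$ required for the next application (with invariance tolerance $1/(n{+}1)$), which avoids your upfront calibration of $(\ep_n)$ against the geometric sequence $2^{-n}\delta_0$, but your calibration is achievable for the reason you give, since the threshold in Lemma \ref{H2stable_pre} depends only on the invariance tolerance.
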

\begin{proof}
Applying Lemma \ref{H2stable_pre} to $\ep=1/n$, 
we get $\delta_n>0$. 
We may assume $\delta_n\to0$ as $n\to\infty$. 

Suppose that an action $\alpha:G\curvearrowright A$ 
on a unital $C^*$-algebra $A$ belongs to $\AC(\mathcal{O}_\infty,\mu^G)$ 
and that $J\subset A$ is a globally $\alpha$-invariant ideal. 
Assume that an $\alpha$-cocycle $(u_g)_g$ in $U(J)_0$ satisfies 
$\lVert u_g-1\rVert<\delta_1$ for all $g\in S$. 
By Lemma \ref{H2stable_pre}, 
we can find a unitary $v_1\in U(C([0,1])\otimes J)_0$ 
such that 
\[
v_1(0)=1,\quad \lVert v_1(1)u_g\alpha_g(v_1(1))^*-1\rVert<\delta_2, 
\]
\[
\lVert\alpha_g(v_1(t))-v_1(t)\rVert<1\quad\forall g\in S,\ t\in[0,1]. 
\]
Then $u^{(1)}_g=v_1(1)u_g\alpha_g(v_1(1))^*$ form an $\alpha$-cocycle 
satisfying $\lVert u^{(1)}_g-1\rVert<\delta_2$ for all $g\in S$. 
Again using Lemma \ref{H2stable_pre}, 
we can find a unitary $v_2\in U(C([0,1])\otimes J)_0$ 
such that 
\[
v_2(0)=1,\quad \lVert v_2(1)u^{(1)}_g\alpha_g(v_2(1))^*-1\rVert<\delta_3, 
\]
\[
\lVert\alpha_g(v_2(t))-v_2(t)\rVert<1/2\quad\forall g\in S,\ t\in[0,1]. 
\]
Then $u^{(2)}_g=v_2(1)u^{(1)}_g\alpha_g(v_2(1))^*$ form an $\alpha$-cocycle 
satisfying $\lVert u^{(2)}_g-1\rVert<\delta_3$ for all $g\in S$. 

Repeating this process, we obtain $v_n$ and $(u^{(n)}_g)_g$. 
We inductively construct a continuous map $w:[0,\infty)\to U(J)_0$ 
by $w(t)=v_1(t)$ for $t\in[0,1]$ and 
$w(t)=v_{n+1}(t{-}n)w(n)$ for $t\in[n,n{+}1]$. 
For $t\in[n,n{+}1]$ and $g\in S$, we have 
\begin{align*}
\lVert w(t)u_g\alpha_g(w(t))^*-1\rVert
&=\lVert v_{n+1}(t{-}n)w(n)u_g\alpha_g(v_{n+1}(t{-}n)w(n))^*-1\rVert\\
&=\lVert v_{n+1}(t{-}n)u^{(n)}_g\alpha_g(v_{n+1}(t{-}n))^*-1\rVert\\
&<\lVert v_{n+1}(t{-}n)u^{(n)}_gv_{n+1}(t{-}n)^*-1\rVert+(n+1)^{-1}\\
&<\delta_{n+1}+(n+1)^{-1}, 
\end{align*}
which shows that the family $(w(t))_t$ has the desired property. 
\end{proof}

\begin{theorem}\label{H1H2stable}
Let $G$ be a poly-$\Z$ group. 
There exists a finite generating subset $S\subset G$ such that 
$(G,S)$ is asymptotically $H^1$-stable and $H^2$-stable. 
\end{theorem}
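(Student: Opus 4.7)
The plan is to proceed by induction on the Hirsch length of $G$, exploiting the three structural lemmas already established in this subsection.

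For the base case (Hirsch length zero, i.e.\ $G$ trivial), every statement is vacuous. For Hirsch length one, $G = \Z$, where approximate $H^1$-stability follows from Nakamura's argument and $H^2$-stability is trivial (as noted in the remark just after Definition \ref{defofH1H2}); then Lemma \ref{asympH1stable} upgrades the approximate $H^1$-stability to the asymptotic version, giving both properties for $(\Z, \{-1,0,1\})$.

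For the inductive step, suppose the theorem is known for all poly-$\Z$ groups of Hirsch length less than $n$, and let $G$ have Hirsch length $n$. Pick a normal poly-$\Z$ subgroup $N \subset G$ of Hirsch length $n-1$ with $G/N \cong \Z$, and choose $\xi \in G$ generating $G$ together with $N$. By the inductive hypothesis there is a finite generating subset $S_0 \subset N$ such that $(N, S_0)$ is asymptotically $H^1$-stable and $H^2$-stable; in particular $(N, S_0)$ is approximately $H^1$-stable. Applying Lemma \ref{H2stable} with this $N$ and $S_0$ produces a finite generating subset $S' \subset G$ (of the form $S_0^m \cup \{\xi, \xi^{-1}\}$ for some $m$) such that $(G, S')$ is $H^2$-stable. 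Feeding this into Lemma \ref{appH1stable} gives that $(G, S')$ is approximately $H^1$-stable, and then Lemma \ref{asympH1stable} promotes this to asymptotic $H^1$-stability. Thus $(G, S')$ enjoys both stability properties, completing the induction.

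There is essentially no new obstacle at this stage: the hard work lies in the preceding lemmas, in particular in Lemma \ref{H2stable} (which is where the passage from $N$ to $G$ happens through the $\check u$-cocycle construction, Rohlin-type arguments from Proposition \ref{equivRohlin}, and a convergence scheme with summable perturbations) and in Lemma \ref{1cv_xi_homo1} (which provides the crucial one-dimensional coboundary approximation along the direction of $\xi$). The theorem itself is a clean bookkeeping induction; the only point requiring minor care is to verify that the base case $G = \Z$ is really covered by the cited remark, and that the inductive application of Lemma \ref{H2stable} is unconditional on the choice of normal subgroup $N$ (any choice with $G/N \cong \Z$ works, since the conclusion concerns the \emph{existence} of some finite generating set $S'$).
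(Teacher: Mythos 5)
Your proposal is correct and follows the same route as the paper: the paper's proof is exactly the one-line induction combining Lemma \ref{H2stable}, Lemma \ref{appH1stable} and Lemma \ref{asympH1stable}, with the base case $G=\Z$ covered by the remark after Definition \ref{defofH1H2}. Your write-up simply makes the bookkeeping explicit.
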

\begin{proof}
This follows from induction 
using Lemma \ref{H2stable}, Lemma \ref{appH1stable} 
and Lemma \ref{asympH1stable}. 
\end{proof}

\section{$KK$-trivial cocycle conjugacy}

In this section, 
we establish Theorem \ref{KKtrivcc} and Theorem \ref{KKtrivcc2}, 
which will be the main tool for classification theorems 
in Section 7 and Section 8. 

First, let us recall a refinement of Nakamura's homotopy theorem 
from \cite{IMweakhomot}. 

\begin{theorem}[{\cite[Theorem 5.5]{IMweakhomot}}]\label{refinedNakamura}
Let $A$ be a unital separable $C^*$-algebra and let $I\subset A$ be an ideal. 
Let $(\alpha,u):G\curvearrowright A$ be a cocycle action 
of a countable discrete group $G$. 
Suppose that $(A_\flat)^\alpha$ contains 
a unital copy of $\mathcal{O}_\infty$. 
For any continuous map $x:[0,1]\times[0,\infty)\to U(I)$, 
there exists a continuous map 
$y:[0,1]\times[0,\infty)\to U(I)$ such that 
\[
y(0,t)=x(0,t),\quad y(1,t)=x(1,t),\quad \Lip(y(\cdot,t))<25\pi
\quad\forall t\in[0,\infty), 
\]
\[
\limsup_{t\to\infty}\sup_{s\in[0,1]}\lVert[y(s,t),a]\rVert
\leq243\limsup_{t\to\infty}\sup_{s\in[0,1]}\lVert[x(s,t),a]\rVert
\quad\forall a\in A
\]
and 
\[
\limsup_{t\to\infty}\sup_{s\in[0,1]}\lVert\alpha_g(y(s,t))-y(s,t)\rVert
\leq243\limsup_{t\to\infty}\sup_{s\in[0,1]}\lVert\alpha_g(x(s,t))-x(s,t)\rVert
\quad\forall g\in G. 
\]
\end{theorem}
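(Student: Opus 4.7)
The plan is to leverage the assumed unital $\alpha$-invariant embedding $\mathcal{O}_\infty\hookrightarrow(A_\flat)^\alpha$ as a central reservoir of unitaries that both commute with $A$ and are fixed by $\alpha$, and then replace $x(\cdot,t)$ by a homotopy whose straightness is dictated by short-path lemmas in $\mathcal{O}_2$ and whose commutation/equivariance estimates degrade only by a controlled multiplicative factor. The key building block is Lemma 3.1 itself: any unitary in $\mathcal{O}_2$ whose commutators with a prescribed finite set $F$ are small can be joined to $1$ by a path of Lipschitz constant just above $8\pi/3$ whose pointwise commutators with $F$ grow by at most a factor of $4$. Because $\mathcal{O}_\infty$ tensorially absorbs $\mathcal{O}_2$ and sits centrally and $\alpha$-invariantly inside $A^\flat$, this lemma lifts to the ambient setting.

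First I would pin the endpoints by passing to $\ell(s,t):=x(0,t)^*x(s,t)$, a map with $\ell(0,t)=1$ and $\ell(1,t)=x(0,t)^*x(1,t)$, for which the commutator with $a\in A$ and the defect $\alpha_g(\ell)-\ell$ differ from the corresponding quantities for $x$ only by a factor of at most $2$ (left-multiplication by the unitary $x(0,t)$ is an isometry). Then, using the hypothesis together with the self-absorption underlying Theorem \ref{McDuff} and Corollary \ref{embedtoAflat}, I would transfer $\ell$ into a central copy $\mathcal{O}_\infty\subset A^\flat$ fixed by $\alpha$, and apply the Nakamura/Haagerup-R\o rdam short-path construction fiberwise in $t$ to obtain a replacement path $z(s,t)$ from $1$ to $\ell(1,t)$ with $\Lip(z(\cdot,t))<25\pi$ and whose commutator/equivariance estimates are at most a universal constant times those of $x(\cdot,t)$. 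Setting $y(s,t):=x(0,t)z(s,t)$ then gives the required homotopy, with $y(0,t)=x(0,t)$ and $y(1,t)=x(1,t)$.

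The main obstacle is the precise bookkeeping for the constants $25\pi$ and $243$. The Lipschitz bound has to be produced by concatenating only a bounded number of short subpaths in $\mathcal{O}_2$, each contributing roughly $8\pi/3$, without $t$-dependent blow-up. The constant $243=3^5$ should emerge from iterating the tensor-swap trick about five times: at each iteration one uses a commutator inequality of the shape $\lVert[ab,c]\rVert\le\lVert[a,c]\rVert+\lVert[b,c]\rVert$ combined with a Rohlin-type averaging inside a fresh $\mathcal{O}_\infty$ tensor factor, costing a multiplicative factor of order $3$ in commutator and equivariance norms. Continuity in $t$ of the resulting selection is not automatic; however, because the hypothesis is phrased at the level of $A^\flat=C^b([0,\infty),A)/C_0([0,\infty),A)$ rather than $A^\omega$, each ingredient (unitary paths in $\mathcal{O}_2$, the tensor decomposition, the central embedding) admits a continuous realization in $C^b([0,\infty),\,\cdot\,)$, and a standard reindexation plus bounded-Lipschitz interpolation across a countable exhaustion of separable subsets of $A$ provides the joint continuous lift.
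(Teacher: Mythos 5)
This theorem is not proved in the paper at all: it is imported verbatim from \cite[Theorem 5.5]{IMweakhomot}, so there is no internal argument to compare yours against. Judged on its own terms, your outline gestures in the right general direction (endpoint normalization, a central $\alpha$-fixed copy of $\mathcal{O}_\infty$, the Haagerup--R{\o}rdam/Nakamura short-path lemma), but the decisive step is missing. The unitary $\ell(1,t)=x(0,t)^*x(1,t)$ and the path $\ell(\cdot,t)$ live in $U(I)$, not in any copy of $\mathcal{O}_2$, so Lemma 3.1 does not apply to them; ``transferring $\ell$ into a central copy of $\mathcal{O}_\infty\subset A^\flat$'' is not an operation one can perform on a specific unitary of $U(I)$. (Note also that $\mathcal{O}_\infty$ does \emph{not} tensorially absorb $\mathcal{O}_2$, and admits no unital copy of it since $[1]\neq 0$ in $K_0(\mathcal{O}_\infty)$; one only has non-unital corner embeddings, as in the proof of Lemma 3.2.) What is actually required is the mechanism of \cite[Theorem 7]{Na00ETDS} and \cite[Lemma 3.3]{IM10Adv}: cut the given path into finitely many pieces, parallelize them against projections coming from the central $\alpha$-fixed $\mathcal{O}_\infty$, and correct inside corner algebras by short $\mathcal{O}_2$-paths, all while keeping every value congruent to $1$ modulo $I$ so that the output stays in $U(I)$. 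None of this is set up in your sketch, and without it neither the bound $\Lip(y(\cdot,t))<25\pi$ nor the factor $243$ can be derived; your own account of $243=3^5$ as ``about five tensor-swap iterations'' concedes that the constant is being reverse-engineered rather than produced. (For comparison, the sequential version of this estimate carries the constant $81$, as used in Lemma \ref{1cv_xi_homo1}.)

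The second genuine gap is joint continuity in $(s,t)$. The whole point of passing from the $A_\omega$ version to the $A^\flat$ version is that the correcting unitaries must be chosen continuously in $t$, uniformly over $s$, with a $t$-uniform Lipschitz bound in $s$; this is why the statement merits a separate paper. Your appeal to ``standard reindexation plus bounded-Lipschitz interpolation across a countable exhaustion'' names the difficulty without resolving it: reindexation yields, for each tolerance and each finite subset of $A$ and $G$, a choice valid on a tail in $t$, and gluing these into a single continuous $y:[0,1]\times[0,\infty)\to U(I)$ requires an explicit lifting and reparametrization scheme of the kind carried out for $\hat x$ and $f$ in the proof of the lemma preceding Proposition \ref{extendedH1stable}. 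As written, the proposal is a plan for a proof rather than a proof.
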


For every poly-$\Z$ group $G$, 
we choose and fix an outer action 
$\mu^G:G\curvearrowright\mathcal{O}_\infty$. 

\begin{lemma}
Let $G$ be a poly-$\Z$ group. 
For any $\ep>0$, 
there exist a finite generating subset $S\subset G$ and $\delta>0$ 
such that the following holds. 

Suppose that an action $\alpha:G\curvearrowright A$ 
on a unital separable $C^*$-algebra $A$ 
belongs to $\AC(\mathcal{O}_\infty,\mu^G)$ 
and that $J\subset A$ is a globally $\alpha$-invariant ideal. 
If $(u_g)_g$ is an $\alpha$-cocycle in $U(J^\flat\cap A')_0$ 
satisfying $\lVert u_g-1\rVert<\delta$ for all $g\in S$, then 
there exists a continuous map $w:[0,1]\to U(J^\flat\cap A')_0$ 
such that $w(0)=1$, $\lVert w(t)-\alpha_g(w(t))\rVert<\ep$ 
for all $t\in[0,1]$ and $g\in S$ and 
$u_g=w(1)\alpha_g(w(1))^*$ for all $g\in G$. 
\end{lemma}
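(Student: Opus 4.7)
The plan is to combine the asymptotic $H^1$-stability provided by Theorem \ref{H1H2stable} with the homotopy refinement of Theorem \ref{refinedNakamura}, plus a reindexation step that converts asymptotic cohomologization in $J^\flat\cap A'$ into an exact equality. I would pick, via Theorem \ref{H1H2stable}, a finite generating subset $S\subset G$ for which $(G,S)$ is asymptotically $H^1$-stable and $H^2$-stable. Letting $\delta_0>0$ be the corresponding asymptotic $H^1$-stability constant, I would then choose $\delta=\min\{\delta_0,\ep/243\}$.

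First I would produce a single unitary $w_1\in U(J^\flat\cap A')_0$ which exactly cohomologizes $(u_g)$. By Lemma \ref{AC'}(3), the action $\alpha\colon G\curvearrowright A_\flat$ belongs to $\AC(\mathcal{O}_\infty,\mu^G)$, and $J^\flat\cap A'=J^\flat\cap A_\flat$ is an $\alpha$-invariant ideal of $A_\flat$. Applying asymptotic $H^1$-stability of $(G,S)$ to this action and ideal yields a continuous family $(v_t)_{t\in[0,\infty)}$ in $U(J^\flat\cap A')_0$ with $v_0=1$ and $v_t\alpha_g(v_t)^*\to u_g$ for every $g\in S$. Lifting to a jointly continuous bounded map $\tilde v\colon[0,\infty)^2\to U(J)$ so that $\tilde v(t,\cdot)$ represents $v_t$, and passing to the diagonal $\tau\mapsto\tilde v(\phi(\tau),\tau)$ along a sufficiently fast-growing continuous increasing $\phi\colon[0,\infty)\to[0,\infty)$, a standard reindexation produces $w_1\in U(J^\flat\cap A')_0$ satisfying $w_1\alpha_g(w_1)^*=u_g$ exactly for each $g\in S$, and hence for every $g\in G$ since both sides are $\alpha$-cocycles agreeing on a generating set. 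In particular $\lVert\alpha_g(w_1)-w_1\rVert=\lVert u_g-1\rVert<\delta$ for $g\in S$.

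Next I would connect $1$ to $w_1$ by a continuous path in $U(J^\flat\cap A')_0$ with uniform $\ep$-commutation with $\alpha_g$, and this is where Theorem \ref{refinedNakamura} enters. Since $U(J^\flat\cap A')_0$ is path-connected, I would choose a continuous path from $1$ to $w_1$ in that group and lift it to a jointly continuous bounded map $x\colon[0,1]\times[0,\infty)\to U(J)$, routing the path through the family $(v_t)$ above so that
\[
\limsup_{t\to\infty}\sup_{s\in[0,1]}\lVert\alpha_g(x(s,t))-x(s,t)\rVert<\delta,\quad \limsup_{t\to\infty}\sup_{s\in[0,1]}\lVert[x(s,t),a]\rVert=0
\]
for all $g\in S$ and $a\in A$. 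Then apply Theorem \ref{refinedNakamura} with $I=J$ to $x$ to obtain $y\colon[0,1]\times[0,\infty)\to U(J)$ having the same boundary data as $x$, with $\Lip(y(\cdot,t))<25\pi$ and
\[
\limsup_{t\to\infty}\sup_{s\in[0,1]}\lVert\alpha_g(y(s,t))-y(s,t)\rVert\leq 243\delta\leq\ep,\quad \limsup_{t\to\infty}\sup_{s\in[0,1]}\lVert[y(s,t),a]\rVert=0.
\]
Letting $w(s)$ denote the class of $y(s,\cdot)$ in $J^\flat$, the resulting map $w\colon[0,1]\to U(J^\flat\cap A')_0$ satisfies $w(0)=1$, $w(1)=w_1$, the required commutation bound on all of $[0,1]$, and $u_g=w(1)\alpha_g(w(1))^*$ for every $g\in G$.

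The main obstacle will be to arrange the coarse lift $x(s,t)$ so that its asymptotic almost-commutation with $\alpha_g$ is bounded uniformly in $s\in[0,1]$ by $\delta$, not merely at the endpoints $s=0,1$; a naively chosen path through $U(J^\flat\cap A')_0$ can have arbitrarily large intermediate commutation. The resolution should be to route $x$ through the cohomologizing family $(v_t)$ from the first stage, whose asymptotic commutation with $\alpha_g$ is controlled by $\lVert u_g-1\rVert<\delta$, using the extra room furnished by the $\mathcal{O}_\infty$-absorption encoded in $\AC(\mathcal{O}_\infty,\mu^G)$ to carry out the homotopy. The multiplicative $243$ factor from Theorem \ref{refinedNakamura} then converts the $\delta$ bound into the desired $\ep$ bound.
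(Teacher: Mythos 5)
There is a genuine gap at the step you yourself flag as ``the main obstacle,'' and the resolution you propose does not close it. You assert that by routing the homotopy $x(s,t)$ through the cohomologizing family $(v_t)$ you can arrange $\limsup_{t\to\infty}\sup_{s\in[0,1]}\lVert\alpha_g(x(s,t))-x(s,t)\rVert<\delta$. But asymptotic $H^1$-stability only gives $v_t\alpha_g(v_t)^*\to u_g$ as $t\to\infty$; it gives no control whatsoever on $\lVert v_t-\alpha_g(v_t)\rVert$ for finite $t$, so the intermediate points of the family $(v_t)$ (which is exactly what your homotopy passes through) may fail to almost commute with $\alpha_g$ by an arbitrarily large amount. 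The parameter $t$ of the asymptotic cohomologization and the homotopy parameter $s$ are being conflated: smallness of $\lVert u_g-1\rVert$ controls only the \emph{limiting} behaviour, not the path. Theorem \ref{refinedNakamura} cannot rescue this, since its conclusion is a multiplicative bound ($243\times$) in terms of the input path's commutation defect, which is exactly the quantity you have not controlled.

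The missing idea, which is how the paper proceeds, is to first build a \emph{path of $\alpha$-cocycles} $r\mapsto(\tilde u_g(r))_g$ from the trivial cocycle to $(u_g)_g$ that stays uniformly $\delta_1$-close to $1$: one takes arbitrary unitary paths from $1$ to $u_g$ staying close to $1$, observes that their cocycle defect $\tilde u_g\alpha_g(\tilde u_h)\tilde u_{gh}^*$ lies in $U(C_0((0,1))\otimes(J^\flat\cap A'))$ and is $3\delta$-close to $1$, and corrects it using the $H^2$-stability of $(G,S)$ (which your argument never actually invokes). One then applies asymptotic $H^1$-stability \emph{to this entire path at once}, i.e.\ to the cocycle $(\tilde u_g)_g$ in the ideal $C_0((0,1])\otimes(J^\flat\cap A')$. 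The resulting cohomologizer is, at each path parameter $r$, a unitary that approximately implements the cocycle $\tilde u_g(r)$, hence satisfies $\lVert x(s)(r)-\alpha_g(x(s)(r))\rVert\approx\lVert\tilde u_g(r)-1\rVert<\delta_1$ uniformly in $r$ --- this is the mechanism that produces the uniform commutation bound along the homotopy, after which the reindexation and Theorem \ref{refinedNakamura} steps go through as you describe. Your first stage (producing $w_1$ with $w_1\alpha_g(w_1)^*=u_g$ exactly) is fine, but it is subsumed by the evaluation at $r=1$ of the paper's construction.
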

\begin{proof}
By Theorem \ref{H1H2stable}, 
there exists a finite generating subset $S\subset G$ 
such that $(G,S)$ is asymptotically $H^1$-stable and $H^2$-stable. 
Let $\delta_1>0$ be the constant derived 
from the asymptotic $H^1$-stability of $(G,S)$. 
We may and do assume $\delta_1<\ep/243$. 
Let $\delta_2>0$ be the constant derived 
from the $H^2$-stability of $(G,S)$ for $\delta_1/2$. 
Put $\delta=\min\{\delta_1/2,\delta_2/3\}$. 

Suppose that an action $\alpha:G\curvearrowright A$ 
on a unital separable $C^*$-algebra $A$ is in $\AC(\mathcal{O}_\infty,\mu^G)$ 
and that $J\subset A$ is a globally $\alpha$-invariant ideal. 
By Lemma \ref{AC'}, 
$\id\otimes\alpha:G\curvearrowright C([0,1])\otimes A_\flat$ is 
also in $\AC(\mathcal{O}_\infty,\mu^G)$. 
By abuse of notation, 
$\id\otimes\alpha$ is simply denoted by $\alpha$. 
Let $(u_g)_g$ be an $\alpha$-cocycle in $U(J^\flat\cap A')_0$ 
satisfying $\lVert u_g-1\rVert<\delta$ for all $g\in S^2$. 
Choose continuous paths $\tilde u_g:[0,1]\to U(J^\flat\cap A')_0$ 
such that $\tilde u_g(0)=1$, $\tilde u_g(1)=u_g$ for all $g\in G$ and 
$\lVert\tilde u_g-1\rVert<\delta$ for all $g\in S^2$. 
Then 
\[
v(g,h)=\tilde u_g\alpha_g(\tilde u_h)\tilde u_{gh}^*
\in U(C_0((0,1))\otimes(J^\flat\cap A')), 
\]
and $(\alpha^{\tilde u},v)$ is a cocycle action 
on $C([0,1])\otimes(J^\flat\cap A')$. 
Since 
\[
\lVert v(g,h)-1\rVert<3\delta\leq\delta_2\quad\forall g\in S, 
\]
the $H^2$-stability implies that 
there exists a family of unitaries $(\hat u_g)_g$ 
in $U(C_0((0,1))\otimes(J^\flat\cap A'))$ such that 
$v(g,h)=\alpha^{\tilde u}_g(\hat u_h^*)\hat u_g^*\hat u_{gh}$ 
for all $g,h\in G$ and 
$\lVert\hat u_g-1\rVert<\delta_1/2$ for all $g\in S$. 
It is routine to check that $(\hat u_g\tilde u_g)_g$ is 
a path of $\alpha$-cocycles from $1$ to $(u_g)_g$ satisfying 
\[
\lVert\hat u_g\tilde u_g-1\rVert<\delta_1/2+\delta\leq\delta_1
\quad\forall g\in S. 
\]
To simplify notation, 
we write $(\tilde u_g)_g$ instead of $(\hat u_g\tilde u_g)_g$. 
The asymptotic $H^1$-stability yields 
a continuous map $x:[0,\infty)\to U(C_0((0,1])\otimes(J^\flat\cap A'))_0$ 
such that $x(0)=1$ and $x(s)\alpha_g(x(s)^*)\to\tilde u_g$ as $s\to\infty$. 

We would like to find a lift 
$\hat x:[0,\infty)\to U(C_0((0,1])\otimes C_b([0,\infty),J))$ 
of $x$ satisfying $\hat x(0)=1$. 
For each $n\in\N\cup\{0\}$, 
the restriction of $x$ to $[n,n{+}1]$ is regarded 
as an element of $U(C([n,n{+}1])\otimes C_0((0,1])\otimes J^\flat)$, 
and so there exists a lift 
$\hat x_n:[n,n{+}1]\to U(C_0((0,1])\otimes C_b([0,\infty),J))$. 
We may assume $\hat x_0(0)=1$. 
Inductively we define $\hat x$ by 
$\hat x(s)=\hat x_0(s)$ for $s\in[0,1]$ and 
$\hat x(s)=\hat x(n)\hat x_n(n)^*\hat x_n(s)$ for $s\in(n,n{+}1]$. 
Then $\hat x:[0,\infty)\to U(C_0((0,1])\otimes C_b([0,\infty),J))$ 
is a lift of $x$. 
We treat $\hat x$ as a continuous function 
in two variables $(s,t)\in[0,\infty)\times[0,\infty)$. 

Let $(F_n)_{n\in\N}$ be an increasing sequence of finite subsets of $A$ 
whose union is dense in $A$. 
By abuse of notation, 
we identify $\tilde u_g\in U(C_0((0,1])\otimes(J^\flat\cap A'))$ 
with its lift $\tilde u_g:[0,\infty)\to U(C_0((0,1])\otimes J)$, 
and may assume $\lVert\tilde u_g(t)-1\rVert<\delta_1$ 
for all $g\in S$ and $t\in[0,\infty)$. 
We choose a strictly increasing sequence $(s_n)_{n\in\N}$ of 
positive real numbers satisfying 
\[
\lVert x(s)\alpha_g(x(s)^*)-\tilde u_g\rVert<1/n
\quad\forall s\geq s_n,\ g\in S, 
\]
and choose a strictly increasing sequence $(t_n)_{n\in\N}$ of 
positive real numbers satisfying 
\[
\lVert\hat x(s,t)\alpha_g(\hat x(s,t)^*)-\tilde u_g(t)\rVert<1/n
\quad\forall s\in[s_n,s_{n+1}],\ t\geq t_n,\ g\in S,
\]
\[
\lVert[\hat x(s,t),1\otimes a]\rVert<1/n
\quad\forall s\in[0,s_{n+1}],\ t\geq t_n,\ a\in F_n. 
\]
We define a continuous piecewise linear function $f:[0,\infty)\to[0,\infty)$ 
as follows: 
for $t\in [0,t_1]$, we set $f(t)=s_1t/t_1$, 
and for $t\in [t_n,t_{n+1}]$ with $n\geq 1$ we set 
\[
f(t)=s_n+\frac{(s_{n+1}-s_n)(t-t_n)}{t_{n+1}-t_n}. 
\]
Then $z(t)=\hat x(f(t),t)$ is a continuous map 
from $[0,\infty)$ to $U(C_0((0,1])\otimes J)$ satisfying 
\[
\lim_{t\to\infty}z(t)\alpha_g(z(t)^*)=\tilde u_g\quad\text{and}\quad 
\lim_{t\to\infty}[z(t),1\otimes a]=0\quad\forall g\in G,\ \forall a\in A. 
\]
Notice that one has 
\[
\limsup_{t\to\infty}\lVert z(t)-\alpha_g(z(t))\rVert\leq\delta_1
\quad\forall g\in S. 
\]
Regard $z$ as a continuous function 
in two variables $(r,t)\in[0,1]\times[0,\infty)$. 
Applying Theorem \ref{refinedNakamura} to $z(r,t)$, 
we can get a continuous map $w:[0,1]\to U(J^\flat\cap A')_0$ 
with the desired properties. 
\end{proof}

\begin{proposition}\label{extendedH1stable}
Let $G$ be a poly-$\Z$ group. 
Suppose that a cocycle action $(\alpha,u):G\curvearrowright A$ 
on a unital separable $C^*$-algebra $A$ is in $\AC(\mathcal{O}_\infty,\mu^G)$ 
and that $I\subset A$ is a globally $\alpha$-invariant ideal. 
Suppose that a family $(x_g)_{g\in G}$ of continuous maps 
from $[0,1]\times[0,\infty)$ to $U(I)$ satisfies 
\[
\lim_{t\to\infty}\max_{s\in[0,1]}
\lVert[x_g(s,t),a]\rVert=0\quad\forall g\in G,\ a\in A, 
\]
\[
\lim_{t\to\infty}\max_{s\in[0,1]}
\lVert x_g(s,t)\alpha_g(x_h(s,t))-x_{gh}(s,t)\rVert=0\quad\forall g,h\in G
\]
and $x_g(0,t)=1$. 
Then there exists a continuous map $y:[0,\infty)\to U(I)$ such that 
\[
\lim_{t\to\infty}\lVert[y(t),a]\rVert=0\quad\forall a\in A
\]
and 
\[
\lim_{t\to\infty}\lVert x_g(1,t)-y(t)\alpha_g(y(t)^*)\rVert=0
\quad\forall g\in G. 
\]
Moreover, the equivalence class of $(y(t))_t$ in $A^\flat$ 
belongs to $U(I^\flat\cap A')_0$. 
\end{proposition}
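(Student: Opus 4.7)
The plan is to apply a telescoping argument based on the previous lemma. I fix a finite generating set $S\subset G$ together with $\ep>0$ and the constant $\delta>0$ provided by that lemma. I then choose a sufficiently fine partition $0=s_0<s_1<\cdots<s_N=1$ and inductively construct unitaries $z_i\in U(I^\flat\cap A')_0$ satisfying $[x_g(s_i,\cdot)]=z_i\alpha_g(z_i)^*$ in $A^\flat$, starting from $z_0=1$. For each fixed $s_i\in[0,1]$, the uniform-in-$s$ cocycle hypothesis ensures that $[x_g(s_i,\cdot)]$ is a genuine $\alpha$-cocycle in $U(I^\flat\cap A')$, and $x_g(0,t)=1$ yields $[x_g(0,\cdot)]=1$, so the base case works.

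At the inductive step, the incremental element $y_g^{(i)}:=z_{i-1}^*\,[x_g(s_i,\cdot)]\,\alpha_g(z_{i-1})$ is again an $\alpha$-cocycle in $U(I^\flat\cap A')$, and using the identity $[x_g(s_{i-1},\cdot)]=z_{i-1}\alpha_g(z_{i-1})^*$ a direct computation gives $\lVert y_g^{(i)}-1\rVert=\lVert[x_g(s_i,\cdot)]-[x_g(s_{i-1},\cdot)]\rVert$ in $A^\flat$. If the partition is chosen so that this quantity is less than $\delta$ for every $g\in S$, the previous lemma produces $w_i\in U(I^\flat\cap A')_0$ with $y_g^{(i)}=w_i\alpha_g(w_i)^*$, and one sets $z_i:=z_{i-1}w_i$. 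After $N$ steps, $y:=z_N\in U(I^\flat\cap A')_0$ satisfies $[x_g(1,\cdot)]=y\alpha_g(y)^*$ in $A^\flat$. To produce the required continuous map $y\colon[0,\infty)\to U(I)$, I would write $y$ as a finite product of exponentials $\exp(ih_j)$ of self-adjoint $h_j\in I^\flat\cap A'$ (possible since $y$ is in the connected component of the unit), lift each $h_j$ to a bounded continuous self-adjoint $\tilde h_j\colon[0,\infty)\to I$, and take $y(t):=\prod_j\exp(i\tilde h_j(t))$; this inherits asymptotic centrality from $h_j\in A'$, and the class of $(y(t))_t$ in $A^\flat$ equals $y$ by construction.

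The main obstacle is the partition refinement in the first paragraph, namely showing that $\lVert[x_g(s_i,\cdot)]-[x_g(s_{i-1},\cdot)]\rVert=\limsup_{t\to\infty}\lVert x_g(s_i,t)-x_g(s_{i-1},t)\rVert$ can be made uniformly smaller than $\delta$ on $S$ by refinement alone. Joint continuity of $x_g(s,t)$ does not give uniform continuity in $s$ as $t\to\infty$, so some regularization is needed: I would apply Theorem \ref{refinedNakamura} to each $x_g$ with $g\in S$ to replace it by a continuous map whose Lipschitz constant in $s$ is bounded uniformly in $t$ by $25\pi$, preserving the endpoints at $s=0,1$. Since this regularization is done independently for each $g$, the family loses the cocycle relation at interior partition points, and bridging this requires the asymptotic $H^1$- and $H^2$-stability from Theorem \ref{H1H2stable} together with the $\AC(\mathcal{O}_\infty,\mu^G)$ hypothesis to absorb the cocycle defect, so that the final partition estimate applies to the genuine cocycles $[x_g(s_i,\cdot)]$ used in the telescoping.
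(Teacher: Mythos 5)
Your telescoping strategy founders exactly at the point you flag, and the repair you sketch does not close the gap. Applying Theorem \ref{refinedNakamura} separately to each $x_g$, $g\in S$, produces paths with uniform Lipschitz control in $s$ and the correct endpoints, but it gives no control whatsoever on the cocycle defect $y_g(s_i,\cdot)\alpha_g(y_h(s_i,\cdot))y_{gh}(s_i,\cdot)^*$ at interior partition points: the regularizations of distinct group elements are performed independently, so this defect can have norm up to $2$. The $H^2$-stability of Definition \ref{defofH1H2} (and everything proved in Section 5) is a local statement --- it only turns $2$-cocycles that are $\delta$-close to $1$ into coboundaries --- so it cannot ``absorb'' a defect of unrestricted size, and showing that a large defect is a coboundary is essentially the original problem over again. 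Consequently the increments $y_g^{(i)}$ in your induction are never shown to be genuine $\alpha$-cocycles at norm distance less than $\delta$ from $1$, and the previous lemma cannot be invoked. (The remaining pieces of your argument --- the identity $\lVert y_g^{(i)}-1\rVert=\lVert x_g(s_i,\cdot)-x_g(s_{i-1},\cdot)\rVert$, the cocycle relation for the increments inside $I^\flat\cap A'$, and the lifting of $y\in U(I^\flat\cap A')_0$ to a continuous unitary path in $C^b([0,\infty),I)$ via exponentials --- are all fine.)

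The paper avoids the partition in $s$ entirely by inducting on the Hirsch length of $G$. Writing $G=N\rtimes\langle\xi\rangle$, it absorbs the homotopy parameter into the coefficient algebra: one sets $J=\{f\in C([0,1],I)\mid f(0)=0\}\subset C([0,1],A)$ and $\tilde x_g(r,s,t)=x_g(rs,t)$, applies the induction hypothesis for $N$ in this path algebra to trivialize $(x_g)_{g\in N}$ uniformly in $s$, and then handles the remaining generator $\xi$ by a Rohlin-tower stability argument in $(A_\flat)^{\alpha|N}$ combined with Theorem \ref{refinedNakamura}. This yields, for every $\ep>0$, some $\mathbf{y}\in U(I^\flat\cap A')_0$ with $\lVert\mathbf{x}_g-\mathbf{y}\alpha_g(\mathbf{y}^*)\rVert<\ep$ for $g\in N\cup\{\xi\}$; only then is the previous lemma applied, to the small cocycle $\mathbf{y}^*\mathbf{x}_g\alpha_g(\mathbf{y})$, to upgrade an approximate coboundary to an exact one. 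To salvage your route you would need a version of Theorem \ref{refinedNakamura} that regularizes the whole family $(x_g)_g$ equivariantly, preserving the cocycle relation at every $s$; no such statement is available, and producing one amounts to the induction the paper actually performs.
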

\begin{proof}
The proof is by induction on the Hirsch length of $G$. 
When $G$ is trivial, we have nothing to do. 
Suppose that the theorem is known for any poly-$\Z$ groups 
with Hirsch length less than $l$. 
Let $G$ be a poly-$\Z$ group with Hirsch length $l$. 
There exists a normal poly-$\Z$ subgroup $N\subset G$ 
whose Hirsch length equals $l-1$. 
Take $\xi\in G$ so that $G$ is generated by $N$ and $\xi$. 

Let $I\subset A$, $(\alpha,u):G\curvearrowright A$ and $(x_g)_{g\in G}$ 
be as in the statement. 
We define an ideal $J$ of $C([0,1],A)$ by 
\[
J=\{f:[0,1]\to A\mid f(s)\in I\ \forall s\in[0,1],\ f(0)=0\}. 
\]
Set $\tilde x_g(r,s,t)=x_g(rs,t)$ 
for $(r,s,t)\in[0,1]\times[0,1]\times[0,\infty)$. 
Then $(r,t)\mapsto \tilde x_g(r,\cdot,t)$ is regarded 
as a continuous map from $[0,1]\times[0,\infty)$ to $U(J)$. 
By Lemma \ref{AC} (2), 
$(\id\otimes\alpha,1\otimes u):G\curvearrowright C([0,1],A)$ 
is also in $\AC(\mathcal{O}_\infty,\mu^G)$. 
By the induction hypothesis, 
there exists a continuous map $y_1:[0,1]\times[0,\infty)\to U(I)$ such that 
\[
\lim_{t\to\infty}\sup_{s\in[0,1]}
\lVert[y_1(s,t),a]\rVert=0\quad\forall a\in A, 
\]
\[
\lim_{t\to\infty}\sup_{s\in[0,1]}
\lVert x_g(s,t)-y_1(s,t)\alpha_g(y_1(s,t)^*)\rVert=0
\quad\forall g\in N
\]
and $y_1(0,t)=1$. 
By Theorem \ref{refinedNakamura}, 
the equivalence class of $(y_1(1,t))_t$ in $A^\flat$ 
belongs to $U(I^\flat\cap A')_0$. 
Put $x'_g(s,t)=y_1(s,t)^*x_g(s,t)\alpha_g(y_1(s,t))$. 
It is easy to see 
\[
\lim_{t\to\infty}\max_{s\in[0,1]}
\lVert[x'_\xi(s,t),a]\rVert=0\quad\forall a\in A, 
\]
\[
\lim_{t\to\infty}\max_{s\in[0,1]}
\lVert\alpha_g(x'_\xi(s,t))-x'_\xi(s,t)\rVert=0\quad\forall g\in N
\]
and $x'_\xi(0,t)=1$. 
Since $\alpha:G\curvearrowright A_\flat$ is in $\AC(\mathcal{O}_\infty,\mu^G)$ 
by Lemma \ref{AC'} (3), 
we can find Rohlin projections for $\alpha_\xi$ in $(A_\flat)^{\alpha|N}$. 
This, together with Theorem \ref{refinedNakamura}, enables us to 
apply the usual stability argument. 
Thus, for any $\ep>0$, there exists $y_2:[0,\infty)\to U(I)$ such that 
\[
\lim_{t\to\infty}\lVert[y_2(t),a]\rVert=0\quad\forall a\in A,\quad 
\lim_{t\to\infty}\lVert\alpha_g(y_2(t))-y_2(t)\rVert=0\quad\forall g\in N
\]
and 
\[
\limsup_{t\to\infty}\lVert x'_\xi(1,t)-y_2(t)\alpha_\xi(y_2(t)^*)\rVert<\ep. 
\]
Moreover, the equivalence class of $(y_2(t))_t$ in $A^\flat$ 
belongs to $U(I^\flat\cap A')_0$. 
Consequently we have 
\[
\limsup_{t\to\infty}
\lVert x_g(1,t)-y_1(1,t)y_2(t)\alpha_g(y_2(t)^*y_1(1,t)^*)\rVert<\ep
\]
for all $g\in N\cup\{\xi\}$. 
Let $(\mathbf{x}_g)_g$ denote the $\alpha$-cocycle in $U(I^\flat\cap A')$ 
consisting of the equivalence classes of $(x_g(1,t))_t$. 
The above argument means that for any $\ep>0$ 
there exists $\mathbf{y}\in U(I^\flat\cap A')_0$ 
such that $\lVert\mathbf{x}_g-\mathbf{y}\alpha_g(\mathbf{y}^*)\rVert<\ep$ 
holds for any $g\in N\cup\{\xi\}$. 
Then the statement follows from the lemma above. 
\end{proof}

\begin{theorem}\label{KKtrivcc}
Let $G$ be a poly-$\Z$ group and 
let $A$ be a unital separable $C^*$-algebra. 
Let $(\alpha,u):G\curvearrowright A$ and $(\beta,v):G\curvearrowright A$ be 
cocycle actions belonging to $\AC(\mathcal{O}_\infty,\mu^G)$. 
Suppose that 
there exists a family $(x_g)_g$ of unitaries in $A^\flat$ such that 
\[
(\Ad x_g\circ\alpha_g)(a)=\beta_g(a)\quad\forall g\in G,\ \forall a\in A, 
\]
\[
x_g\alpha_g(x_h)u(g,h)x_{gh}^*=v(g,h)\quad\forall g,h\in G. 
\]
Then $(\alpha,u)$ and $(\beta,v)$ are cocycle conjugate 
via an asymptotically inner automorphism. 
\end{theorem}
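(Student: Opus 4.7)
The plan is an induction on the Hirsch length of $G$, carrying out an Evans-Kishimoto intertwining argument at the inductive step, with the stability results of Section 5 providing the control needed to make the perturbations converge. The base case $G=\{1\}$ is trivial. For the inductive step, fix a normal poly-$\Z$ subgroup $N\subset G$ of Hirsch length one less and $\xi\in G$ generating $G/N\cong\Z$.

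The first move is to realize the hypothesis by a family of continuous paths $x_g:[0,\infty)\to U(A)$ achieving the two asymptotic relations, and then restrict everything to $N$. The restricted data satisfies the hypotheses of the theorem for $(\alpha|N,u|_{N\times N})$ and $(\beta|N,v|_{N\times N})$, which lie in $\AC(\mathcal{O}_\infty,\mu^N)$ by Lemma \ref{AC} (2). By the induction hypothesis these restrictions are cocycle conjugate via an asymptotically inner automorphism $\theta$. Replacing $(\beta,v)$ by its conjugate under $\theta$ and a further cocycle perturbation (absorbing the resulting corrections into the $x_g$), I may reduce to the situation $\alpha|N=\beta|N$ and $u|_{N\times N}=v|_{N\times N}$.

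In this reduced form, $\Ad x_g\circ\alpha_g=\alpha_g$ for $g\in N$ forces each $x_g$ to lie in $A^\flat\cap A'$, and the cocycle relation on $N$ then says $(x_g)_{g\in N}$ is an $\alpha|N$-cocycle in $U(A^\flat\cap A')_0$. Proposition \ref{extendedH1stable} (applied with $I=A$ and the cocycle action restricted to $N$, which by Lemma \ref{AC} (2) still belongs to $\AC(\mathcal{O}_\infty,\mu^N)$) produces a central path $(y(t))_t$ with $y(t)\alpha_g(y(t))^*\to x_g$ asymptotically for every $g\in N$; after absorbing $y$, I may further assume $x_g=1$ in $A^\flat$ for all $g\in N$. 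The remaining intertwiner $x_\xi$ then satisfies $\Ad x_\xi\circ\alpha_\xi=\beta_\xi$ and, from the mixed cocycle relations $x_g\alpha_g(x_\xi)u(g,\xi)x_{g\xi}^*=v(g,\xi)$ and its companion, is asymptotically $\alpha|N$-equivariant over the common $N$-action.

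The proof is completed by an Evans-Kishimoto two-sided intertwining along the $\Z$-direction generated by $\xi$, carried out inside the asymptotic centralizer $(A_\flat)^{\alpha|N}$. That centralizer contains a unital copy of $\mathcal{O}_\infty$ by Lemma \ref{AC'} (3), so the argument of Proposition \ref{equivRohlin} yields Rohlin towers for $\alpha_\xi$ there. At each stage one corrects $\alpha_\xi$ (respectively $\beta_\xi$) by an inner automorphism to halve the distance to $\beta_\xi$ (respectively $\alpha_\xi$); Theorem \ref{refinedNakamura} provides the short homotopies needed to ensure asymptotic centrality of the correction, and the asymptotic $H^1$-stability and $H^2$-stability of $N$ from Theorem \ref{H1H2stable} let the correction be chosen almost $\alpha|N$-invariant and still close to the required $\alpha_\xi$-coboundary. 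Passing to the limit produces an asymptotically inner $\theta\in\Aut(A)$ that conjugates $(\alpha_\xi,u(\xi,\cdot))$ to $(\beta_\xi,v(\xi,\cdot))$; combined with the trivialized $N$-part it implements the desired cocycle conjugacy on all of $G$.

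The hardest point will be the bookkeeping at each Evans-Kishimoto step: the correcting unitaries have to be simultaneously almost $\alpha|N$-invariant (so that the reduction done in the previous paragraph is preserved as one iterates), almost central in $A$ (so that the Rohlin-tower-based intertwining argument for $\alpha_\xi$ applies), and must reduce the defect from $\beta_\xi$ by a definite factor. Threading the equivariant $H^1$- and $H^2$-stability of $N$ through the standard $\Z$-intertwining is precisely the delicate technical core, and is presumably the reason the authors describe their induction as "a bit tricky"; the asymmetry in the conclusion is also resolved here, since only the forward intertwining $x_g$ is used, while the backward correction is obtained from the stability theorem rather than a symmetric hypothesis.
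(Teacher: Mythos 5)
There is a genuine gap at the heart of the inductive step: the induction hypothesis you carry (Theorem \ref{KKtrivcc} for $N$) is too weak to perform the reduction ``replace $(\beta,v)$ by its conjugate under $\theta$ and absorb the corrections into the $x_g$.'' The abstract statement that $\alpha|N$ and $\beta|N$ are cocycle conjugate via \emph{some} asymptotically inner $\theta$ gives you no control over how a path of unitaries witnessing the asymptotic innerness of $\theta$ interacts with the given intertwiners $(x_g)_{g\in N}\subset A^\flat$. Concretely, after your reduction the new intertwiners for $g\in N$ form an $\alpha|N$-cocycle in $U(A^\flat\cap A')_0$, and you then invoke Proposition \ref{extendedH1stable} to cobound it; but that proposition requires as input a \emph{homotopy of asymptotic cocycles} $x_g(s,t)$ with $x_g(0,t)=1$, not merely a cocycle whose entries lie in the connected component of the identity. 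A cocycle in $U(A_\flat)_0$ need not be connected to the trivial cocycle through cocycles, and indeed for $N$ of Hirsch length $\geq 2$ there is a genuine obstruction (the class $\kappa^2$ in $H^2(N,K_0(A_\flat))$ of Section 7), so the step ``after absorbing $y$, I may assume $x_g=1$ for all $g\in N$'' fails in general.

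The paper resolves exactly this difficulty by strengthening the statement being inducted on: Theorem \ref{KKtrivcc} is proved in mutual induction with Theorem \ref{KKtrivcc2}, whose conclusion supplies a continuous path $w$ with $\lim_s\Ad w(s)=\gamma$ \emph{and} the compatibility $\lim_s w(s)(x_g(s)\otimes 1)(\alpha_g\otimes\mu^G_g)(w(s))^*=c_g$ for genuine unitaries $c_g$. It is this compatibility of the innerness witness with the given intertwiners — obtained by applying Theorem \ref{KKtrivcc} to a cocycle action on the compactified algebra $C([0,\infty],A\otimes\mathcal{O}_\infty)$ and then using Proposition \ref{extendedH1stable} on the resulting two-parameter family (which \emph{does} come with the required homotopy) — that lets one trivialize the $N$-part while keeping track of $x_\xi$. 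Your treatment of the $\xi$-direction (Rohlin towers in $(A_\flat)^{\alpha|N}$, Theorem \ref{refinedNakamura}, and an intertwining in the spirit of Theorem \ref{equivNakamura}) matches the paper's, but without the strengthened inductive statement the argument does not close.
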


\begin{theorem}\label{KKtrivcc2}
Let $G$ be a poly-$\Z$ group and 
let $(\alpha,u)$ and $(\beta,v)$ be cocycle actions of $G$ 
on a unital separable $C^*$-algebra $A$. 
Suppose that 
there exists a family $(x_g)_g$ of unitaries in $C_b([0,\infty),A)$ such that 
\[
\lim_{s\to\infty}(\Ad x_g(s)\circ\alpha_g)(a)=\beta_g(a)
\quad\forall g\in G,\ \forall a\in A, 
\]
\[
\lim_{s\to\infty}x_g(s)\alpha_g(x_h(s))u(g,h)x_{gh}(s)^*
=v(g,h)\quad\forall g,h\in G. 
\]
Then 
there exist a continuous map $w:[0,\infty)\to U(A\otimes\mathcal{O}_\infty)$, 
$\gamma\in\Aut(A\otimes\mathcal{O}_\infty)$ and 
a family $(c_g)_g$ of unitaries in $A\otimes\mathcal{O}_\infty$ satisfying 
\[
\lim_{s\to\infty}\Ad w(s)(a)=\gamma(a)
\quad\forall a\in A\otimes\mathcal{O}_\infty, 
\]
\[
\lim_{s\to\infty}w(s)(x_g(s)\otimes1)(\alpha_g\otimes\mu^G_g)(w(s))^*=c_g
\quad\forall g\in G
\]
and 
\[
c_g(\alpha_g\otimes\mu^G_g)(c_h)(u(g,h)\otimes1)c_{gh}^*
=\gamma(v(g,h)\otimes1)\quad\forall g,h\in G. 
\]
\end{theorem}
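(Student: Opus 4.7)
The plan is to tensor everything with $(\mathcal{O}_\infty, \mu^G)$ so that the ambient cocycle actions automatically lie in $\AC(\mathcal{O}_\infty, \mu^G)$, reducing the statement to a setting where Theorem \ref{KKtrivcc} applies, and then to extract a continuous path $w(s)$ adapted to the given family $x_g(s)$.

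First, I would pass to the cocycle actions $(\alpha', u') = (\alpha \otimes \mu^G, u \otimes 1)$ and $(\beta', v') = (\beta \otimes \mu^G, v \otimes 1)$ on $A \otimes \mathcal{O}_\infty$. Lemma \ref{AC}(3) places $\mu^G$ in $\AC(\mathcal{O}_\infty, \mu^G)$ (as an outer action on the unital Kirchberg algebra $\mathcal{O}_\infty$), and Lemma \ref{AC}(2), applied after a harmless flip of tensor factors, then places both $(\alpha', u')$ and $(\beta', v')$ in $\AC(\mathcal{O}_\infty, \mu^G)$. The unitaries $y_g(s) = x_g(s) \otimes 1 \in C_b([0, \infty), A \otimes \mathcal{O}_\infty)$ descend to unitaries $\mathbf{y}_g$ in $(A \otimes \mathcal{O}_\infty)^\flat$ satisfying, exactly,
\[
\Ad \mathbf{y}_g \circ \alpha'_g = \beta'_g, \qquad \mathbf{y}_g \alpha'_g(\mathbf{y}_h) u'(g,h) \mathbf{y}_{gh}^* = v'(g,h);
\]
the first relation is checked on $A \otimes 1$ directly from the hypothesis, while on $1 \otimes \mathcal{O}_\infty$ both sides act as $1 \otimes \mu^G_g$. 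Thus Theorem \ref{KKtrivcc} applies and produces an asymptotically inner automorphism $\gamma \in \Aut(A \otimes \mathcal{O}_\infty)$ realising a cocycle conjugacy of $(\alpha', u')$ with $(\beta', v')$.

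The second task is to produce the continuous path $w : [0, \infty) \to U(A \otimes \mathcal{O}_\infty)$ with $\Ad w(s) \to \gamma$ pointwise and with $w(s) y_g(s) \alpha'_g(w(s))^* \to c_g$ in norm for suitable $c_g \in A \otimes \mathcal{O}_\infty$. Granting this, the identity $\Ad c_g \circ \alpha'_g = \gamma \circ \beta'_g \circ \gamma^{-1}$ follows by rewriting $w(s) y_g(s) \alpha'_g(w(s)^* a w(s)) y_g(s)^* w(s)^*$ and passing to the limit, using $w(s)^* a w(s) \to \gamma^{-1}(a)$ together with $\Ad y_g(s) \circ \alpha'_g \to \beta'_g$. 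The $2$-cocycle identity $c_g \alpha'_g(c_h) u'(g,h) c_{gh}^* = \gamma(v'(g,h))$ is then a direct consequence of comparing the two expressions
$\Ad(c_g \alpha'_g(c_h) u'(g,h)) \circ \alpha'_{gh} = (\Ad c_g \circ \alpha'_g)(\Ad c_h \circ \alpha'_h)$
and $\Ad \gamma(v'(g,h)) \circ \gamma \circ \beta'_{gh} \circ \gamma^{-1} = \gamma \circ \beta'_g \beta'_h \circ \gamma^{-1}$, invoking the cocycle identities for $u'$ and $v'$.

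The hard part is the norm convergence of $w(s) y_g(s) \alpha'_g(w(s))^*$ to a fixed unitary: individually neither $w(s)$ nor $y_g(s)$ converges in norm, so a generic continuous path $w(s)$ asymptotically implementing $\gamma$ will not suffice. This forces one to retrace the Evans–Kishimoto intertwining argument underlying Theorem \ref{KKtrivcc}, choosing $w(s)$ as an infinite telescoping product of elementary perturbations each constructed directly from the given $x_g(s)$; with that choice the product $w(s) y_g(s) \alpha'_g(w(s))^*$ telescopes across intertwining stages, and its residual error is controlled by the same bounds (coming from the asymptotic $H^1$- and $H^2$-stability packaged in Section~5 and Proposition \ref{extendedH1stable}) that force $\Ad w(s) \to \gamma$. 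A diagonal/rate-matching argument over the successive stages then supplies the required norm limit and defines $c_g$.
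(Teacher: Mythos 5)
Your reductions are sound as far as they go: tensoring with $(\mathcal{O}_\infty,\mu^G)$ so that both cocycle actions land in $\AC(\mathcal{O}_\infty,\mu^G)$, observing that $(x_g(s)\otimes 1)_s$ defines a family of unitaries in $(A\otimes\mathcal{O}_\infty)^\flat$ satisfying the exact hypotheses of Theorem \ref{KKtrivcc}, and deriving the $2$-cocycle identity for $(c_g)_g$ from $\Ad c_g\circ(\alpha_g\otimes\mu^G_g)=\gamma\circ(\beta_g\otimes\mu^G_g)\circ\gamma^{-1}$ are all legitimate. You also correctly diagnose that the entire content of the theorem is the simultaneous norm convergence $w(s)(x_g(s)\otimes1)(\alpha_g\otimes\mu^G_g)(w(s))^*\to c_g$, which a generic path implementing the asymptotic innerness of $\gamma$ will not satisfy.

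However, your resolution of that difficulty --- ``retrace the Evans--Kishimoto intertwining argument, take $w(s)$ to be a telescoping product of elementary perturbations built from $x_g(s)$, and finish with a diagonal/rate-matching argument'' --- is not a proof but a restatement of what must be proved. In this paper the proof of Theorem \ref{KKtrivcc} is not an explicit telescoping product in the unitary group: it runs through twisted crossed products by a normal subgroup $N$, the equivariant Nakamura theorem, and induction on the Hirsch length, so there is no ready-made sequence of elementary perturbations adapted to $x_g(s)$ available to telescope, and extracting one is precisely the strengthening that Theorem \ref{KKtrivcc2} asserts. The device the paper actually uses, and which your sketch is missing, is to apply Theorem \ref{KKtrivcc} as a black box to the algebra $C([0,\infty],A\otimes\mathcal{O}_\infty)$ over the one-point compactification: $\tilde\alpha$ is $\alpha_g\otimes\mu^G_g$ pointwise, $\tilde\beta$ is the $x_g(s)$-perturbation at finite $s$ and $\beta_g\otimes\mu^G_g$ at $s=\infty$, and the intertwining unitary in the $\flat$-algebra is the two-variable function $\tilde x_g(s,t)=x_g(\min\{s,t\})\otimes1$. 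This yields $\gamma=(\gamma_s)_s$ and $c_g=(c_g(s))_s$ coherently in $s$; after normalizing $\gamma_0=\id$ and $c_g(0)=x_g(0)\otimes1$, one forms $y_g(s,t)=c_g(s)^*w(s,t)\tilde x_g(s,t)\tilde\alpha_g(w(s,t)^*)$, checks it is asymptotically central and asymptotically a cocycle with $y_g(0,t)=1$, and invokes Proposition \ref{extendedH1stable} to absorb the discrepancy at $s=\infty$ into a correction $z(t)$, so that $w'(t)=z(t)^*w(\infty,t)$ has the required compatibility. Without this (or an equally concrete) mechanism, the hard step in your proposal remains a genuine gap.
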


Notice that the conclusion of Theorem \ref{KKtrivcc2} implies 
\begin{align*}
(\gamma\circ(\beta_g\otimes\mu^G_g)\circ\gamma^{-1})(a)
&=(\Ad w\circ\Ad(x_g\otimes1)\circ(\alpha_g\otimes\mu^G_g)\circ\Ad w^*)(a)\\
&=((\Ad w(x_g\otimes1)(\alpha_g\otimes\mu^G_g)(w^*))
\circ(\alpha_g\otimes\mu^G_g))(a)\\
&=(\Ad c_g\circ(\alpha_g\otimes\mu^G_g))(a)
\end{align*}
for all $g\in G$ and $a\in A$. 
Therefore, 
$(\alpha\otimes\mu^G,u\otimes1)$ and $(\beta\otimes\mu^G,v\otimes1)$ 
are $KK$-trivially cocycle conjugate. 

\begin{proof}[Proof of Theorem \ref{KKtrivcc} and Theorem \ref{KKtrivcc2}]
The proof is by induction on the Hirsch length of $G$. 
By \cite[Theorem 5]{Na00ETDS}, Theorem \ref{KKtrivcc} is known for $G=\Z$. 

Let $G$ be a poly-$\Z$ group. 
Assuming that Theorem \ref{KKtrivcc} is known for $G$, 
we would like to prove that 
Theorem \ref{KKtrivcc2} holds for $G$. 

Let $(\alpha,u):G\curvearrowright A$, $(\beta,v):G\curvearrowright A$ 
and $(x_g)_g$ be as in the statement. 
We denote the one point compactification of $[0,\infty)$ 
by $[0,\infty]=[0,\infty)\cup\{\infty\}$. 
Define cocycle actions $(\tilde\alpha,\tilde u)$, $(\tilde\beta,\tilde v)$ 
of $G$ on $C([0,\infty],A\otimes\mathcal{O}_\infty)$ by 
\[
\tilde\alpha_g(f)(s)=(\alpha_g\otimes\mu^G_g)(f(s)),\quad 
\tilde u(g,h)(s)=u(g,h)\otimes1, 
\]
\[
\tilde\beta_g(f)(s)
=\begin{cases}
(\Ad(x_g(s)\otimes1)\circ(\alpha_g\otimes\mu^G_g))(f(s))&s\in[0,\infty)\\
(\beta_g\otimes\mu^G_g)(f(s))&s=\infty\end{cases}
\]
and 
\[
\tilde v(g,h)
=\begin{cases}
(x_g(s)\alpha_g(x_h(s))u(g,h)x_{gh}^*)\otimes1&s\in[0,\infty)\\
v(g,h)\otimes1&s=\infty\end{cases}
\]
It is easy to see that 
$(\tilde\alpha,\tilde u)$ and $(\tilde\beta,\tilde v)$ 
are in $\AC(\mathcal{O}_\infty,\mu^G)$. 
We define a continuous map $\tilde x_g$ 
from $[0,\infty)$ to $U(C([0,\infty],A\otimes\mathcal{O}_\infty))$ by 
\[
\tilde x_g(s,t)=x_g(\min\{s,t\})\otimes1\quad\forall 
(s,t)\in[0,\infty]\times [0,\infty). 
\]
We think of $\tilde x_g$ 
as a unitary in $(C([0,\infty],A\otimes\mathcal{O}_\infty))^\flat$. 
It is not so hard to see 
\[
(\Ad\tilde x_g\circ\tilde\alpha_g)(f)=\tilde\beta_g(f)
\quad\forall f\in C([0,\infty],A\otimes\mathcal{O}_\infty). 
\]
Moreover we can check 
\[
\tilde x_g\tilde\alpha_g(\tilde x_h)\tilde u(g,h)\tilde x_{gh}^*
=\tilde v(g,h)\quad\forall g,h\in G. 
\]
It follows from Theorem \ref{KKtrivcc} that 
there exist an asymptotically inner automorphism 
$\gamma\in\Aut(C([0,\infty],A\otimes\mathcal{O}_\infty))$ and 
a family $(c_g)_g$ of unitaries 
in $C([0,\infty],A\otimes\mathcal{O}_\infty)$ such that 
\[
\Ad c_g\circ\tilde\alpha_g=\gamma\circ\tilde\beta_g\circ\gamma^{-1}
\quad\forall g\in G
\]
and 
\[
c_g\tilde\alpha_g(c_h)\tilde u(g,h)c_{gh}^*=\gamma(\tilde v(g,h))
\quad\forall g,h\in G. 
\]
We write $\gamma=(\gamma_s)_s$ and $c_g=(c_g(s))_s$ for $s\in[0,\infty]$. 
For $s\in[0,\infty)$, we have 
\begin{align*}
&\gamma_0^{-1}\circ\gamma_s\circ\Ad(x_g(s)\otimes1)
\circ(\alpha_g\otimes\mu^G_g)\circ\gamma_s^{-1}\circ\gamma_0\\
&=\gamma_0^{-1}\circ\Ad c_g(s)\circ(\alpha_g\otimes\mu^G_g)\circ\gamma_0\\
&=\Ad\gamma_0^{-1}(c_g(s)c_g(0)^*)
\circ\gamma_0^{-1}\circ\Ad c_g(0)\circ(\alpha_g\otimes\mu^G_g)\circ\gamma_0\\
&=\Ad\gamma_0^{-1}(c_g(s)c_g(0)^*)\circ\Ad(x_g(0)\otimes1)
\circ(\alpha_g\otimes\mu^G_g). 
\end{align*}
Similarly we have 
\[
\gamma_0^{-1}\circ\gamma_\infty\circ(\beta_g\otimes\mu^G_g)
\circ\gamma_\infty^{-1}\circ\gamma_0
=\Ad\gamma_0^{-1}(c_g(\infty)c_g(0)^*)\circ\Ad(x_g(0)\otimes1)
\circ(\alpha_g\otimes\mu^G_g). 
\]
Hence, by replacing $\gamma_s$ and $c_g(s)$ 
with $\gamma_0^{-1}\circ\gamma_s$ and 
$\gamma_0^{-1}(c_g(s)c_g(0)^*)(x_g(0)\otimes1)$, 
we may assume $\gamma_0=\id$ and $c_g(0)=x_g(0)\otimes1$. 
Since $\gamma\in\Aut(C([0,\infty],A\otimes\mathcal{O}_\infty))$ 
is (still) asymptotically inner, 
there exists a continuous map $w$ 
from $[0,\infty]\times[0,\infty)$ to $U(A\otimes\mathcal{O}_\infty)$ 
such that $\gamma=\lim_t\Ad w(\cdot,t)$. 
We may assume $w(0,t)=1$ for all $t\in[0,\infty)$. 
For $(s,t)\in[0,\infty]\times[0,\infty)$, we consider 
\[
y_g(s,t)
=c_g(s)^*w(s,t)\tilde x_g(s,t)(\alpha_g\otimes\mu^G_g)(w(s,t)^*). 
\]
It is not so hard to see 
\[
\lim_{t\to\infty}\max_{s\in[0,\infty]}
\lVert[y_g(s,t),a]\rVert=0
\quad\forall g\in G,\ a\in A\otimes\mathcal{O}_\infty
\]
and $y_g(0,t)=1$. 
In order to show 
\[
\lim_{t\to\infty}\max_{s\in[0,\infty]}
\lVert y_g(s,t)(\alpha_g\otimes\mu^G_g)(y_h(s,t))
-y_{gh}(s,t)\rVert=0\quad\forall g,h\in G, 
\]
we regard $y_g$ as a map 
from $[0,\infty)$ to $U(C([0,\infty],A\otimes\mathcal{O}_\infty))$ and 
write 
\[
y_g(t)=c_g^*w(t)\tilde x_g(t)\tilde\alpha_g(w(t)^*). 
\]
Then we get 
\begin{align*}
y_g(t)\tilde\alpha_g(y_h(t))
&=c_g^*w(t)\tilde x_g(t)\tilde\alpha_g(w(t)^*)
\tilde\alpha_g(c_h^*w(t)\tilde x_h(t)\tilde\alpha_h(w(t)^*))\\
&\approx c_g^*w(t)\tilde x_g(t)\tilde\alpha_g(\gamma^{-1}(c_h^*))
\tilde\alpha_g(\tilde x_h(t)\tilde\alpha_h(w(t)^*))\\
&\approx c_g^*w(t)\tilde\beta_g(\gamma^{-1}(c_h^*))\tilde x_g(t)
\tilde\alpha_g(\tilde x_h(t)\tilde\alpha_h(w(t)^*))\\
&\approx c_g^*\gamma(\tilde\beta_g(\gamma^{-1}(c_h^*)))w(t)
\tilde v(g,h)\tilde x_{gh}(t)\tilde u(g,h)^*
\tilde\alpha_g(\tilde\alpha_h(w(t)^*))\\
&\approx c_g^*c_g\tilde\alpha_g(c_h^*)c_g^*\gamma(\tilde v(g,h))w(t)
\tilde x_{gh}(t)\tilde\alpha_{gh}(w(t)^*)\tilde u(g,h)^*\\
&\approx \tilde u(g,h)c_{gh}^*w(t)
\tilde x_{gh}(t)\tilde\alpha_{gh}(w(t)^*)\tilde u(g,h)^*\\
&\approx y_{gh}(t), 
\end{align*}
when $t$ is sufficiently large, as desired. 
By Proposition \ref{extendedH1stable}, 
we can find a continuous map $z:[0,\infty)\to U(A\otimes\mathcal{O}_\infty)$ 
such that 
\[
\lim_{t\to\infty}\lVert[z(t),a]\rVert=0
\quad\forall a\in A\otimes\mathcal{O}_\infty
\]
and 
\[
\lim_{t\to\infty}
\lVert y_g(\infty,t)-z(t)(\alpha_g\otimes\mu^G_g)(z(t)^*)\rVert=0
\quad\forall g\in G. 
\]
Put $w'(t)=z(t)^*w(\infty,t)$. 
Then one has 
\begin{align*}
&\lim_{t\to\infty}w'(t)(x_g(t)\otimes1)(\alpha_g\otimes\mu^G_g)(w'(t)^*)\\
&=\lim_{t\to\infty}z(t)^*w(\infty,t)(x_g(t)\otimes1)
(\alpha_g\otimes\mu^G_g)(w(\infty,t)^*z(t))\\
&=\lim_{t\to\infty}z(t)^*c_g(\infty)y_g(\infty,t)
(\alpha_g\otimes\mu^G_g)(z(t))\\
&=c_g(\infty)
\end{align*}
and 
\[
\lim_{t\to\infty}\Ad w'(t)(a)=\lim_{t\to\infty}\Ad w(\infty,t)(a)
=\gamma_\infty(a)\quad\forall a\in A\otimes\mathcal{O}_\infty, 
\]
which imply that Theorem \ref{KKtrivcc2} holds for $G$. 

Next, assuming that 
Theorem \ref{KKtrivcc2} is known for any poly-$\Z$ groups 
with Hirsch length less than $l$, we prove that 
Theorem \ref{KKtrivcc} holds for a poly-$\Z$ group $G$ 
whose Hirsch length equals $l$. 
There exists a normal poly-$\Z$ subgroup $N\subset G$ 
whose Hirsch length equals $l-1$. 
Take $\xi\in G$ so that $G$ is generated by $N$ and $\xi$. 

Let $(\alpha,u):G\curvearrowright A$ and $(\beta,v):G\curvearrowright A$ be 
cocycle actions belonging to $\AC(\mathcal{O}_\infty,\mu^G)$. 
Suppose that 
there exists a family $(x_g)_g$ of continuous maps 
from $[0,\infty)$ to $U(A)$ such that 
\[
\lim_{t\to\infty}(\Ad x_g(t)\circ\alpha_g)(a)=\beta_g(a)
\quad\forall g\in G,\ \forall a\in A, 
\]
\[
\lim_{t\to\infty}x_g(t)\alpha_g(x_h(t))u(g,h)x_{gh}(t)^*
=v(g,h)\quad\forall g,h\in G. 
\]
By Theorem \ref{KKtrivcc2} for the poly-$\Z$ group $N$, 
we can find a continuous map $w:[0,\infty)\to U(A\otimes\mathcal{O}_\infty)$, 
$\gamma\in\Aut(A\otimes\mathcal{O}_\infty)$ and 
a family $(c_g)_{g\in N}$ of unitaries in $A\otimes\mathcal{O}_\infty$ 
satisfying 
\[
\lim_{t\to\infty}\Ad w(t)(a)=\gamma(a)
\quad\forall a\in A\otimes\mathcal{O}_\infty, 
\]
\[
\lim_{t\to\infty}w(t)(x_g(t)\otimes1)(\alpha_g\otimes\mu^G_g)(w(t))^*=c_g
\quad\forall g\in N
\]
and 
\[
c_g(\alpha_g\otimes\mu^G_g)(c_h)(u(g,h)\otimes1)c_{gh}^*
=\gamma(v(g,h)\otimes1)\quad\forall g,h\in N. 
\]
Define cocycle actions $(\tilde\alpha,\tilde u)$, $(\tilde\beta,\tilde v)$ 
of $G$ on $A\otimes\mathcal{O}_\infty$ by 
\[
\tilde\alpha_g=\alpha_g\otimes\mu^G_g,\quad \tilde u(g,h)=u(g,h)\otimes1
\]
and 
\[
\tilde\beta_g=\gamma\circ(\beta_g\otimes\mu^G_g)\circ\gamma^{-1},\quad 
\tilde v(g,h)=\gamma(v(g,h)\otimes1). 
\]
We let 
\[
\tilde x_g(t)=w(t)(x_g(t)\otimes1)\tilde\alpha_g(w(t)^*)\quad\forall g\in G
\]
and regard it as an element of $(A\otimes\mathcal{O}_\infty)^\flat$. 
It is routine to verify 
\[
(\Ad\tilde x_g\circ\tilde\alpha_g)(a)=\tilde\beta_g(a)
\quad\forall g\in G,\ a\in A\otimes\mathcal{O}_\infty
\]
and 
\[
\tilde x_g\tilde\alpha_g(\tilde x_h)\tilde u(g,h)\tilde x_{gh}^*=\tilde v(g,h)
\quad\forall g,h\in G. 
\]
By construction, 
we have $\tilde x_g=c_g$ and $\tilde\beta_g=\Ad c_g\circ\tilde\alpha_g$ 
for $g\in N$. 

Let $B_\alpha$ and $B_\beta$ be 
the twisted crossed products of $A\otimes\mathcal{O}_\infty$ 
by $(\tilde\alpha,\tilde u)$ and $(\tilde\beta,\tilde v)$, respectively. 
We denote the implementing unitary representations of $G$ 
in $B_\alpha$ and $B_\beta$ 
by $(\lambda^\alpha_g)_g$ and $(\lambda^\beta_g)_g$, respectively. 
We can define a homomorphism $\pi:B_\beta\to(B_\alpha)^\flat$ by 
\[
\pi(a)=a\quad\forall a\in A\otimes\mathcal{O}_\infty
\quad\text{and}\quad 
\pi(\lambda^\beta_g)=\tilde x_g\lambda^\alpha_g\quad\forall g\in G. 
\]
Let $C_\alpha\subset B_\alpha$ be the subalgebra 
generated by $A\otimes\mathcal{O}_\infty$ and 
$\{\lambda^\alpha_g\mid g\in N\}$. 
The $C^*$-algebra $C_\alpha$ is canonically isomorphic to 
the twisted crossed product of $A\otimes\mathcal{O}_\infty$ 
by the restriction of $(\tilde\alpha,\tilde u)$ to $N$. 
In the same way, we define $C_\beta\subset B_\beta$. 
Then we have $\pi(C_\beta)=C_\alpha$ 
because $\tilde x_g=c_g$ holds for $g\in N$. 
Furthermore, for any $z\in B_\beta$, it is easy to see 
\[
(\Ad\tilde x_\xi\circ\Ad\lambda^\alpha_\xi\circ\pi)(z)
=(\Ad\pi(\lambda^\beta_\xi)\circ\pi)(z)
=(\pi\circ\Ad\lambda^\beta_\xi)(z). 
\]
In particular, for any $z\in C_\alpha$, 
\[
(\Ad\tilde x_\xi\circ\Ad\lambda^\alpha_\xi)(z)
=(\pi\circ\Ad\lambda^\beta_\xi\circ\pi^{-1})(z), 
\]
which means that 
$\Ad\lambda^\alpha_\xi$ and $\pi\circ\Ad\lambda^\beta_\xi\circ\pi^{-1}$ are 
asymptotically unitarily equivalent in $\Aut(C_\alpha)$ 
by $\tilde x_\xi:[0,\infty)\to U(A\otimes\mathcal{O}_\infty)$. 
Then, in the same way as Theorem \ref{equivNakamura} we can show that 
$(\tilde\alpha,\tilde u)$ and $(\tilde\beta,\tilde v)$ are 
cocycle conjugate via an asymptotically inner automorphism. 
Thanks to Theorem \ref{McDuff}, 
we can conclude that $(\alpha,u)$ and $(\beta,v)$ are cocycle conjugate 
via an asymptotically inner automorphism. 
Thus, Theorem \ref{KKtrivcc} is true for $G$. 
\end{proof}

\begin{corollary}
Let $G$ be a poly-$\Z$ group and 
let $A$ be a unital separable $C^*$-algebra. 
Suppose that $(\alpha,u):G\curvearrowright A$ is a cocycle action 
in $\AC(\mathcal{O}_\infty,\mu^G)$ and that 
there exists a family of unitaries $(x_g)_g$ in $A^\flat$ satisfying 
\[
\alpha_g(a)=\Ad x_g(a),\quad 
x_gx_hx_{gh}^*=u(g,h)\quad\forall a\in A,\ g,h\in G. 
\]
Then, $(\alpha,u)$ and 
$\id\otimes\mu^G:G\curvearrowright A\otimes\mathcal{O}_\infty$ are 
cocycle conjugate 
via an isomorphism asymptotically unitarily equivalent to 
the embedding $a\mapsto a\otimes1$. 

In particular, for every unital Kirchberg algebra $A$, 
all asymptotically representable outer cocycle actions of $G$ on $A$ are 
mutually $KK$-trivially cocycle conjugate. 
\end{corollary}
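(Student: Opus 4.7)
The plan is to reduce, in two steps, to Theorem \ref{KKtrivcc}. Since $(\alpha,u)$ lies in $\AC(\mathcal{O}_\infty,\mu^G)$, Lemma \ref{AC'}(1) already supplies a cocycle conjugacy between $(\alpha,u)$ on $A$ and $(\alpha\otimes\mu^G, u\otimes 1)$ on $A\otimes\mathcal{O}_\infty$, implemented by an isomorphism $\theta_1\colon A\to A\otimes\mathcal{O}_\infty$ that is asymptotically unitarily equivalent to the embedding $\iota\colon a\mapsto a\otimes 1$. It therefore suffices to construct an asymptotically inner automorphism $\theta_2\in\Aut(A\otimes\mathcal{O}_\infty)$ implementing a cocycle conjugacy from $(\alpha\otimes\mu^G, u\otimes 1)$ to the genuine action $(\id_A\otimes\mu^G,1)$; then $\theta_2\circ\theta_1$ gives the desired isomorphism, which remains asymptotically unitarily equivalent to $\iota$ since $\theta_2$ is asymptotically inner.

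For $\theta_2$, I would apply Theorem \ref{KKtrivcc} to the family $y_g:=x_g^*\otimes 1 \in (A\otimes\mathcal{O}_\infty)^\flat$. The relation $\alpha_g=\Ad x_g$ on $A$ gives, for $a\in A$ and $b\in\mathcal{O}_\infty$,
\[
(\Ad y_g \circ (\alpha_g\otimes\mu^G_g))(a\otimes b) = x_g^*\alpha_g(a)x_g \otimes \mu^G_g(b) = a\otimes \mu^G_g(b),
\]
verifying the first hypothesis of Theorem \ref{KKtrivcc}. For the cocycle identity
\[
y_g\,(\alpha_g\otimes\mu^G_g)(y_h)\,(u(g,h)\otimes 1)\,y_{gh}^* = 1,
\]
the multiplicativity $x_g x_h = u(g,h) x_{gh}$ together with its standard consequence $\alpha_g(x_h)=u(g,h)\,u(ghg^{-1},g)^*\,x_{ghg^{-1}}$ in $A^\flat$ (the third defining relation of an asymptotic representation, obtained from the two given relations and the $u$-cocycle identity) makes the expression collapse to $x_g^* x_g=1$ after substitution. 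Theorem \ref{KKtrivcc} then supplies the required $\theta_2$.

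The final assertion follows at once. By Lemma \ref{AC}(3), every outer cocycle action on a unital Kirchberg algebra $A$ automatically lies in $\AC(\mathcal{O}_\infty,\mu^G)$, so the first part applies to each asymptotically representable outer cocycle action $(\alpha,u)$ and yields an isomorphism $\theta_{(\alpha,u)}\colon A\to A\otimes\mathcal{O}_\infty$ asymptotically unitarily equivalent to $\iota$ and implementing a cocycle conjugacy with $(\id\otimes\mu^G,1)$. For two such actions $(\alpha,u),(\beta,v)$, the composite $\theta_{(\beta,v)}^{-1}\circ\theta_{(\alpha,u)}\in\Aut(A)$ implements a cocycle conjugacy between them, and since both $\theta_{(\alpha,u)}$ and $\theta_{(\beta,v)}$ have $KK$-class equal to $KK(\iota)$, the composite has trivial $KK$-class. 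The main point requiring care is the derivation of the formula for $\alpha_g(x_h)$ in $A^\flat$ when $\alpha_g$ denotes the pointwise extension: the corollary states only the implementation and multiplicativity relations, so one must track how the pointwise extension of $\alpha_g$ interacts with $\Ad x_g$ on the $x_h$'s themselves, rather than only on elements of $A$.
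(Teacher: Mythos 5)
Your overall architecture (reduce via Lemma \ref{AC'}(1), then apply Theorem \ref{KKtrivcc} on $A\otimes\mathcal{O}_\infty$) matches the paper's, but the way you feed the data into Theorem \ref{KKtrivcc} has a genuine gap. You take the base cocycle action to be $(\alpha\otimes\mu^G,u\otimes1)$ and the target to be $(\id\otimes\mu^G,1)$, so the cocycle identity you must verify contains the term $(\alpha_g\otimes\mu^G_g)(y_h)=\alpha_g(x_h^*)\otimes1$, where $\alpha_g$ is the pointwise extension to $A^\flat$. To collapse the expression you invoke $\alpha_g(x_h)=u(g,h)u(ghg^{-1},g)^*x_{ghg^{-1}}$ as a ``standard consequence'' of the two given relations. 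It is not: what actually follows from $x_gx_h=u(g,h)x_{gh}$ is
\[
\Ad x_g(x_h)=x_gx_hx_g^*=u(g,h)x_{gh}x_g^*=u(g,h)u(ghg^{-1},g)^*x_{ghg^{-1}},
\]
i.e.\ the identity holds for $\Ad x_g$, not for the pointwise extension $\alpha_g$. The hypothesis $\Ad x_g(a)=\alpha_g(a)$ is only for $a\in A$ (constant functions), and there is no reason for $\Ad x_g$ and $\alpha_g$ to agree on the elements $x_h\in A^\flat$. This is exactly why the definition of asymptotic representability lists $\alpha_g(v_h)=u(g,h)u(ghg^{-1},g)^*v_{ghg^{-1}}$ as a \emph{separate} axiom; the corollary deliberately assumes only the other two relations, so your derivation cannot be completed from the stated hypotheses. (You flagged this as ``the main point requiring care,'' but the care in fact cannot be supplied in the direction you chose.)

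The paper's proof sidesteps the problem by exploiting the asymmetry of Theorem \ref{KKtrivcc} in the opposite way: it takes the \emph{base} action to be $\id\otimes\mu^G$ with trivial $2$-cocycle and perturbing unitaries $x_g\otimes1$, and the target to be $(\alpha\otimes\mu^G,u\otimes1)$. Then the troublesome term becomes $(\id\otimes\mu^G_g)(x_h\otimes1)=x_h\otimes1$, since $\mu^G_g$ fixes $1$ and $\id$ does not touch $x_h$, and the cocycle identity reduces verbatim to the given relation $x_gx_hx_{gh}^*=u(g,h)$. Since the conclusion of Theorem \ref{KKtrivcc} (cocycle conjugacy via an asymptotically inner automorphism) is symmetric in the two actions, this choice of direction costs nothing. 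If you swap the roles accordingly, the rest of your argument — including the final ``in particular'' deduction via composing the two isomorphisms and comparing $KK$-classes with that of the embedding — is fine. (For the ``in particular'' part alone your original direction would also work, since asymptotically representable actions do satisfy the third relation; but the first assertion of the corollary is stated under the weaker hypotheses.)
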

\begin{proof}
By Lemma \ref{AC'} (1), 
$(\alpha,u)$ is cocycle conjugate to 
$(\alpha\otimes\mu^G,u\otimes1):G\curvearrowright A\otimes\mathcal{O}_\infty$ 
via an isomorphism asymptotically unitarily equivalent to 
the embedding $a\mapsto a\otimes1$. 
One has 
\[
((\Ad x_g\otimes1)\circ(\id\otimes\mu^G_g))(a)=(\alpha_g\otimes\mu^G_g)(a)
\quad\forall a\in A\otimes\mathcal{O}_\infty,\ g\in G
\]
and 
\[
(x_g\otimes1)(\id\otimes\mu^G_g)(x_h\otimes1)(x_{gh}\otimes1)^*
=u(g,h)\otimes1\quad\forall g,h\in G. 
\]
It follows from Theorem \ref{KKtrivcc} that 
$(\alpha\otimes\mu^G,u\otimes1)$ and $(\id\otimes\mu^G,1\otimes1)$ are 
cocycle conjugate via an asymptotically inner automorphism. 
This completes the proof. 
\end{proof}

\section{Poly-$\Z$ groups of Hirsch length two}

For every poly-$\Z$ group $G$, 
we choose and fix an outer action 
$\mu^G:G\curvearrowright\mathcal{O}_\infty$. 

Let $A$ be a unital (not-necessarily separable) $C^*$-algebra. 
Suppose that the trivial action $\{1\}\curvearrowright A$ 
admits an approximately central embedding of 
$\{1\}\curvearrowright\mathcal{O}_\infty$. 
The following are well-known facts, 
which will be used repeatedly without mention. 
\begin{enumerate}
\item For any $x\in K_0(A)$, 
there exists a full and properly infinite projection $p\in A$ 
with $K_0(p)=x$. 
If $p,q$ are full and properly infinite projections 
with the same $K_0$-class in $K_0(A)$, then 
they are Murray-von Neumann equivalent. 
\item For any ideal $J\subset A$, 
the canonical map $U(J)/U(J)_0\to K_1(J)$ is an isomorphism. 
Moreover, for any $u\in U(J)_0$, 
there exists a continuous map $\tilde u:[0,1]\to U(J)_0$ 
such that $\tilde u(0)=1$, $\tilde u(1)=u$ and $\Lip(\tilde u)\leq2\pi$. 
\end{enumerate}
In fact, (1) follows from \cite[Proposition 4.1.4]{Ro_text} 
(which was originally proved by J. Cuntz \cite{Cu81Annals}), 
because $1$ is a full and properly infinite projection. 
(2) follows from the proof of \cite[Theorem 3.1]{Ph02JOT}. 

Let $A$ be a unital Kirchberg algebra. 
In \cite[Corollary 2.8]{IMweakhomot}, it is shown that 
$K_i(A_\flat)$ is isomorphic to $KK^i(A,A)$ for $i=0,1$. 
In what follows, we identify these groups. 

\begin{lemma}\label{auto_flat}
For $\alpha\in\Aut(A)$, the following hold. 
\begin{enumerate}
\item Under the identification of $K_0(A_\flat)$ with $KK(A,A)$, 
$K_0(\alpha|A_\flat)$ corresponds to 
the homomorphism $x\mapsto KK(\alpha)\circ x\circ KK(\alpha)^{-1}$. 
\item Under the identification of $K_1(A_\flat)$ with $KK(SA,A)$, 
$K_1(\alpha|A_\flat)$ corresponds to 
the homomorphism $x\mapsto KK(\alpha)\circ x\circ KK(S\alpha)^{-1}$. 
\item If $\alpha$ is asymptotically inner and 
$u\in U(A^\flat)$ satisfies $\alpha=\Ad u$ on $A$, then 
$K_i(\Ad u|A_\flat)=K_i(\alpha|A_\flat)$ for $i=0,1$. 
\end{enumerate}
\end{lemma}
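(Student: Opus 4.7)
The plan is to use the identification $K_i(A_\flat) \cong KK^i(A,A)$ from \cite[Corollary 2.8]{IMweakhomot}, under which a projection $p = [(p_t)_t] \in A_\flat$ corresponds to the $KK$-class of the asymptotic homomorphism $\varphi^p \colon A \dashrightarrow A$ with $\varphi^p_t(a) = p_t a p_t$, while a unitary $v = [(v_t)_t] \in A_\flat$ corresponds via functional calculus with $v_t$ to a class in $KK(SA,A) \cong KK^1(A,A)$.

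For (1) the key computation is that the componentwise extension of $\alpha$ to $A_\flat$ sends $[(p_t)_t]$ to $[(\alpha(p_t))_t]$, whose associated asymptotic homomorphism satisfies
\[
\alpha(p_t) \cdot a \cdot \alpha(p_t) = \alpha\bigl(p_t \cdot \alpha^{-1}(a) \cdot p_t\bigr) = (\alpha \circ \varphi^p_t \circ \alpha^{-1})(a),
\]
so that the corresponding $KK$-class is $KK(\alpha) \circ KK(\varphi^p) \circ KK(\alpha)^{-1}$, yielding (1). Part (2) follows from the analogous computation applied to a unitary $v \in A_\flat$: the domain $SA$ gets conjugated by $S\alpha$ and the codomain $A$ by $\alpha$.

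For (3), first pick a continuous path of unitaries $(u_t)_{t \geq 0}$ in $U(A)$ with $\alpha(a) = \lim_t u_t a u_t^*$, and set $u' = [(u_t)_t] \in U(A^\flat)$. Since both $u$ and $u'$ implement $\alpha$ on $A$, the unitary $u(u')^*$ commutes with $A$ in $A^\flat$, hence $u(u')^* \in U(A_\flat)$; thus $\Ad u = \Ad(u(u')^*) \circ \Ad u'$ on $A_\flat$, and as $\Ad(u(u')^*)$ is inner in $A_\flat$, it acts trivially on $K_*(A_\flat)$. I may therefore assume $u = u'$.

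With this choice $\Ad u(p) = [(u_t p_t u_t^*)_t]$, and I would build a homotopy of asymptotic homomorphisms between $\varphi^{\alpha(p)}$ and $\varphi^{\Ad u(p)}$ via
\[
\Phi^s_t(a) = u_{T(s,t)} p_t u_{T(s,t)}^* \cdot a \cdot u_{T(s,t)} p_t u_{T(s,t)}^*, \quad s \in [0,1],
\]
where $T(s,t) = st + (1-s)h(t)$ with $h \colon [0,\infty) \to [0,\infty)$ chosen diagonally so that $\|u_{h(t)} p_t u_{h(t)}^* - \alpha(p_t)\| \to 0$ as $t \to \infty$. At $s=1$ this recovers $\varphi^{\Ad u(p)}$, while at $s=0$ it is asymptotically equivalent to $\varphi^{\alpha(p)}$; the uniform lower bound $T(s,t) \geq \tfrac{1}{2}\min(t, h(t)) \to \infty$ makes asymptotic multiplicativity hold uniformly in $s$. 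The $K_1$ case is parallel, using unitaries in $A_\flat$ in place of projections. The main obstacle is the diagonal choice of $h$: since the convergence $u_T b u_T^* \to \alpha(b)$ is only pointwise in $b$, one must select $h(t)$ depending on the entire sequence $(p_t)_t$, a standard diagonal manoeuvre of the sort repeatedly used in the paper (cf.\ the proof of Proposition \ref{extendedH1stable}).
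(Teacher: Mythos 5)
Your proof is correct, but for part (3) it takes a genuinely different and heavier route than the paper. The paper dismisses (1) and (2) as obvious consequences of \cite[Corollary 2.8]{IMweakhomot}; your explicit computation that the componentwise extension of $\alpha$ conjugates the associated asymptotic morphism by $\alpha$ on the target and by $\alpha$ (resp.\ $S\alpha$) on the source is exactly the content being invoked there. For (3), the paper argues entirely inside $K_0(A_\flat)$: given a projection $p\in A_\flat$, it chooses (by precisely the reindexation you perform, i.e.\ your $v=(u_{h(t)})_t$) a unitary $v\in U(A^\flat)$ with $vav^*=\alpha(a)$ for all $a\in A\cup\{p\}$, observes that $uv^*\in U(A_\flat)$, and concludes $K_0(upu^*)=K_0\bigl((uv^*)\alpha(p)(uv^*)^*\bigr)=K_0(\alpha(p))$ because conjugation by a unitary of $A_\flat$ itself does not move $K_0$-classes — no reference to the identification with $KK(A,A)$ is needed. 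You instead push both projections through that identification and connect the resulting asymptotic morphisms by the reparametrized homotopy $\Phi^s_t$. This does work: the bound $T(s,t)\ge\min(t,h(t))$ gives asymptotic multiplicativity uniformly in $s$, and the diagonal choice of $h$ handles the endpoint $s=0$; and the detour through homotopy of asymptotic morphisms is genuinely necessary in your setup, since $s\mapsto (q_{s,t})_t$ is not a norm-continuous path of projections in $A_\flat$ (the increments $|T(s,t)-T(s',t)|$ are not uniformly small in $t$). The cost is that your conclusion for (3) depends on the injectivity of $[p]\mapsto[\varphi^p]$, whereas the paper's does not; since you already have the reduction to $u=u'$ and the reindexed unitary $u_{h(t)}$ in hand, you could have concluded in one line as the paper does.
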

\begin{proof}
(1) and (2) are obvious. See \cite{IMweakhomot}. 
To show (3), take a projection $p\in A_\flat$. 
There exists a unitary $v\in A^\flat$ 
such that $\alpha(a)=vav^*$ holds for every $a\in A\cup\{p\}$. 
Because $uv^*$ is in $A_\flat$, we get 
\[
K_0(upu^*)=K_0(uv^*vpv^*vu^*)=K_0(vpv^*)=K_0(\alpha(p)), 
\]
and so $K_0(\Ad u|A_\flat)=K_0(\alpha|A_\flat)$. 
In the same way we obtain $K_1(\Ad u|A_\flat)=K_1(\alpha|A_\flat)$. 
\end{proof}

Let $p\in A$ and $q\in A_\flat$ be projections and 
let $\tilde q:[0,\infty)\to A$ be a lift of $q$. 
When $t$ is large enough, $p\tilde q(t)$ is close to a projection 
and its $K_0$-class does not depend on $t$. 
This correspondence gives rise to a homomorphism $K_0(A_\flat)\to K_0(A)$, 
which we denote by $p_*$. 
In the same fashion, 
we get the homomorphism $p_*:K_1(A_\flat)\to K_1(A)$. 
By a slight abuse of notation, we will use $p_*$ 
to denote induced homomorphisms $H^n(G,K_i(A_\flat))\to H^n(G,K_i(A))$.

\subsection{Uniqueness}

In this subsection, we determine 
when outer (cocycle) actions of poly-$\Z$ groups of Hirsch length two 
are mutually $KK$-trivially cocycle conjugate (Theorem \ref{uni_Hirsch2}). 

\begin{lemma}
Let $\alpha:\Z\curvearrowright A$ be an action 
in $\AC(\mathcal{O}_\infty,\mu^\Z)$ 
and let $J\subset A$ be a globally $\alpha$-invariant ideal. 
Let $u\in U(J)$ be a unitary satisfying $K_1(u)=0$ in $K_1(J)$. 
For any $\ep>0$, there exists $v\in U(J)_0$ 
such that $\lVert u-v\alpha(v^*)\rVert<\ep$, 
where the $\Z$-action $\alpha$ is identified with a single automorphism. 
\end{lemma}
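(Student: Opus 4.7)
The plan is a Rohlin tower argument for the $\Z$-action, exploiting $\alpha\in\AC(\mathcal{O}_\infty,\mu^\Z)$ together with $K_1(u)=0$. Since $A$ admits an approximately central embedding of $\mathcal{O}_\infty$, the identification $U(J)/U(J)_0\cong K_1(J)$ recalled at the start of Section 7 places $u$ in $U(J)_0$. Fix a large integer $m$ (to be chosen at the end) and set $u_i=u\alpha(u)\cdots\alpha^{i-1}(u)$ for $0\le i\le m+1$. Since $K_1(u_i)=iK_1(u)=0$, each $u_i$ lies in $U(J)_0$, so I can choose paths $x,y\colon [0,1]\to U(J)_0$ from $1$ to $u_m$ and $u_{m+1}$ respectively, each with $\Lip\le 2\pi$ (using the Lipschitz guarantee from the section preamble).

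Applying Proposition \ref{equivRohlin} to $\mu^\Z\colon\Z\curvearrowright\mathcal{O}_\infty$ yields Rohlin projections of heights $m$ and $m+1$ in $(\mathcal{O}_\infty)_\omega$, and transferring them by the standard reindexation along an $\alpha$-equivariant embedding $\mathcal{O}_\infty\to A^\omega\cap S'$—with $S$ an $\alpha$-invariant separable subset of $A$ containing all $u_i$ and the images of $x,y$—produces projections $e,f\in A^\omega\cap S'$ satisfying
$$\sum_{i=0}^{m-1}\alpha^i(e)+\sum_{j=0}^{m}\alpha^j(f)=1,\qquad \alpha^m(e)=e,\qquad \alpha^{m+1}(f)=f.$$
Then I define
$$v=\sum_{i=0}^{m-1}u_i\alpha^i\bigl(x(1-i/m)\bigr)\alpha^i(e)+\sum_{j=0}^{m}u_j\alpha^j\bigl(y(1-j/(m+1))\bigr)\alpha^j(f),$$
which is a unitary lying in $U(J^\omega)_0$ (the path to $1$ is constructed block-wise). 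A direct computation using $\alpha(u_i^*)=u_{i+1}^*u$, the cyclic identities, and the Lipschitz bounds gives $v\alpha(v^*)\alpha^i(e)=u_i\alpha^i\bigl(x(1{-}i/m)\,x(1{-}(i{-}1)/m)^*\bigr)u_i^*u\cdot\alpha^i(e)$ on each non-boundary rung, which lies within $2\pi/m$ of $u\alpha^i(e)$; on the wrap-around rung $i=0$ the identity $x(1)=u_m$ is exactly what absorbs the residual $u_m^*$ produced by the cyclic boundary. The same estimate holds on the $f$-tower, so $\|u-v\alpha(v^*)\|_{A^\omega}\le 2\pi/m$. Choosing $m>2\pi/\ep$ and lifting $v$ to a representative $(v_n)_n$ with $v_n\in U(J)_0$ yields the required $v=v_n$ for sufficiently large $n$ along $\omega$.

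The main technical point, and the only place where $K_1(u)=0$ is used, is the cyclic closure of the Rohlin tower: the residual $u_m^*$ at the wrap-around rung must be absorbed by the compensating path $x$, which exists in $U(J)_0$ with controlled Lipschitz constant precisely because $K_1(u_m)=mK_1(u)=0$ forces $u_m\in U(J)_0$. Without this hypothesis one would obtain only approximate $H^1$-stability for cocycles near $1$, as in Lemma \ref{1cv_xi_homo1}.
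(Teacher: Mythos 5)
Your proof is correct and follows essentially the same route as the paper: the paper's own proof simply invokes the $2\pi$-Lipschitz path from $1$ to any unitary in $U(J)$ with trivial $K_1$-class and then refers to Nakamura's Lemma 8, which is precisely the two-tower Rohlin argument (heights $m$ and $m{+}1$, paths $x,y$ absorbing the residual $u_m^*$, $u_{m+1}^*$ at the wrap-around rung) that you carry out explicitly, and which also mirrors the computation in Lemma \ref{1cv_xi_homo1}. The only cosmetic slip is writing $K_1(u_i)=iK_1(u)$, which presumes $\alpha$ acts trivially on $K_1(J)$; the correct statement $K_1(u_i)=\sum_{j}K_1(\alpha^j)(K_1(u))=0$ gives the same conclusion.
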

\begin{proof}
Let $w\in U(J)$ be a unitary satisfying $K_1(w)=0$ in $K_1(J)$. 
By the fact mentioned above, there exists a continuous map 
$\tilde w:[0,1]\to U(J)_0$ such that 
$\tilde w(0)=1$, $\tilde w(1)=w$ and $\Lip(\tilde w)\leq2\pi$. 
Then, by using this, one can prove the statement 
in the same way as \cite[Lemma 8]{Na00ETDS}. 
\end{proof}

\begin{lemma}\label{2cv_Hirsch2}
Let $G$ be a poly-$\Z$ group of Hirsch length two and 
let $(\alpha,u):G\curvearrowright A$ be a cocycle action 
belonging to $\AC(\mathcal{O}_\infty,\mu^G)$. 
Suppose that $J\subset A$ is a globally $\alpha$-invariant ideal. 
If $u(g,h)$ belongs to $U(J)_0$ for all $g,h\in G$, then 
there exists a family of unitaries $(v_g)_{g\in G}$ in $U(J)_0$ 
such that $u(g,h)=\alpha_g(v_h^*)v_g^*v_{gh}$ for all $g,h\in G$. 
\end{lemma}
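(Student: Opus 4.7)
The plan is a two-stage trivialization: first reduce $u$ so that $u|_{N\times N}=1$, and then perturb further to render the $2$-cocycle close to $1$ on a finite generating subset of $G$, at which point $H^2$-stability (Theorem~\ref{H1H2stable}) finishes the job. Let $N\cong\Z$ be a normal subgroup of $G$ with $G/N\cong\Z$; fix $\zeta$ generating $N$ and $\xi\in G$ projecting to a generator of $G/N$.

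\emph{Step 1 (trivialization on $N\times N$).} Since $N\cong\Z$, the telescoping recursion $v_\zeta:=1$, $v_{\zeta^{n+1}}:=v_{\zeta^n}\,u(\zeta^n,\zeta)$ (extended analogously for $n<0$) produces unitaries $v_{\zeta^n}\in U(J)_0$ satisfying $u(g,h)=\alpha_g(v_h^*)v_g^*v_{gh}$ for all $g,h\in N$; this is a direct induction from the $2$-cocycle identity, and $U(J)_0$ is preserved at each step because the $u(\zeta^n,\zeta)$ lie in $U(J)_0$. Extending by $v_g:=1$ for $g\notin N$ and replacing $(\alpha,u)$ by the corresponding cocycle perturbation, I may assume $u(g,h)=1$ for all $g,h\in N$, so that $\alpha|N$ is a genuine action of $N$ on $A$.

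\emph{Steps 2--3 (approximate trivialization of the residual $1$-cocycle).} With $u|_{N\times N}=1$, Lemma~\ref{check} supplies an $\alpha|N$-cocycle $\check u_g:=u(\xi,\xi^{-1}g\xi)u(g,\xi)^*$ taking values in $U(J)_0$. The restricted action $\alpha|N$ belongs to $\AC(\mathcal{O}_\infty,\mu^N)$: restricting an approximately central embedding of $\mathcal{O}_\infty$ along $N\hookrightarrow G$ shows $\alpha|N\in\AC(\mathcal{O}_\infty,\mu^G|N)$, and Theorem~\ref{Oinfty_unique} together with Lemma~\ref{AC}(1) then identifies this class with $\AC(\mathcal{O}_\infty,\mu^N)$. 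Since $\check u_\zeta\in U(J)_0$ has $K_1(\check u_\zeta)=0$ in $K_1(J)$, the preceding lemma (a Nakamura-type approximate $H^1$-stability statement for $\Z$) yields, for any prescribed $\delta>0$, a unitary $y\in U(J)_0$ with $\lVert\check u_\zeta-y\,\alpha_\zeta(y^*)\rVert<\delta$.

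\emph{Steps 4--5 (making $u$ small on generators and finishing).} Fix a finite generating subset $S\subset G$ for which $(G,S)$ is $H^2$-stable (Theorem~\ref{H1H2stable}), and let $\delta_0>0$ be the associated threshold. Taking $\delta\ll\delta_0$ in Step~3, define a second family of perturbing unitaries by $v_g:=1$ for $g\in N$, $v_\xi:=y^*$, and, more generally, by iterating the telescoping formulae dictated by insisting $u^v(\xi^{a-1},\xi)=1$ and $u^v(\zeta^b,\xi^a)=1$ (writing elements in the normal form $g=\zeta^b\xi^a$). This construction keeps $u^v|_{N\times N}=1$ and makes $u^v$ identically $1$ on the pairs $(\xi^a,\xi)$ and $(\zeta^b,\xi^a)$; a direct cocycle-identity computation yields $u^v(\xi,\zeta^{\pm1})=y^*\check u_\zeta\alpha_{\zeta^{\pm1}}(y)$, which is within $O(\delta)$ of $1$ by Step~3, with the sign dictated by $\xi^{-1}\zeta\xi=\zeta^{\pm1}$. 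By choosing $\delta$ sufficiently small and enlarging $S$ if necessary (permissible via Lemma~\ref{rem_Hstable}), one arranges $\lVert u^v(g,h)-1\rVert<\delta_0$ for all $g,h\in S$ while keeping $u^v$ valued in $U(J)_0$. The $H^2$-stability conclusion of Theorem~\ref{H1H2stable} then furnishes unitaries in $U(J)_0$ trivializing $u^v$, and composing all three perturbations produces the desired family $(v_g)_g\subset U(J)_0$. The main delicate point is the bookkeeping in Step~4: propagating the approximate trivialization of the single $1$-cocycle $\check u_\zeta$ into simultaneous control of all $u^v(g,h)$ for $g,h\in S$ while maintaining $U(J)_0$-valuedness. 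The Hirsch length $2$ assumption is essential here, since the two-generator normal form $g=\zeta^b\xi^a$ bounds the word length of elements of $S$ in $\{\xi,\zeta\}$ and therefore controls the accumulated error from iterating the cocycle identity below the threshold $\delta_0$.
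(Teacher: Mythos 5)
Your proposal is correct and follows essentially the same route as the paper's proof: reduce to $u|_{N\times N}=1$ using $N\cong\Z$, pass to the $\alpha|N$-cocycle $\check u_g$ via Lemma~\ref{check}, approximately trivialize it with the preceding Nakamura-type lemma (using $K_1(\check u_\zeta)=0$ since $\check u_\zeta\in U(J)_0$), and then invoke the $H^2$-stability of $(G,S)$ from Theorem~\ref{H1H2stable}. Your Steps 4--5 merely make explicit the bookkeeping that the paper compresses into ``by a suitable perturbation, we may further assume that $u(g,h)$ is close to $1$ on a finite generating subset.''
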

\begin{proof}
There exists a normal subgroup $N\subset G$ isomorphic to $\Z$ 
and $\xi\in G$ such that $G$ is generated by $N$ and $\xi$. 
Because $N$ is isomorphic to $\Z$, by a cocycle perturbation, 
we may assume that $u(g,h)=1$ for all $g,h\in N$. 
It follows from Lemma \ref{check} that the unitaries 
\[
\check u_g=u(\xi,\xi^{-1}g\xi)u(g,\xi)^*\in U(J)_0
\]
form an $\alpha|N$-cocycle satisfying 
\[
\alpha_\xi\circ\alpha_{\xi^{-1}g\xi}\circ\alpha_\xi^{-1}
=\Ad\check u_g\circ\alpha_g\quad\forall g\in N. 
\]
Then, the lemma above tells us that 
the $\alpha|N$-cocycle $(\check u_g)_g$ can be approximated by coboundaries. 
Therefore, by a suitable perturbation, 
we may further assume that $u(g,h)$ is close to $1$ 
on a finite generating subset of $G$. 
Then, by the $H^2$-stability of $G$ (Theorem \ref{H1H2stable}), 
we can conclude that $(u(g,h))_{g,h\in G}$ is a coboundary. 
\end{proof}

\begin{lemma}\label{realize:1c}
Let $G$ be a poly-$\Z$ group of Hirsch length two and 
let $\alpha:G\curvearrowright A$ be an action 
belonging to $\AC(\mathcal{O}_\infty,\mu^G)$. 
Suppose that $J\subset A$ is a globally $\alpha$-invariant ideal. 
For any $1$-cocycle $\rho:G\to K_1(J)$, 
there exists an $\alpha$-cocycle $(u_g)_{g\in G}$ in $U(J)$ 
such that $K_1(u_g)=\rho(g)$ for all $g\in G$. 
\end{lemma}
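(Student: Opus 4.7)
The plan is to first lift $\rho$ pointwise to unitaries in $U(J)$ that need not form an $\alpha$-cocycle, and then correct the resulting 2-cocycle error by a family in $U(J)_0$ supplied by Lemma \ref{2cv_Hirsch2}.

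For each $g\in G$ I would choose some $u_g\in U(J)$ with $K_1(u_g)=\rho(g)$, setting $u_1=1$ (permissible because the 1-cocycle condition forces $\rho(1)=0$). The failure of $(u_g)_g$ to be an $\alpha$-cocycle is measured by
\[
u(g,h):=u_g\alpha_g(u_h)u_{gh}^*\in U(J),
\]
which by definition is the 2-cocycle of the cocycle perturbation $(\alpha^u,u)$ of the genuine action $\alpha$ by $(u_g)_g$, where $\alpha^u_g=\Ad u_g\circ\alpha_g$. Membership in $\AC(\mathcal{O}_\infty,\mu^G)$ is preserved under such a perturbation: given any separable $S\subset A$, one enlarges $S$ to contain the countably many $u_g$'s, and the equivariant embedding $\mathcal{O}_\infty\to A^\omega\cap(S\cup\{u_g\mid g\in G\})'$ provided by $\alpha\in\AC(\mathcal{O}_\infty,\mu^G)$ then also intertwines $\mu^G$ with $\alpha^u$. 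The ideal $J$ remains globally $\alpha^u$-invariant since each $\Ad u_g$ preserves it.

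Next I would compute the $K_1$-class of the error. Using $K_1(\alpha_g(u_h))=\alpha_g^*\rho(h)$ together with the 1-cocycle identity for $\rho$,
\[
K_1(u(g,h))=\rho(g)+\alpha_g^*\rho(h)-\rho(gh)=0,
\]
so $u(g,h)\in U(J)_0$ for every $g,h\in G$. All hypotheses of Lemma \ref{2cv_Hirsch2} are now in place for the cocycle action $(\alpha^u,u)$, and that lemma yields a family $(v_g)_g$ in $U(J)_0$ with $u(g,h)=\alpha^u_g(v_h^*)v_g^*v_{gh}$.

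Finally, I would set $\tilde u_g:=v_gu_g$. The rearrangement
\[
\tilde u_g\alpha_g(\tilde u_h)=v_g\alpha^u_g(v_h)\cdot u_g\alpha_g(u_h)=v_g\alpha^u_g(v_h)u(g,h)u_{gh}=v_{gh}u_{gh}=\tilde u_{gh}
\]
then shows that $(\tilde u_g)_g$ is a genuine $\alpha$-cocycle in $U(J)$, and $K_1(\tilde u_g)=K_1(v_g)+K_1(u_g)=\rho(g)$ because $v_g\in U(J)_0$. The only substantive work is absorbed into Lemma \ref{2cv_Hirsch2}, which in turn relies on the $H^2$-stability of poly-$\Z$ groups of Hirsch length two from Theorem \ref{H1H2stable}; the observation one must make here is that the 1-cocycle identity for $\rho$ is precisely what forces the 2-coboundary of any $K_1$-level lift to land in the connected component $U(J)_0$, which is the exact hypothesis demanded by that lemma.
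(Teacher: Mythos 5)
Your proposal is correct and follows essentially the same route as the paper: lift $\rho$ pointwise to unitaries in $U(J)$, observe that the $1$-cocycle identity forces the resulting $2$-cocycle error into $U(J)_0$, and then kill it with Lemma \ref{2cv_Hirsch2}. The only difference is that you explicitly verify the perturbed cocycle action stays in $\AC(\mathcal{O}_\infty,\mu^G)$, a point the paper leaves implicit; your verification is correct.
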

\begin{proof}
Choose unitaries $w_g\in U(J)$ 
so that $K_1(w_g)=\rho(g)$ for all $g\in G$. 
Consider the cocycle action $(\alpha^w,1^w):G\curvearrowright A$. 
As $1^w(g,h)=w_g\alpha_g(w_h)w_{gh}^*$ is in $U(J)_0$, 
Lemma \ref{2cv_Hirsch2} applies and yields $(v_g)_{g\in G}$ in $U(J)_0$ 
such that $1^w(g,h)=\alpha^w_g(v_h^*)v_g^*v_{gh}$ for all $g,h\in G$. 
Hence $(v_gw_g)_g$ is an $\alpha$-cocycle 
satisfying $K_1(v_gw_g)=\rho(g)$. 
\end{proof}

We introduce the invariant $\kappa^2(\alpha,u)$ 
for an $\alpha$-cocycle $(u_g)_g$ as follows. 
Let $\alpha:G\curvearrowright A$ be an action of a discrete group $G$ 
on a unital $C^*$-algebra $A$. 
Let $(u_g)_{g\in G}$ be an $\alpha$-cocycle with $u_g\in U(A)_0$. 
We choose a continuous path $\tilde u_g:[0,1]\to U(A)$ from $1$ to $u_g$. 
By abuse of notation, 
$\id\otimes\alpha_g\in\Aut(C([0,1])\otimes A)$ is 
simply denoted by $\alpha_g$. 
Then
\[
\rho(g,h)
=K_1(\tilde u_g\alpha_g(\tilde u_h)\tilde u_{gh}^*)\in K_1(SA)=K_0(A), 
\]
and they form a $2$-cocycle, thanks to the next lemma. 
We denote by $\kappa^2(\alpha,u)$ its cohomology class in $H^2(G,K_0(A))$, 

\begin{lemma}\label{kappa2}
In the setting above, $\rho$ is a $2$-cocycle, 
and its cohomology class does not depend on the choice of 
the continuous paths $(\tilde u_g)_{g\in G}$. 
\end{lemma}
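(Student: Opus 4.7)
The plan is to directly verify the two assertions by computations at the level of unitary paths, and then pass to $K_1$-classes.

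First, observe that each $c(g,h) = \tilde u_g \alpha_g(\tilde u_h) \tilde u_{gh}^*$ evaluates to $1$ at $t=0$ (since every $\tilde u_\bullet(0)=1$) and to $u_g \alpha_g(u_h) u_{gh}^* = 1$ at $t=1$ (by the $\alpha$-cocycle relation for $(u_g)_g$). Hence $c(g,h) \in U(SA)$, so that $\rho(g,h) = K_1(c(g,h)) \in K_1(SA) = K_0(A)$ is well-defined. A direct expansion gives the identity
\[
c(g,h)\, c(gh,k) = \tilde u_g\,\alpha_g(\tilde u_h)\,\alpha_{gh}(\tilde u_k)\,\tilde u_{ghk}^* = \Ad(\tilde u_g)\bigl(\alpha_g(c(h,k))\bigr) \cdot c(g,hk)
\]
in $U(SA) \subset U(C([0,1]) \otimes A)$. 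Passing to $K_1$-classes and using additivity, this should yield
\[
\rho(g,h) + \rho(gh,k) = K_0(\alpha_g)(\rho(h,k)) + \rho(g,hk),
\]
which is precisely the $2$-cocycle identity for $H^2(G, K_0(A))$, provided that (i) $\alpha_g$ acts on $K_1(SA) \cong K_0(A)$ as $K_0(\alpha_g)$, and (ii) $\Ad(\tilde u_g)$ acts trivially on $K_1(SA)$.

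Point (i) is standard from the identification $K_1(SA) \cong K_0(A)$. Point (ii) is the only delicate step and is the main obstacle, because $\tilde u_g$ is not a unitary of $SA$ itself but only of $C([0,1]) \otimes A$. The plan is to observe that, since $A$ is unital, $\tilde u_g$ lies in the multiplier algebra $M(SA)$ when $SA$ is viewed as $C_0((0,1), A)$, so $\Ad(\tilde u_g)$ is an inner automorphism of $M(SA)$ and therefore acts trivially on $K_*(SA)$. Alternatively, one may apply the five lemma to the evaluation extension
\[
0 \longrightarrow SA \longrightarrow C([0,1])\otimes A \longrightarrow A \oplus A \longrightarrow 0,
\]
noting that $\Ad(\tilde u_g)$ acts on the middle algebra as an inner automorphism and on the quotient as $\Ad(1) \oplus \Ad(u_g)$, both of which are trivial on $K$-theory; the five lemma then forces the induced map on $K_1(SA)$ to be the identity as well.

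Finally, for independence of the choice of paths, let $(\tilde u'_g)_g$ be another family of paths from $1$ to $u_g$, with associated cocycle $\rho'$. Set $w_g = \tilde u_g (\tilde u'_g)^* \in U(SA)$ and $\eta(g) = K_1(w_g) \in K_0(A)$. A similar rearrangement gives
\[
c(g,h) = w_g \cdot \Ad(\tilde u'_g)\bigl(\alpha_g(w_h)\bigr) \cdot c'(g,h) \cdot w_{gh}^*,
\]
and taking $K_1$-classes (using (i) and (ii) once more) yields
\[
\rho(g,h) - \rho'(g,h) = \eta(g) + K_0(\alpha_g)(\eta(h)) - \eta(gh) = (d\eta)(g,h),
\]
so that $\rho - \rho'$ is the coboundary of $\eta$; hence $[\rho] = [\rho']$ in $H^2(G, K_0(A))$, as desired.
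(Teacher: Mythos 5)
Your proof is correct, and it takes a genuinely cleaner route than the paper's, though the underlying strategy (compute the alternating sum of $K_1$-classes directly from products of the paths) is the same. The paper justifies all of its rearrangements with two explicit path-homotopy commutation facts (e.g.\ if $v(0)=1$ and $w(1)=1$ then $vw$ and $wv$ are homotopic rel endpoints) and then works through a long chain of $K_1$-identities; you instead isolate the single exact identity $c(g,h)\,c(gh,k)=\Ad(\tilde u_g)\bigl(\alpha_g(c(h,k))\bigr)\,c(g,hk)$ in $U(SA)$ (which I checked, as well as the analogous identity in the coboundary step) and dispose of the conjugation by noting that $\tilde u_g$ is a unitary multiplier of $SA$ --- it lies in $C([0,1])\otimes A$, which embeds in $M(SA)$ because $SA$ is an essential ideal --- so $\Ad(\tilde u_g)$ acts trivially on $K_*(SA)$. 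What your approach buys is brevity and a conceptual explanation of why the conjugating terms drop out; what the paper's approach buys is that the same elementary homotopy lemmas are reused verbatim in the much longer $\kappa^3$ computation (Lemma \ref{kappa3}), where a single clean exact identity would be harder to extract. One small caveat: your alternative justification of the triviality of $\Ad(\tilde u_g)$ on $K_1(SA)$ via the evaluation extension needs slightly more than the five lemma, which only yields that the induced map is an isomorphism, not that it is the identity; what actually does the work there is naturality together with surjectivity of the index map $K_0(A\oplus A)\to K_1(SA)$. Since your primary multiplier-algebra argument is airtight, this does not affect the proof.
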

\begin{proof}
For continuous maps $v,w:[0,1]\to U(A)_0$, 
it is easy to verify the following. 
\begin{itemize}
\item If $v(0)=1$ and $w(1)=1$, then the two paths $vw$ and $wv$ are homotopic 
within the paths from $w(0)$ to $v(1)$. 
\item If $v(0)=1$ and $v(1)=w(1)^*$, then 
the two paths $vw$ and $wv$ are homotopic within the paths from $w(0)$ to $1$. 
\end{itemize}
First, we show that $\rho$ satisfies the $2$-cocycle relation. 
For $g,h,k\in G$, we have 
\begin{align*}
& g\cdot\rho(h,k)-\rho(gh,k)+\rho(g,hk)-\rho(g,h)\\
&=K_1(\alpha_g(\tilde u_{hk}^*\tilde u_h\alpha_h(\tilde u_k)))
-K_1(\tilde u_{ghk}^*\tilde u_{gh}\alpha_{gh}(\tilde u_k))
+\rho(g,hk)-\rho(g,h)\\
&=K_1(\alpha_g(\tilde u_{hk}^*\tilde u_h)\tilde u_{gh}^*\tilde u_{ghk})
+K_1(\tilde u_g\alpha_g(\tilde u_{hk})\tilde u_{ghk}^*)-\rho(g,h)\\
&=K_1(\alpha_g(\tilde u_h)\tilde u_{gh}^*\tilde u_{ghk}\alpha_g(\tilde u_{hk}^*))
+K_1(\alpha_g(\tilde u_{hk})\tilde u_{ghk}^*\tilde u_g)-\rho(g,h)\\
&=K_1(\alpha_g(\tilde u_h)\tilde u_{gh}^*\tilde u_g)-\rho(g,h)=0, 
\end{align*}
and so $\rho$ is a $2$-cocycle. 

When $(\hat u_g)_g$ is another family of paths from $1$ to $u_g$ in $U(A)_0$, 
one has 
\begin{align*}
& K_1(\tilde u_g\alpha_g(\tilde u_h)\tilde u_{gh}^*)
-K_1(\hat u_g\alpha_g(\hat u_h)\hat u_{gh}^*)\\
&=K_1(\tilde u_g\alpha_g(\tilde u_h)\tilde u_{gh}^*
\hat u_{gh}\alpha_g(\hat u_h^*)\hat u_g^*)\\
&=K_1(\alpha_g(\tilde u_h)\tilde u_{gh}^*
\hat u_{gh}\alpha_g(\hat u_h^*))+K_1(\hat u_g^*\tilde u_g)\\
&=K_1(\tilde u_{gh}^*\hat u_{gh})+K_1(\alpha_g(\hat u_h^*\tilde u_h))
+K_1(\hat u_g^*\tilde u_g)\\
&=g\cdot K_1(\hat u_h^*\tilde u_h)-K_1(\hat u_{gh}^*\tilde u_{gh})
+K_1(\hat u_g^*\tilde u_g), 
\end{align*}
which is a coboundary. 
\end{proof}

The following lemma can be shown easily. 

\begin{lemma}\label{kappa2additive}
In the setting above, 
for any $\alpha^u$-cocycle $(v_g)_g$ in $U(A)_0$, 
we have $\kappa^2(\alpha,vu)=\kappa^2(\alpha,u)+\kappa^2(\alpha^u,v)$. 
\end{lemma}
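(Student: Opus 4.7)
The plan is to unwind the definition of $\kappa^2$ by choosing compatible path lifts and then decompose the defining $2$-cocycle loop into two factors whose $K_1$-classes are exactly the two summands on the right-hand side. Specifically, given continuous paths $\tilde u_g,\tilde v_g:[0,1]\to U(A)_0$ with $\tilde u_g(0)=\tilde v_g(0)=1$, $\tilde u_g(1)=u_g$, $\tilde v_g(1)=v_g$, I would take $\widetilde{vu}_g(t)=\tilde v_g(t)\tilde u_g(t)$ as the path lift for the $\alpha$-cocycle $(v_gu_g)_g$. Writing $w^u_{g,h}=\tilde u_g\,\alpha_g(\tilde u_h)\,\tilde u_{gh}^*$ as in the definition of $\kappa^2(\alpha,u)$, a direct computation gives the factorisation
\[
\widetilde{vu}_g\,\alpha_g(\widetilde{vu}_h)\,\widetilde{vu}_{gh}^*
=\bigl(\tilde v_g\,\tilde u_g\,\alpha_g(\tilde v_h)\,\tilde u_g^*\,\tilde v_{gh}^*\bigr)\cdot\bigl(\tilde v_{gh}\,w^u_{g,h}\,\tilde v_{gh}^*\bigr),
\]
obtained by inserting $\tilde u_g^*\tilde u_g$ in the middle. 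Both factors are loops in $U(A)_0$ (check endpoints at $t=0,1$), so additivity of $K_1$ on products of loops reduces the lemma to identifying each factor with the expected summand.

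For the second factor, the homotopy $H_s(t)=\tilde v_{gh}(st)\,w^u_{g,h}(t)\,\tilde v_{gh}(st)^*$ is a loop homotopy from $w^u_{g,h}$ (at $s=0$) to $\tilde v_{gh}\,w^u_{g,h}\,\tilde v_{gh}^*$ (at $s=1$), rel endpoints, hence its $K_1$-class equals $\kappa^2(\alpha,u)(g,h)$. For the first factor, one compares it with the canonical representative $\tilde v_g\,\alpha^u_g(\tilde v_h)\,\tilde v_{gh}^*=\tilde v_g\,u_g\,\alpha_g(\tilde v_h)\,u_g^*\,\tilde v_{gh}^*$ of $\kappa^2(\alpha^u,v)(g,h)$; the homotopy $H_s(t)=\tilde u_g((1{-}s)t+s)\,\alpha_g(\tilde v_h(t))\,\tilde u_g((1{-}s)t+s)^*$ interpolates between the two middle pieces rel endpoints (check $t=0$ and $t=1$), hence yields a homotopy of closed loops after multiplication by $\tilde v_g$ on the left and $\tilde v_{gh}^*$ on the right. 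Therefore $K_1$ of the first factor equals $\kappa^2(\alpha^u,v)(g,h)$, and summing gives the claim.

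The only nontrivial ingredient is the path-independence already established in Lemma \ref{kappa2}, which guarantees that the particular choice of lifts does not matter; the rest is routine manipulation of $U(A)_0$-valued paths combined with the fact that pointwise multiplication of loops corresponds to addition in $K_1(SA)=K_0(A)$. I expect no genuine obstacle: the potentially delicate point is merely keeping track that all the constructed loops actually close up at $t=0$ and $t=1$, which uses the cocycle identity $v_g\alpha^u_g(v_h)=v_{gh}$ together with $\alpha^u_g=\Ad u_g\circ\alpha_g$.
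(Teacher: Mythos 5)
Your proof is correct: the paper itself gives no argument for this lemma (it is stated with ``The following lemma can be shown easily''), and your factorisation of $\widetilde{vu}_g\,\alpha_g(\widetilde{vu}_h)\,\widetilde{vu}_{gh}^*$ into two based loops, together with the two rel-endpoint homotopies identifying their $K_1$-classes with the representatives of $\kappa^2(\alpha,u)(g,h)$ and $\kappa^2(\alpha^u,v)(g,h)$, is exactly the routine verification the authors intend, with all endpoint checks done correctly via $v_g\alpha^u_g(v_h)=v_{gh}$. The appeal to Lemma \ref{kappa2} for path-independence closes the argument, so nothing is missing.
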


\begin{remark}
When $G=\Z^2$ and its action on $K_0(A)$ is trivial, 
the cohomology class $\kappa^2(\alpha,u)$ is determined by 
\[
\rho((1,0),(0,1))-\rho((0,1),(1,0))
=K_1\left(\tilde u_{(1,0)}\alpha_{(1,0)}(\tilde u_{(0,1)})
\alpha_{(0,1)}(\tilde u_{(1,0)}^*)\tilde u_{(0,1)}^*\right), 
\]
which is the $\kappa$-invariant discussed in \cite{KM08Adv}. 
\end{remark}

\begin{lemma}\label{1cv_Hirsch2}
Let $G$ be a poly-$\Z$ group of Hirsch length two and 
let $A$ be a unital $C^*$-algebra. 
Let $\alpha:G\curvearrowright A$ be an action 
in $\AC(\mathcal{O}_\infty,\mu^G)$. 
If $(u_g)_{g\in G}$ is an $\alpha$-cocycle 
with $u_g\in U(A)_0$ and $\kappa^2(\alpha,u)=0$, then 
$(u_g)_{g\in G}$ can be approximated by coboundaries. 
Thus, there exists a sequence of unitaries $(v_n)_n$ in $U(A)_0$ 
such that 
\[
\lim_{n\to\infty}v_n\alpha_g(v_n^*)=u_g\quad\forall g\in G. 
\]
\end{lemma}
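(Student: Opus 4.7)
The strategy is to use the vanishing of $\kappa^2(\alpha,u)$ to exhibit $(u_g)_g$ as the endpoint of a continuous family of genuine $\alpha$-cocycles starting at the trivial cocycle, and then appeal to Lemma \ref{1cv_G_homo1} to approximate this family by an honest coboundary.

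Since each $u_g$ lies in $U(A)_0$, choose continuous paths $\tilde u_g:[0,1]\to U(A)$ with $\tilde u_g(0)=1$ and $\tilde u_g(1)=u_g$, and view $(\tilde u_g)_g$ as unitaries in $B:=C([0,1])\otimes A$ equipped with the action $\tilde\alpha:=\id\otimes\alpha$, which still belongs to $\AC(\mathcal{O}_\infty,\mu^G)$ by Lemma \ref{AC} (2). The defect unitaries
\[
v(g,h):=\tilde u_g\,\tilde\alpha_g(\tilde u_h)\,\tilde u_{gh}^*
\]
take the value $1$ at both $t=0$ and $t=1$ (at $t=1$ by the cocycle identity for $(u_g)$) and hence lie in $U(J)$ for the globally $\tilde\alpha$-invariant ideal $J:=C_0((0,1))\otimes A$. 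Under the canonical identification $K_1(J)\cong K_0(A)$, the classes $K_1(v(g,h))$ form precisely the $2$-cocycle $\rho$ of Lemma \ref{kappa2}, whose cohomology class is $\kappa^2(\alpha,u)=0$. Write $\rho=\delta c$ for some $1$-cochain $c:G\to K_0(A)$, and choose loops $l_g\in U(J)$ with $K_1(l_g)=c_g$, possible because the standing $\mathcal{O}_\infty$-absorption gives $U(J)/U(J)_0\cong K_1(J)\cong K_0(A)$. Replacing $\tilde u_g$ by $\hat u_g:=l_g^*\tilde u_g$ yields new paths still joining $1$ to $u_g$, and the displayed computation in the proof of Lemma \ref{kappa2} gives that the new defect $\hat v(g,h):=\hat u_g\,\tilde\alpha_g(\hat u_h)\,\hat u_{gh}^*$ satisfies $K_1(\hat v(g,h))=0$ for every $(g,h)$; consequently $\hat v(g,h)\in U(J)_0$.

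The cocycle action $(\tilde\alpha^{\hat u},\hat v)$ on $B$ belongs to $\AC(\mathcal{O}_\infty,\mu^G)$ and has $2$-cocycle in $U(J)_0$, so Lemma \ref{2cv_Hirsch2} yields unitaries $y_g\in U(J)_0$ with $\hat v(g,h)=\tilde\alpha^{\hat u}_g(y_h^*)\,y_g^*\,y_{gh}$. A short calculation shows that $w_g:=y_g\hat u_g$ is a genuine $\tilde\alpha$-cocycle in $U(B)$ satisfying $w_g(0)=1$ and $w_g(1)=u_g$. Finally, applying Lemma \ref{1cv_G_homo1} to $(w_g)_g$ with the invariant ideal taken to be $A$ itself, for any finite $K\subset G$ and $\ep>0$ one obtains $v\in U(C([0,1])\otimes A)$ with $v(0)=1$ and $\sup_{t\in[0,1]}\lVert w_g(t)-v(t)\tilde\alpha_g(v(t)^*)\rVert<\ep$ for every $g\in K$. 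Evaluating at $t=1$ gives $v(1)\in U(A)_0$ with $\lVert u_g-v(1)\alpha_g(v(1)^*)\rVert<\ep$ for $g\in K$; exhausting $G$ by such $K$'s and sending $\ep\downarrow 0$ produces the desired sequence. The main technical step is the path-adjustment: the proof of Lemma \ref{kappa2} shows that the ambiguity in $\rho$ under rechoice of the paths is exactly the coboundary subgroup $B^2(G,K_0(A))$, so $\kappa^2(\alpha,u)=0$ provides precisely the freedom needed to kill the defect in $K_1$ pointwise, after which the remaining argument is a straightforward concatenation of Lemmas \ref{2cv_Hirsch2} and \ref{1cv_G_homo1}.
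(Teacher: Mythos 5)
Your argument is correct and follows essentially the same route as the paper: choose paths from $1$ to $u_g$, use $\kappa^2(\alpha,u)=0$ to adjust them so that the defect $2$-cocycle has trivial $K_1$-classes, trivialize the defect via Lemma \ref{2cv_Hirsch2}, and conclude with Lemma \ref{1cv_G_homo1}. The only difference is that you spell out the path-adjustment by loops $l_g$ with $K_1(l_g)=c_g$, which the paper leaves implicit in the phrase ``we can choose continuous paths \dots so that $K_1(\tilde u_g\alpha_g(\tilde u_h)\tilde u_{gh}^*)=0$.''
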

\begin{proof}
Since $\kappa^2(\alpha,u)=0$, 
we can choose continuous paths $\tilde u_g:[0,1]\to U(A)$ from $1$ to $u_g$ 
so that 
\[
K_1(\tilde u_g\alpha_g(\tilde u_h)\tilde u_{gh}^*)=0\quad\forall g,h\in G. 
\]
Consider 
\[
\tilde\alpha_g=(\Ad\tilde u_g)\circ\alpha_g
\in\Aut(C([0,1])\otimes A)
\]
and 
\[
w(g,h)=\tilde u_g\alpha_g(\tilde u_h)\tilde u_{gh}^*
\in U(C_0((0,1))\otimes A). 
\]
Then, $(\tilde\alpha,w)$ is a cocycle action on $C([0,1])\otimes A$ 
and $K_1(w(g,h))=0$, 
i.e.\ $w(g,h)\in U(C_0((0,1))\otimes A)_0$ for all $g,h\in G$. 
It follows from Lemma \ref{2cv_Hirsch2} that 
$(w(g,h))_{g,h}$ is a coboundary. 
Hence, we may assume $w(g,h)=1$ for every $g,h\in G$. 
Thus, $(\tilde u_g)_g$ is an $\id\otimes\alpha$-cocycle 
such that $\tilde u_g(0)=1$. 
We get the conclusion from Lemma \ref{1cv_G_homo1}. 
\end{proof}

Let $(\alpha,u)$ be a cocycle action of a countable discrete group $G$ 
on a unital Kirchberg algebra $A$. 
Let $\beta$ be an action of $G$ on $A$. 
Assume $KK(\alpha_g)=KK(\beta_g)$. 
We choose a family $(v_g)_{g\in G}$ of unitaries in $A^\flat$ 
satisfying $(\Ad v_g\circ\alpha_g)(a)=\beta_g(a)$ for every $a\in A$. 
Define a cocycle action $(\sigma,w):G\curvearrowright A_\flat$ 
by $\sigma_g=\Ad v_g\circ\alpha_g$ and 
$w(g,h)=v_g\alpha_g(v_h)u(g,h)v_{gh}^*$. 

\begin{lemma}\label{key}
In the setting above, assume further that $G$ is a poly-$\Z$ group 
and that $(\alpha,u)$ and $\beta$ are outer. 
The following are equivalent. 
\begin{enumerate}
\item $(\alpha,u)$ and $\beta$ are $KK$-trivially cocycle conjugate. 
\item The $2$-cocycle $(w(g,h))_{g,h\in G}$ in $A_\flat$ is 
a coboundary. 
\end{enumerate}
\end{lemma}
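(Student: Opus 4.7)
The plan is to pass freely between unitaries in $A^\flat$ and the associated cocycle action $(\sigma,w)$ on $A_\flat$, so that the coboundary condition on $w$ becomes the statement that one can choose the family $(v_g)$ implementing $\beta$ from $(\alpha,u)$ so that the $2$-cocycle $v_g\alpha_g(v_h)u(g,h)v_{gh}^*$ is identically $1$. Once this is achieved, Theorem~\ref{KKtrivcc} produces the desired $KK$-trivial cocycle conjugacy. A preliminary observation I would record first is that the cohomology class of $w$ does not depend on the particular choice of $(v_g)$: any other family has the form $v_g'=t_g v_g$ for $t_g\in U(A_\flat)$, and then $(\sigma',w')$ is the cocycle perturbation of $(\sigma,w)$ by $(t_g)$, so $w'(g,h)=t_g\sigma_g(t_h)w(g,h)t_{gh}^*$. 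Being a coboundary is thus invariant under the choice of $(v_g)$. Also, by Lemma~\ref{AC} (3), both $(\alpha,u)$ and $\beta$ lie in $\AC(\mathcal{O}_\infty,\mu^G)$ since $A$ is Kirchberg and both cocycle actions are outer, so Theorem~\ref{KKtrivcc} is available.

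For (2)$\Rightarrow$(1), suppose $w$ is a coboundary: there exist $z_g\in U(A_\flat)$ with $w(g,h)=\sigma_g(z_h^*)z_g^*z_{gh}$. Set $x_g=z_gv_g\in U(A^\flat)$. Since $z_g\in A_\flat$ commutes with $A$, we have $\Ad x_g\circ\alpha_g=\Ad z_g\circ\sigma_g=\sigma_g=\beta_g$ on $A$. A direct computation using the relation $v_g\alpha_g(t)v_g^*=\sigma_g(t)$ for $t\in A_\flat$ gives
\[
x_g\alpha_g(x_h)u(g,h)x_{gh}^*
=z_g\sigma_g(z_h)\bigl(v_g\alpha_g(v_h)u(g,h)v_{gh}^*\bigr)z_{gh}^*
=z_g\sigma_g(z_h)w(g,h)z_{gh}^*=1.
\]
Hence the hypotheses of Theorem~\ref{KKtrivcc} are satisfied with trivial $v(g,h)\equiv 1$, so $(\alpha,u)$ and $\beta$ are cocycle conjugate through an asymptotically inner automorphism; since asymptotically inner automorphisms have trivial $KK$-class, this is $KK$-trivial cocycle conjugacy.

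For (1)$\Rightarrow$(2), suppose $(\alpha,u)$ and $\beta$ are $KK$-trivially cocycle conjugate via $\theta\in\Aut(A)$ with $KK(\theta)=1_A$ and unitaries $(w_g)_g$ satisfying $\theta\circ\alpha_g\circ\theta^{-1}=\Ad w_g\circ\beta_g$ and $\theta(u(g,h))=w_g\beta_g(w_h)w_{gh}^*$. By Phillips's theorem, $\theta$ is asymptotically inner, so there exists $y\in U(A^\flat)$ with $\theta(a)=yay^*$ for $a\in A$. Define the particular choice
\[
v_g=w_g^*\,y\,\alpha_g(y^*)\in U(A^\flat).
\]
A short direct check shows $\Ad v_g\circ\alpha_g=\beta_g$ on $A$, so this is a legitimate choice. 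Plugging into $w(g,h)=v_g\alpha_g(v_h)u(g,h)v_{gh}^*$ and using the relation $\alpha_g\alpha_h=\Ad u(g,h)\circ\alpha_{gh}$ to rewrite $\alpha_g(\alpha_h(y^*))$, then substituting $u(g,h)=\theta^{-1}(w_g\beta_g(w_h)w_{gh}^*)=y^*w_g\beta_g(w_h)w_{gh}^*y$ and $\beta_g(w_h)=v_g\alpha_g(w_h)v_g^*$, one collapses the expression term by term and obtains $w(g,h)=1$. This is trivially a coboundary, and by the choice-independence remark above (2) follows for every choice of $(v_g)$.

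The main technical point — rather than a genuine obstacle — will be the bookkeeping in the direct computation for (1)$\Rightarrow$(2): the noncommutativity of $y\in A^\flat$ with $A$, together with the twisting of $\alpha_g\alpha_h$ by $u(g,h)$, must be tracked carefully so that every factor of $u(g,h)$ and $\alpha_{gh}(y^{\pm 1})$ cancels. Everything else (the use of Phillips's theorem, the application of Theorem~\ref{KKtrivcc}, and the independence of the cohomology class of $w$ from the choice of $v_g$) is a routine consequence of material already established earlier in the paper.
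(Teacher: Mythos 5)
Your proof is correct and follows essentially the same route as the paper: the substantive direction (2)$\Rightarrow$(1) is handled exactly as in the paper, by absorbing the coboundary into the unitaries $(v_g)_g$ so that $w(g,h)=1$ and then invoking Theorem \ref{KKtrivcc}, whose conclusion (cocycle conjugacy via an asymptotically inner automorphism) gives $KK$-triviality. The only difference is that you write out the computation for (1)$\Rightarrow$(2) explicitly, with the choice $v_g=w_g^*y\alpha_g(y^*)$, where the paper simply declares this direction obvious; your computation checks out.
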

\begin{proof}
(1)$\Rightarrow$(2) is obvious. 
Indeed, 
this is true without the assumption about $G$, $(\alpha,u)$ and $\beta$. 

Let us show the converse. 
Since $(w(g,h))_{g,h\in G}$ is a coboundary, 
by replacing the unitaries $(v_g)_g$, 
we may assume $w(g,h)=1$ for all $g,h\in G$. 
Thus, the hypothesis of Theorem \ref{KKtrivcc} is satisfied. 
Hence $(\alpha,u)$ and $\beta$ are $KK$-trivially cocycle conjugate. 
\end{proof}

\begin{definition}
Let $(\alpha,u):G\curvearrowright A$, $\beta:G\curvearrowright A$ and 
$(\sigma,w):G\curvearrowright A_\flat$ be as above. 
We denote by 
\[
\mathfrak{o}^2((\alpha,u),\beta)
\in H^2(G,K_1(A_\flat))=H^2(G,KK^1(A,A))
\]
the cohomology class of the 2-cocycle $(g,h)\mapsto K_1(w(g,h))$. 
Then $\mathfrak{o}^2((\alpha,u),\beta)$ does not depend 
on the choice of the unitaries $(v_g)_g$. 
When $\alpha$ is a genuine action, 
we write $\mathfrak{o}^2(\alpha,\beta)=\mathfrak{o}^2((\alpha,1),\beta)$. 
\end{definition}

It is easy to see that 
$\mathfrak{o}^2((\alpha^x,u^x),\beta)
=\mathfrak{o}^2((\alpha,u),\beta)$ 
holds for any family $(x_g)_g$ of unitaries in $A$. 
Moreover, 
when a projection $p\in A$ satisfies $\beta_g(p)=p$ for every $g\in G$, 
it is straightforward to see that 
$\mathfrak{o}^2((\alpha,u)^p,\beta^p)$ equals 
$\mathfrak{o}^2((\alpha,u),\beta)$ 
under the identification of $KK^1(pAp,pAp)$ with $KK^1(A,A)$. 
When $(v_g(t))_{t\in[0,\infty)}$ in $U(C^b([0,\infty),A))$ are lifts of $v_g$, 
the unitary $w(g,h)\in A_\flat$ is represented by 
\[
w(g,h)(t)=v_g(t)\alpha_g(v_h(t))u(g,h)v_{gh}(t)^*, 
\]
whose $K_1$-class in $K_1(A)$ equals 
\[
K_1(u(g,h))+K_1(v_g(t))+K_1(\alpha_g(v_h(t)))-K_1(v_{gh}(t)). 
\]
This means that 
the homomorphism $1_*:H^2(G,K_1(A_\flat))\to H^2(G,K_1(A))$ sends 
$\mathfrak{o}^2((\alpha,u),\beta)$ to the cohomology class of 
$(g,h)\mapsto K_1(u(g,h))$. 
We also remark that $K_1(A_\flat)$ is isomorphic to $KK^1(A,A)$ 
(\cite[Corollary 2.8]{IMweakhomot}), 
and $K_i(\alpha_g|A_\flat)=K_i(\beta_g|A_\flat)=K_i(\sigma_g)$ 
for $i=0,1$ and $g\in G$ by Lemma \ref{auto_flat}. 
For genuine actions, the chain rule 
\[
\mathfrak{o}^2(\alpha,\beta)+\mathfrak{o}^2(\beta,\gamma)
=\mathfrak{o}^2(\alpha,\gamma)
\]
holds. 

\begin{theorem}\label{uni_Hirsch2}
Let $A$ be a unital Kirchberg algebra and 
let $G$ be a poly-$\Z$ group of Hirsch length two. 
Let $(\alpha,u):G\curvearrowright A$ be an outer cocycle action 
and let $\beta:G\curvearrowright A$ be an outer action. 
The following are equivalent. 
\begin{enumerate}
\item $(\alpha,u)$ and $\beta$ are $KK$-trivially cocycle conjugate. 
\item $KK(\alpha_g)=KK(\beta_g)$ for all $g\in G$ and 
$\mathfrak{o}^2((\alpha,u),\beta)=0$. 
\end{enumerate}
\end{theorem}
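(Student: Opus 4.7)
The plan is to deduce both implications from Lemma \ref{key}, which equates $KK$-trivial cocycle conjugacy with the condition that the $2$-cocycle $w(g,h) \in U(A_\flat)$, arising from any choice of unitaries $v_g \in U(A^\flat)$ with $\Ad v_g \circ \alpha_g = \beta_g$ on $A$, is a coboundary in $U(A_\flat)$. The task is then to bridge this unitary-level coboundary condition and the $K_1$-level condition defining $\mathfrak{o}^2$. The direction (1) $\Rightarrow$ (2) is essentially immediate: a $KK$-trivial cocycle conjugacy $(\theta,(w_g))$ with $\theta \circ \alpha_g \circ \theta^{-1} = \Ad w_g \circ \beta_g$ and $KK(\theta) = 1_A$ yields $KK(\alpha_g) = KK(\beta_g)$ at once, and Lemma \ref{key}(1) $\Rightarrow$ (2) ensures that $(w(g,h))$ is a coboundary in $U(A_\flat)$, hence its $K_1$-class is a coboundary in $K_1(A_\flat) \cong KK^1(A,A)$; since $\mathfrak{o}^2((\alpha,u),\beta)$ is independent of the choice of $(v_g)$, we obtain $\mathfrak{o}^2 = 0$.

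For (2) $\Rightarrow$ (1), I would first apply Phillips's theorem to produce $v_g \in U(A^\flat)$ with $\Ad v_g \circ \alpha_g = \beta_g$ on $A$, form the $2$-cocycle $w(g,h) \in U(A_\flat)$, and note that $\mathfrak{o}^2 = 0$ produces classes $k_g \in K_1(A_\flat)$ satisfying $K_1(w(g,h)) = \sigma_g(k_h) - k_{gh} + k_g$, where $\sigma_g = \Ad v_g \circ \alpha_g$. Since $A_\flat$ contains a unital copy of $\mathcal{O}_\infty$ (by Lemma \ref{AC'}(3) combined with Lemma \ref{AC}(3)), the canonical surjection $U(A_\flat) \to K_1(A_\flat)$ lets me choose $x_g \in U(A_\flat)$ with $K_1(x_g) = -k_g$. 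Replacing $v_g$ by $v'_g := x_g v_g$ preserves the intertwining on $A$ (since $x_g \in A_\flat$ commutes with $A$) and yields a modified $2$-cocycle $w'(g,h) = x_g \sigma_g(x_h) w(g,h) x_{gh}^*$ whose $K_1$-class vanishes, so $w'(g,h) \in U(A_\flat)_0$.

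Next I would apply Lemma \ref{2cv_Hirsch2} to the cocycle action $(\sigma', w')$ on $A_\flat$, taking $A_\flat$ as its own ideal, to obtain $y_g \in U(A_\flat)_0$ with $w'(g,h) = \sigma'_g(y_h^*) y_g^* y_{gh}$. Replacing $v'_g$ by $y_g v'_g$ reduces the resulting $2$-cocycle to the constant $1$, whence the hypothesis of Lemma \ref{key}(2) is satisfied and its (2) $\Rightarrow$ (1) direction delivers the desired $KK$-trivial cocycle conjugacy.

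The main obstacle will be in justifying the application of Lemma \ref{2cv_Hirsch2}: one must verify that the perturbed cocycle action $(\sigma'|_{A_\flat}, w')$ on $A_\flat$ belongs to $\AC(\mathcal{O}_\infty, \mu^G)$. Although $\alpha$ on $A_\flat$ is in $\AC$ by Lemma \ref{AC'}(3), the perturbation involves unitaries $v'_g \in A^\flat \setminus A_\flat$, so the routine cocycle-perturbation preservation of $\AC$ does not apply verbatim. Resolving this requires constructing an equivariant unital embedding $\mathcal{O}_\infty \to (A_\flat)^\omega$ whose image also commutes with every $v'_g$ when regarded inside the larger ambient ultrapower $(A^\flat)^\omega$; this should be achievable by invoking the $\AC$-membership of $\alpha$ on $A^\flat$ with separable test subset enlarged to include both $A$ and $\{v'_g\}_{g\in G}$, followed by a reindexation argument that produces representatives lying in $A_\flat$ rather than merely in $A^\flat \cap A'$.
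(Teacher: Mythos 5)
Your proposal is correct and follows essentially the same route as the paper's proof: kill the class $\mathfrak{o}^2$ by adjusting the $v_g$ with central unitaries of prescribed $K_1$-class so that $K_1(w(g,h))=0$, apply Lemma \ref{2cv_Hirsch2} to trivialize the resulting $2$-cocycle in $U(A_\flat)_0$, and conclude via Lemma \ref{key}. The one point you flag as an obstacle --- verifying that the perturbed cocycle action on $A_\flat$ lies in $\AC(\mathcal{O}_\infty,\mu^G)$ --- is exactly the step the paper leaves implicit, and your fix (invoking $\AC$ for $\alpha$ on $A^\flat$ with a test set enlarged by $A$ and the $v_g$, so that equivariance for $\sigma$ follows from commutation with the $v_g$, followed by reindexation into $A_\flat$) is the right one.
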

\begin{proof}
(1)$\Rightarrow$(2) is obvious from Lemma \ref{key}. 

Let us show the converse. 
Define the cocycle action $(\sigma,w):G\curvearrowright A_\flat$ as above. 
By $\mathfrak{o}^2((\alpha,u),\beta)=0$, 
we may assume $K_1(w(g,h))=0$ in $K_1(A_\flat)$ for every $g,h\in G$. 
It follows from Lemma \ref{2cv_Hirsch2} and Lemma \ref{key} that 
$(\alpha,u)$ and $\beta$ are $KK$-trivially cocycle conjugate. 
\end{proof}

\subsection{The relationship between $H^*(N,M)$ and $H^*(N\rtimes\Z,M)$}

In this subsection, we collect several statements 
which will be used in Section 7.3 and Section 8. 
Throughout this subsection, 
we let $G$ be a countable group and 
let $N\subset G$ be a normal subgroup such that $G/N\cong\Z$. 
Take $\xi\in G$ so that $G$ is generated by $N$ and $\xi$. 

First, let us recall a few basic facts 
about group cohomology of semidirect products by $\Z$. 
Let $M$ be a left $G$-module. 
For each $n\in\N$, 
the Lyndon-Hochschild-Serre spectral sequence 
(see \cite[Chapter XI.10]{MR1344215} for instance) 
gives the short exact sequence 
\begin{equation}
\begin{CD} 
0@>>>H^1(\Z,H^{n-1}(N,M))@>j>>H^n(G,M)@>q>>H^n(N,M)^\Z@>>>0, 
\end{CD}
\label{LHS}
\end{equation}
where $q$ is the restriction map. 
Let $\omega:N^n\to M$ be an $n$-cocycle, that is, 
\[
g_0\omega(g_1,\dots,g_n)
+\sum_{i=1}^n(-1)^i\omega(g_0,\dots,g_{i-1}g_i,\dots,g_n)
+(-1)^{n+1}\omega(g_0,\dots,g_{n-1})=0
\]
holds for any $(g_0,g_1,\dots,g_n)\in G^{n+1}$. 
For $\l\in\Z$, define an $n$-cocycle $\xi^l\omega:N^n\to M$ by 
\[
(\xi^l\omega)(g_1,g_2,\dots,g_n)
=\xi^l\omega(\xi^{-l}g_1\xi^l,\xi^{-l}g_2\xi^l,\dots,\xi^{-l}g_n\xi^l). 
\]
Then, in \eqref{LHS}, 
the $\Z$-action on $H^n(N,M)$ is given by $\xi^l[\omega]=[\xi^l\omega]$. 

We would like to write the homomorphism 
$j:H^1(\Z,H^{n-1}(N,M))\to H^n(G,M)$ explicitly. 
Let $\bar j:H^{n-1}(N,M)\to H^n(G,M)$ be the composition of 
$H^{n-1}(N,M)\to H^1(\Z,H^{n-1}(N,M))$ and $j$. 
Suppose that an $(n{-}1)$-cocycle $\rho:N^{n-1}\to M$ is given. 
We let $(\rho_l)_{l\in\Z}$ be the family of $(n{-}1)$-cocycles 
satisfying $\rho_0=0$, $\rho_1=\rho$ and 
\[
\rho_{l+m}=\rho_l+\xi^l\rho_m\quad\forall l,m\in\Z. 
\]
Define $\omega:G^n\to M$ by 
\begin{align}
&\omega(g_1\xi^{l_1},g_2\xi^{l_2},\dots,g_n\xi^{l_n})\notag\\
&=(-1)^ng_1\rho_{l_1}(\xi^{l_1}g_2\xi^{-l_1},\xi^{l_1+l_2}g_3\xi^{-l_1-l_2},
\dots,\xi^{l_1+l_2+\dots+l_{n-1}}g_n\xi^{-l_1-l_2-\dots-l_{n-1}}). 
\label{defofomega}
\end{align}
Then one can verify the following easily. 

\begin{lemma}\label{barj}
The map $\omega$ is an $n$-cocycle and 
$\bar j([\rho])$ equals $[\omega]$. 
\end{lemma}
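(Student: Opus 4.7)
The plan is to verify the two assertions of Lemma \ref{barj} in turn. Every element of $G$ decomposes uniquely as $g\xi^l$ with $g\in N$ and $l\in\Z$, so formula \eqref{defofomega} defines an $n$-cochain $\omega$ unambiguously.

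For the cocycle assertion, I would compute the coboundary $\delta\omega$ on $(g_0\xi^{l_0}, g_1\xi^{l_1}, \dots, g_n\xi^{l_n})$ and verify that it vanishes. The multiplication rule
\[
(g_i\xi^{l_i})(g_{i+1}\xi^{l_{i+1}})
= g_i(\xi^{l_i}g_{i+1}\xi^{-l_i})\xi^{l_i+l_{i+1}}
\]
preserves the normal form in $N\cdot\xi^{\Z}$, so each term of $\delta\omega$ unfolds through \eqref{defofomega} into an expression of the form $(-1)^n\,g_0 \rho_{l_*}(\dots)$ applied to $\xi$-conjugates of the $g_i$. Using $\rho_{l_0+l_1}=\rho_{l_0}+\xi^{l_0}\rho_{l_1}$ to split the term arising from the first coboundary face, together with the $(n{-}1)$-cocycle identity $\delta\rho_{l_0}=0$ on $N^{n-1}$ (itself an easy consequence of $\delta\rho=0$ combined with the additivity of $l\mapsto\rho_l$), every remaining term of $\delta\omega$ pairs with exactly one other of opposite sign. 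Conceptually, $\delta\omega=0$ decomposes into two superimposed cocycle identities, one along $N$ (coming from $\delta\rho=0$) and one along $\Z$ (coming from the additivity of $l\mapsto\rho_l$), reflecting the two edges of the Lyndon--Hochschild--Serre spectral sequence.

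For the identification $[\omega]=\bar j([\rho])$, first observe that $\omega|_{N^n}\equiv 0$ because $\rho_0=0$, so $q([\omega])=0$ in $H^n(N,M)^\Z$ and $[\omega]$ lies in the image of $j$. The inverse of the edge map of the Lyndon--Hochschild--Serre spectral sequence is given concretely as follows: for an $n$-cocycle $\omega'$ on $G$ with $\omega'|_{N^n}=0$, the $(n{-}1)$-cochain $(g_2,\dots,g_n)\mapsto(-1)^n\omega'(\xi,g_2,\dots,g_n)$ on $N^{n-1}$ is an $N$-cocycle, and its class in the coinvariants $H^{n-1}(N,M)_{\Z}\cong H^1(\Z,H^{n-1}(N,M))$ equals $j^{-1}([\omega'])$. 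Applied to our $\omega$, evaluation at $(\xi,g_2,\dots,g_n)$ with $g_i\in N$ yields $\rho_1$ evaluated on $\xi$-conjugates of the $g_i$; the resulting class differs from $[\rho]$ only by the $\Z$-action (which is trivial modulo $(\xi-1)$-coboundaries) and an overall sign (absorbed by the $(-1)^n$ appearing in the edge map), giving $j^{-1}([\omega])=[\rho]$ in the coinvariants and hence $[\omega]=\bar j([\rho])$.

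The main obstacle is the cocycle computation: although mechanical, it requires careful bookkeeping of indices and of the many conjugations by powers of $\xi$ that appear in \eqref{defofomega}. Once the verification is organized systematically -- for example by sorting coboundary faces according to the positions of the new $\xi$-contributions introduced by each multiplication -- the cancellations become transparent and both assertions follow without further difficulty.
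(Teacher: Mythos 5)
The paper offers no proof of this lemma beyond the phrase ``one can verify the following easily,'' so there is nothing to compare your argument against directly; I am assessing it on its own terms. Your treatment of the first assertion is essentially right: writing $\tilde g_i=\xi^{l_0+\cdots+l_{i-1}}g_i\xi^{-(l_0+\cdots+l_{i-1})}$, the first two faces of $\delta\omega(g_0\xi^{l_0},\dots,g_n\xi^{l_n})$ combine, via $\rho_{l_0+l_1}=\rho_{l_0}+\xi^{l_0}\rho_{l_1}$, into $-(-1)^n g_0\tilde g_1\rho_{l_0}(\tilde g_2,\dots,\tilde g_n)$, and the remaining faces supply exactly the other terms of $-(-1)^n g_0(\delta\rho_{l_0})(\tilde g_1,\dots,\tilde g_n)$, which vanishes because each $\rho_l$ is an $(n{-}1)$-cocycle. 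The terms do not cancel in pairs, as you suggest; they assemble into a single coboundary of $\rho_{l_0}$. But since you name both ingredients (the additivity of $l\mapsto\rho_l$ and $\delta\rho_{l_0}=0$) correctly, this is only a looseness of phrasing.

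The second half has a genuine gap. Your proposed inverse of the edge map --- sending an $n$-cocycle $\omega'$ vanishing on $N^n$ to the $(n{-}1)$-cochain $(g_2,\dots,g_n)\mapsto(-1)^n\omega'(\xi,g_2,\dots,g_n)$ --- fails as stated: applied to the paper's $\omega$ it produces $(g_2,\dots,g_n)\mapsto\rho(\xi g_2\xi^{-1},\dots,\xi g_n\xi^{-1})$, which is in general not an $N$-cocycle. For $n=2$ its coboundary at $(g,h)$ equals $(g-\xi g\xi^{-1})\cdot\rho(\xi h\xi^{-1})$, which need not vanish. The underlying issue is that the $\Z$-action $\xi^l[\rho]=[\xi^l\rho]$ twists both the arguments and the values, whereas your evaluation only conjugates the arguments; so the discrepancy with $[\rho]$ is not ``the $\Z$-action'' and cannot be absorbed into $(\xi-1)$-coboundaries by the argument you give. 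The evaluation that recovers $\rho$ on the nose is $(g_2,\dots,g_n)\mapsto(-1)^n\omega(\xi,\xi^{-1}g_2\xi,\dots,\xi^{-1}g_n\xi)$. More seriously, even with the corrected formula you would still owe a proof that this evaluation computes $j^{-1}$ on $\ker q$: that identification is the actual content of the second assertion, and asserting a concrete formula for the edge map without deriving it (for instance from the cochain-level construction of the spectral sequence, or from the Wang exact sequence of the extension $1\to N\to G\to\Z\to1$, whose connecting homomorphism is precisely \eqref{defofomega} up to sign) leaves $[\omega]=\bar j([\rho])$ unestablished.
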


Now we turn to group actions on $C^*$-algebras. 
Let $G$, $N$ and $\xi$ be as above. 

\begin{lemma}\label{realize:ka2}
Suppose that $N$ is a poly-$\Z$ group. 
Let $\alpha:G\curvearrowright A$ be an action 
in $\AC(\mathcal{O}_\infty,\mu^G)$. 
Let $(u_g)_{g\in N}$ be an $\alpha|N$-cocycle in $U(SA)$ and 
let $c\in H^1(N,K_0(A))$ be the cohomology class of $g\mapsto K_1(u_g)$. 
Then there exists an $\alpha$-cocycle $(v_g)_{g\in G}$ in $U(A)_0$ 
such that $\bar j(c)=\kappa^2(\alpha,v)$, 
where $\bar j:H^1(N,K_0(A))\to H^2(G,K_0(A))$ is the homomorphism 
introduced above. 
\end{lemma}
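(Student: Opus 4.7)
The plan is to produce, from the $\alpha|N$-cocycle $u$ with $[K_1(u)] = c \in H^1(N, K_0(A))$, an $\alpha$-cocycle $v: G \to U(A)_0$ whose invariant $\kappa^2(\alpha, v)$ matches the explicit representative $\omega(g_1\xi^{l_1}, g_2\xi^{l_2}) = g_1 \rho_{l_1}(\xi^{l_1} g_2 \xi^{-l_1})$ of $\bar j(c)$ provided by Lemma~\ref{barj}. Viewing $SA = C_0((0,1)) \otimes A$, think of $u$ as a family of loops $u_g: [0,1] \to U(A)$ with $u_g(0) = u_g(1) = 1$, $u_g \alpha_g(u_h) = u_{gh}$ pointwise for $g, h \in N$, and $K_1(u_g) = \rho(g)$.

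The core of the plan is to construct, for each $l \in \Z$, an $\alpha|N$-cocycle $u^{(l)} \in Z^1(N, U(SA))$ realizing $[\rho_l] \in H^1(N, K_0(A))$, where $\rho_l = \sum_{k=0}^{l-1} \xi^k \rho$ for $l \geq 0$ and analogously for $l < 0$. Set $u^{(0)} = 1$ and $u^{(1)} = u$; for other $l$, apply Lemma~\ref{realize:1c} (extended from Hirsch length two to $N$ poly-$\Z$ by the same inductive scheme as in Section~5). Then, by a further application of Lemma~\ref{1cv_Hirsch2}, arrange that the family $(u^{(l)})_{l \in \Z}$ satisfies the coherent concatenation identity $u^{(l+m)}_h \sim u^{(l)}_h \cdot \alpha_{\xi^l}(u^{(m)}_{\xi^{-l} h \xi^l})$ modulo $\alpha|N$-coboundary, mirroring the identity $\rho_{l+m} = \rho_l + \xi^l \rho_m$.

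Having the $u^{(l)}$'s, I would define $v$ as follows. Decompose each $g \in G$ uniquely as $h \xi^l$ with $h \in N$ and $l \in \Z$, and assign to $g$ a unitary $v_{h\xi^l} \in U(A)_0$ together with a distinguished path $\tilde v_{h\xi^l}: [0,1] \to U(A)$ from $1$, both constructed from the loop $u^{(l)}_h$ via a truncation of the parameter interval. The coherent concatenation identity among the $u^{(l)}$'s then translates directly into the $\alpha$-cocycle identity $v_{g_1} \alpha_{g_1}(v_{g_2}) = v_{g_1 g_2}$. Computing $\kappa^2(\alpha, v)(g_1\xi^{l_1}, g_2\xi^{l_2})$ with these paths, the loop $\tilde v_{g_1} \alpha_{g_1}(\tilde v_{g_2}) \tilde v_{g_1 g_2}^{-1}$ expands via the cocycle identities for the $u^{(l)}$'s into a loop whose $K_1$-class evaluates to $g_1 \rho_{l_1}(\xi^{l_1} g_2 \xi^{-l_1})$, matching $\omega$.

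The main obstacle I foresee is the coherent inductive construction of the family $(u^{(l)})_{l \in \Z}$: at each stage, trivializing the obstruction to an exact (not merely cohomological) concatenation identity requires an $\alpha|N$-coboundary correction, and assembling these corrections across all $l$ so that the resulting $v$ is a genuine (not merely approximate) $\alpha$-cocycle relies crucially on the $H^2$-stability and asymptotic $H^1$-stability of poly-$\Z$ groups (Theorem~\ref{H1H2stable}). The hypothesis $\alpha \in \AC(\mathcal{O}_\infty, \mu^G)$ is precisely what makes these stability results available in the present setting.
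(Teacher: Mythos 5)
Your proposal correctly identifies the target — matching $\kappa^2(\alpha,v)$ against the explicit representative $\omega$ of $\bar j(c)$ supplied by Lemma \ref{barj} — and correctly locates the difficulty in making the $\Z$-indexed family coherent. But the mechanism you propose has genuine gaps, and it is not the paper's route. First, your construction of the cocycles $u^{(l)}$ realizing $[\rho_l]$ invokes Lemma \ref{realize:1c} ``extended to $N$ poly-$\Z$.'' That extension is false in general: the proof of Lemma \ref{realize:1c} rests on Lemma \ref{2cv_Hirsch2}, which is special to Hirsch length two; for $N$ of Hirsch length three or more, a $2$-cocycle with values in $U(J)_0$ need not be a coboundary (this is exactly the $\kappa^3$-obstruction of Section 8), so a prescribed $1$-cocycle $\rho_l:N\to K_1(SA)$ need not be realizable by a unitary cocycle. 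Since the lemma allows $N$ to be an arbitrary poly-$\Z$ group, this step does not go through. Second, even granting the $u^{(l)}$, your concatenation identity $u^{(l+m)}_h\sim u^{(l)}_h\,\alpha_{\xi^l}(u^{(m)}_{\xi^{-l}h\xi^l})$ holds only modulo $\alpha|N$-coboundaries, and those correction terms have no reason to be small; $H^2$-stability (Theorem \ref{H1H2stable}) only trivializes $2$-cocycles that are close to $1$ on a generating set, so it cannot be used to patch these defects into an exact $\alpha$-cocycle on $G$. Relatedly, the ``truncation'' producing $v_{h\xi^l}$ from the loop $u^{(l)}_h$ is not pinned down: one needs the exact pointwise identity $v_{g_1}\alpha_{g_1}(v_{g_2})=v_{g_1g_2}$ at the truncation parameter for $\kappa^2(\alpha,v)$ to be defined, and a merely cohomological relation among the $u^{(l)}$ does not deliver it.

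The paper's proof sidesteps all of this. It works in the cone $C_0((0,1])\otimes A$ rather than the suspension: by Lemma \ref{1cv_G_homo1} the $\alpha|N$-cocycle $(u_g)$ is a limit of coboundaries $x_k\alpha_g(x_k^*)$ with $x_k\in U(C_0((0,1])\otimes A)$. The whole $\Z$-indexed family is then generated from the single unitary $x_k$ by the exact relations $y_{k,0}=1$, $y_{k,1}=x_k$, $y_{k,l}\alpha_{\xi^l}(y_{k,m})=y_{k,l+m}$ (possible because $\Z$ is free), setting $z_{k,g\xi^l}=\alpha_g(y_{k,l})$. Evaluation at the endpoint $t=1$ gives a family in $U(A)$ whose $\alpha$-cocycle defect tends to $0$ as $k\to\infty$ — so $H^2$-stability genuinely applies — and the cone paths $t\mapsto z_{k,g}(t)$ themselves serve as the paths $\tilde v_{k,g}$ in the computation of $\kappa^2$, which then evaluates to $\omega$. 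To salvage your approach, replace the family $(u^{(l)})_l$ of exact suspension cocycles by this single-generator construction in the cone.
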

\begin{proof}
Define a $2$-cocycle $\omega:G^2\to K_1(SA)=K_0(A)$ by 
\[
\omega(g,h\xi^m)=0\quad\text{and}\quad 
\omega(g\xi,h\xi^m)=g\cdot K_1(u_{\xi h\xi^{-1}})
\quad\forall g,h\in N,\ m\in\Z. 
\]
By Lemma \ref{barj}, $\bar j(c)$ equals $[\omega]$. 

By Lemma \ref{1cv_G_homo1}, 
there exists a sequence $(x_k)_{k\in\N}$ of unitaries 
in $U(C_0((0,1])\otimes A)$ such that 
\[
\lim_{k\to\infty}x_k\alpha_g(x_k^*)=u_g\quad\forall g\in N. 
\]
In particular, 
one has $x_k(1)\alpha_g(x_k(1)^*)\to1$ as $k\to\infty$ for any $g\in N$. 
For each $k\in\N$, we can construct a family of unitaries 
$(y_{k,l})_{l\in\Z}$ in $U(C_0((0,1])\otimes A)$ 
satisfying $y_{k,0}=1$, $y_{k,1}=x_k$ and 
$y_{k,l}\alpha_{\xi^l}(y_{k,m})=y_{k,l+m}$ for every $l,m\in\Z$. 
For $k\in\N$, $g\in N$ and $l\in\Z$, 
we let $z_{k,g\xi^l}=\alpha_g(y_{k,l})\in U(C_0((0,1])\otimes A)$. 
Then, for any $g\xi^l,h\xi^m\in G$, we get 
\begin{align*}
z_{k,g\xi^l}\alpha_{g\xi^l}(z_{k,h\xi^m})z_{k,g\xi^lh\xi^m}^*
&=\alpha_g(y_{k,l})\alpha_{g\xi^l}\left(\alpha_h(y_{k,m})\right)
\alpha_{g\xi^lh\xi^{-l}}(y_{k,l+m}^*)\\
&=\alpha_g\left(y_{k,l}\alpha_{\xi^lh\xi^{-l}}(\alpha_{\xi^l}(y_{k,m}))\right)
\alpha_{g\xi^lh\xi^{-l}}(y_{k,l+m}^*)\\
&=\alpha_g\left(y_{k,l}\alpha_{\xi^lh\xi^{-l}}(y_{k,l}^*)\right), 
\end{align*}
and hence 
\[
\lim_{k\to\infty}
z_{k,g\xi^l}(1)\alpha_{g\xi^l}(z_{k,h\xi^m}(1))z_{k,g\xi^lh\xi^m}(1)^*
=\lim_{k\to\infty}
\alpha_g\left(y_{k,l}(1)\alpha_{\xi^lh\xi^{-l}}(y_{k,l}(1)^*)\right)=1. 
\]
Since $G$ is $H^2$-stable by Theorem \ref{H1H2stable}, 
there exists a sequence of $\alpha$-cocycles $(v_{k,g})_g$ in $U(A)_0$ 
such that 
\[
\lim_{k\to\infty}\lVert v_{k,g}-z_{k,g}(1)\rVert=0\quad\forall g\in G. 
\]
In order to compute $\kappa^2(\alpha,v_k)$, 
we choose a path of unitaries $\tilde v_{k,g}$ so that 
$\tilde v_{k,g}(0)=1$, $\tilde v_{k,g}(1)=v_{k,g}$ and 
\[
\lim_{k\to\infty}
\sup_{t\in[0,1]}\lVert\tilde v_{k,g}(t)-z_{k,g}(t)\rVert=0. 
\]
For $g\xi^l,h\xi^m\in G$, when $k$ is sufficiently large, 
\[
\tilde v_{k,g\xi^l}\alpha_{g\xi^l}(\tilde v_{k,h\xi^m})
\tilde v_{k,g\xi^lh\xi^m}^*
\approx z_{k,g\xi^l}\alpha_{g\xi^l}(z_{k,h\xi^m})z_{k,g\xi^lh\xi^m}^*
=\alpha_g\left(y_{k,l}\alpha_{\xi^lh\xi^{-l}}(y_{k,l}^*)\right). 
\]
When $l=0$, 
$y_{k,l}\alpha_{\xi^lh\xi^{-l}}(y_{k,l}^*)$ equals $1$. 
Moreover, when $\l=1$, 
\[
\tilde v_{k,g\xi}\alpha_{g\xi}(\tilde v_{k,h\xi^m})\tilde v_{k,g\xi h\xi^m}^*
\approx\alpha_g(y_{k,1}\alpha_{\xi h\xi^{-1}}(y_{k,1}^*))
\approx\alpha_g(u_{\xi h\xi^{-1}}). 
\]
Therefore, 
$\kappa^2(\alpha,v_k)$ equals $\bar j(c)$ for sufficiently large $k$. 
\end{proof}

\begin{lemma}\label{realize:ka2'}
Suppose that $N$ is $\Z$ 
(i.e.\ $G$ is a poly-$\Z$ group of Hirsch length two). 
Let $\alpha:G\curvearrowright A$ be an action 
in $\AC(\mathcal{O}_\infty,\mu^G)$. 
Then for any $c\in H^2(G,K_0(A))$, 
there exists an $\alpha$-cocycle $(v_g)_{g\in G}$ in $U(A)_0$ 
such that $\kappa^2(\alpha,v)=c$. 
\end{lemma}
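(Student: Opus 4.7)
My plan is to reduce the statement to Lemma~\ref{realize:ka2} by showing that the map $\bar j : H^1(N, K_0(A)) \to H^2(G, K_0(A))$ is surjective, where $N \cong \Z$ is the normal subgroup of $G$ with $G/N \cong \Z$.

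First, I would invoke the Lyndon--Hochschild--Serre short exact sequence \eqref{LHS} with $n = 2$ and coefficient module $M = K_0(A)$:
\[
0 \to H^1(\Z, H^1(N, K_0(A))) \xrightarrow{j} H^2(G, K_0(A)) \xrightarrow{q} H^2(N, K_0(A))^\Z \to 0.
\]
Since $N \cong \Z$ has cohomological dimension one, $H^2(N, K_0(A)) = 0$, so $j$ is an isomorphism. Setting $M' = H^1(N, K_0(A))$, the natural map $M' \to H^1(\Z, M') = M'/(\xi - 1)M'$ is just the quotient projection, hence surjective. By the definition of $\bar j$ preceding Lemma~\ref{barj}, its composition with $j$ is exactly $\bar j$, which is therefore surjective.

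Given $c \in H^2(G, K_0(A))$, I would choose a $1$-cocycle $\rho : N \to K_0(A) = K_1(SA)$ with $\bar j([\rho]) = c$. Since $N = \Z$, a direct construction yields an $\alpha|N$-cocycle $(u_g)_{g \in N}$ in $U(SA)$ with $K_1(u_g) = \rho(g)$: pick a generator $\xi_N$ of $N$ and any unitary $u \in U(SA)$ with $K_1(u) = \rho(\xi_N)$, and set $u_{\xi_N^n} = u\,\alpha_{\xi_N}(u) \cdots \alpha_{\xi_N}^{n-1}(u)$ for $n > 0$ (with the analogous formula for $n < 0$); the cocycle identity and the required $K_1$-values follow by induction.

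Finally, since $N = \Z$ is poly-$\Z$ and $\alpha|N$ lies in $\AC(\mathcal{O}_\infty, \mu^N)$ by restriction and Lemma~\ref{AC}\,(1), I would apply Lemma~\ref{realize:ka2} to $(u_g)_{g \in N}$ to obtain an $\alpha$-cocycle $(v_g)_{g \in G}$ in $U(A)_0$ with $\kappa^2(\alpha, v) = \bar j([\rho]) = c$, as required. The only non-routine point is the surjectivity of $\bar j$; everything else is either direct cocycle bookkeeping or an invocation of the already-established Lemma~\ref{realize:ka2}.
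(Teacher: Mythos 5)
Your proposal is correct and follows essentially the same route as the paper: use the Lyndon--Hochschild--Serre sequence \eqref{LHS} to see that $\bar j$ is surjective (since $H^2(N,K_0(A))=0$ for $N\cong\Z$), lift $c$ to a $1$-cocycle $\rho$ on $N$, realize $\rho$ by an $\alpha|N$-cocycle in $U(SA)$ using that $N\cong\Z$, and then invoke Lemma \ref{realize:ka2}. The extra details you supply (the explicit inductive construction of $(u_g)$ and the surjectivity of $H^1(N,K_0(A))\to H^1(\Z,H^1(N,K_0(A)))$) are exactly the steps the paper leaves implicit.
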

\begin{proof}
By \eqref{LHS}, 
$j:H^1(\Z,H^1(N,K_0(A)))\to H^2(G,K_0(A))$ is an isomorphism. 
Hence there exists a $1$-cocycle $\rho:N\to K_0(A)=K_1(SA)$ 
such that $\bar j([\rho])=c$. 
Since $N$ is $\Z$, 
we can find an $\alpha|N$-cocycle $(u_g)_{g\in N}$ in $U(SA)$ 
such that $K_1(u_g)=\rho(g)$. 
By the lemma above, 
there exists an $\alpha$-cocycle $(v_g)_{g\in G}$ in $A$ 
such that $\kappa^2(\alpha,v)=\bar j([\rho])=c$, 
which completes the proof. 
\end{proof}

\begin{lemma}\label{realization}
Suppose that $N$ is a poly-$\Z$ group. 
Let $\alpha:G\curvearrowright A$ be an outer action of $G$ 
on a unital Kirchberg algebra $A$. 
Let $(u_g)_{g\in N}$ be an $\alpha|N$-cocycle in $A_\flat$. 
Then there exist a cocycle action 
$(\beta,v):G\curvearrowright A\otimes\mathcal{O}_\infty$ 
and a family $(x_g)_{g\in G}$ of unitaries 
in $(A\otimes\mathcal{O}_\infty)^\flat$ 
satisfying the following. 
\begin{enumerate}
\item $\beta_g=\alpha_g\otimes\mu^G_g$, $v(g,h)=1$, $v(g\xi^l,\xi^m)=1$, 
$x_g=1$ for all $g,h\in N$ and $l,m\in\Z$. 
\item $(\alpha_g\otimes\mu^G_g)(a)=(\Ad x_g\circ\beta_g)(a)$ holds 
for all $g\in G$ and $a\in A$. 
\item When we put $w(g,h)=x_g\beta_g(x_h)v(g,h)x_{gh}^*$ for $g,h\in G$, 
\[
w(g,h)=1,\quad w(g\xi^l,\xi^m)=1
\quad\text{and}\quad w(\xi,g)=u_{\xi g\xi^{-1}}\otimes1
\]
hold for any $g,h\in N$ and $l,m\in\Z$. 
\end{enumerate}
\end{lemma}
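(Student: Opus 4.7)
The plan is to take $\beta_g = \alpha_g\otimes\mu^G_g$, a genuine action of $G$ on $A\otimes\mathcal{O}_\infty$; since this forces the accompanying $2$-cocycle $v$ to be central and hence $\T$-valued by simplicity, I may set $v\equiv 1$, which satisfies every requirement on $v$ in~(1). Using the semidirect decomposition $G=N\rtimes\Z$ supplied by the section $l\mapsto\xi^l$, I write each element of $G$ uniquely as $g\xi^l$ with $g\in N$ and $l\in\Z$, and set $x_{g\xi^l}=\tilde\alpha_g(y_l)$, where $\tilde\alpha:=\alpha\otimes\mu^G$ and $(y_l)_{l\in\Z}$ will be a $1$-cocycle in $U((A\otimes\mathcal{O}_\infty)^\flat\cap(A\otimes 1)')$ with respect to the $\Z$-action generated by $\tilde\alpha_\xi$, normalized by $y_0=1$. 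This immediately yields $x_g=1$ for $g\in N$, gives the commutation of $x_g$ with $A\otimes 1$ required by~(2), and makes $w(g,h)=1$ for $g,h\in N$ as well as $w(g\xi^l,\xi^m)=\tilde\alpha_g\bigl(y_l\tilde\alpha_{\xi^l}(y_m)y_{l+m}^*\bigr)=1$ automatic.

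What remains is $w(\xi,g)=u_{\xi g\xi^{-1}}\otimes 1$ for $g\in N$; substituting $x_\xi=y_1$ and $x_{\xi g}=\tilde\alpha_{\xi g\xi^{-1}}(y_1)$ (from $\xi g=(\xi g\xi^{-1})\xi$) collapses this to the single identity
\[
y_1\,\tilde\alpha_h(y_1^*) = u_h\otimes 1 \qquad\text{for every } h\in N.
\]
Once such a $y_1$ is produced, the remaining $y_l$ are determined by $y_{l+1}=y_l\tilde\alpha_{\xi^l}(y_1)$ and its extension to $l<0$, and conditions (1)--(3) then follow by direct substitution into the formulas above.

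To build $y_1$ I will invoke Proposition~\ref{extendedH1stable} on the separable Kirchberg algebra $A\otimes\mathcal{O}_\infty$ with outer action $\tilde\alpha|N$, which lies in $\AC(\mathcal{O}_\infty,\mu^N)$ by Lemma~\ref{AC}(3). I lift $u_h$ to a continuous $\tilde u_h:[0,\infty)\to U(A)$ with $\tilde u_h(0)=1$ that asymptotically satisfies the $\alpha|N$-cocycle relation and asymptotically commutes with $A$; since $\tilde u_h$ starts from $1$, every $\tilde u_h(t)\otimes 1$ lies in $U_0(A\otimes\mathcal{O}_\infty)$, so after modifying $\tilde u_h$ by asymptotically trivial elements (to gain uniform control of commutator norms on initial intervals) I obtain a jointly continuous homotopy $x_h(s,t):[0,1]\times[0,\infty)\to U(A\otimes\mathcal{O}_\infty)$ with $x_h(0,t)=1$, $x_h(1,t)=\tilde u_h(t)\otimes 1$, and the uniform-in-$s$ asymptotic centrality and asymptotic $\tilde\alpha|N$-cocycle identity required by the proposition. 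A further application of the refined Nakamura homotopy theorem (Theorem~\ref{refinedNakamura}) smooths the path while preserving these properties up to fixed constants. Proposition~\ref{extendedH1stable} then outputs a continuous $y:[0,\infty)\to U(A\otimes\mathcal{O}_\infty)$ whose class $y_1\in U((A\otimes\mathcal{O}_\infty)_\flat)_0\subset U((A\otimes\mathcal{O}_\infty)^\flat\cap(A\otimes 1)')$ satisfies the displayed identity.

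The main obstacle will be producing the homotopy $x_h(s,t)$ so that, coherently for all $h\in N$, the sup over $s\in[0,1]$ of $\lVert[x_h(s,t),c]\rVert$ and of $\lVert x_h(s,t)\tilde\alpha_h(x_k(s,t))-x_{hk}(s,t)\rVert$ actually tend to $0$ as $t\to\infty$; a naive reparametrization $\tilde u_h(st)\otimes 1$ fails because $\sup_s$ amounts to a supremum over all past times of $\tilde u_h$, so a careful choice of lift together with Theorem~\ref{refinedNakamura} applied uniformly across finite subsets of $N$ will be required to meet the hypotheses of Proposition~\ref{extendedH1stable}.
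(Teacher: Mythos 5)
There is a genuine gap, and it is structural rather than technical. By fixing $\beta_g=\alpha_g\otimes\mu^G_g$ for \emph{all} $g\in G$ and $v\equiv 1$, and taking $x_{g\xi^l}=\tilde\alpha_g(y_l)$ for a $1$-cocycle $(y_l)_l$, you reduce everything to solving
\[
y_1\,\tilde\alpha_h(y_1^*) = u_h\otimes 1 \qquad (h\in N),
\]
i.e.\ to showing that the $\tilde\alpha|N$-cocycle $(u_h\otimes1)_h$ is a coboundary. But the lemma is stated for an \emph{arbitrary} $\alpha|N$-cocycle $(u_g)_g$ in $A_\flat$, and such cocycles are not coboundaries in general: already for $N=\Z$ and $A$ with $KK^1(A,A)\neq 0$ (e.g.\ $A=\mathcal{O}_n$), one can take $K_1(u_1)\neq 0$ in $K_1(A_\flat)=KK^1(A,A)$, whereas any coboundary $y\tilde\alpha_1(y^*)$ has $K_1$-class $(1-K_1(\alpha_1|A_\flat))K_1(y)$, which vanishes when $KK(\alpha_1)=1_A$. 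More tellingly, your construction outputs $(\beta,v)=(\alpha\otimes\mu^G,1)$ on the nose, so $\mathfrak{o}^2((\beta,v),\alpha\otimes\mu^G)=0$ by taking the trivial conjugating family; yet conditions (1)--(3) force $\mathfrak{o}^2((\beta,v),\alpha\otimes\mu^G)=\bar j([K_1(u_\cdot)])$, which is exactly the nonzero class this lemma is used to realize in Theorem \ref{exi_Hirsch2} (2) and Theorem \ref{exi_Hirsch3} (2). Consequently the homotopy $x_h(s,t)$ you hope to feed into Proposition \ref{extendedH1stable} cannot exist for general $(u_g)$ --- its existence would make the cocycle a coboundary --- so the difficulty you flag at the end is not a matter of choosing lifts carefully.

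The paper avoids this by \emph{not} insisting that $\beta_\xi$ equal $\alpha_\xi\otimes\mu^G_\xi$. It applies Theorem \ref{KKtrivcc2} to the restriction to $N$, trading the asymptotic cocycle $(u_g\otimes1)_g$ for an honest automorphism $\gamma\in\Aut(A\otimes\mathcal{O}_\infty)$ (asymptotically implemented by a unitary $y$) and an honest $(\alpha\otimes\mu^G)|N$-cocycle $(c_g)_g$ in $A\otimes\mathcal{O}_\infty$ with $y(u_g\otimes1)(\alpha_g\otimes\mu^G_g)(y^*)=c_g$. One then sets $\beta_\xi=\gamma\circ(\alpha_\xi\otimes\mu^G_\xi)$, $v(\xi,g)=c_{\xi g\xi^{-1}}$, and $x_{\xi}=y^*$; the non-coboundary part of $(u_g)$ is absorbed into the twist $\gamma$ and the genuine cocycle $(c_g)$, which is precisely the freedom your ansatz gives away. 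Your formal bookkeeping for $w(g,h)$, $w(g\xi^l,\xi^m)$ and $w(\xi,g)$ is correct and matches the paper's; the missing ingredient is this use of Theorem \ref{KKtrivcc2} in place of an $H^1$-triviality that does not hold.
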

\begin{proof}
We may assume that $\mu^N$ equals the restriction of $\mu^G$ to $N$. 
By applying Theorem \ref{KKtrivcc2} to the $\alpha|N$-cocycle $(u_g)_g$, 
we can find $y\in U((A\otimes\mathcal{O}_\infty)^\flat)$, 
$\gamma\in\Aut(A\otimes\mathcal{O}_\infty)$ and 
an $\alpha|N$-cocycle $(c_g)_g$ in $A\otimes\mathcal{O}_\infty$ such that 
\[
\Ad y(a)=\gamma(a)\quad\forall a\in A\otimes\mathcal{O}_\infty,\quad 
y(u_g\otimes1)(\alpha_g\otimes\mu^G_g)(y^*)=c_g\quad\forall g\in N. 
\]
In particular, 
one has $\gamma\circ(\alpha_g\otimes\mu^G_g)\circ\gamma^{-1}
=\Ad c_g\circ(\alpha_g\otimes\mu^G_g)$ on $A$ 
for $g\in N$. 
Let us write $\alpha'=\alpha\otimes\mu^G$ for simplicity. 
Then we get 
\[
(\gamma\circ\alpha'_\xi)\circ\alpha'_{\xi^{-1}g\xi}
\circ(\gamma\circ\alpha'_\xi)^{-1}
=\gamma\circ\alpha'_g\circ\gamma^{-1}
=\Ad c_g\circ\alpha'_g, 
\]
and so there exists a cocycle action 
$(\beta,v):G\curvearrowright A\otimes\mathcal{O}_\infty$ 
such that 
\[
\beta_{g\xi^l}=\alpha'_g\circ(\gamma\circ\alpha'_\xi)^l
\quad\forall g\in N,\ l\in\Z
\]
and 
\[
v(g,h)=1,\quad v(g\xi^l,\xi^m)=1,\quad v(\xi,g)=c_{\xi g\xi^{-1}}
\quad\forall g,h\in N,\ l,m\in\Z. 
\]
Let $(y_l)_{l\in\Z}$ be the unitaries 
in $(A\otimes\mathcal{O}_\infty)^\flat$ satisfying 
$y_0=1$, $y_1=y^*$ and $y_l\beta_{\xi^l}(y_m)=y_{l+m}$. 
For $g\in N$ and $l\in\Z$, we put $x_{g\xi^l}=\beta_g(y_l)$. 
Then $\alpha'_g=\Ad x_g\circ\beta_g$ on $A$ for any $g\in G$. 
For $g,h\in G$, we set $w(g,h)=x_g\beta_g(x_h)v(g,h)x_{gh}^*$. 
For $g,h\in N$ and $l,m\in\Z$, it is straightforward to see 
\[
w(g,h)=1,\quad w(g\xi^l,\xi^m)=1
\]
and 
\[
w(\xi,g)=x_\xi\beta_\xi(x_g)v(\xi,g)x_{\xi g}^*
=y^*c_{\xi g\xi^{-1}}\beta_{\xi g\xi^{-1}}(y)
=y^*c_{\xi g\xi^{-1}}\alpha'_{\xi g\xi^{-1}}(y)
=u_{\xi g\xi^{-1}}\otimes1. 
\]
\end{proof}

\subsection{Existence}

In this subsection, 
we discuss existence of outer (cocycle) actions of poly-$\Z$ groups 
of Hirsch length two with prescribed $K$-theoretic data. 
For unital $C^*$-algebras $A,B$, 
we let $KK(A,B)^{-1}_*$ denote the set of 
invertible elements $x\in KK(A,B)$ such that $K_0(x)(K_0(1_A))=K_0(1_B)$. 

\begin{theorem}\label{exi_Hirsch2}
Let $G$ be a poly-$\Z$ group of Hirsch length two. 
Let $A$ be a unital Kirchberg algebra. 
\begin{enumerate}
\item For any homomorphism $\phi:G\to KK(A,A)^{-1}_*$, 
there exists an outer cocycle action $(\alpha,u):G\curvearrowright A$ 
such that $KK(\alpha_g)=\phi(g)$ for all $g\in G$. 
\item Let $\alpha$ be an action of $G$ on $A$. 
For any $c\in H^2(G,KK^1(A,A))$, 
there exists an outer cocycle action $(\beta,v):G\curvearrowright A$ 
such that $KK(\alpha_g)=KK(\beta_g)$ for all $g\in G$ and 
$\mathfrak{o}^2((\beta,v),\alpha)=c$. 
Moreover, $(\beta,v)$ can be chosen to be a genuine action 
if and only if $1_*(c)=0$ in $H^2(G,K_1(A))$. 
\end{enumerate}
\end{theorem}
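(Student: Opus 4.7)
The plan for Part (1) is an induction on the Hirsch length of $G$. For Hirsch length zero or one there is nothing essential to do: Phillips's classification theorem produces an automorphism realizing any given $KK(A,A)^{-1}_*$ class, and tensoring with the outer model action $\mu^\Z$ on $\mathcal{O}_\infty$ makes it outer. For Hirsch length two, write $G = N \rtimes \langle \xi \rangle$ with $N \cong \Z$ normal; produce $(\alpha|N, 1)$ by the inductive step and pick $\alpha_\xi \in \Aut(A)$ with $KK(\alpha_\xi) = \phi(\xi)$ by Phillips. The conjugation automorphism $\alpha_\xi \alpha_g \alpha_\xi^{-1}$ matches $\alpha_{\xi g \xi^{-1}}$ only up to asymptotic inner automorphisms, which supplies a family of asymptotic intertwiners satisfying the hypotheses of Theorem \ref{KKtrivcc2}. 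Invoking that theorem converts this asymptotic data into an honest cocycle action on $A \otimes \mathcal{O}_\infty \cong A$ (Theorem \ref{Oinfty_absorb}), and a further tensoring with $\mu^G$ secures outerness without disturbing the $KK$-classes, since $KK(\mu^G_g) = 1$ in $KK(\mathcal{O}_\infty, \mathcal{O}_\infty)$.

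For Part (2), the plan is to combine the Lyndon--Hochschild--Serre spectral sequence for $1 \to N \to G \to \Z \to 1$ (where $N = \langle \zeta \rangle \cong \Z$) with the realization Lemma \ref{realization}. Because $H^p(N, -) = 0$ for $p \geq 2$, the sequence \eqref{LHS} yields a surjection $\bar j : H^1(N, KK^1(A,A)) \twoheadrightarrow H^2(G, KK^1(A,A))$. Lift the given class $c$ to a $1$-cocycle $\rho : N \to K_1(A_\flat) = KK^1(A,A)$; since $N$ is cyclic, $\rho$ is realized as $g \mapsto K_1(u_g)$ for an $\alpha|N$-cocycle $(u_g)_{g \in N}$ in $U(A_\flat)$ by choosing $u_\zeta \in U(A_\flat)$ with $K_1(u_\zeta) = \rho(\zeta)$ and propagating via the cocycle relation. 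Applying Lemma \ref{realization} produces a cocycle action $(\beta, v) : G \curvearrowright A \otimes \mathcal{O}_\infty$ with intertwiners $(x_g)_g$ in $(A \otimes \mathcal{O}_\infty)^\flat$ such that the explicit formula $w(\xi, g) = u_{\xi g \xi^{-1}} \otimes 1$, together with the vanishing of $w$ on the remaining prescribed pairs, matches the canonical representative of $\bar j([\rho])$ provided by Lemma \ref{barj}. This identifies $\mathfrak{o}^2((\beta, v), \alpha \otimes \mu^G) = c$. Transferring along the cocycle conjugacy $A \otimes \mathcal{O}_\infty \cong A$ from Theorem \ref{Oinfty_absorb} gives the required cocycle action on $A$, with outerness inherited from $\mu^G$.

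For the genuine action refinement, necessity is immediate, since $v = 1$ forces $1_*(\mathfrak{o}^2(\beta, \alpha)) = [K_1(v(\cdot,\cdot))] = 0$. For sufficiency, assume $1_*(c) = 0$ and produce $(\beta, v)$ as above. The vanishing of $1_*(c)$ says precisely that $[K_1(v(g, h))] = 0$ in $H^2(G, K_1(A))$, so after a cocycle perturbation of $(\beta, v)$ one may arrange $v(g, h) \in U(A)_0$ for all $g, h$. Lemma \ref{2cv_Hirsch2} then asserts that this $2$-cocycle is a coboundary, allowing a final cocycle perturbation to a genuine action $\beta'$ with the same $KK$-data and $\mathfrak{o}^2(\beta', \alpha) = c$.

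The main obstacle is the computation in Part (2) identifying $\mathfrak{o}^2((\beta, v), \alpha \otimes \mu^G)$ with $c$: one must carefully check that the explicit partially-defined $2$-cocycle $w$ produced by Lemma \ref{realization} represents $\bar j([\rho])$ on the nose, which entails tracking coboundary corrections against the normal-form formula in Lemma \ref{barj}. A secondary difficulty in Part (1) is assembling the asymptotic intertwiners and cocycle identities in precisely the form needed to invoke Theorem \ref{KKtrivcc2}, especially when $G$ is the Klein bottle group rather than $\Z^2$, where the non-trivial action of $G/N$ on $N$ complicates bookkeeping.
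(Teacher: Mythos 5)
Your proposal is correct and follows essentially the same route as the paper: part (1) extends a $\Z$-action on $N$ across $\xi$ using the classification of $\Z$-actions (the paper cites Nakamura's theorem directly where you invoke Theorem \ref{KKtrivcc2}, which for Hirsch length one amounts to the same thing), and part (2) lifts $c$ through the Lyndon--Hochschild--Serre sequence to a $1$-cocycle $\rho:N\to K_1(A_\flat)$, realizes it by an $\alpha|N$-cocycle of unitaries, and feeds it into Lemma \ref{realization}, with the normal form of Lemma \ref{barj} identifying $\mathfrak{o}^2((\beta,v),\alpha\otimes\mu^G)$ with $\bar j([\rho])=c$. The genuine-action refinement via $1_*(c)=[K_1(v(\cdot,\cdot))]$ and Lemma \ref{2cv_Hirsch2} also matches the paper's argument.
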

\begin{proof}
There exists a normal subgroup $N\subset G$ isomorphic to $\Z$ 
and $\xi\in G$ such that $G$ is generated by $N$ and $\xi$. 

(1) 
Since $N$ is isomorphic to $\Z$, 
there exists an outer action $\alpha:N\curvearrowright A$ such that 
$KK(\alpha_g)=\phi(g)$ for all $g\in N$. 
Choose $\gamma\in\Aut(A)$ so that $KK(\gamma)=\phi(\xi)$. 
Then 
\[
KK(\alpha_g)=\phi(g)=KK(\gamma)\circ\phi(\xi^{-1}g\xi)\circ KK(\gamma^{-1})
=KK(\gamma\circ\alpha_{\xi^{-1}g\xi}\circ\gamma^{-1})
\]
holds for $g\in N$. 
It follows from \cite[Theorem 5]{Na00ETDS} that $g\mapsto\alpha_g$ and 
$g\mapsto\gamma\circ\alpha_{\xi^{-1}g\xi}\circ\gamma^{-1}$ 
are $KK$-trivially cocycle conjugate. 
Thus, there exist an $\alpha$-cocycle $(u_g)_{g\in N}$ in $A$ 
and $\tilde\gamma\in\Aut(A)$ such that $KK(\tilde\gamma)=1_A$ and 
\[
\Ad u_g\circ\alpha_g=\tilde\gamma\circ\gamma\circ\alpha_{\xi^{-1}g\xi}
\circ\gamma^{-1}\circ\tilde\gamma^{-1}\quad\forall g\in N. 
\]
Hence there exists a cocycle action $(\beta,v):G\curvearrowright A$ such that 
$\beta_g=\alpha_g$ for $g\in N$ and $\beta_\xi=\tilde\gamma\circ\gamma$. 
Clearly $KK(\beta_g)=\phi(g)$ holds for all $g\in G$. 
We can make $(\beta,v)$ outer 
by tensoring the outer action $\mu^G:G\curvearrowright\mathcal{O}_\infty$. 

(2) 
Suppose that we are given $c\in H^2(G,KK^1(A,A))=H^2(G,K_1(A_\flat))$. 
By \eqref{LHS}, there exists a $1$-cocycle $\rho:N\to K_1(A_\flat)$ 
satisfying $\bar j([\rho])=c$. 
Since $N\cong\Z$, there exists an $\alpha|N$-cocycle $(u_g)_g$ in $A_\flat$ 
such that $K_1(u_g)=\rho(g)$ for $g\in N$. 
By Lemma \ref{realization}, there exist a cocycle action 
$(\beta,v):G\curvearrowright A\otimes\mathcal{O}_\infty$ and 
a family $(x_g)_{g\in G}$ of unitaries in $(A\otimes\mathcal{O}_\infty)^\flat$ 
satisfying the following. 
\begin{itemize}
\item $\beta_g=\alpha_g\otimes\mu^G_g$, $v(g,h)=1$, $v(g\xi^l,\xi^m)=1$, 
$x_g=1$ for all $g,h\in N$ and $l,m\in\Z$. 
\item $(\alpha_g\otimes\mu^G_g)(a)=(\Ad x_g\circ\beta_g)(a)$ holds 
for all $g\in G$ and $a\in A$. 
\item When we put $w(g,h)=x_g\beta_g(x_h)v(g,h)x_{gh}^*$ for $g,h\in G$, 
\[
w(g,h)=1,\quad w(g\xi^l,\xi^m)=1
\quad\text{and}\quad w(\xi,g)=u_{\xi g\xi^{-1}}\otimes1 
\]
hold for any $g,h\in N$ and $l,m\in\Z$. 
\end{itemize}
Define $\omega:G^2\to K_1(A_\flat)$ by \eqref{defofomega}. 
It is easy to see 
\[
\omega(g,h)=0,\quad \omega(g\xi^l,\xi^m)=0
\quad\text{and}\quad\omega(\xi,g)=\rho(\xi g\xi^{-1}) 
\]
hold for any $g,h\in N$ and $l,m\in\Z$. 
Therefore, $\mathfrak{o}^2((\beta,v),\alpha\otimes\mu^G)=[\omega]$. 
Hence, from Lemma \ref{barj}, we can conclude 
$\mathfrak{o}^2((\beta,v),\alpha\otimes\mu^G)=\bar j([\rho])=c$. 
We can make $(\beta,v)$ outer 
by tensoring the outer action $\mu^G:G\curvearrowright\mathcal{O}_\infty$. 

As mentioned before, 
$1_*(c)=1_*(\mathfrak{o}^2((\beta,v),\alpha\otimes\mu^G))$ equals 
the cohomology class of $(g,h)\mapsto v(g,h)$. 
By Lemma \ref{2cv_Hirsch2}, we can conclude that 
$1_*(c)=0$ if and only if 
$(\beta,v)$ is $KK$-trivially cocycle conjugate to a genuine action. 
\end{proof}

\begin{remark}
In Theorem \ref{exi_Hirsch2} (1), 
if $K_1(A)$ is trivial or $A$ is in the Cuntz standard form, 
then $(\alpha,u)$ can be chosen to be a genuine action 
(see Lemma \ref{2cv_Hirsch2} and Theorem \ref{Cuntzstandard1}). 
In general, however, we do not know 
if a given homomorphism $\phi:G\to KK(A,A)^{-1}_*$ is realized 
by a genuine action or not. 
\end{remark}

The following two corollaries are immediate consequences of 
Theorem \ref{uni_Hirsch2} and Theorem \ref{exi_Hirsch2}. 

\begin{corollary}
Let $G$ be a poly-$\Z$ group of Hirsch length two. 
Let $A$ be a unital Kirchberg algebra and 
let $\phi:G\to KK(A,A)^{-1}_*$ be a homomorphism. 
There exists a bijective correspondence 
between the set of $KK$-trivially cocycle conjugacy classes of 
outer cocycle actions $(\alpha,u):G\curvearrowright A$ 
satisfying $KK(\alpha_g)=\phi(g)$ for all $g\in G$ and 
the cohomology group $H^2(G,KK^1(A,A))$. 
\end{corollary}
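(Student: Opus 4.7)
The plan is to classify cocycle actions by a relative obstruction class. Fix an outer cocycle action $(\alpha_0, u_0): G \curvearrowright A$ realizing $\phi$ via Theorem \ref{exi_Hirsch2}(1). I would then extend the definition of $\mathfrak{o}^2$ from Section 7.1 to a pair of outer cocycle actions as follows: given another $(\alpha, u)$ with $KK(\alpha_g) = \phi(g)$, Phillips's theorem \cite[Theorem 4.1.1]{Ph00DocM} furnishes unitaries $v_g \in A^\flat$ with $\Ad v_g \circ (\alpha_0)_g = \alpha_g$ on $A$, and the unitary
\[
w(g,h) = v_g (\alpha_0)_g(v_h) u_0(g,h) v_{gh}^* u(g,h)^*
\]
implements $\alpha_g \alpha_h \alpha_{gh}^{-1}$ on $A$ in two ways and therefore commutes with $A$, so lies in $A_\flat$. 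The $K_1$-classes form a $2$-cocycle for the $G$-action on $K_1(A_\flat) = KK^1(A, A)$, whose cohomology class $\mathfrak{o}^2((\alpha, u), (\alpha_0, u_0))$ is independent of the choice of $(v_g)_g$; the argument is parallel to the one in Section 7.1. Set $\Phi([(\alpha, u)]) = \mathfrak{o}^2((\alpha, u), (\alpha_0, u_0))$, which is clearly well-defined on $KK$-trivial cocycle conjugacy classes.

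For injectivity, suppose $\mathfrak{o}^2((\alpha, u), (\alpha_0, u_0)) = 0$. After replacing each $v_g$ by $z_g v_g$ for suitable $z_g \in U(A_\flat)$, one may arrange $K_1(w(g,h)) = 0$, i.e., $w(g,h) \in U(A_\flat)_0$. Now I would apply Lemma \ref{2cv_Hirsch2} to the induced $2$-cocycle $(w(g,h))_{g,h}$ inside $A_\flat$, using that the extended action $\alpha_0$ on $A_\flat$ lies in $\AC(\mathcal{O}_\infty, \mu^G)$ by Lemma \ref{AC'}(3), together with the stability results of Theorem \ref{H1H2stable}. This produces a coboundary trivializing $w$; absorbing it into $(v_g)_g$ yields $w(g,h) = 1$, so $v_g (\alpha_0)_g(v_h) u_0(g,h) v_{gh}^* = u(g,h)$. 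Then the hypothesis of Theorem \ref{KKtrivcc} is met and $(\alpha_0, u_0)$ and $(\alpha, u)$ are cocycle conjugate via an asymptotically inner automorphism, hence $KK$-trivially cocycle conjugate.

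For surjectivity, pick a normal subgroup $N \leq G$ with $G/N \cong \Z$ (so $N \cong \Z$). Given $c \in H^2(G, KK^1(A, A))$, the short exact sequence \eqref{LHS} produces a $1$-cocycle $\rho: N \to KK^1(A,A) = K_1(A_\flat)$ with $\bar j([\rho]) = c$. Realize $\rho$ by an $\alpha_0|N$-cocycle $(y_g)_{g \in N}$ of unitaries in $A_\flat$ with $K_1(y_g) = \rho(g)$ (possible since $N$ is cyclic). Feed $(y_g)_g$ into Lemma \ref{realization}, noting that its proof only requires Theorem \ref{KKtrivcc2} and is insensitive to whether the base cocycle is trivial; this yields a cocycle action on $A \otimes \mathcal{O}_\infty$ with prescribed invariant $\bar j([\rho]) = c$, computed exactly as in the proof of Theorem \ref{exi_Hirsch2}(2). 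Transporting back to $A$ by Theorem \ref{Oinfty_absorb} produces the required preimage.

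The main obstacle is the extension of $\mathfrak{o}^2$ and of the auxiliary results (Lemma \ref{2cv_Hirsch2} inside $A_\flat$, Lemma \ref{realization}, and the existence portion of Theorem \ref{exi_Hirsch2}(2)) to the situation where the base is a cocycle action rather than a genuine one. This generalization is unavoidable because, as remarked after Theorem \ref{exi_Hirsch2}, a given $\phi$ need not be realizable by a genuine action on $A$. The verification requires care: one must check that the stability machinery of Section 5, which is stated for cocycle actions on unital $C^*$-algebras, continues to apply within $A_\flat$, and that the cohomological bookkeeping tracking $K_1(A_\flat)$-classes of $w(g,h)$ is preserved under each perturbation step.
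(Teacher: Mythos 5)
Your argument is correct, and it correctly isolates the one point where the paper's presentation of this corollary as an ``immediate consequence'' of Theorem \ref{uni_Hirsch2} and Theorem \ref{exi_Hirsch2} hides some work: since $\phi$ need not be realizable by a genuine action, the base point $(\alpha_0,u_0)$ is only a cocycle action, while the definition of $\mathfrak{o}^2$, Lemma \ref{key}, Theorem \ref{uni_Hirsch2} and Theorem \ref{exi_Hirsch2}(2) are all stated with a genuine action in one slot. Your resolution --- extending $\mathfrak{o}^2$ to a pair of cocycle actions and rerunning the uniqueness and existence arguments relative to $(\alpha_0,u_0)$ --- does go through: the pair $(\sigma,w)$ with $w(g,h)=v_g(\alpha_0)_g(v_h)u_0(g,h)v_{gh}^*u(g,h)^*$ is a genuine cocycle action on $A_\flat$ because the $u$- and $u_0$-terms cancel out of the cocycle identity (they lie in $A$ and commute with $A_\flat$); Lemma \ref{2cv_Hirsch2}, Theorem \ref{KKtrivcc} and Theorem \ref{KKtrivcc2} are already stated for cocycle actions, so only Lemma \ref{realization} and the bookkeeping in Theorem \ref{exi_Hirsch2}(2) genuinely need revisiting, and the extra terms $\check u_{0,g}=u_0(\xi,\xi^{-1}g\xi)$ that then enter $v(\xi,g)$ and $w(\xi,g)$ drop out of the \emph{relative} class for the same commutation reason --- so your claim that the proof is ``insensitive'' to the base cocycle is slightly optimistic about the formulas but right about the outcome. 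What would spare you all of this relativization is the Cuntz standard form reduction already available in the paper: tensoring with $\mathcal{O}$ gives a bijection between the classification problems on $A$ and on $A\otimes\mathcal{O}$ (injectivity is Theorem \ref{Cuntzstandard2}; surjectivity uses Lemma \ref{reduction} and Remark \ref{Oinfty_absorb'}, together with the fact that $\phi(g)\in KK(A,A)^{-1}_*$ forces $K_0(\beta_g)$ to fix $K_0(1\otimes p)$), and on $A\otimes\mathcal{O}$ every outer cocycle action is cocycle conjugate to a genuine one by Theorem \ref{Cuntzstandard1}, so Theorem \ref{uni_Hirsch2}, Theorem \ref{exi_Hirsch2}(2) and the chain rule for genuine actions apply verbatim. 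Your route buys an intrinsic relative invariant $\mathfrak{o}^2((\alpha,u),(\alpha_0,u_0))$ living on $A$ itself; the standard-form route buys a proof that quotes only results as literally stated.
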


\begin{corollary}
Let $G$ be a poly-$\Z$ group of Hirsch length two. 
Let $\alpha:G\curvearrowright A$ be an outer action of $G$ 
on a unital Kirchberg algebra $A$. 
There exists a bijective correspondence between the following two sets. 
\begin{enumerate}
\item The set of $KK$-trivial cocycle conjugacy classes of 
outer actions $\beta:G\curvearrowright A$ 
satisfying $KK(\alpha_g)=KK(\beta_g)$ for all $g\in G$. 
\item The set of cohomology classes $c\in H^2(G,KK^1(A,A))$ 
satisfying $1_*(c)=0$. 
\end{enumerate}
\end{corollary}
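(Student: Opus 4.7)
The plan is to exhibit the bijection via the map $[\beta]\mapsto\mathfrak{o}^2(\alpha,\beta)$, where $[\beta]$ denotes the $KK$-trivial cocycle conjugacy class of an outer action $\beta$ with $KK(\beta_g)=KK(\alpha_g)$ for all $g\in G$. The entire argument is an assembly of Theorem \ref{uni_Hirsch2}, Theorem \ref{exi_Hirsch2}, and the chain rule $\mathfrak{o}^2(\alpha,\beta)+\mathfrak{o}^2(\beta,\gamma)=\mathfrak{o}^2(\alpha,\gamma)$ recorded just before Theorem \ref{uni_Hirsch2}. In particular, specializing the chain rule to $\gamma=\alpha$ gives $\mathfrak{o}^2(\alpha,\alpha)=0$ and $\mathfrak{o}^2(\alpha,\beta)=-\mathfrak{o}^2(\beta,\alpha)$, facts I will use freely.

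First I would verify that the assignment is well-defined on $KK$-trivial cocycle conjugacy classes. If $\beta\sim_{KK}\beta'$, the implication (1)$\Rightarrow$(2) of Theorem \ref{uni_Hirsch2} (equivalently the easy direction of Lemma \ref{key}) gives $\mathfrak{o}^2(\beta,\beta')=0$, whence the chain rule forces $\mathfrak{o}^2(\alpha,\beta)=\mathfrak{o}^2(\alpha,\beta')$. Next, I would confirm that the invariant lands in the prescribed subset: the naturality computation preceding Theorem \ref{uni_Hirsch2} shows that $1_*\mathfrak{o}^2((\alpha,u),\beta)$ is represented by $(g,h)\mapsto K_1(u(g,h))$, so because $\alpha$ is a genuine action (i.e.\ $u\equiv 1$), one has $1_*\mathfrak{o}^2(\alpha,\beta)=0$ as required.

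For injectivity, suppose $\mathfrak{o}^2(\alpha,\beta)=\mathfrak{o}^2(\alpha,\beta')$ for two outer actions with matching $KK$-classes. The chain rule yields $\mathfrak{o}^2(\beta,\beta')=0$, and since also $KK(\beta_g)=KK(\beta'_g)$ for every $g\in G$, Theorem \ref{uni_Hirsch2} gives $\beta\sim_{KK}\beta'$. For surjectivity, let $c\in H^2(G,KK^1(A,A))$ with $1_*(c)=0$; then $1_*(-c)=0$ as well. Applying the second part of Theorem \ref{exi_Hirsch2} to $\alpha$ and the class $-c$, and invoking the ``moreover'' clause (which produces a genuine action precisely when the image under $1_*$ vanishes), one obtains an outer action $\beta:G\curvearrowright A$ with $KK(\beta_g)=KK(\alpha_g)$ and $\mathfrak{o}^2(\beta,\alpha)=-c$. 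The chain rule then gives $\mathfrak{o}^2(\alpha,\beta)=c$, as desired.

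There is no genuine obstacle in this corollary; all conceptual work has already been absorbed into the proofs of the uniqueness theorem and the existence theorem. The only point demanding a little care is the bookkeeping of signs in the chain rule, so that the ``genuine action'' case of Theorem \ref{exi_Hirsch2} is invoked with the correct argument ($-c$ rather than $c$) when producing the preimage.
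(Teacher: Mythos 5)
Your proof is correct and is exactly the assembly the paper intends: the paper derives this corollary as an immediate consequence of Theorem \ref{uni_Hirsch2} and Theorem \ref{exi_Hirsch2}, with well-definedness and injectivity coming from the uniqueness theorem plus the chain rule, the constraint $1_*(c)=0$ coming from the naturality computation for $\mathfrak{o}^2$, and surjectivity from the ``moreover'' clause of Theorem \ref{exi_Hirsch2} (2). Your sign bookkeeping (applying the existence theorem to $-c$ so that $\mathfrak{o}^2(\alpha,\beta)=c$) is handled correctly.
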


\begin{example}
Let $A=\mathcal{O}_n$ be the Cuntz algebra. 
We have $KK(A,A)^{-1}_*=\{1\}$. 
\begin{enumerate}
\item Let $G=\langle\xi,\zeta\mid\xi\zeta=\zeta\xi\rangle\cong\Z^2$. 
Then $H^2(G,KK^1(A,A))\cong KK^1(A,A)\cong\Z_{n-1}$, 
where $KK^1(A,A)$ is regarded as a trivial module. 
Hence, there exist $n{-}1$ cocycle conjugacy classes of 
outer $G$-actions on $A$. 
This agrees with \cite[Example 8.7]{IM10Adv}. 
\item Let $G=\langle\xi,\zeta\mid\xi\zeta=\zeta^{-1}\xi\rangle$ be 
the Klein bottle group. 
Then $H^2(G,KK^1(A,A))\cong KK^1(A,A)\otimes\Z_2\cong\Z_{n-1}\otimes\Z_2$, 
where $KK^1(A,A)$ is regarded as a trivial module. 
Hence, 
\[
\#\{\text{cocycle conjugacy classes of outer actions $G\curvearrowright A$}\}
=\begin{cases}1&\text{$n$ is even}\\
2&\text{$n$ is odd. }\end{cases}
\]
\end{enumerate}
\end{example}

\section{Poly-$\Z$ groups of Hirsch length three}

For every poly-$\Z$ group $G$, 
we choose and fix an outer action 
$\mu^G:G\curvearrowright\mathcal{O}_\infty$.

\subsection{Uniqueness}

In this subsection, we determine 
when outer (cocycle) actions of poly-$\Z$ groups of Hirsch length three 
are mutually $KK$-trivially cocycle conjugate (Theorem \ref{uni_Hirsch3}). 

Let $(\alpha,u):G\curvearrowright A$ be a cocycle action of 
a discrete group $G$ on a unital $C^*$-algebra 
such that $u(g,h)\in U(A)_0$ for all $g,h\in G$. 
We introduce the invariant $\kappa^3(\alpha,u)$ as follows. 
Choose a continuous path $\tilde u(g,h):[0,1]\to U(A)_0$ 
such that $\tilde u(g,h)=1$ and $\tilde u(g,h)=u(g,h)$. 
Then 
\[
\omega(g,h,k)
=K_1(\alpha_g(\tilde u(h,k))\tilde u(g,hk)\tilde u(gh,k)^*\tilde u(g,h)^*)
\in K_1(SA)=K_0(A), 
\]
and they form a $3$-cocycle, thanks to the next lemma. 
We denote by $\kappa^3(\alpha,u)$ its cohomology class in $H^3(G,K_0(A))$. 

\begin{lemma}\label{kappa3}
In the setting above, $\omega$ is a $3$-cocycle, 
and its cohomology class does not depend on the choice of 
the continuous paths $(\tilde u(g,h))_{g,h}$. 
\end{lemma}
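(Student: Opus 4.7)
The plan is to mimic the structure of the proof of Lemma \ref{kappa2}, adapting it one degree up. First I would observe that the 2-cocycle identity $u(g,h)u(gh,k)=\alpha_g(u(h,k))u(g,hk)$ guarantees that the unitary $\alpha_g(\tilde u(h,k))\tilde u(g,hk)\tilde u(gh,k)^*\tilde u(g,h)^*$ equals $1$ at $t=1$, and it already equals $1$ at $t=0$ by construction. Thus it defines a genuine loop at $1$ in $U(A)_0$, hence an element of $K_1(SA)=K_0(A)$, so $\omega(g,h,k)$ is well defined.

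To prove the 3-cocycle identity I would compute
\[
(d\omega)(g,h,k,l)=g\cdot\omega(h,k,l)-\omega(gh,k,l)+\omega(g,hk,l)-\omega(g,h,kl)+\omega(g,h,k)
\]
by expanding each term as a $K_1$-class of a four-factor product of the paths $\tilde u(\cdot,\cdot)$ and their $\alpha$-translates. The key tools are additivity $K_1(vw)=K_1(v)+K_1(w)$ for paths in $U(SA)$, together with the two homotopy facts recalled at the start of the proof of Lemma \ref{kappa2} (a path with trivial endpoint may be moved past an adjacent path up to homotopy). Once all five terms are written inside a single $K_1$, the internal cancellations mirror the purely algebraic identity $d^2=0$ applied to the strict 2-cocycle $u$: the only obstruction to a strict identity at the level of paths is that the $\tilde u$'s are cocycles only at $t=1$, but since every boundary term considered vanishes at both endpoints, the obstruction is absorbed into the loop structure and produces no residue in $K_1(SA)$.

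For the independence of the cohomology class under the choice of $(\tilde u(g,h))_{g,h}$, I would let $(\hat u(g,h))_{g,h}$ be a second family of paths from $1$ to $u(g,h)$, define $\sigma(g,h):=K_1(\tilde u(g,h)\hat u(g,h)^*)\in K_0(A)$, and show that the corresponding 3-cocycles $\omega_{\tilde u}$ and $\omega_{\hat u}$ differ by $d\sigma$. The computation is the same kind of path-concatenation bookkeeping as in the verification of the cocycle property, but with one extra factor per pair inserted; the commutations required to collect the $\tilde u\hat u^*$ pieces into the standard position are handled by the same two homotopy facts.

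The hard part will be the combinatorial bookkeeping in the 3-cocycle identity, since one must keep careful track of endpoints when commuting path segments and when using $\alpha_g$-equivariance. However, as in Lemma \ref{kappa2}, every required commutation has a trivial endpoint available, so the argument is mechanical and requires no new input beyond the two homotopy facts and $K_1$-additivity.
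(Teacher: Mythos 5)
Your proposal follows the paper's proof essentially verbatim: the paper establishes the $3$-cocycle identity by exactly the direct $K_1$-additivity and path-commutation computation you describe (using the two homotopy facts recorded in the proof of Lemma \ref{kappa2}, together with the fact that conjugation by the constant unitaries $u(g,h)$ --- which enters because $\alpha_g\circ\alpha_h=\Ad u(g,h)\circ\alpha_{gh}$ --- acts trivially on the $K_1$-class of a loop), and it proves independence of the choice of paths by exhibiting the difference of the two $3$-cocycles as the coboundary of $(g,h)\mapsto K_1(\tilde u(g,h)\hat u(g,h)^*)$, which is precisely your $\sigma$. There is no gap; the remaining work is the endpoint bookkeeping you already anticipate.
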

\begin{proof}
We compute in a similar fashion to the proof of Lemma \ref{kappa2}. 
For $g,h,k,l\in G$, 
\begin{align*}
&g\cdot\omega(h,k,l)-\omega(gh,k,l)+\omega(g,hk,l)
-\omega(g,h,kl)+\omega(g,h,k)\\
&=K_1\left(\alpha_g(\alpha_h(\tilde u(k,l))\tilde u(h,kl)
\tilde u(hk,l)^*\tilde u(h,k)^*)\right)\\
&\quad -\omega(gh,k,l)+\omega(g,hk,l)-\omega(g,h,kl)+\omega(g,h,k)\\
&=K_1\left(\alpha_{gh}(\tilde u(k,l))u(g,h)^*\alpha_g(\tilde u(h,kl))
\alpha_g(\tilde u(hk,l)^*)\alpha_g(\tilde u(h,k)^*)u(g,h)\right)\\
&\quad -K_1\left(\alpha_{gh}(\tilde u(k,l))\tilde u(gh,kl)
\tilde u(ghk,l)^*\tilde u(gh,k)^*\right)
+\omega(g,hk,l)-\omega(g,h,kl)+\omega(g,h,k)\\
&=K_1\left(\tilde u(gh,k)\tilde u(ghk,l)\tilde u(gh,kl)^*
u(g,h)^*\alpha_g(\tilde u(h,kl))
\alpha_g(\tilde u(hk,l)^*)\alpha_g(\tilde u(h,k)^*)u(g,h)\right)\\
&\quad +\omega(g,hk,l)-\omega(g,h,kl)+\omega(g,h,k)\\
&=K_1\left(\alpha_g(\tilde u(h,k)^*)u(g,h)\tilde u(gh,k)\tilde u(ghk,l)
\tilde u(gh,kl)^*u(g,h)^*\alpha_g(\tilde u(h,kl))
\alpha_g(\tilde u(hk,l)^*)\right)\\
&\quad +K_1\left(\alpha_g(\tilde u(hk,l))\tilde u(g,hkl)
\tilde u(ghk,l)^*\tilde u(g,hk)^*\right)
-\omega(g,h,kl)+\omega(g,h,k)\\
&=K_1\left(\alpha_g(\tilde u(h,k)^*)u(g,h)\tilde u(gh,k)\tilde u(ghk,l)
\tilde u(gh,kl)^*u(g,h)^*\alpha_g(\tilde u(h,kl))\right.\\
&\qquad\left.\times\tilde u(g,hkl)\tilde u(ghk,l)^*\tilde u(g,hk)^*\right)
-\omega(g,h,kl)+\omega(g,h,k)\\
&=K_1\left(\alpha_g(\tilde u(h,kl))\tilde u(g,hkl)
\tilde u(ghk,l)^*\tilde u(g,hk)^*\right.\\
&\qquad\left.\times\alpha_g(\tilde u(h,k)^*)u(g,h)\tilde u(gh,k)
\tilde u(ghk,l)\tilde u(gh,kl)^*u(g,h)^*\right)\\
&\quad -K_1\left(\alpha_g(\tilde u(h,kl))\tilde u(g,hkl)
\tilde u(gh,kl)^*\tilde u(g,h)^*\right)+\omega(g,h,k)\\
&=K_1\left(\tilde u(g,h)\tilde u(gh,kl)
\tilde u(ghk,l)^*\tilde u(g,hk)^*\right.\\
&\qquad\left.\times\alpha_g(\tilde u(h,k)^*)u(g,h)\tilde u(gh,k)
\tilde u(ghk,l)\tilde u(gh,kl)^*u(g,h)^*\right)+\omega(g,h,k)\\
&=K_1\left(\tilde u(gh,kl)\tilde u(ghk,l)^*\tilde u(g,hk)^*\right.\\
&\qquad\left.\times\alpha_g(\tilde u(h,k)^*)u(g,h)\tilde u(gh,k)
\tilde u(ghk,l)\tilde u(gh,kl)^*(u(g,h)^*\tilde u(g,h))\right)+\omega(g,h,k)\\
&=K_1\left(\tilde u(gh,kl)\tilde u(ghk,l)^*\tilde u(g,hk)^*\right.\\
&\qquad\left.\times\alpha_g(\tilde u(h,k)^*)u(g,h)(u(g,h)^*\tilde u(g,h))
\tilde u(gh,k)\tilde u(ghk,l)\tilde u(gh,kl)^*\right)+\omega(g,h,k)\\
&=K_1\left(\tilde u(g,hk)^*\alpha_g(\tilde u(h,k)^*)
\tilde u(g,h)\tilde u(gh,k)\right)+\omega(g,h,k)\\
&=0, 
\end{align*}
and so $\omega$ is a $3$-cocycle. 

When $(\hat u(g,h))_{g,h}$ is another family of paths 
from $1$ to $u(g,h)$ in $U(A)_0$, one has 
\begin{align*}
& K_1(\alpha_g(\tilde u(h,k))\tilde u(g,hk)\tilde u(gh,k)^*\tilde u(g,h)^*)
-K_1(\alpha_g(\hat u(h,k))\hat u(g,hk)\hat u(gh,k)^*\hat u(g,h)^*)\\
&=K_1(\alpha_g(\tilde u(h,k))\tilde u(g,hk)\tilde u(gh,k)^*
(\tilde u(g,h)^*\hat u(g,h))
\hat u(gh,k)\hat u(g,hk)^*\alpha_g(\hat u(h,k)^*))\\
&=K_1(\alpha_g(\tilde u(h,k))\tilde u(g,hk)
(\tilde u(gh,k)^*\hat u(gh,k))\hat u(g,hk)^*\alpha_g(\hat u(h,k)^*))
+K_1(\tilde u(g,h)^*\hat u(g,h))\\
&=K_1(\alpha_g(\tilde u(h,k))
(\tilde u(g,hk)\hat u(g,hk)^*)\alpha_g(\hat u(h,k)^*))\\
&\quad +K_1(\tilde u(gh,k)^*\hat u(gh,k))+K_1(\tilde u(g,h)^*\hat u(g,h))\\
&=g\cdot K_1(\tilde u(h,k)\hat u(h,k)^*)-K_1(\tilde u(gh,k)\hat u(gh,k)^*)\\
&\quad +K_1(\tilde u(g,hk)\hat u(g,hk)^*)-K_1(\tilde u(g,h)\hat u(g,h)^*), 
\end{align*}
which is a coboundary. 
\end{proof}

It is easy to show the following. 

\begin{lemma}\label{kappa3_inv}
In the setting above, 
when $(x_g)_g$ is a family of unitaries in $U(A)_0$, 
one has $\kappa^3(\alpha^x,u^x)=\kappa^3(\alpha,u)$. 
\end{lemma}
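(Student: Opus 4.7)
My plan is to view $(\alpha^x,u^x)$ as the other endpoint of a cocycle action on $C([0,1])\otimes A$ whose starting endpoint is $(\alpha,u)$, and then to apply naturality of $\kappa^3$ together with the homotopy invariance of $K_0$.

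First, I would choose continuous paths $\tilde x_g\colon[0,1]\to U(A)$ from $1$ to $x_g$, which exist because $x_g\in U(A)_0$. Regarded as unitaries in $B:=C([0,1])\otimes A$, each $\tilde x_g$ lies in $U(B)_0$ via the rescaling homotopy $t\mapsto\tilde x_g(t\cdot)$. Form the cocycle perturbation $(\beta,v)$ of $(\id\otimes\alpha,\,1\otimes u)$ on $B$ by $(\tilde x_g)_g$, namely
\[
\beta_g=\Ad\tilde x_g\circ(\id\otimes\alpha_g),\qquad
v(g,h)=\tilde x_g\,(\id\otimes\alpha_g)(\tilde x_h)\,u(g,h)\,\tilde x_{gh}^{\,*}.
\]
Each $v(g,h)$ is a product of four unitaries in $U(B)_0$ and hence lies in $U(B)_0$, so $\kappa^3(\beta,v)\in H^3(G,K_0(B))$ is defined.

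Next, I would observe that the evaluations $\mathrm{ev}_0,\mathrm{ev}_1\colon B\to A$ are surjective $*$-homomorphisms intertwining $(\beta,v)$ with $(\alpha,u)$ and with $(\alpha^x,u^x)$ respectively, since $\tilde x_g(0)=1$ and $\tilde x_g(1)=x_g$. Because $[0,1]$ is contractible, the induced maps $K_0(\mathrm{ev}_0)$ and $K_0(\mathrm{ev}_1)$ both coincide with the canonical isomorphism $K_0(B)\cong K_0(A)$. Moreover the three $G$-module structures on $K_0(A)$ induced by $\alpha$, $\alpha^x$ and $\beta$ all agree, since inner automorphisms act trivially on $K_0$. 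Thus $\kappa^3(\alpha,u)$ and $\kappa^3(\alpha^x,u^x)$ may legitimately be compared in $H^3(G,K_0(A))$.

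Finally, the naturality of $\kappa^3$ under $G$-equivariant $*$-homomorphisms $\phi$ of cocycle actions is immediate from the construction in Lemma~\ref{kappa3}: lifting paths $\tilde v(g,h)\subset U(B)_0$ to paths $\phi\circ\tilde v(g,h)\subset U(A)_0$ from $1$ to $\phi(v(g,h))$ and applying $\phi$ to the twist loop shows that $K_0(\phi)_*$ carries $\kappa^3(\beta,v)$ onto the $\kappa^3$ of the target cocycle action. Applying this observation separately to $\mathrm{ev}_0$ and $\mathrm{ev}_1$ gives
\[
\kappa^3(\alpha,u)=K_0(\mathrm{ev}_0)_*\kappa^3(\beta,v)
=K_0(\mathrm{ev}_1)_*\kappa^3(\beta,v)=\kappa^3(\alpha^x,u^x),
\]
as required. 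The proof is essentially formal, the only point requiring verification being that the interpolating family $(\beta,v)$ really is a cocycle action with $v(g,h)\in U(B)_0$; no serious obstacle arises.
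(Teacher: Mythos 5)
Your argument is correct. The paper itself gives no proof of this lemma (it is introduced with ``It is easy to show the following''), and the implicitly intended argument is presumably a direct computation in the style of Lemmas \ref{kappa2} and \ref{kappa3}: one takes the concatenated paths $\tilde x_g\alpha_g(\tilde x_h)\tilde u(g,h)\tilde x_{gh}^*$ from $1$ to $u^x(g,h)$ and manipulates the resulting twist loop using the two homotopy-commutation facts recorded at the start of the proof of Lemma \ref{kappa2}, until the difference with the loop for $(\alpha,u)$ is exhibited as a coboundary. Your route packages all of that into a single application of homotopy invariance of $K_0$ together with naturality of $\kappa^3$ under equivariant homomorphisms, by realizing $(\alpha,u)$ and $(\alpha^x,u^x)$ as the two endpoint evaluations of the perturbed cocycle action $(\beta,v)$ on $C([0,1])\otimes A$. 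All the points that need checking do go through: $\tilde x_g\in U(B)_0$ by rescaling, hence $v(g,h)\in U(B)_0$ so that $\kappa^3(\beta,v)$ is defined; $\mathrm{ev}_0$ and $\mathrm{ev}_1$ intertwine $(\beta,v)$ with $(\alpha,u)$ and $(\alpha^x,u^x)$ and induce the same $G$-module isomorphism $K_0(B)\to K_0(A)$ (the module structures for $\alpha$ and $\alpha^x$ coincide because inner automorphisms act trivially on $K_0$); and the naturality statement follows by pushing the chosen paths forward, exactly as you say. This is cleaner and no less rigorous than the direct computation, at the cost of introducing the auxiliary algebra $C([0,1])\otimes A$ — a device the paper itself uses repeatedly (e.g.\ in the definition of $\kappa^2$ and in Lemma \ref{1cv_Hirsch2}), so it fits the surrounding text well.
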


The following lemma enables us to relate $\kappa^3$ to $\kappa^2$. 

\begin{lemma}\label{kappa3=kappa2}
Let $(\alpha,u):G\curvearrowright A$ be a cocycle action of 
a discrete group $G$ on a unital $C^*$-algebra 
such that $u(g,h)\in U(A)_0$ for all $g,h\in G$. 
Suppose that $N\subset G$ is a normal subgroup such that $G/N\cong\Z$. 
Take $\xi\in G$ so that $G$ is generated by $N$ and $\xi$. 
If 
\[
u(g,h)=1\quad\text{and}\quad u(g\xi^l,\xi^m)=1
\quad\forall g,h\in N,\ l,m\in\Z, 
\]
then the unitaries $\check u_g=u(\xi,\xi^{-1}g\xi)$ 
form an $\alpha|N$-cocycle 
and $\kappa^3(\alpha,u)$ equals $\bar j(\kappa^2(\alpha|N,\check u))$, 
where $\bar j:H^2(N,K_0(A))\to H^3(G,K_0(A))$ is a homomorphism 
defined in Section 7.2. 
\end{lemma}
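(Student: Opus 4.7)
The $\alpha|N$-cocycle assertion follows directly from Lemma~\ref{check}: applying the hypothesis $u(g\xi^l,\xi^m)=1$ with $l=0$, $m=1$ gives $u(g,\xi)=1$ for every $g\in N$, so the general formula $u(\xi,\xi^{-1}g\xi)u(g,\xi)^*$ of Lemma~\ref{check} reduces to $\check u_g=u(\xi,\xi^{-1}g\xi)$, and the lemma identifies these as an $\alpha|N$-cocycle.

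For the main identity my plan is first to reduce the data of $u$ to that of $\check u$. Writing each element of $G$ uniquely as $g\xi^l$ with $g\in N$ and $l\in\Z$, a repeated application of the $2$-cocycle identity together with the vanishing hypotheses yields the closed form $u(g_1\xi^{l_1},g_2\xi^{l_2})=\alpha_{g_1}(u(\xi^{l_1},g_2))$. Setting $\check u^{(l)}_g:=u(\xi^l,\xi^{-l}g\xi^l)$, so that $\check u^{(0)}_g=1$ and $\check u^{(1)}_g=\check u_g$, another application of the cocycle identity shows that each $\check u^{(l)}$ is an $\alpha|N$-cocycle in $U(A)_0$ and that these are linked by
\[
\check u^{(l+m)}_g=\alpha_{\xi^l}\bigl(\check u^{(m)}_{\xi^{-l}g\xi^l}\bigr)\check u^{(l)}_g.
\]
I then choose continuous paths $\tilde{\check u}^{(l)}_g:[0,1]\to U(A)_0$ from $1$ to $\check u^{(l)}_g$, recursively so that the same identity $\tilde{\check u}^{(l+m)}_g=\alpha_{\xi^l}(\tilde{\check u}^{(m)}_{\xi^{-l}g\xi^l})\tilde{\check u}^{(l)}_g$ holds pointwise, with $\tilde{\check u}^{(0)}_g=1$ and $\tilde{\check u}^{(l)}_e=1$. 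Setting $\tilde{\check u}_g:=\tilde{\check u}^{(1)}_g$ and $\rho^{(l)}(g,h):=K_1(\tilde{\check u}^{(l)}_g\alpha_g(\tilde{\check u}^{(l)}_h)(\tilde{\check u}^{(l)}_{gh})^{-1})$, the class $[\rho^{(l)}]$ represents $\kappa^2(\alpha|N,\check u^{(l)})$, and in particular $[\rho^{(1)}]=\kappa^2(\alpha|N,\check u)$. The displayed recursion presents $\check u^{(l+m)}$ as the cocycle perturbation of $\check u^{(l)}$ by the $(\alpha|N)^{\check u^{(l)}}$-cocycle $v'_g:=\alpha_{\xi^l}(\check u^{(m)}_{\xi^{-l}g\xi^l})$, and a direct computation using the identity $(\alpha|N)^{\check u^{(l)}}_g=\alpha_{\xi^l}\alpha_{\xi^{-l}g\xi^l}\alpha_{\xi^l}^{-1}$ gives $\kappa^2((\alpha|N)^{\check u^{(l)}},v')=\xi^l\cdot[\rho^{(m)}]$. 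Lemma~\ref{kappa2additive} then yields the cohomological recursion $[\rho^{(l+m)}]=[\rho^{(l)}]+\xi^l\cdot[\rho^{(m)}]$, which is the defining recursion for $\rho_l$ in Section~7.2 with the same initial data, so $[\rho^{(l)}]=[\rho_l]$ in $H^2(N,K_0(A))$ for all $l\in\Z$.

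For the $3$-cocycle representing $\kappa^3(\alpha,u)$ I take the consistent paths $\tilde u(g_1\xi^{l_1},g_2\xi^{l_2}):=\alpha_{g_1}(\tilde{\check u}^{(l_1)}_{\xi^{l_1}g_2\xi^{-l_1}})$, which agree with the constant path $1$ exactly when $u(g_1\xi^{l_1},g_2\xi^{l_2})=1$. Writing $\tilde g_2:=\xi^{l_1}g_2\xi^{-l_1}$, $\tilde g_3:=\xi^{l_1+l_2}g_3\xi^{-l_1-l_2}$, and $g':=\xi^{l_2}g_3\xi^{-l_2}$, expansion of $\omega(g_1\xi^{l_1},g_2\xi^{l_2},g_3\xi^{l_3})$ produces $\alpha_{g_1}$ as an outer factor whose argument is a loop built from $\tilde{\check u}^{(l_1)}_{\tilde g_2}$, $\tilde{\check u}^{(l_1)}_{\tilde g_2\tilde g_3}$, $\tilde{\check u}^{(l_1+l_2)}_{\tilde g_3}$, and an $\alpha_{\xi^{l_1}g_2}$-conjugate of $\tilde{\check u}^{(l_2)}_{g'}$ (the last coming from $\alpha_g(\tilde u(h,k))$ after expanding $\alpha_{g_1\xi^{l_1}}\alpha_{g_2}$ via $u(\xi^{l_1},g_2)=\check u^{(l_1)}_{\tilde g_2}$). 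The path recursion rewrites $\alpha_{\xi^{l_1}}(\tilde{\check u}^{(l_2)}_{g'})=\tilde{\check u}^{(l_1+l_2)}_{\tilde g_3}(\tilde{\check u}^{(l_1)}_{\tilde g_3})^{-1}$, cancelling the $\tilde{\check u}^{(l_1+l_2)}$ factor, and the $\alpha|N$-cocycle identity $\check u^{(l_1)}_{\tilde g_2}\alpha_{\tilde g_2}(\check u^{(l_1)}_{\tilde g_3})=\check u^{(l_1)}_{\tilde g_2\tilde g_3}$ collapses the remainder to a loop whose $K_1$-class is $-\rho^{(l_1)}(\tilde g_2,\tilde g_3)$. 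Using $K_1(\alpha_{g_1}(\cdot))=g_1\cdot K_1(\cdot)$ and $[\rho^{(l_1)}]=[\rho_{l_1}]$ gives
\[
\omega(g_1\xi^{l_1},g_2\xi^{l_2},g_3\xi^{l_3})=-g_1\cdot\rho_{l_1}(\tilde g_2,\tilde g_3),
\]
which by Lemma~\ref{barj} represents $\bar j(\kappa^2(\alpha|N,\check u))$.

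The main obstacle is the last-step loop-level collapse: the conjugation identity $\alpha_{g_1\xi^{l_1}}\alpha_{g_2}=\Ad\alpha_{g_1}(\check u^{(l_1)}_{\tilde g_2})\circ\alpha_{g_1}\alpha_{\tilde g_2}\alpha_{\xi^{l_1}}$ introduces constant unitary factors $\check u^{(l_1)}_{\tilde g_2}$ and their inverses into the middle of the loop inside $\alpha_{g_1}$. These constants do not lie in $U(SA)$, so they cannot be absorbed by inner automorphisms of $K_1(SA)$; one has to track them carefully and check that they cancel in pairs at the level of $K_0(A)$, leaving the loop $\tilde{\check u}^{(l_1)}_{\tilde g_2}\alpha_{\tilde g_2}(\tilde{\check u}^{(l_1)}_{\tilde g_3})(\tilde{\check u}^{(l_1)}_{\tilde g_2\tilde g_3})^{-1}$ whose $K_1$-class is $\rho^{(l_1)}(\tilde g_2,\tilde g_3)$.
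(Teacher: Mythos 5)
Your construction is essentially the paper's: the same reduction $u(g_1\xi^{l_1},g_2\xi^{l_2})=\alpha_{g_1}(u(\xi^{l_1},g_2))$, the same auxiliary cocycles $\check u^{(l)}_g=u(\xi^l,\xi^{-l}g\xi^l)$ with the multiplicative recursion, the same recursive choice of paths, and the same final identification of the $\kappa^3$-representative with the formula of Lemma \ref{barj}. There is, however, one step that does not work as written. The formula \eqref{defofomega} produces a representative of $\bar j([\rho])$ from the \emph{specific} family of $2$-cocycles $(\rho_l)_l$ determined by the exact relations $\rho_0=0$, $\rho_1=\rho$, $\rho_{l+m}=\rho_l+\xi^l\rho_m$; these exact relations are what make $\omega$ a cocycle and pin down its class. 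You establish only the cohomological recursion $[\rho^{(l+m)}]=[\rho^{(l)}]+\xi^l[\rho^{(m)}]$ via Lemma \ref{kappa2additive}, conclude $[\rho^{(l)}]=[\rho_l]$, and then substitute $\rho_{l_1}$ for $\rho^{(l_1)}$ \emph{inside the cocycle formula} for your computed representative of $\kappa^3(\alpha,u)$. That substitution is not licensed by equality of cohomology classes: if $\rho^{(l)}=\rho_l+\partial\beta_l$ with the $\beta_l$ unrelated for different $l$, the difference $(g_1\xi^{l_1},g_2\xi^{l_2},g_3\xi^{l_3})\mapsto -g_1(\partial\beta_{l_1})(\cdots)$ is not visibly a coboundary on $G$; the natural candidate primitive $\eta(g_1\xi^{l_1},g_2\xi^{l_2})=-g_1\beta_{l_1}(\xi^{l_1}g_2\xi^{-l_1})$ fails because $\partial\eta$ involves $\xi^{l_1}\beta_{l_2}$ rather than $\beta_{l_1+l_2}-\beta_{l_1}$.

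The gap is easily closed, and the paper closes it: because your paths satisfy the multiplicative recursion $\tilde{\check u}^{(l+m)}_g=\alpha_{\xi^l}\bigl(\tilde{\check u}^{(m)}_{\xi^{-l}g\xi^l}\bigr)\tilde{\check u}^{(l)}_g$ exactly as paths, not merely at the endpoints, a direct $K_1$-computation (essentially the pointwise version of your Lemma \ref{kappa2additive} argument, using $\alpha_{\xi^l}\circ\alpha_{\xi^{-l}g\xi^l}=\Ad\check u^{(l)}_g\circ\alpha_{g}\circ\alpha_{\xi^l}$ and the vanishing of inner perturbations on $K_1$ of the suspension) shows that the $2$-cocycles themselves satisfy $\rho^{(l+m)}=\rho^{(l)}+\xi^l\rho^{(m)}$ exactly, so $\rho^{(l)}=\rho_l$ on the nose and the substitution becomes an identity. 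You should make this computation explicit rather than appeal to the cohomology-level Lemma \ref{kappa2additive}. The rest of your outline, including the bookkeeping of the constant conjugating unitaries $\check u^{(l_1)}_{\tilde g_2}$ in the final loop collapse that you flag as the main obstacle, is exactly the computation the paper carries out at the end of its proof, so the plan there is sound even though the details remain to be written.
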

\begin{proof}
By Lemma \ref{check}, 
$(\check u_g)_{g\in N}$ is an $\alpha|N$-cocycle satisfying 
\[
\alpha_\xi\circ\alpha_{\xi^{-1}g\xi}\circ\alpha_\xi^{-1}
=\Ad \check u_g\circ\alpha_g\quad\forall g\in N. 
\]
For $l\in\Z$ and $g\in N$, 
we put $\check u_{l,g}=u(\xi^l,\xi^{-l}g\xi^l)$. 
Similarly, we can check that $(\check u_{l,g})_g$ is 
an $\alpha|N$-cocycle satisfying 
\[
\alpha_{\xi^l}\circ\alpha_{\xi^{-l}g\xi^l}\circ\alpha_{\xi^l}^{-1}
=\Ad \check u_{l,g}\circ\alpha_g
\]
and 
\[
\alpha_{\xi^l}(\check u_{m,\xi^{-l}g\xi^l})\check u_{l,g}
=\check u_{l+m,g}. 
\]
For $g\in N$, let $\tilde u_g:[0,1]\to U(A)$ be a continuous map 
such that $\tilde u_g(0)=1$ and $\tilde u_g(1)=\check u_g$. 
Then we can construct continuous maps $\tilde u_{l,g}:[0,1]\to U(A)$ 
for $l\in\Z$ and $g\in N$ such that 
$\tilde u_{1,g}=\tilde u_g$, 
$\tilde u_{l,g}(0)=1$, $\tilde u_{l,g}(1)=\check u_{l,g}$ and 
\[
\alpha_{\xi^l}(\tilde u_{m,\xi^{-l}g\xi^l})\tilde u_{l,g}=\tilde u_{l+m,g}, 
\]
where $(\id\otimes\alpha,1\otimes u):G\curvearrowright C([0,1])\otimes A$ is 
abbreviated as $(\alpha,u)$ for simplicity. 
Define $\rho_l:N^2\to K_1(SA)$ by 
\[
\rho_l(g,h)
=K_1\left(\tilde u_{l,g}\alpha_g(\tilde u_{l,h})\tilde u_{l,gh}^*\right), 
\]
which is a $2$-cocycle by Lemma \ref{kappa2} and 
whose cohomology class is $\kappa^2(\alpha|N,\check u_l)$. 
For any $g,h\in N$ and $l,m\in\Z$, we have 
\begin{align*}
&K_1\left(\tilde u_{l+m,g}\alpha_g(\tilde u_{l+m,h})
\tilde u_{l+m,gh}^*\right)\\
&=K_1\left(\alpha_{\xi^l}(\tilde u_{m,\xi^{-l}g\xi^l})\tilde u_{l,g}
\cdot\alpha_g(\alpha_{\xi^l}(\tilde u_{m,\xi^{-l}h\xi^l})\tilde u_{l,h})
\cdot(\alpha_{\xi^l}(\tilde u_{m,\xi^{-l}gh\xi^l})\tilde u_{l,gh})^*\right)\\
&=K_1\left(\alpha_{\xi^l}(\tilde u_{m,\xi^{-l}g\xi^l})\tilde u_{l,g}
\check u_{l,g}^*
\alpha_{\xi^l}(\alpha_{\xi^{-l}g\xi^l}(\tilde u_{m,\xi^{-l}h\xi^l}))
\check u_{l,g}\alpha_g(\tilde u_{l,h})
\tilde u_{l,gh}^*\alpha_{\xi^l}(\tilde u_{m,\xi^{-l}gh\xi^l})^*\right)\\
&=K_1\left(\alpha_{\xi^l}(\tilde u_{m,\xi^{-l}g\xi^l})
\alpha_{\xi^l}(\alpha_{\xi^{-l}g\xi^l}(\tilde u_{m,\xi^{-l}h\xi^l}))
\tilde u_{l,g}\alpha_g(\tilde u_{l,h})\tilde u_{l,gh}^*
\alpha_{\xi^l}(\tilde u_{m,\xi^{-l}gh\xi^l})^*\right)\\
&=K_1\left(
\alpha_{\xi^l}(\tilde u_{m,\xi^{-l}gh\xi^l}^*\tilde u_{m,\xi^{-l}g\xi^l}
\alpha_{\xi^{-l}g\xi^l}(\tilde u_{m,\xi^{-l}h\xi^l}))\right)
+K_1\left(\tilde u_{l,g}\alpha_g(\tilde u_{l,h})\tilde u_{l,gh}^*\right). 
\end{align*}
Hence $\rho_{l+m}=\rho_l+\xi^l\rho_m$ is obtained. 
It follows from Lemma \ref{barj} that 
$\bar j(\kappa^2(\alpha|N,\check u))$ is given 
by the cohomology class of the $3$-cocycle 
\begin{align*}
&(g\xi^l,h\xi^m,k\xi^n)\\
&\mapsto -g\cdot\rho_l(\xi^lh\xi^{-l},\xi^{l+m}k\xi^{-l-m})\\
&=-g\cdot K_1\left(\tilde u_{l,\xi^lh\xi^{-l}}
\cdot\alpha_{\xi^lh\xi^{-l}}(\tilde u_{l,\xi^{l+m}k\xi^{-l-m}})
\cdot\tilde u_{l,\xi^lh\xi^{-l}\xi^{l+m}k\xi^{-l-m}}^*\right)
\in K_1(SA)=K_0(A). 
\end{align*}

Now let us consider $\kappa^3(\alpha,u)\in H^3(G,K_0(A))$. 
Since 
\[
u(g\xi^l,h\xi^m)=\alpha_g(u(\xi^l,h))
=\alpha_g(\check u_{l,\xi^lh\xi^{-l}}), 
\]
$\alpha_g(\tilde u_{l,\xi^lh\xi^{-l}})$ is a continuous path 
connecting $1$ to $u(g\xi^l,h\xi^m)$ in $U(A)_0$. 
Then $\kappa^3(\alpha,u)$ is given by the $3$-cocycle 
\begin{align*}
(g\xi^l,h\xi^m,k\xi^n)
&\mapsto 
K_1\left(\alpha_{g\xi^l}(\alpha_h(\tilde u_{m,\xi^mk\xi^{-m}}))
\cdot\alpha_g(\tilde u_{l,\xi^lh\xi^mk\xi^{-m}\xi^{-l}})\right.\\
&\qquad\left.
\cdot\alpha_{g\xi^lh\xi^{-l}}(\tilde u_{l+m,\xi^{l+m}k\xi^{-l-m}})^*
\cdot\alpha_g(\tilde u_{l,\xi^lh\xi^{-l}})^*\right)\in K_1(SA). 
\end{align*}
One has 
\begin{align*}
& K_1\left(\alpha_{g\xi^l}(\alpha_h(\tilde u_{m,\xi^mk\xi^{-m}}))
\alpha_g(\tilde u_{l,\xi^lh\xi^mk\xi^{-m}\xi^{-l}})
\alpha_{g\xi^lh\xi^{-l}}(\tilde u_{l+m,\xi^{l+m}k\xi^{-l-m}})^*
\alpha_g(\tilde u_{l,\xi^lh\xi^{-l}})^*\right)\\
&=K_1\left(\alpha_g\bigl(
u(\xi^l,h)\alpha_{\xi^lh}(\tilde u_{m,\xi^mk\xi^{-m}})u(\xi^l,h)^*
\tilde u_{l,\xi^lh\xi^{-l}\xi^{l+m}k\xi^{-l-m}}\right.\\
&\qquad\left.
\alpha_{\xi^lh\xi^{-l}}(\tilde u_{l+m,\xi^{l+m}k\xi^{-l-m}})^*
\tilde u_{l,\xi^lh\xi^{-l}}^*\bigl)\right)\\
&=g\cdot K_1\left(
\check u_{l,\xi^lh\xi^{-l}}\alpha_{\xi^lh}(\tilde u_{m,\xi^mk\xi^{-m}})
\check u_{l,\xi^lh\xi^{-l}}^*
\tilde u_{l,\xi^lh\xi^{-l}\xi^{l+m}k\xi^{-l-m}}\right.\\
&\qquad\left.\alpha_{\xi^lh\xi^{-l}}
(\alpha_{\xi^l}(\tilde u_{m,\xi^mk\xi^{-m}})
\tilde u_{l,\xi^{l+m}k\xi^{-l-m}})^*
\tilde u_{l,\xi^lh\xi^{-l}}^*\right)\\
&=g\cdot K_1\left(
\check u_{l,\xi^lh\xi^{-l}}\alpha_{\xi^lh}(\tilde u_{m,\xi^mk\xi^{-m}})
\check u_{l,\xi^lh\xi^{-l}}^*
\tilde u_{l,\xi^lh\xi^{-l}}\tilde u_{l,\xi^lh\xi^{-l}}^*
\tilde u_{l,\xi^lh\xi^{-l}\xi^{l+m}k\xi^{-l-m}}\right.\\
&\qquad\left.\alpha_{\xi^lh\xi^{-l}}(\tilde u_{l,\xi^{l+m}k\xi^{-l-m}})^*
\alpha_{\xi^lh}(\tilde u_{m,\xi^mk\xi^{-m}})^*
\tilde u_{l,\xi^lh\xi^{-l}}^*\right)\\
&=g\cdot K_1\left(\tilde u_{l,\xi^lh\xi^{-l}}
\alpha_{\xi^lh}(\tilde u_{m,\xi^mk\xi^{-m}})\tilde u_{l,\xi^lh\xi^{-l}}^*
\tilde u_{l,\xi^lh\xi^{-l}\xi^{l+m}k\xi^{-l-m}}\right.\\
&\qquad\left.\alpha_{\xi^lh\xi^{-l}}(\tilde u_{l,\xi^{l+m}k\xi^{-l-m}})^*
\alpha_{\xi^lh}(\tilde u_{m,\xi^mk\xi^{-m}})^*
\tilde u_{l,\xi^lh\xi^{-l}}^*\right)\\
&=g\cdot K_1\left(
\tilde u_{l,\xi^lh\xi^{-l}}^*
\tilde u_{l,\xi^lh\xi^{-l}\xi^{l+m}k\xi^{-l-m}}
\alpha_{\xi^lh\xi^{-l}}(\tilde u_{l,\xi^{l+m}k\xi^{-l-m}})^*\right)\\
&=-g\cdot K_1\left(\tilde u_{l,\xi^lh\xi^{-l}}
\cdot\alpha_{\xi^lh\xi^{-l}}(\tilde u_{l,\xi^{l+m}k\xi^{-l-m}})
\cdot\tilde u_{l,\xi^lh\xi^{-l}\xi^{l+m}k\xi^{-l-m}}^*\right). 
\end{align*}
Therefore 
$\kappa^3(\alpha,u)$ is equal to $\bar j(\kappa^2(\alpha|N,\check u))$. 
\end{proof}

\begin{lemma}\label{2cv_Hirsch3}
Let $G$ be a poly-$\Z$ group of Hirsch length three and 
let $(\alpha,u)$ be a cocycle action of $G$ 
belonging to $\AC(\mathcal{O}_\infty,\mu^G)$. 
If $u(g,h)\in U(A)_0$ and $\kappa^3(\alpha,u)=0$, then 
there exists a family of unitaries $(v_g)_{g\in G}$ in $U(A)_0$ 
such that $u(g,h)=\alpha_g(v_h^*)v_g^*v_{gh}$ for all $g,h\in G$. 
\end{lemma}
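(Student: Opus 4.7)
The plan is to adapt the induction template of the proof of Lemma \ref{2cv_Hirsch2} to Hirsch length three, with one crucial new step exploiting the vanishing of $\kappa^3(\alpha,u)$ through Lemma \ref{kappa3=kappa2}. Choose a normal subgroup $N\subset G$ of Hirsch length two with $G/N\cong\Z$ and $\xi\in G$ generating $G$ over $N$. First I would apply Lemma \ref{2cv_Hirsch2} to the restricted cocycle action $(\alpha|N,u|N)$ (which belongs to $\AC(\mathcal{O}_\infty,\mu^N)$ by restriction) to cocycle-perturb $(\alpha,u)$ so that $u(g,h)=1$ for all $g,h\in N$. Next, using only the 2-cocycle identity for $u$, I would inductively construct a perturbation by unitaries $(v_g)_{g\in G}$ in $U(J)_0$ with $v_g=1$ on $N$ so as to further achieve $u(g,\xi^m)=1$ for every $g\in G$ and $m\in\Z$, reaching the normal form required by Lemma \ref{kappa3=kappa2}.

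With this normalization in place, $\check u_g = u(\xi,\xi^{-1}g\xi)$ is an $\alpha|N$-cocycle in $U(J)_0$, and Lemma \ref{kappa3=kappa2} together with $\kappa^3(\alpha,u)=0$ forces $\kappa^2(\alpha|N,\check u)$ to lie in $\ker\bar j = (\xi-1)H^2(N,K_0(A))$ by the Lyndon-Hochschild-Serre sequence \eqref{LHS}. Writing $\kappa^2(\alpha|N,\check u)=(\xi-1)y$, I would invoke Lemma \ref{realize:ka2'} (applicable because $N$ has Hirsch length two and $\alpha|N\in\AC(\mathcal{O}_\infty,\mu^N)$) to realize $-y$ as $\kappa^2(\alpha|N,w)$ for some $\alpha|N$-cocycle $(w_g)_{g\in N}$ in $U(J)_0$. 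Then I would further perturb $(\alpha,u)$ by a family $(v'_g)$ with $v'_g=w_g$ for $g\in N$, $v'_\xi=1$, and $v'_{g\xi^m}$ defined inductively to preserve both normalizations. A direct computation, in the spirit of Lemma \ref{kappa3=kappa2}, gives $\check u^{v'}_g = \alpha_\xi(w_{\xi^{-1}g\xi})\,\check u_g\,w_g^*$ and, after tracking the $K_1$-classes of the relevant defect loops, yields
\[
\kappa^2(\alpha^{v'}|N,\check u^{v'}) = \kappa^2(\alpha|N,\check u) + (\xi-1)\kappa^2(\alpha|N,w) = 0.
\]

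With $\kappa^2(\alpha^{v'}|N,\check u^{v'})=0$ achieved, Lemma \ref{1cv_Hirsch2} approximates $\check u^{v'}$ by coboundaries: there is a sequence $(b_n)$ in $U(J)_0$ with $b_n\alpha^{v'}_g(b_n^*)\to\check u^{v'}_g$ for every $g\in N$. For sufficiently large $n$ I would further perturb by setting $v''_\xi=b_n^{-1}$, $v''_g=1$ for $g\in N$, and extending inductively on the remaining cosets to preserve the normalization $u(\cdot,\xi^m)=1$; a routine calculation then shows that the resulting $u$ becomes arbitrarily close to $1$ on a chosen finite generating subset of $G$. Finally, the $H^2$-stability of $G$ (Theorem \ref{H1H2stable}) promotes this to an exact coboundary in $U(J)_0$. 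Composing all the perturbation families from these steps yields the desired $(v_g)_{g\in G}$ with $u(g,h)=\alpha_g(v_h^*)v_g^*v_{gh}$.

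The main obstacle is verifying the $\kappa^2$-change formula in the perturbation step: one must carefully track how the path-dependent defect loops contributing to $\kappa^2(\alpha|N,\check u)$ transform under the cocycle perturbation by the family extending $w$, and confirm that the shift is exactly $(\xi-1)\kappa^2(\alpha|N,w)$. This is a more intricate but structurally similar calculation to the one appearing in Lemma \ref{kappa3=kappa2}. A secondary technical point is the inductive extension of the perturbation families in Steps 2, 4 and 6 from $N$ (or $N\cup\{\xi\}$) to all of $G$ while simultaneously preserving the two normalizations $u(g,h)=1$ on $N\times N$ and $u(g,\xi^m)=1$.
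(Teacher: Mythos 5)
Your proposal is correct and follows essentially the same route as the paper's proof: normalize $u$ to be trivial on $N\times N$ and in the $\xi$-direction, use Lemma \ref{kappa3=kappa2} to place $\kappa^2(\alpha|N,\check u)$ in the image of $\xi-1$ (equivalently, to write it as $[\rho-\xi\rho]$), cancel it by a perturbation built from Lemma \ref{realize:ka2'}, and finish with Lemma \ref{1cv_Hirsch2} and the $H^2$-stability of $G$. The ``$\kappa^2$-change formula'' you flag as the main obstacle is exactly what the paper dispatches with Lemma \ref{kappa2additive}, applied to the product $\alpha_\xi(v_{\xi^{-1}g\xi})\,\check u_g\,v_g^*$.
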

\begin{proof}
We note that $\kappa^3(\alpha,u)$ is invariant 
under perturbation by unitaries in $U(A)_0$ (see Lemma \ref{kappa3_inv}). 
There exists a normal poly-$\Z$ subgroup $N\subset G$ of Hirsch length two 
and $\xi\in G$ such that $G$ is generated by $N$ and $\xi$. 
By Lemma \ref{2cv_Hirsch2}, 
we may assume that $u(g,h)=1$ for all $g,h\in N$. 
By a cocycle perturbation, 
we may further assume that $u(g\xi^l,\xi^m)=1$ 
for all $g\in N$ and $l,m\in\Z$. 
It follows from Lemma \ref{check} that the unitaries 
\[
\check u_g=u(\xi,\xi^{-1}g\xi)\in U(A)_0
\]
form an $\alpha|N$-cocycle satisfying 
\[
\alpha_\xi\circ\alpha_{\xi^{-1}g\xi}\circ\alpha_\xi^{-1}
=\Ad\check u_g\circ\alpha_g\quad\forall g\in N. 
\]
By Lemma \ref{kappa3=kappa2}, 
we get $\bar j(\kappa^2(\alpha|N,\check u))=\kappa^3(\alpha,u)=0$, 
and so there exists a $2$-cocycle $\rho:N^2\to K_1(SA)$ such that 
\[
\kappa^2(\alpha|N,\check u)=[\rho-\xi\rho]. 
\]
By Lemma \ref{realize:ka2'}, 
we can find an $\alpha|N$-cocycle $(v_g)_{g\in N}$ in $U(A)_0$ 
such that $\kappa^2(\alpha|N,v)=[\rho]$. 
Letting $v_{g\xi^l}=v_g$ for $g\in N$ and $l\in\Z$, 
we obtain a family of unitaries $(v_g)_{g\in G}$ in $U(A)_0$. 
Consider the cocycle action $(\alpha^v,u^v)$. 
We still have 
\[
u^v(g,h)=1\quad\text{and}\quad u^v(g\xi^l,\xi^m)=1
\quad\forall g,h\in N,\ l,m\in\Z. 
\]
Besides, 
\[
\alpha^v_\xi\circ\alpha^v_{\xi^{-1}g\xi}\circ(\alpha^v_\xi)^{-1}
=\Ad(\alpha_\xi(v_{\xi^{-1}g\xi})\check u_gv_g^*)\circ\alpha^v_g
\]
holds true for all $g\in N$, 
where $w_g=\alpha_\xi(v_{\xi^{-1}g\xi})\check u_gv_g^*$ form 
an $\alpha^v|N$-cocycle. 
By means of Lemma \ref{kappa2additive}, one gets 
\[
\kappa^2(\alpha^v|N,w)
=[\xi\rho]+\kappa^2(\alpha|N,\check u)-[\rho]=0. 
\]
Then, Lemma \ref{1cv_Hirsch2} tells us that 
the $\alpha^v|N$-cocycle $(w_g)_g$ can be approximated by coboundaries. 
Therefore, by a suitable perturbation, 
we may further assume that $u^v(g,h)$ is close to $1$ 
on a finite generating subset of $G$. 
Then, by the $H^2$-stability of $G$ (Theorem \ref{H1H2stable}), 
we can conclude that $(u^v(g,h))_{g,h\in G}$ is a coboundary. 
\end{proof}

Let $A$ be a unital $C^*$-algebra such that 
$K_1(A)$ is canonically isomorphic to $U(A)/U(A)_0$. 
Let $(\alpha,u):G\curvearrowright A$ be a cocycle action 
of a countable discrete group $G$. 
Assume that $u(g,h)$ is in $U(A)_0$ for all $g,h\in G$. 
We define a homomorphism 
$h^{1,3}_{(\alpha,u)}:H^1(G,K_1(A))\to H^3(G,K_0(A))$ as follows. 
Let $\eta:G\to K_1(A)$ be a $1$-cocycle. 
Choose $v_g\in U(A)$ so that $K_1(v_g)=\eta(g)$. 
Then $K_1(u^v(g,h))=0$ and 
$\kappa^3(\alpha^v,u^v)\in H^3(G,K_0(A))$ is defined. 
By Lemma \ref{kappa3_inv}, 
$\kappa^3(\alpha^v,u^v)$ does not depend on the choice of $(v_g)_{g\in G}$, 
which also implies that 
it depends only on the cohomology class $[\eta]\in H^1(G,K_1(A))$.  
We denote by $h^{1,3}_{(\alpha,u)}$ the map 
\[
H^1(G,K_1(A))\ni[\eta]\mapsto
\kappa^3(\alpha^v,u^v)-\kappa^3(\alpha,u)\in H^3(G,K_0(A)). 
\]

\begin{lemma}\label{h13}
The map $h^{1,3}_{(\alpha,u)}:H^1(G,K_1(A))\to H^3(G,K_0(A))$ 
is a homomorphism. 
\end{lemma}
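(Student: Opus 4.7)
The map $h^{1,3}_{(\alpha,u)}$ is already known to be well defined, so only additivity is at issue. Fix unitary families $v=(v_g)$ and $w=(w_g)$ with $K_1(v_g)=\eta_1(g)$ and $K_1(w_g)=\eta_2(g)$ forming $1$-cocycles, so that $vw=(v_g w_g)$ represents $[\eta_1]+[\eta_2]$. Since $(\alpha^w)^v=\alpha^{vw}$ and $(u^w)^v=u^{vw}$ by direct inspection, plugging the three products into the definition of $h^{1,3}_{(\alpha,u)}$ shows that additivity is equivalent to the identity
\begin{equation*}
\kappa^3\bigl((\alpha^w)^v,(u^w)^v\bigr)-\kappa^3(\alpha^w,u^w)
=\kappa^3(\alpha^v,u^v)-\kappa^3(\alpha,u),
\end{equation*}
that is, to the invariance of $h^{1,3}$ under cocycle perturbations of the underlying action. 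When $w_g\in U(A)_0$ for every $g$, both sides collapse to $\kappa^3(\alpha^v,u^v)-\kappa^3(\alpha,u)$ by Lemma~\ref{kappa3_inv}, so the substance of the argument is the case of a general $w$.

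For the general case the plan is a direct cochain-level computation. Choose a path $\tilde u(g,h)$ in $U(A)_0$ from $1$ to $u(g,h)$, and paths $\tilde e_v(g,h)$, $\tilde e_w(g,h)$, $\tilde e^w_v(g,h)$ in $U(A)_0$ from $1$ to the null-homotopic unitaries
\begin{equation*}
e_v(g,h)=v_g\alpha_g(v_h)v_{gh}^{*},\quad
e_w(g,h)=w_g\alpha_g(w_h)w_{gh}^{*},\quad
e^w_v(g,h)=v_g\alpha^w_g(v_h)v_{gh}^{*}.
\end{equation*}
A straightforward calculation using $\alpha^w_g=\Ad w_g\circ\alpha_g$ yields the key decomposition $e_{vw}(g,h)=e^w_v(g,h)\cdot v_{gh}e_w(g,h)v_{gh}^{*}$, so it is natural to take as representatives
\begin{align*}
\tilde u^v(g,h)&=\tilde e_v(g,h)\cdot v_{gh}\tilde u(g,h)v_{gh}^{*},\\
\tilde u^w(g,h)&=\tilde e_w(g,h)\cdot w_{gh}\tilde u(g,h)w_{gh}^{*},\\
\tilde u^{vw}(g,h)&=\tilde e^w_v(g,h)\cdot v_{gh}\tilde u^w(g,h)v_{gh}^{*}
\end{align*}
for paths from $1$ to $u^v(g,h)$, $u^w(g,h)$ and $u^{vw}(g,h)$ respectively.

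With these compatible choices I will expand the four $3$-cochains $\omega,\omega^v,\omega^w,\omega^{vw}$ according to the formula of Lemma~\ref{kappa3}. In the alternating sum $\omega^{vw}-\omega^v-\omega^w+\omega$, all contributions built purely from the $\tilde u$-factors are designed to cancel: inside $\omega^{vw}$ the $\tilde u$-factors appear as $v$-conjugates of those inside $\omega^w$, and inside $\omega^v$ they appear as $v$-conjugates of those inside $\omega$; since conjugation by a constant unitary is invisible to $K_1$ of a loop, these cancel pairwise. What is left is a $3$-cochain $\tau(g,h,k)\in K_0(A)$ fabricated only from the $\tilde e$-paths and the constant conjugators $v_g, w_g$ and their $v_{gh}, w_{gh}, v_{ghk}, w_{ghk}$ relatives. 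The final step is to exhibit a $2$-cochain $\theta(g,h)\in K_0(A)$, built from the same $\tilde e$-data via the key decomposition above, whose coboundary equals $\tau$.

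The main obstacle will be the bookkeeping in this last step: the alternating sum contains many constant unitary conjugators that must be commuted past the various path factors using the two homotopy rearrangement moves already used in the proof of Lemma~\ref{kappa2}, and the resulting telescope of $K_1$-contributions must be organized into $d\theta$ for a carefully chosen $\theta$. Once the correct $\theta$ is identified, the remaining identity is a routine consequence of the $2$-cocycle relations for $u,u^v,u^w,u^{vw}$ together with the relation $e_{vw}=e^w_v\cdot v_{\cdot\cdot}e_w v_{\cdot\cdot}^{*}$, completing the proof that $h^{1,3}_{(\alpha,u)}$ is a homomorphism.
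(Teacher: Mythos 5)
Your reduction of additivity to the identity $\kappa^3(\alpha^{vw},u^{vw})-\kappa^3(\alpha^w,u^w)=\kappa^3(\alpha^v,u^v)-\kappa^3(\alpha,u)$ is correct, and the compatible choice of paths via the decomposition $e_{vw}(g,h)=e^w_v(g,h)\cdot v_{gh}e_w(g,h)v_{gh}^*$ is a sensible starting point. But the argument stops exactly where the work begins: you never exhibit the $2$-cochain $\theta$ with $d\theta=\tau$, nor do you verify that the advertised cancellation of the $\tilde u$-factors actually occurs. The latter is not automatic. In the loop $\alpha^{vw}_g(\tilde u^{vw}(h,k))\,\tilde u^{vw}(g,hk)\,\tilde u^{vw}(gh,k)^*\,\tilde u^{vw}(g,h)^*$ the four factors carry four \emph{different} constant conjugators, so the observation that conjugation by a constant unitary is invisible to $K_1$ of a loop does not apply factor by factor: the individual factors are paths, not loops, and have no $K_1$-class of their own. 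Every rearrangement must pass through the two homotopy moves used in Lemma \ref{kappa2}, and whether the residue is a coboundary is precisely the content of the lemma; announcing that a suitable $\theta$ will exist once it "is identified" is a plan, not a proof. As written there is a genuine gap.

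The paper's own proof avoids all of this bookkeeping with a two-line matrix trick: pass to $(\alpha\otimes\id,u\otimes1):G\curvearrowright A\otimes M_2$, note that $h^{1,3}_{(\alpha,u)}=h^{1,3}_{(\alpha\otimes\id,u\otimes1)}$ under the canonical identifications of $K$-groups, and represent $[\eta]+[\zeta]$ by $z_g=\diag(v_g,w_g)$. Then $(u\otimes1)^z(g,h)=\diag(u^v(g,h),u^w(g,h))$, so $\kappa^3((\alpha\otimes\id)^z,(u\otimes1)^z)=\kappa^3(\alpha^v,u^v)+\kappa^3(\alpha^w,u^w)$ while $\kappa^3(\alpha\otimes\id,u\otimes1)=2\kappa^3(\alpha,u)$; additivity follows immediately, the only input being the already-established independence of $h^{1,3}_{(\alpha,u)}$ from the choice of representing unitaries. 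If you want to keep your direct approach you must actually produce $\theta$ and carry out the telescoping; otherwise I would recommend switching to the diagonal trick.
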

\begin{proof}
Consider $(\alpha\otimes\id,u\otimes1):G\curvearrowright A\otimes M_2$. 
Clearly, 
we have $h^{1,3}_{(\alpha,u)}=h^{1,3}_{(\alpha\otimes\id,u\otimes1)}$. 
Let $\eta:G\to K_1(A)$ and $\zeta:G\to K_1(A)$ be $1$-cocycles. 
Choose $v_g\in U(A)$ and $w_g\in U(A)$ 
so that $K_1(v_g)=\eta(g)$ and $K_1(w_g)=\zeta(g)$. 
Let $z_g=\diag(v_g,w_g)\in U(A\otimes M_2)$. 
Then we obtain 
\begin{align*}
h^{1,3}_{(\alpha,u)}([\eta]+[\zeta])
&=h^{1,3}_{(\alpha\otimes\id,u\otimes1)}([\eta]+[\zeta])\\
&=\kappa^3((\alpha\otimes\id)^z,(u\otimes1)^z)
-\kappa^3(\alpha\otimes\id,u\otimes1)\\
&=\kappa^3(\alpha^v,u^v)+\kappa^3(\alpha^w,u^w)-2\kappa^3(\alpha,u)\\
&=h^{1,3}_{(\alpha,u)}([\eta])+h^{1,3}_{(\alpha,u)}([\zeta]), 
\end{align*}
which means that $h^{1,3}_{(\alpha,u)}$ is a homomorphism. 
\end{proof}

When $(\alpha,1):G\curvearrowright A$ is a genuine action, 
we write $h^{1.3}_{(\alpha,1)}=h^{1,3}_\alpha$. 

Let $A$ be a unital Kirchberg algebra and 
let $(\alpha,u):G\curvearrowright A$ be a cocycle action 
such that $u(g,h)\in U(A)_0$. 
Regarding $\alpha$ as an action of $G$ on $A_\flat$, 
we also obtain a homomorphism $H^1(G,K_1(A_\flat))$ to $H^3(G,K_0(A_\flat))$ 
We denote it by $\tilde h^{1,3}_\alpha$. 

\begin{lemma}\label{h13p=ph13}
Let $p\in A$ be a projection such that $K_0(p)\in K_0(A)^G$. 
Then we have $h^{1,3}_{(\alpha,u)}\circ p_*=p_*\circ\tilde h^{1,3}_\alpha$. 
\end{lemma}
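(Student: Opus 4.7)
My plan is to chase the definitions on a fixed 1-cocycle $\eta\colon G\to K_1(A_\flat)$. First I would lift $\eta$ to unitaries $v_g\in U(A_\flat)$ with $K_1(v_g)=\eta(g)$ and choose continuous representatives $\tilde v_g\colon[0,\infty)\to U(A)$ which asymptotically commute with $A$, hence with $p$. For $t_0$ sufficiently large, the element $V_g:=p\tilde v_g(t_0)p+(1-p)$ is (after a small polar correction) a unitary in $A$, and the family $(V_g)_g$ represents the $1$-cocycle $p_*\eta\colon G\to K_1(A)$. In particular $V_g\alpha_g(V_h)V_{gh}^*$ is, up to asymptotically vanishing error, the $p$-compression of $v_g\alpha_g(v_h)v_{gh}^*$.

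For the left-hand side $p_*\tilde h^{1,3}_\alpha([\eta])$ I would fix continuous paths $\tilde u(g,h)\colon[0,1]\to U(A)_0$ from $1$ to $u(g,h)$ together with paths $\tilde w(g,h)\colon[0,1]\to U(A_\flat)_0$ from $1$ to $u^v(g,h)=v_g\alpha_g(v_h)u(g,h)v_{gh}^*$; these give a representative $3$-cocycle $\Omega_\flat\in Z^3(G,K_0(A_\flat))$ for $\tilde h^{1,3}_\alpha([\eta])=\kappa^3((\alpha,u)^v)-\kappa^3(\alpha,u)$ computed in $A_\flat$. After lifting $\tilde w(g,h)$ to a two-parameter family $\widetilde W(g,h)(s,t)$ in $U(A)$, $p_*\Omega_\flat$ is represented by the $K_1$-class in $A$ of the $p$-compressed loop
\[
s\mapsto p\cdot\alpha_g\bigl(\widetilde W(h,k)(s,t_0)\bigr)\widetilde W(g,hk)(s,t_0)\widetilde W(gh,k)(s,t_0)^*\widetilde W(g,h)(s,t_0)^*\cdot p+(1-p)
\]
for large $t_0$. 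For the right-hand side $h^{1,3}_{(\alpha,u)}(p_*[\eta])=\kappa^3((\alpha,u)^V)-\kappa^3(\alpha,u)$ I would reuse the paths $\tilde u(g,h)$ for $\kappa^3(\alpha,u)$ and build paths from $1$ to $u^V(g,h)=V_g\alpha_g(V_h)u(g,h)V_{gh}^*$ in $U(A)_0$ by composing $\tilde u(g,h)$ with the $p$-compression of $\widetilde W(g,h)$, whose endpoints match because of the compression identity recorded above.

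The main obstacle is the algebraic bookkeeping to verify that, with these matched path choices, the representing $3$-cocycle for $\kappa^3((\alpha,u)^V)-\kappa^3(\alpha,u)$ agrees with $p_*\Omega_\flat$ modulo a coboundary in $Z^3(G,K_0(A))$. In each of the two differences the $\tilde u(g,h)$-contributions cancel, leaving only terms built from the $\tilde w$-paths on the one side and their $p$-compressions on the other. Since $\tilde v_g$, $\widetilde W(g,h)$ and $p$ almost commute for $t_0$ large, the compressed loop in $A$ has the same $K_1$-class as the compression of the corresponding loop in $A_\flat$; additivity of $K_1$ under products of asymptotically commuting unitaries then identifies the two $3$-cocycles up to a coboundary coming from the polar adjustments needed to pass from approximate to genuine unitaries, which is cohomologically trivial by standard $K_1$-stability. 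This is in spirit the same telescoping manipulation that appears in the proof of Lemma \ref{kappa3=kappa2}, and independence of all choices follows from Lemma \ref{kappa3_inv} and the fact that both $h^{1,3}$ and $\tilde h^{1,3}$ are well-defined homomorphisms.
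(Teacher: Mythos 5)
There is a genuine gap at the very first step, and everything downstream depends on it. You claim that for $V_g=p\tilde v_g(t_0)p+(1-p)$ the product $V_g\alpha_g(V_h)V_{gh}^*$ is, up to asymptotically vanishing error, the $p$-compression of $v_g\alpha_g(v_h)v_{gh}^*$. This is false unless $\alpha_g(p)=p$ for all $g$. The hypothesis only gives $K_0(p)\in K_0(A)^G$, so in general $\alpha_g(p)\neq p$, and then
\[
\alpha_g(V_h)\approx\alpha_g(v_h)\alpha_g(p)+\bigl(1-\alpha_g(p)\bigr)
\]
is a ``corner plus complement'' unitary with respect to $\alpha_g(p)$, not $p$. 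The product $V_g\alpha_g(V_h)u(g,h)V_{gh}^*$ then contains cross terms such as $v_gp\,\alpha_g(v_h)\bigl(1-\alpha_g(p)\bigr)$ and is not of the form $\bigl(v_g\alpha_g(v_h)v_{gh}^*\bigr)p+(1-p)$ times anything; the error is of the order of $\lVert p-\alpha_g(p)\rVert$, not asymptotically small. Consequently the endpoint matching for your paths to $u^V(g,h)$ fails, and the comparison of the two representing $3$-cocycles collapses. This matters in exactly the situations where the lemma is later applied (Lemma \ref{p*ofo3}, Remark \ref{tildeh=0}), since there $p$ has no reason to be $\alpha$-invariant.

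The missing ingredient is a family of correcting unitaries. Since $A$ is a Kirchberg algebra and $K_0(\alpha_g(p))=K_0(p)$, one can choose $w_g\in U(A)_0$ with $w_gpw_g^*=\alpha_g(p)$, and take as representative of $p_*(\eta(g))$ the unitary $V_g=(v_gp+1-p)w_g^*\in U(A^\flat)$; this is legitimate because $K_1(A)\cong K_1(A^\flat)$, $K_1(w_g)=0$, and by Lemma \ref{kappa3_inv} the value of $\kappa^3$ does not depend on the chosen representatives. With this choice one has the \emph{exact} factorization
\[
V_g\alpha_g(V_h)u(g,h)V_{gh}^*
=\bigl(v_g\alpha_g(v_h)v_{gh}^*\,p+1-p\bigr)\cdot
\bigl(w_g^*\alpha_g(w_h^*)u(g,h)w_{gh}\bigr),
\]
using that $\alpha_g(v_h)\alpha_g(p)+1-\alpha_g(p)=w_g\bigl(\alpha_g(v_h)p+1-p\bigr)w_g^*$, that $w_g^*\alpha_g(w_h^*)u(g,h)w_{gh}$ commutes with $p$, and that the $v$'s are central. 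The first factor is the $p$-compression of the $2$-cocycle computing $\tilde h^{1,3}_\alpha([\eta])$ in $A_\flat$, and the second is the $2$-cocycle of the perturbation of $(\alpha,u)$ by the null-homotopic unitaries $w_g^*$, which contributes exactly $\kappa^3(\alpha,u)$; choosing paths for the two factors separately gives $h^{1,3}_{(\alpha,u)}(p_*[\eta])+\kappa^3(\alpha,u)=p_*\tilde h^{1,3}_\alpha([\eta])+\kappa^3(\alpha,u)$, as required. Working directly in $A^\flat$ also spares you the polar-decomposition corrections and the choice of a large $t_0$.
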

\begin{proof}
Notice that $K_*(A)$ is canonically isomorphic to $K_*(A^\flat)$ 
via the embedding $A\to A^\flat$. 
Let $\eta:G\to K_1(A_\flat)$ be a $1$-cocycle. 
Take a family $(v_g)_g$ of unitaries in $U(A_\flat)$ 
satisfying $K_1(v_g)=\eta(g)$ for every $g\in G$. 
Set $\tilde v_g=v_gp+(1{-}p)\in U(A^\flat)$. 
For each $g\in G$, 
we choose a unitary $w_g\in U(A)_0$ so that $w_gpw_g^*=\alpha_g(p)$. 
Under the identification of $K_1(A)$ with $K_1(A^\flat)$, 
one has $p_*(\eta(g))=K_1(\tilde v_gw^*_g)$. 
Then 
\begin{align*}
&\tilde v_gw_g^*\alpha_g(\tilde v_hw_h^*)u(g,h)w_{gh}\tilde v_{gh}^*\\
&=(v_gp+1{-}p)w_g^*\alpha_g(v_hp+1{-}p)w_gw_g^*\alpha_g(w_h^*)u(g,h)
w_{gh}(v_{gh}p+1{-}p)^*\\
&=(v_g\alpha_g(v_h)v_{gh}^*p+1{-}p)\cdot w_g^*\alpha_g(w_h^*)u(g,h)w_{gh}, 
\end{align*}
which implies 
\[
(h^{1,3}_{(\alpha,u)}\circ p_*)([\eta])+\kappa^3(\alpha,u)
=(p_*\circ\tilde h^{1,3}_\alpha)([\eta])+\kappa^3(\alpha,u). 
\]
Therefore 
$h^{1,3}_{(\alpha,u)}\circ p_*=p_*\circ\tilde h^{1,3}_\alpha$ is obtained. 
\end{proof}

\begin{remark}\label{tildeh=0}
Let $A$ be a unital Kirchberg algebra 
which is stably isomorphic to $\mathcal{O}_n$ or $\mathcal{O}_\infty$, 
and let $(\alpha,u):G\curvearrowright A$ be a cocycle action 
such that $KK(\alpha_g)=1_A$ for each $g\in G$. 
As $K_1(A)$ is trivial, $h^{1,3}_{(\alpha,u)}$ is zero. 
There exists a projection $p\in A$ such that 
$K_0(p)$ generates $K_0(A)$, 
and $p_*:K_0(A_\flat)\to K_0(A)$ is an isomorphism. 
It follows from the lemma above that 
$\tilde h^{1,3}_\alpha$ is zero. 
\end{remark}

We would like to introduce 
an obstruction class $\mathfrak{o}^3((\alpha,u),\beta)$. 
Let $(\alpha,u)$ be a cocycle action of a countable discrete group $G$ 
on a unital Kirchberg algebra $A$. 
Let $\beta$ be an action of $G$ on $A$. 
Assume $KK(\alpha_g)=KK(\beta_g)$ for all $g\in G$. 
We choose a family $(v_g)_{g\in G}$ of unitaries in $A^\flat$ 
satisfying $(\Ad v_g\circ\alpha_g)(a)=\beta_g(a)$ for every $a\in A$. 
Define a cocycle action $(\sigma,w):G\curvearrowright A_\flat$ 
by $\sigma_g=\Ad v_g\circ\alpha_g$ and 
$w(g,h)=v_g\alpha_g(v_h)u(g,h)v_{gh}^*$. 
Assume further that $\mathfrak{o}^2((\alpha,u),\beta)=0$ 
(see Section 7.1 for the definition of $\mathfrak{o}^2$). 
Then, we can choose the family $(v_g)_{g\in G}$ 
so that $w(g,h)\in U(A_\flat)_0$ for all $g,h\in G$. 
Hence, $\kappa^3(\sigma,w)\in H^3(G,K_0(A_\flat))$ can be defined. 
When $(v'_g)_g$ is another family in $U(A^\flat)$ with the same properties 
and $(\sigma',w'):G\curvearrowright A_\flat$ is the cocycle action 
arising from $(v'_g)_g$, 
$v'_gv_g^*$ is in $U(A_\flat)$ and 
\[
\kappa^3(\sigma',w')-\kappa^3(\sigma,w)
=h^{1,3}_{(\sigma,w)}([\eta]), 
\]
where $\eta:G\to K_1(A_\flat)$ is the $1$-cocycle 
given by $g\mapsto K_1(v'_gv_g^*)$. 
Remember that $K_i(\alpha_g|A_\flat)=K_i(\beta_g|A_\flat)=K_i(\sigma_g)$ 
holds true for $i=0,1$ and $g\in G$ by Lemma \ref{auto_flat}. 

\begin{lemma}
In the setting above, 
we have $h^{1,3}_{(\sigma,w)}=\tilde h^{1,3}_\alpha$. 
\end{lemma}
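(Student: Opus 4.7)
For $\eta\in Z^1(G,K_1(A_\flat))$, pick $(x_g)\subset U(A_\flat)$ with $K_1(x_g)=\eta(g)$. By the very definitions,
\[
\tilde h^{1,3}_\alpha([\eta])=\kappa^3(\alpha^x,1^x),\qquad
h^{1,3}_{(\sigma,w)}([\eta])=\kappa^3(\sigma^x,w^x)-\kappa^3(\sigma,w),
\]
so the lemma reduces to establishing $\kappa^3(\sigma^x,w^x)=\kappa^3(\alpha^x,1^x)+\kappa^3(\sigma,w)$ in $H^3(G,K_0(A_\flat))$. The first step is the $K$-theoretic input: since $\sigma_g=\Ad v_g\circ\alpha_g$ with $v_g\in U(A^\flat)$, the automorphism $\beta_g\alpha_g^{-1}$ is asymptotically inner via unitaries in $A$, so Lemma \ref{auto_flat}(3) together with the hypothesis $KK(\alpha_g)=KK(\beta_g)$ and Lemma \ref{auto_flat}(1) imply $K_i(\sigma_g)=K_i(\alpha_g)$ on $A_\flat$ for $i=0,1$. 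In particular the unitary
\[
r_{g,h}:=\alpha_g(x_h)^{-1}\sigma_g(x_h)=\alpha_g(x_h)^{-1}v_g\alpha_g(x_h)v_g^{-1}
\]
lies in $U(A_\flat)_0$ for each $g,h$.

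Writing $\sigma_g(x_h)=\alpha_g(x_h)r_{g,h}$ in the formula for $w^x$ yields the multiplicative decomposition
\[
w^x(g,h)=1^x(g,h)\cdot(\Ad x_{gh})\bigl(r_{g,h}\,w(g,h)\bigr).
\]
I then choose continuous paths $\widetilde{1^x}(g,h)$, $\tilde r_{g,h}$, $\tilde w(g,h)$ from $1$ to the three endpoints and set
\[
\widetilde{w^x}(g,h)(t)=\widetilde{1^x}(g,h)(t)\cdot(\Ad x_{gh})\bigl(\tilde r_{g,h}(t)\tilde w(g,h)(t)\bigr)
\]
as the path computing $\kappa^3(\sigma^x,w^x)$ via Lemma \ref{kappa3}; the $\tilde w(g,h)$ alone compute $\kappa^3(\sigma,w)$. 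Plugging $\widetilde{w^x}$ into $\omega_{\sigma^x,w^x}(g,h,k)$ and using (i) additivity of $K_1$ on products of loops in $U(A_\flat)_0$, (ii) conjugation-invariance of $K_1$ (to discard the $\Ad x_{gh}$ and $\Ad x_g$ factors), and (iii) the identity $K_i(\sigma_g)=K_i(\alpha_g)$ on $A_\flat$ (to replace $\sigma^x_g$ by $\alpha^x_g$ in the appropriate factor without altering $K_1$), the $3$-cocycle $\omega_{\sigma^x,w^x}$ breaks, modulo $3$-coboundaries, into three separately closed pieces: a ``$B$-piece'' built from $\widetilde{1^x}$ whose $K_1$-class is $\omega_{\alpha^x,1^x}$, a ``$W$-piece'' built from $\tilde w$ giving $\omega_{\sigma,w}$, and an ``$R$-piece'' built from $\tilde r$.

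The main obstacle is to show that the $R$-piece bounds in $H^3(G,K_0(A_\flat))$. The plan is to exploit the commutator presentation $r_{g,h}=[\alpha_g(x_h)^{-1},v_g]$ in $U(A^\flat)$ and the cocycle identity $\sigma_g\sigma_h=\Ad w(g,h)\circ\sigma_{gh}$ applied to $x_k$: substituting $\sigma_g(x_h)=\alpha_g(x_h)r_{g,h}$ on both sides and comparing the results using $\alpha_g\alpha_h(x_k)=\alpha_{gh}(x_k)$ on $A_\flat$ yields an explicit relation of the form $\sigma_g(r_{h,k})r_{g,hk}=f(g,h,k)\,r_{gh,k}r_{g,h}g(g,h,k)$ where $f,g$ are products of multiplicative commutators (with $w(g,h)$ and $\alpha_{gh}(x_k)$). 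Taking continuous paths consistently and applying $K_1$ then exhibits the $R$-piece $3$-cocycle as the coboundary of the $2$-cochain $\mu(g,h):=K_1$ of the natural correction term (built from $\tilde r_{g,h}$ and commutator-paths), using repeatedly that $K_1$ annihilates multiplicative commutators in $U_0$. Combining the three contributions yields $\kappa^3(\sigma^x,w^x)=\kappa^3(\alpha^x,1^x)+\kappa^3(\sigma,w)$ and hence $h^{1,3}_{(\sigma,w)}=\tilde h^{1,3}_\alpha$.
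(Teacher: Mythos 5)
Your reduction of the lemma to the identity $\kappa^3(\sigma^x,w^x)=\kappa^3(\alpha^x,1^x)+\kappa^3(\sigma,w)$ is correct, and so is the $K$-theoretic input ($K_i(\sigma_g|A_\flat)=K_i(\alpha_g|A_\flat)$, hence $r_{g,h}\in U(A_\flat)_0$) and the factorization $w^x(g,h)=1^x(g,h)\cdot\Ad x_{gh}\bigl(r_{g,h}w(g,h)\bigr)$. But the proof has a genuine gap at its central step: the claim that the ``$R$-piece'' bounds is only a plan, and as sketched it does not go through. Two concrete problems. First, the factors in the defining expression for $\kappa^3$ are paths from $1$ to various unitaries, not loops, so ``additivity of $K_1$'' and ``$K_1$ kills commutators'' cannot be applied factor-by-factor; one needs the explicit homotopy manipulations of the kind carried out in Lemmas \ref{kappa2} and \ref{kappa3} (commuting $vw$ past $wv$ under endpoint conditions), and you have not verified that your three pieces actually separate modulo coboundaries after these manipulations. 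Second, your proposed $2$-cochain $\mu(g,h)$ is to be built from ``commutator-paths'' realizing $\tilde r_{g,h}$ as a pointwise commutator of a path to $\alpha_g(x_h)$ with $v_g$; but no such path exists in $U(A_\flat)$ when $\eta(h)\neq 0$ (since $x_h\notin U(A_\flat)_0$), and a path in $U(A^\flat)$ conjugated against $v_g$ need not stay in $A_\flat$. So the $R$-piece is not disposed of.

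What is missing is the reindexation (``rescaling'') technique in the asymptotic centralizer algebra, which is how the paper's proof avoids the $R$-piece altogether: since $A$ is separable and $x_g\in A_\flat$ while $v_g\in A^\flat$ and $\tilde w(g,h)(t)\in A_\flat$, one may re-choose the representatives $x_g$ (without changing $K_1(x_g)=\eta(g)$) so that $[\alpha_g(x_h),v_g]=0$ and $[\sigma_{gh}(x_k),\tilde w(g,h)(t)]=0$ hold \emph{exactly}, using Theorem \ref{refinedNakamura} to handle the continuous families of paths. Then $\sigma_g(x_h)=\alpha_g(x_h)$ on the nose, $r_{g,h}=1$, the concatenated path from $1$ to $w^x(g,h)$ splits cleanly into the $\tilde x(g,h)$-part and the $x_g\sigma_g(x_h)\tilde w(g,h)x_{gh}^*$-part, and the $3$-cocycle visibly decomposes as $\omega_{\alpha^x,1^x}+\omega_{\sigma,w}$. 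If you want to keep your error-term bookkeeping instead, you must actually produce the $2$-cochain and verify its coboundary equals the $R$-piece at the level of loops in $U(A_\flat)$, which I do not see how to do without effectively reproving the reindexation step.
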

\begin{proof}
Choose a path of unitaries $\tilde w(g,h):[0,1]\to U(A_\flat)$ 
such that $\tilde w(g,h)(0)=1$ and $\tilde w(g,h)(1)=w(g,h)$. 

Let $\eta:G\to K_1(A_\flat)$ be a $1$-cocycle. 
We choose $x_g\in K_1(A_\flat)$ so that $K_1(x_g)=\eta(g)$. 
By the rescaling argument, we may assume 
\[
[\sigma_{gh}(x_k),\tilde w(g,h)(t)]=0,\quad 
[\alpha_g(x_h),v_g]=0
\]
hold for all $g,h,k\in G$ and $t\in[0,1]$ 
(here we have used Theorem \ref{refinedNakamura}). 
Furthermore, 
we may assume that there exists a path of unitaries 
$\tilde x(g,h):[0,1]\to U(A_\flat)$ such that 
\[
\tilde x(g,h)(0)=1,\quad 
\tilde x(g,h)(1)=x_g\sigma_g(x_h)x_{gh}^*=x_g\alpha_g(x_h)x_{gh}^*. 
\]
and 
\[
[\alpha_g(\tilde x(h,k)(t)),v_g]=0\quad\forall g,h,k\in G,\ t\in [0,1]. 
\]
Then the concatenation of the paths $t\mapsto\tilde x(g,h)(t)$ and 
$t\mapsto x_g\sigma_g(x_h)\tilde w(g,h)(t)x_{gh}^*$ gives 
a path connecting $1$ to $w^x(g,h)=x_g\sigma_g(x_h)w(g,h)x_{gh}^*$. 
It is easy to see 
\begin{align*}
&\sigma^x_g(x_h\sigma_h(x_k)\tilde w(h,k)(t)x_{hk}^*)
\cdot x_g\sigma_g(x_{hk})\tilde w(g,hk)(t)x_{ghk}^*\\
&\quad \cdot(x_{gh}\sigma_{gh}(x_k)\tilde w(gh,k)(t)x_{ghk}^*)^*
\cdot(x_g\sigma_g(x_h)\tilde w(g,h)(t)x_{gh}^*)^*\\
&=x_g\sigma_g(x_h\sigma_h(x_k))
\sigma_g(\tilde w(h,k)(t))\tilde w(g,hk)(t)
\tilde w(gh,k)(t)^*\sigma_{gh}(x_k^*)
\tilde w(g,h)(t)^*\sigma_g(x_h^*)x_g^*\\
&=x_g\sigma_g(x_h)\sigma_{gh}(x_k)\cdot
\sigma_g(\tilde w(h,k)(t))\tilde w(g,hk)(t)
\tilde w(gh,k)(t)^*\tilde w(g,h)(t)^*
\cdot\sigma_{gh}(x_k^*)\sigma_g(x_h^*)x_g^*
\end{align*}
and 
\begin{align*}
&\sigma^x_g(\tilde x(h,k)(t))\tilde x(g,hk)(t)
\tilde x(gh,k)(t)^*\tilde x(g,h)(t)^*\\
&=\alpha^x_g(\tilde x(h,k)(t))\tilde x(g,hk)(t)
\tilde x(gh,k)(t)^*\tilde x(g,h)(t)^*. 
\end{align*}
Hence we have 
\[
\kappa^3(\sigma^x,w^x)=\kappa^3(\sigma,w)+\kappa^3(\alpha^x,1^x), 
\]
which means $h^{1,3}_{(\sigma,w)}([\eta])=\tilde h^{1,3}_\alpha([\eta])$. 
\end{proof}

\begin{definition}
Let $(\alpha,u):G\curvearrowright A$, $\beta:G\curvearrowright A$ and 
$(\sigma,w):G\curvearrowright A_\flat$ be as above. 
We define 
\[
\mathfrak{o}^3((\alpha,u),\beta)
=\kappa^3(\sigma,w)+\Ima\tilde h^{1,3}_\alpha
\in H^3(G,K_0(A_\flat))/\Ima\tilde h^{1,3}_\alpha. 
\]
By the lemma above, 
$\mathfrak{o}^3((\alpha,u),\beta)$ does not depend on the choice of $(v_g)_g$, 
and $\mathfrak{o}^3((\alpha,u),\beta)=0$ if and only if 
the family $(v_g)_g$ can be chosen so that $\kappa^3(\sigma,w)=0$. 
When $\alpha$ is a genuine action, 
we write $\mathfrak{o}^3(\alpha,\beta)=\mathfrak{o}^3((\alpha,1),\beta)$. 
\end{definition}

Now we are ready to prove the following theorem. 

\begin{theorem}\label{uni_Hirsch3}
Let $A$ be a unital Kirchberg algebra and 
let $G$ be a poly-$\Z$ group of Hirsch length three. 
Let $(\alpha,u):G\curvearrowright A$ be an outer cocycle action 
and let $\beta:G\curvearrowright A$ be an outer action. 
The following are equivalent. 
\begin{enumerate}
\item $(\alpha,u)$ and $\beta$ are $KK$-trivially cocycle conjugate. 
\item $KK(\alpha_g)=KK(\beta_g)$ for all $g\in G$, 
$\mathfrak{o}^2((\alpha,u),\beta)=0$ and 
$\mathfrak{o}^3((\alpha,u),\beta)=0$. 
\end{enumerate}
\end{theorem}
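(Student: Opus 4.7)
The direction (1)$\Rightarrow$(2) should be essentially tautological. If $\theta: A \to A$ with $KK(\theta)=1_A$ and unitaries $(w_g)_g \subset A$ realize the $KK$-trivial cocycle conjugacy, then substituting $v_g = \theta^{-1}(w_g)$ into the definitions gives $w(g,h) = 1$ in $A_\flat$, so the cohomology classes $\mathfrak{o}^2((\alpha,u),\beta)$ and $\mathfrak{o}^3((\alpha,u),\beta)$ are both trivially zero.

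For (2)$\Rightarrow$(1), the plan is to imitate the proof of Theorem~\ref{uni_Hirsch2}, adding an extra normalization step that uses $\mathfrak{o}^3$. Since $KK(\alpha_g)=KK(\beta_g)$ and both actions are on a unital Kirchberg algebra, Phillips's asymptotic unitary equivalence theorem produces a family $(v_g)_g \subset U(A^\flat)$ with $\Ad v_g\circ\alpha_g=\beta_g$ on $A$. This gives the associated cocycle action $(\sigma,w)$ on $A_\flat$ with $\sigma_g=\Ad v_g\circ\alpha_g$ and $w(g,h)=v_g\alpha_g(v_h)u(g,h)v_{gh}^*$. The hypothesis $\mathfrak{o}^2((\alpha,u),\beta)=0$ allows us to perturb the $v_g$ by unitaries of $U(A_\flat)$ so that $K_1(w(g,h))=0$, i.e.\ $w(g,h)\in U(A_\flat)_0$; consequently the class $\kappa^3(\sigma,w)\in H^3(G,K_0(A_\flat))$ is defined.

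At this point the hypothesis $\mathfrak{o}^3((\alpha,u),\beta)=0$ says precisely that $\kappa^3(\sigma,w)\in\Ima\tilde h^{1,3}_\alpha$. I would then pick a $1$-cocycle $\eta\colon G\to K_1(A_\flat)$ with $\tilde h^{1,3}_\alpha([\eta])=\kappa^3(\sigma,w)$, lift it to unitaries $y_g\in U(A_\flat)$ with $K_1(y_g)=\eta(g)$ (using the identification $U(A_\flat)/U(A_\flat)_0\cong K_1(A_\flat)$), and replace $v_g$ by $y_g^{*}v_g$. The cocycle property of $\eta$ ensures the new $w'(g,h)$ remains in $U(A_\flat)_0$, while the definition of $\tilde h^{1,3}_\alpha$ exactly subtracts the contribution to $\kappa^3$, so $\kappa^3(\sigma',w')=0$.

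With $\kappa^3(\sigma',w')=0$ in hand, I would apply Lemma~\ref{2cv_Hirsch3} to the cocycle action $(\sigma',w')$ on $A_\flat$ to produce unitaries $(z_g)_g\subset U(A_\flat)_0$ expressing $w'(g,h)=\sigma'_g(z_h^*)z_g^*z_{gh}$; absorbing $z_g$ into $v_g$ yields a family in $A^\flat$ whose associated $2$-cocycle in $A_\flat$ is trivial, and then Lemma~\ref{key} gives the desired $KK$-trivial cocycle conjugacy. To invoke Lemma~\ref{2cv_Hirsch3} I need $(\sigma',w')$ to lie in $\AC(\mathcal{O}_\infty,\mu^G)$; this follows from Lemma~\ref{AC'}(3) for $\alpha:G\curvearrowright A_\flat$ together with the observation that cocycle perturbation by the countable family $(v_g)_g\subset A_\flat$ preserves $\AC$, since those unitaries can be absorbed into the separable set $S$ whose commutant houses the approximately central $\mathcal{O}_\infty$. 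The main technical obstacle I anticipate is precisely this last verification — making sure the various reductions in $A_\flat$ remain within the reach of the $\AC$ toolbox and the $H^1$/$H^2$-stability results from Section~5, since $A_\flat$ itself is not separable; but this is exactly the kind of non-separable bookkeeping that has already been handled in Lemma~\ref{AC'} and the proof of Lemma~\ref{2cv_Hirsch3}, so no new ideas should be required.
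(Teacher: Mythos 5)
Your proposal is correct and follows essentially the same route as the paper: choose $(v_g)$ in $U(A^\flat)$ via Phillips's theorem, use $\mathfrak{o}^2((\alpha,u),\beta)=0$ to arrange $w(g,h)\in U(A_\flat)_0$, use $\mathfrak{o}^3((\alpha,u),\beta)=0$ (through the identity $h^{1,3}_{(\sigma,w)}=\tilde h^{1,3}_\alpha$ already recorded in the paper) to further perturb $(v_g)$ by unitaries of $U(A_\flat)$ so that $\kappa^3(\sigma,w)=0$, and then conclude with Lemma~\ref{2cv_Hirsch3} and Lemma~\ref{key}; your remarks on keeping $(\sigma,w)$ inside $\AC(\mathcal{O}_\infty,\mu^G)$ via Lemma~\ref{AC'} and reindexation are exactly the bookkeeping the paper's terse proof leaves implicit. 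The only cosmetic point is in (1)$\Rightarrow$(2): the correct perturbing unitaries are $v_g=w_g^*z\alpha_g(z^*)$ with $z\in U(A^\flat)$ implementing the asymptotically inner $\theta$, not $\theta^{-1}(w_g)$, but that direction is immediate from Lemma~\ref{key} in any case.
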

\begin{proof}
(1)$\Rightarrow$(2) is obvious from Lemma \ref{key}. 

Let us show the converse. 
Define the cocycle action $(\sigma,w):G\curvearrowright A_\flat$ as above. 
By $\mathfrak{o}^2((\alpha,u),\beta)=0$ and 
$\mathfrak{o}^3((\alpha,u),\beta)=0$, 
we may assume $K_1(w(g,h))=0$ in $K_1(A_\flat)$ and 
$\kappa^3(\sigma,w)=0$ for every $g,h\in G$. 
It follows from Lemma \ref{2cv_Hirsch3} and Lemma \ref{key} that 
$(\alpha,u)$ and $\beta$ are $KK$-trivially cocycle conjugate. 
\end{proof}

\subsection{Existence}

In this subsection, 
we discuss existence of outer (cocycle) actions of poly-$\Z$ groups 
of Hirsch length three with prescribed $K$-theoretic data. 

\begin{theorem}\label{exi_Hirsch3}
Let $G$ be a poly-$\Z$ group of Hirsch length three. 
Let $A$ be a unital Kirchberg algebra. 
\begin{enumerate}
\item Let $\alpha:G\curvearrowright A$ be an action. 
For any $c\in H^2(G,KK^1(A,A))$, 
there exists an outer cocycle action $(\beta,v):G\curvearrowright A$ 
such that $KK(\alpha_g)=KK(\beta_g)$ for all $g\in G$ and 
$\mathfrak{o}^2((\beta,v),\alpha)=c$. 
\item Let $\alpha:G\curvearrowright A$ be an action. 
For any $c\in H^3(G,KK(A,A))$, 
there exists an outer cocycle action $(\beta,v):G\curvearrowright A$ 
such that $KK(\alpha_g)=KK(\beta_g)$ for all $g\in G$ and 
$\mathfrak{o}^2((\beta,v),\alpha)=0$ and 
$\mathfrak{o}^3((\beta,v),\alpha)=c+\Ima\tilde h^{1,3}_\beta$. 
\end{enumerate}
\end{theorem}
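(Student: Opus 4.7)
The plan is to proceed by induction on the Hirsch length, combining four ingredients: Theorem~\ref{exi_Hirsch2} applied to the normal subgroup $N\subset G$ of Hirsch length two with $G/N\cong\Z$ (fix $\xi\in G$ generating $G$ together with $N$); Lemma~\ref{realization}, which manufactures $G$-cocycle actions from $\alpha|_N$-cocycles in $A_\flat$ with a prescribed slice-vanishing structure; Lemma~\ref{realize:ka2'}, which realizes $\kappa^2$ classes by $1$-cocycles; and Lemma~\ref{kappa3=kappa2}, which identifies $\kappa^3$ of a cocycle action with the slice-vanishing structure produced by Lemma~\ref{realization} with $\bar j\circ\kappa^2$ of the induced $N$-cocycle action.

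For part (1), I would decompose $c\in H^2(G,K_1(A_\flat))$ via the Lyndon-Hochschild-Serre sequence
\[
0\to H^1(\Z,H^1(N,K_1(A_\flat)))\xrightarrow{j}H^2(G,K_1(A_\flat))\xrightarrow{q}H^2(N,K_1(A_\flat))^\Z\to0.
\]
First apply Theorem~\ref{exi_Hirsch2}(2) to $N$ to obtain an outer cocycle action $(\gamma,u):N\curvearrowright A$ with $KK(\gamma_g)=KK(\alpha_g)$ for $g\in N$ and $\mathfrak{o}^2((\gamma,u),\alpha|_N)=q(c)$. The $\Z$-invariance of $q(c)$ implies that the conjugated cocycle action $g\mapsto(\alpha_\xi\circ\gamma_{\xi^{-1}g\xi}\circ\alpha_\xi^{-1},\alpha_\xi(u))$ has the same invariants as $(\gamma,u)$, so by Theorem~\ref{uni_Hirsch2} they are $KK$-trivially cocycle conjugate; using the implementing automorphism $\theta$ (with $KK(\theta)=1_A$) and cocycle-conjugacy unitaries $(s_g)_{g\in N}$, I would set $\beta_{0,\xi}:=\theta\circ\alpha_\xi$, $\beta_{0,g}:=\gamma_g$ for $g\in N$, and define the cocycle data involving $\xi$ via $(s_g)$, obtaining an outer cocycle action $(\beta_0,v_0):G\curvearrowright A$ restricting to $(\gamma,u)$ on $N$. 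By construction $\mathfrak{o}^2((\beta_0,v_0),\alpha)$ restricts to $q(c)$, so $c-\mathfrak{o}^2((\beta_0,v_0),\alpha)=\bar j([\rho])$ for some $1$-cocycle $\rho:N\to K_1(A_\flat)$; a final cocycle perturbation along a $\beta_0|_N$-cocycle $(z_g)$ in $A_\flat$ with $K_1(z_g)=\rho(g)$, obtained via the construction of Lemma~\ref{realization} (applied to $(\beta_0,v_0)$ after absorbing $\mathcal{O}_\infty$), yields $(\beta,v)$ with $\mathfrak{o}^2((\beta,v),\alpha)=c$.

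For part (2), since $N$ is poly-$\Z$ of Hirsch length two, $H^3(N,K_0(A_\flat))=0$, so the LHS sequence gives that $\bar j:H^2(N,K_0(A_\flat))\to H^3(G,K_0(A_\flat))$ is surjective and $c=\bar j([\rho])$ for some $\rho\in H^2(N,K_0(A_\flat))$. Since $\alpha|_N$ acting on $A_\flat$ belongs to $\AC(\mathcal{O}_\infty,\mu^N)$ by Lemma~\ref{AC'}(3), Lemma~\ref{realize:ka2'} realizes $\rho$ as $\kappa^2(\alpha|_N,\hat v)$ for some $\alpha|_N$-cocycle $(\hat v_g)_{g\in N}$ in $U(A_\flat)_0$. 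I would then invoke Lemma~\ref{realization} with $u_g=\hat v_g$ to produce an outer cocycle action $(\beta,v):G\curvearrowright A\otimes\mathcal{O}_\infty$ together with implementing unitaries $(x_g)\in(A\otimes\mathcal{O}_\infty)^\flat$; the conjugated cocycle action $(\sigma,w)$ on $(A\otimes\mathcal{O}_\infty)_\flat$ then satisfies $\sigma_g=\alpha_g\otimes\mu^G_g$ on $A$ and $w(g,h)\in U(A_\flat)_0$ throughout (since $\hat v_g\in U(A_\flat)_0$), so $\mathfrak{o}^2((\beta,v),\alpha\otimes\mu^G)=0$. The slice-vanishing structure of $w$ (trivial on $N\times N$ and on $N\langle\xi\rangle\times\langle\xi\rangle$, and equal to $\hat v_{\xi g\xi^{-1}}\otimes1$ on the $\xi$-slice) lets Lemma~\ref{kappa3=kappa2} apply, giving
\[
\kappa^3(\sigma,w)=\bar j(\kappa^2(\sigma|_N,\check w))=\bar j([\hat v])=\bar j([\rho])=c,
\]
so $\mathfrak{o}^3((\beta,v),\alpha\otimes\mu^G)=c+\Ima\tilde h^{1,3}_\beta$; transferring back to $A$ via Theorem~\ref{Oinfty_absorb} gives the desired cocycle action on $A$.

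The hard part will be the extension step in part (1): extending $(\gamma,u)$ to a full $G$-cocycle action $(\beta_0,v_0)$ with the correct restriction of $\mathfrak{o}^2$. The $\Z$-invariance of $q(c)$ is exactly the necessary condition, and by Theorem~\ref{uni_Hirsch2} it is sufficient at the level of cohomology classes, but converting the resulting $KK$-trivial cocycle conjugacy into concrete unitary data that glues coherently into a $G$-cocycle action requires careful bookkeeping. The subsequent correction by the $\bar j$-image class must moreover accommodate a cocycle action (rather than a genuine action) as the base, so a modest extension of Lemma~\ref{realization} is required.
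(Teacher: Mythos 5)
Your part (2) is essentially the paper's own argument: decompose $c$ through the Lyndon--Hochschild--Serre sequence, realize the resulting class in $H^2(N,K_0(A_\flat))$ by an $\alpha|N$-cocycle in $U(A_\flat)_0$ via Lemma \ref{realize:ka2'} (legitimized by Lemma \ref{AC'}), feed it into Lemma \ref{realization}, and compute $\kappa^3$ by Lemma \ref{kappa3=kappa2}. No issues there.

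Part (1) has a genuine gap, and it is exactly the one you flag at the end but do not resolve. Theorem \ref{exi_Hirsch2}(2) applied to $N$ produces a \emph{cocycle} action $(\gamma,u)$, which can be taken genuine only when $1_*(q(c))=0$. From that point on, every tool you want to use is stated only for a genuine-action base: $\mathfrak{o}^2$ is defined for a pair consisting of a cocycle action and a \emph{genuine} action, so ``$\mathfrak{o}^2$ of $(\gamma,u)$ against its $\xi$-conjugate'' is not defined, and Theorem \ref{uni_Hirsch2} as stated compares a cocycle action with a genuine action, not two cocycle actions; likewise Lemma \ref{realize:1c} and Lemma \ref{realization} take a genuine action $\alpha:G\curvearrowright A$ (or $N\curvearrowright A$) as input, so your final correction by a $\beta_0|_N$-cocycle with prescribed $K_1$-classes is not licensed when $\beta_0|_N=(\gamma,u)$ carries a nontrivial $2$-cocycle. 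The paper's proof circumvents all of this by first tensoring with $\mathcal{O}$: in the Cuntz standard form, Theorem \ref{Cuntzstandard1} lets one replace the cocycle action by a genuine action at \emph{every} intermediate stage (this is used three times, to get $\beta$, $\beta''$, and the final action from $(\beta''',w)$), after which the chain rule for $\mathfrak{o}^2$ of genuine actions finishes the Cuntz-standard-form case; the general case is then recovered by cutting down by $1\otimes p$ with $p\mathcal{O}p\cong\mathcal{O}_\infty$, checking via Lemma \ref{reduction} and the remark after the definition of $\mathfrak{o}^2$ that the obstruction class survives the corner, and invoking Remark \ref{Oinfty_absorb'}. Without this detour (or an honest extension of $\mathfrak{o}^2$, Theorem \ref{uni_Hirsch2}, Lemma \ref{realize:1c} and Lemma \ref{realization} to cocycle-action bases, which is more than a ``modest extension''), your part (1) does not close.
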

\begin{proof}
There exists a normal poly-$\Z$ subgroup $N\subset G$ 
(of Hirsch length two) 
and $\xi\in G$ such that $G$ is generated by $N$ and $\xi$. 

(1) 
First we claim that 
the statement is true if $A$ is in the Cuntz standard form. 
Let $\alpha:G\curvearrowright A$ be an action and 
let $c\in H^2(G,KK^1(A,A))=H^2(G,K_1(A_\flat))$. 
By \eqref{LHS}, we have the short exact sequence 
\[
0\longrightarrow H^1(\Z,H^1(N,K_1(A_\flat))
\stackrel{j}{\longrightarrow}H^2(G,K_1(A_\flat))
\stackrel{q}{\longrightarrow}H^2(N,K_1(A_\flat))^\Z\longrightarrow0. 
\]
Since $N$ is a poly-$\Z$ group of Hirsch length two and 
$A$ is in the Cuntz standard form, 
by Theorem \ref{exi_Hirsch2} (2) and Theorem \ref{Cuntzstandard1}, 
there exists an outer action $\beta:N\curvearrowright A$ 
such that $KK(\beta_g)=KK(\alpha_g)$ for all $g\in N$ and 
$\mathfrak{o}^2(\beta,\alpha|N)=q(c)$. 
Define $\beta':N\curvearrowright A$ by 
\[
\beta'_g=\alpha_\xi\circ\beta_{\xi^{-1}g\xi}\circ\alpha_\xi^{-1}. 
\]
Clearly $KK(\beta'_g)=KK(\alpha_g)$ for any $g\in N$. 
Because 
$\mathfrak{o}^2(\beta,\alpha|N)=q(c)$ belongs to $H^2(N,K_1(A_\flat))^\Z$, 
we also obtain 
$\mathfrak{o}^2(\beta',\alpha|N)=\mathfrak{o}^2(\beta,\alpha|N)$. 
Therefore $\mathfrak{o}^2(\beta,\beta')=0$. 
It follows from Theorem \ref{uni_Hirsch2} that 
$\beta$ and $\beta'$ are $KK$-trivially cocycle conjugate. 
Thus there exist $\gamma\in\Aut(A)$ and a $\beta$-cocycle $(c_g)_g$ in $A$ 
such that $KK(\gamma)=1_A$ and 
\[
(\gamma\circ\alpha_\xi)\circ\beta_{\xi^{-1}g\xi}
\circ(\gamma\circ\alpha_\xi)^{-1}=\Ad c_g\circ\beta_g
\]
holds true for any $g\in N$. 
Hence $\beta:N\curvearrowright A$ extends to 
a cocycle action $(\beta,v):G\curvearrowright A$ 
such that $KK(\beta_g)=KK(\alpha_g)$ for all $g\in G$ and 
$q(\mathfrak{o}^2((\beta,v),\alpha))=q(c)$. 
We may replace $(\beta,v)$ with a genuine action $\beta''$ 
because $A$ is in the Cuntz standard form. 

Now suppose that a $1$-cocycle $\rho:N\to K_1(A_\flat)$ satisfies 
$\bar j([\rho])=c-\mathfrak{o}^2(\beta'',\alpha)$. 
Since $N$ is a poly-$\Z$ group of Hirsch length two, 
by Lemma \ref{realize:1c}, 
there exists a $\beta''|N$-cocycle $(u_g)_g$ in $A_\flat$ 
such that $K_1(u_g)=\rho(g)$ for $g\in N$. 
In exactly the same way as Theorem \ref{exi_Hirsch2} (2), 
we can find a cocycle action $(\beta''',w):G\curvearrowright A$ 
such that $KK(\beta'''_g)=KK(\beta''_g)$ for all $g\in G$ and 
$\mathfrak{o}^2((\beta''',w),\beta'')=\bar j([\rho])$. 
We may replace $(\beta''',w)$ with a genuine action, 
because $A$ is in the Cuntz standard form. 
Then the proof of the claim is completed by the chain rule.  

Let $A$ be a unital Kirchberg algebra 
which is not necessarily in the Cuntz standard form. 
Let $\alpha:G\curvearrowright A$ be an outer action and 
let $c\in H^2(G,KK^1(A,A))=H^2(G,K_1(A_\flat))$. 
We consider the action 
$\alpha\otimes\id:G\curvearrowright A\otimes\mathcal{O}$. 
By the proof above, 
there exists an outer action $\beta:G\curvearrowright A\otimes\mathcal{O}$ 
such that $KK(\beta_g)=KK(\alpha_g\otimes\id)$ and 
$\mathfrak{o}^2(\beta,\alpha\otimes\id)=c$. 
Take a projection $p\in\mathcal{O}$ 
such that $p\mathcal{O}p\cong\mathcal{O}_\infty$. 
Since $K_0(\beta_g)$ fixes $K_0(1\otimes p)$ for every $g\in G$, 
$\beta$ induces a cocycle action 
$(\beta,1)^{1\otimes p}:G\curvearrowright A\otimes p\mathcal{O}p$ 
(see Section 4.4). 
It is routine to check 
$\mathfrak{o}^2((\beta,1)^{1\otimes p},\alpha\otimes\id)=c$ 
in $H^2(G,KK^1(A\otimes p\mathcal{O}p,A\otimes p\mathcal{O}p))$. 
Then the proof is completed, 
because $\alpha\otimes\id:G\curvearrowright A\otimes p\mathcal{O}p$ is 
cocycle conjugate to $\alpha$ via an isomorphism 
asymptotically unitarily equivalent to the embedding $a\mapsto a\otimes p$. 

(2) 
By \eqref{LHS}, 
$j:H^1(\Z,H^2(N,K_0(A_\flat)))\to H^3(G,K_0(A_\flat))$ is an isomorphism, 
and hence there exists $c'\in H^2(N,K_0(A_\flat))$ such that $\bar j(c')=c$. 
By Lemma \ref{realize:ka2'}, 
we can find an $\alpha|N$-cocycle $(u_g)_{g\in N}$ in $U(A_\flat)_0$ 
satisfying $\kappa^2(\alpha|N,u)=c'$. 
By Lemma \ref{realization}, 
there exist a cocycle action 
$(\beta,v):G\curvearrowright A\otimes\mathcal{O}_\infty$ 
and a family $(x_g)_{g\in G}$ of unitaries 
in $(A\otimes\mathcal{O}_\infty)^\flat$ 
satisfying the following. 
\begin{itemize}
\item $\beta_g=\alpha_g\otimes\mu^G_g$, $v(g,h)=1$, $v(g\xi^l,\xi^m)=1$, 
$x_g=1$ for all $g,h\in N$ and $l,m\in\Z$. 
\item $(\alpha_g\otimes\mu^G_g)(a)=(\Ad x_g\circ\beta_g)(a)$ holds 
for all $g\in G$ and $a\in A$. 
\item When we put $w(g,h)=x_g\beta_g(x_h)v(g,h)x_{gh}^*$ for $g,h\in G$, 
\[
w(g,h)=1,\quad w(g\xi^l,\xi^m)=1
\quad\text{and}\quad w(\xi,g)=u_{\xi g\xi^{-1}}\otimes1
\]
hold for any $g,h\in N$ and $l,m\in\Z$. 
\end{itemize}
Let $\sigma_g=\Ad x_g\circ\beta_g$ and 
consider the cocycle action $(\sigma,w):G\curvearrowright A_\flat$. 
As $K_1(w(g,h))=0$ for all $g,h\in G$, 
$\mathfrak{o}^2((\beta,v),\alpha\otimes\mu^G)=0$. 
Thanks to Lemma \ref{kappa3=kappa2}, we can conclude that 
\[
\kappa^3(\sigma,w)=\bar j(\kappa^2(\sigma|N,\check w))
=\bar j(\kappa^2((\alpha\otimes\mu^G)|N,u\otimes1))=\bar j(c')=c, 
\]
which implies 
$\mathfrak{o}^3((\beta,v),\alpha\otimes\mu^G)=c+\Ima\tilde h^{1,3}_\beta$. 
\end{proof}

\begin{remark}
Let $\phi:G\to KK(A,A)^{-1}_*$ be a homomorphism. 
In general, we do not know whether $\rho$ is realized 
by a cocycle action $G\curvearrowright A$ or not. 
\end{remark}

Recall that a cocycle action $(\alpha,u):G\curvearrowright A$ is 
said to be locally $KK$-trivial if $KK(\alpha_g)=1_A$ for all $g\in G$. 
The following corollary gives a complete classification of 
locally $KK$-trivial outer actions of poly-$\Z$ groups of Hirsch length three 
on the algebras $\mathcal{O}_n\otimes\mathcal{O}$ and $\mathcal{O}$. 

\begin{corollary}\label{OntimesO}
Let $G$ be a poly-$\Z$ group of Hirsch length three. 
Let $A$ be a unital Kirchberg algebra in the Cuntz standard form. 
We regard $KK^i(A,A)$ as trivial $G$-modules. 
\begin{enumerate}
\item For each $c\in H^2(G,KK^1(A,A))$, 
there exists a locally $KK$-trivial outer action 
$\alpha:G\curvearrowright A$ such that $\mathfrak{o}^2(\alpha,\id)=c$. 
\item Assume further that 
$A$ is stably isomorphic to $\mathcal{O}_n$ or $\mathcal{O}_\infty$. 
For each $c\in H^2(G,KK^1(A,A))$, 
we choose and fix an outer action $\alpha^c:G\curvearrowright A$ as above. 
Then the association 
\[
\beta\mapsto(\mathfrak{o}^2(\beta,\id),
\mathfrak{o}^3(\beta,\alpha^{\mathfrak{o}^2(\beta,\id)}))
\]
gives a bijective correspondence 
between the set of $KK$-trivially cocycle conjugacy classes of 
locally $KK$-trivial outer actions $\beta:G\curvearrowright A$ 
and $H^2(G,KK^1(A,A))\times H^3(G,KK(A,A))$. 
\end{enumerate}
\end{corollary}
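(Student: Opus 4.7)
The plan is to derive (1) directly from Theorem \ref{exi_Hirsch3}(1) combined with the Cuntz-standard-form conversion, and to prove (2) by combining the existence Theorem \ref{exi_Hirsch3}(2), the uniqueness Theorem \ref{uni_Hirsch3}, and the chain rule for the obstruction classes; the point of the extra hypothesis on $A$ is that, by Remark \ref{tildeh=0}, the homomorphism $\tilde h^{1,3}_\beta$ vanishes for every locally $KK$-trivial $\beta$, so that $\mathfrak{o}^3$ takes unambiguous values in $H^3(G, KK(A,A))$ rather than a coset thereof.

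For (1), I will apply Theorem \ref{exi_Hirsch3}(1) to the identity action $\alpha = \id: G \curvearrowright A$ and the given class $c$, producing an outer cocycle action $(\beta,v)$ with $KK(\beta_g) = 1_A$ and $\mathfrak{o}^2((\beta,v), \id) = c$. Since $A$ is in the Cuntz standard form, Theorem \ref{Cuntzstandard1} converts $(\beta,v)$ to a $KK$-trivially cocycle conjugate genuine action, and both defining properties (local $KK$-triviality and $\mathfrak{o}^2(\cdot, \id) = c$) are preserved under $KK$-trivial cocycle conjugacy in the first variable.

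For (2), I will verify bijectivity in the usual two steps. For injectivity, suppose $\beta_1, \beta_2: G \curvearrowright A$ are locally $KK$-trivial outer actions with the same image under the association. Writing $c_2 = \mathfrak{o}^2(\beta_i, \id)$, the chain rule $\mathfrak{o}^2(\beta_1, \beta_2) = \mathfrak{o}^2(\beta_1, \id) - \mathfrak{o}^2(\beta_2, \id)$ yields $\mathfrak{o}^2(\beta_1, \beta_2) = 0$; analogously, because both $\mathfrak{o}^3(\beta_i, \alpha^{c_2})$ agree and $\tilde h^{1,3}$ vanishes, the corresponding chain rule for $\mathfrak{o}^3$ forces $\mathfrak{o}^3(\beta_1, \beta_2) = 0$, and then Theorem \ref{uni_Hirsch3} gives $KK$-trivial cocycle conjugacy. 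For surjectivity, given $(c_2, c_3) \in H^2(G, KK^1(A,A)) \times H^3(G, KK(A,A))$, I apply Theorem \ref{exi_Hirsch3}(2) with the fixed base action $\alpha^{c_2}$ and parameter $c_3$ to obtain an outer cocycle action $(\beta,v)$ satisfying $KK(\beta_g) = 1_A$, $\mathfrak{o}^2((\beta,v), \alpha^{c_2}) = 0$, and $\mathfrak{o}^3((\beta,v), \alpha^{c_2}) = c_3 + \Ima \tilde h^{1,3}_\beta = c_3$ (using Remark \ref{tildeh=0}); one more invocation of Theorem \ref{Cuntzstandard1} replaces $(\beta,v)$ by a genuine action, and the chain rule gives $\mathfrak{o}^2(\beta, \id) = 0 + c_2 = c_2$.

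The main obstacle is a clean formulation and verification of the chain rule for $\mathfrak{o}^3$, together with the compatibility of all identifications involved. Concretely, one must check that when $\alpha, \beta, \gamma: G \curvearrowright A$ are locally $KK$-trivial outer actions with $\mathfrak{o}^2(\alpha,\beta) = \mathfrak{o}^2(\beta,\gamma) = 0$, one has $\mathfrak{o}^3(\alpha,\gamma) = \mathfrak{o}^3(\alpha,\beta) + \mathfrak{o}^3(\beta,\gamma)$ after canonical identifications. This should follow from concatenating the unitaries $(v_g)$ implementing the two $\mathfrak{o}^3$ calculations and computing $\kappa^3$ of the product cocycle action in $A_\flat$, using Lemma \ref{kappa2additive}-type additivity together with the fact that the $h^{1,3}$ corrections vanish in the range under consideration. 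Everything else is routine bookkeeping with the preceding theorems.
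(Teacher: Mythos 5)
Your proposal is correct and follows essentially the same route as the paper, whose proof of this corollary consists precisely of citing Theorem \ref{exi_Hirsch3}, Theorem \ref{uni_Hirsch3}, Theorem \ref{Cuntzstandard1} and Remark \ref{tildeh=0} in the way you spell out. The only point you elaborate beyond the paper is the chain rule for $\mathfrak{o}^3$ (needed for well-definedness and injectivity), which the paper leaves implicit; your sketch of it via composing the implementing unitaries in $A^\flat$ and using $\kappa^3$-additivity together with the vanishing of $\tilde h^{1,3}$ is the intended argument.
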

\begin{proof}
(1) 
The assertion follows from 
Theorem \ref{exi_Hirsch3} (1) and Theorem \ref{Cuntzstandard1}. 

(2) 
By Remark \ref{tildeh=0}, $\tilde h^{1,3}_\beta$ is zero 
for any cocycle action $(\beta,v):G\curvearrowright A$. 
Then the assertion follows from 
Theorem \ref{uni_Hirsch3}, Theorem \ref{exi_Hirsch3} and 
Theorem \ref{Cuntzstandard1}. 
\end{proof}

The following corollary gives a complete classification of 
outer cocycle actions of poly-$\Z$ groups of Hirsch length three 
on the algebras $\mathcal{O}_n$ and $\mathcal{O}_\infty$. 

\begin{corollary}\label{On}
Let $G$ be a poly-$\Z$ group of Hirsch length three. 
Let $A$ be the Cuntz algebra $\mathcal{O}_n$ or $\mathcal{O}_\infty$. 
We regard $KK^i(A,A)$ as trivial $G$-modules. 
For each $c\in H^2(G,KK^1(A,A))$, 
we choose and fix a locally $KK$-trivial outer action 
$\alpha^c:G\curvearrowright A\otimes\mathcal{O}$ 
such that $\mathfrak{o}^2(\alpha^c,\id)=c$. 
Then the association 
\[
(\beta,v)\mapsto\left(\mathfrak{o}^2((\beta,v),\id),
\mathfrak{o}^3((\beta\otimes\id,v\otimes1),
\alpha^{\mathfrak{o}^2((\beta,v),\id)})\right)
\]
gives a bijective correspondence 
between the set of $KK$-trivially cocycle conjugacy classes of 
outer cocycle actions $(\beta,v):G\curvearrowright A$ 
and $H^2(G,KK^1(A,A))\times H^3(G,KK(A,A))$. 
\end{corollary}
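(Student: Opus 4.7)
The plan is to reduce the statement to Corollary~\ref{OntimesO} applied to the standard-form algebra $A\otimes\mathcal{O}$, using the equivalence of classification problems supplied by Theorem~\ref{Cuntzstandard2}. The key preliminary observations are that for $A=\mathcal{O}_n$ or $\mathcal{O}_\infty$ one has $KK(A,A)^{-1}_*=\{1_A\}$ and $K_1(A)=0$, so every outer cocycle action $(\beta,v)$ on $A$ automatically satisfies $KK(\beta_g)=1_A$ for all $g$; and since $A\otimes\mathcal{O}$ is stably isomorphic to $A$, hence to $\mathcal{O}_n$ or $\mathcal{O}_\infty$, Remark~\ref{tildeh=0} gives $\tilde h^{1,3}=0$, so $\mathfrak{o}^3$ takes values in the honest group $H^3(G,KK(A,A))$. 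Under the canonical identifications $KK^i(A,A)\cong KK^i(A\otimes\mathcal{O},A\otimes\mathcal{O})$, the invariants $\mathfrak{o}^2((\beta,v),\id)$ on $A$ and $\mathfrak{o}^2((\beta\otimes\id,v\otimes1),\id)$ on $A\otimes\mathcal{O}$ correspond, and likewise $\mathfrak{o}^3$ with respect to the reference $\alpha^{c_2}$ on $A\otimes\mathcal{O}$.

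For injectivity, suppose $(\beta,v)$ and $(\beta',v')$ have matching invariants. Then $(\beta\otimes\id,v\otimes1)$ and $(\beta'\otimes\id,v'\otimes1)$ on $A\otimes\mathcal{O}$ share the same $\mathfrak{o}^2$ and $\mathfrak{o}^3$. By Theorem~\ref{Cuntzstandard1} each is $KK$-trivially cocycle conjugate to a genuine outer action on $A\otimes\mathcal{O}$ (preserving the invariants), and Corollary~\ref{OntimesO}(2) then yields $KK$-trivial cocycle conjugacy of these genuine actions. Hence the $\otimes\id$-actions themselves are $KK$-trivially cocycle conjugate on $A\otimes\mathcal{O}$, and Theorem~\ref{Cuntzstandard2} transports this back to $KK$-trivial cocycle conjugacy of $(\beta,v)$ and $(\beta',v')$ on $A$.

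For surjectivity, given $(c_2,c_3)\in H^2(G,KK^1(A,A))\times H^3(G,KK(A,A))$, I would use Corollary~\ref{OntimesO}(2) to produce a locally $KK$-trivial outer action $\gamma\colon G\curvearrowright A\otimes\mathcal{O}$ with $\mathfrak{o}^2(\gamma,\id)=c_2$ and $\mathfrak{o}^3(\gamma,\alpha^{c_2})=c_3$. Choose a projection $p\in A\otimes\mathcal{O}$ with $K_0(p)=K_0(1_A)$ in $K_0(A\otimes\mathcal{O})$; since $\gamma$ is locally $KK$-trivial, $K_0(p)$ is $G$-invariant, so the corner construction of Section~4.4 produces a cocycle action $(\beta,v):=(\gamma,1)^p$ on $p(A\otimes\mathcal{O})p\cong A$ (the isomorphism being provided by Kirchberg--Phillips). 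Then Lemma~\ref{reduction}(1) combined with Remark~\ref{Oinfty_absorb'} shows that $(\beta\otimes\id,v\otimes1)$ on $A\otimes\mathcal{O}$ is $KK$-trivially cocycle conjugate to $\gamma$, so the invariants of $(\beta,v)$ are exactly $(c_2,c_3)$.

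The hard part will be verifying the naturality claims used throughout: that $\mathfrak{o}^2$ and $\mathfrak{o}^3$ are compatible with tensoring by $\id_{\mathcal{O}}$ and with the corner reduction $(\gamma,1)^p$, under the canonical identifications of $KK$-groups. These are essentially bookkeeping arguments from the definitions -- the embedding $a\mapsto a\otimes1$ and the corner isomorphism $p(A\otimes\mathcal{O})p\cong A$ induce the relevant identifications on $K_*(A_\flat)$ -- but the chains of equivalences going through $A\to A\otimes\mathcal{O}\to A\otimes\mathcal{O}\otimes\mathcal{O}$ and back via corners must be traced carefully to see that the defining $2$- and $3$-cocycles in $A_\flat$ and $(A\otimes\mathcal{O})_\flat$ represent corresponding cohomology classes.
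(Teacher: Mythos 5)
Your proposal is correct and follows the same route as the paper, whose proof of Corollary \ref{On} is literally ``an immediate consequence of Corollary \ref{OntimesO} and Theorem \ref{Cuntzstandard2}''; you have simply made explicit the transfer of the invariants and the corner-reduction step that the paper leaves to the reader. The bookkeeping you flag at the end (compatibility of $\mathfrak{o}^2$, $\mathfrak{o}^3$ with $\otimes\,\id_{\mathcal{O}}$ and with $(\cdot)^p$) is exactly what the paper records just after the definition of $\mathfrak{o}^2$ and in Lemma \ref{reduction}, so nothing further is needed.
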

\begin{proof}
This is an immediate consequence of 
Corollary \ref{OntimesO} and Theorem \ref{Cuntzstandard2}. 
\end{proof}

In order to discuss classification of genuine actions 
on the Cuntz algebras $\mathcal{O}_n$, 
we would like to introduce a homomorphism 
\[
h^{0,3}_{(\alpha,u)}:H^0(G,K_0(A))\to H^3(G,K_0(A)). 
\]
Let $A$ be a unital Kirchberg algebra with $K_1(A)=0$. 
Let $(\alpha,u):G\curvearrowright A$ be 
a cocycle action of a countable discrete group $G$. 
Let $p\in A\setminus\{0\}$ be a projection such that $K_0(p)\in K_0(A)^G$. 
As in Section 4.4, we choose partial isometries $x_g\in A$ 
so that $x_gx_g^*=p$ and $x_g^*x_g=\alpha_g(p)$, 
and consider $(\alpha^x,u^x):G\curvearrowright pAp$. 
Since $u^x(g,h)$ is in $U(pAp)=U(pAp)_0$ for every $g,h\in G$, 
we can consider $\kappa^3(\alpha^x,u^x)\in H^3(G,K_0(A))$ 
(see Section 8.1 for the definition of $\kappa^3$). 
Let $h^{0,3}_{(\alpha,u)}$ be the map 
\[
H^0(G,K_0(A))\ni K_0(p)\mapsto\kappa^3(\alpha^x,u^x)\in H^3(G,K_0(A)). 
\]
We can show that $h^{0,3}_{(\alpha,u)}$ is a homomorphism. 
Clearly $h^{0,3}_{(\alpha,u)}(K_0(1))$ is equal to $\kappa^3(\alpha,u)$. 
When $(\alpha,1):G\curvearrowright A$ is a genuine action, 
we write $h^{0.3}_{(\alpha,1)}=h^{0,3}_\alpha$. 
Evidently one has $h^{0,3}_\alpha(K_0(1))=\kappa^3(\alpha,1)=0$. 

\begin{lemma}\label{p*ofo3}
Suppose that $A$ is a unital Kirchberg algebra with $K_1(A)$ trivial. 
Let $(\alpha,u)$ be a cocycle action of a countable discrete group $G$ on $A$ 
and let $\beta$ be an action of $G$ on $A$ with $KK(\alpha_g)=KK(\beta_g)$ 
for all $g\in G$. 
Let $p\in A$ be a non-zero projection with $K_0(p)\in K_0(A)^G$. 
Assume that $\mathfrak{o}^2((\alpha,u),\beta)=0$. 
Then one has 
\[
p_*(\mathfrak{o}^3((\alpha,u),\beta))
=h^{0,3}_{(\alpha,u)}(K_0(p))-h^{0,3}_\beta(K_0(p)). 
\]
\end{lemma}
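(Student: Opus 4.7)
The plan is to lift all computations to the ambient algebra $pA^\flat p$ via the auxiliary unitary $z_g := y_g v_g x_g^* \in U(pA^\flat p)$ intertwining the reductions by $(x_g)$ and $(y_g)$. First I would pick $(v_g)_g\subset U(A^\flat)$ with $\sigma_g := \Ad v_g \circ \alpha_g$ satisfying $\sigma_g|_A = \beta_g$, and using $\mathfrak{o}^2((\alpha,u),\beta) = 0$ arrange $w(g,h) = v_g\alpha_g(v_h)u(g,h)v_{gh}^* \in U(A_\flat)_0$, so that $\mathfrak{o}^3((\alpha,u),\beta) = \kappa^3(\sigma,w)$ for the cocycle action $(\sigma,w):G\curvearrowright A_\flat$. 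Pick partial isometries $x_g,y_g \in A$ with $x_g x_g^* = y_g y_g^* = p$, $x_g^* x_g = \alpha_g(p)$ and $y_g^* y_g = \beta_g(p)$, producing $(\alpha^x,u^x)$ and $(\beta^y,1^y)$ on $pAp$ that represent $h^{0,3}_{(\alpha,u)}(K_0(p))$ and $h^{0,3}_\beta(K_0(p))$. Using $y_g$, define the reduced cocycle action $(\sigma^y,w^y)$ on $pA^\flat p$ by the usual formulas; because $w(g,h)\in A_\flat$ commutes with $y_{gh}\in A$ in $A^\flat$, one obtains $w^y(g,h) = 1^y(g,h)\hat w(g,h)$ with $\hat w(g,h):=pw(g,h)\in U(pA_\flat p)_0$, and a direct partial-isometry computation yields $\sigma^y_g|_{pAp}(a) = z_g\alpha^x_g(a)z_g^*$ together with $w^y(g,h) = z_g\alpha^x_g(z_h)u^x(g,h)z_{gh}^*$.

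The first key computation uses the product paths $\tilde w^y(g,h)(s):=\tilde{1^y}(g,h)(s)\tilde{\hat w}(g,h)(s)$, where $\tilde{\hat w}:=p\tilde w$ is the compression of a chosen path $\tilde w$ from $1$ to $w$ in $U(A_\flat)$. Since $pA_\flat p$ commutes with $pAp$ inside $pA^\flat p$, the loop defining $\kappa^3(\sigma^y,w^y)$ splits as a pointwise product of the loop $\Omega_{1^y}$ in $U(pAp)$ defining $\kappa^3(\beta^y,1^y)$ (a loop because $1^y$ is a genuine $2$-cocycle for $\beta^y$) and the loop
\[
\Omega_{\hat w}(s)=\sigma^y_g(\tilde{\hat w}(h,k)(s))\tilde{\hat w}(g,hk)(s)\tilde{\hat w}(gh,k)(s)^*\tilde{\hat w}(g,h)(s)^*
\]
in $U(pA_\flat p)$, which closes because $\hat w$ satisfies the relation $\sigma^y_g(\hat w(h,k))\hat w(g,hk) = \hat w(g,h)\hat w(gh,k)$ (inherited from that of $w^y$ modulo $1^y$). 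Setting $\tilde\kappa^3(\sigma^y,\hat w):=K_1(\Omega_{\hat w})\in K_0(pA_\flat p)$ yields
\[
\kappa^3(\sigma^y,w^y) = \iota_*(\kappa^3(\beta^y,1^y)) + \iota_*(\tilde\kappa^3(\sigma^y,\hat w))\quad\text{in }K_0(pA^\flat p).
\]
Separately, compressing the defining loop of $\kappa^3(\sigma,w)$ in $U(A_\flat)$ by $p$ reproduces exactly $\Omega_{\hat w}$, using $p\sigma_g(a) = \sigma^y_g(pa)$ for $a\in A_\flat$ (because $y_g\beta_g(p) = y_g$ and $\sigma_g(a)$ commutes with $y_g$) together with $\tilde{\hat w} = p\tilde w$. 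Hence $(p\cdot p)_*(\kappa^3(\sigma,w)) = \tilde\kappa^3(\sigma^y,\hat w)$ in $K_0(pA_\flat p)$, and therefore $p_*(\kappa^3(\sigma,w)) = p_*(\tilde\kappa^3(\sigma^y,\hat w))$ in $K_0(A)$.

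The final, crucial step and the main obstacle is the identity $p_*(\kappa^3(\sigma^y,w^y)) = \kappa^3(\alpha^x,u^x)$, which is where $K_1(A) = 0$ enters. Choose lifts $v_g(t)\in U(A)$ of $v_g\in U(A^\flat)$; then $z_g$ lifts to $z_g(t):=y_gv_g(t)x_g^*\in U(pAp)$ for sufficiently large $t$. At each such $t$, the pair $(\Ad z_g(t)\circ\alpha^x_g,\ z_g(t)\alpha^x_g(z_h(t))u^x(g,h)z_{gh}(t)^*)$ on $pAp$ is precisely the cocycle perturbation of $(\alpha^x,u^x)$ by $(z_g(t))$ and furnishes the lift of $(\sigma^y,w^y)$ used in the definition of $p_*$. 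Since $K_1(pAp)\cong K_1(A) = 0$, every $z_g(t)$ lies in $U(pAp)_0$, so Lemma \ref{kappa3_inv} gives that the $\kappa^3$ of the perturbed action equals $\kappa^3(\alpha^x,u^x)$, whence $p_*(\kappa^3(\sigma^y,w^y)) = \kappa^3(\alpha^x,u^x)$. Combining everything with $p_*\circ\iota_* = \id$ produces
\[
p_*(\kappa^3(\sigma,w)) = p_*(\kappa^3(\sigma^y,w^y)) - \kappa^3(\beta^y,1^y) = \kappa^3(\alpha^x,u^x) - \kappa^3(\beta^y,1^y),
\]
which is $h^{0,3}_{(\alpha,u)}(K_0(p)) - h^{0,3}_\beta(K_0(p))$, as required.
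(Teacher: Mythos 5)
Your proof is correct and takes essentially the same route as the paper: the paper's entire argument is the single identity $y_g\alpha_g(y_h)u(g,h)y_{gh}^*=x_g\beta_g(x_h)x_{gh}^*\cdot pw(g,h)$ for $y_g=x_gv_g$, which is exactly your factorization $w^y(g,h)=1^y(g,h)\hat w(g,h)$ up to relabelling of the partial isometries. Your splitting of $\kappa^3$ over the two commuting factors and the use of $K_1(A)=0$ (via Lemma \ref{kappa3_inv}) to pass from the $A^\flat$-valued intertwiners to $A$-valued ones is precisely the content the paper compresses into ``this equation implies.''
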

\begin{proof}
Since $K_1(A)$ is trivial, $h^{1,3}_{(\alpha,u)}$ is obviously a zero map. 
Hence, by Lemma \ref{h13p=ph13}, 
$p_*(\mathfrak{o}^3((\alpha,u),\beta))$ is well-defined 
as an element in $H^3(G,K_0(A))$. 

Notice that $K_0(A)$ is canonically isomorphic to $K_0(A^\flat)$ 
via the embedding $A\to A^\flat$. 
Choose a family of unitaries $(v_g)_g$ in $A^\flat$ so that 
\[
(\Ad v_g\circ\alpha_g)(a)=\beta_g(a),\quad 
w(g,h)=v_g\alpha_g(v_h)u(g,h)v_{gh}^*\in U(A_\flat)_0
\]
for all $g,h\in G$ and $a\in A$. 
Let $x_g\in A$ be a partial isometry 
satisfying $x_gx_g^*=p$ and $x_g^*x_g=\beta_g(p)$. 
Then $y_g=x_gv_g\in A^\flat$ is a partial isometry 
satisfying $y_gy_g^*=p$ and $y_g^*y_g=\alpha_g(p)$. 
We have 
\begin{align*}
y_g\alpha_g(y_h)u(g,h)y_{gh}^*
&=x_gv_g\alpha_g(x_hv_h)u(g,h)v_{gh}^*x_{gh}^*\\
&=x_g\beta_g(x_h)v_g\alpha_g(v_h)u(g,h)v_{gh}^*x_{gh}^*\\
&=x_g\beta_g(x_h)x_{gh}^*\cdot pw(g,h). 
\end{align*}
This equation implies 
$h^{0,3}_{(\alpha,u)}(K_0(p))
=h^{0,3}_\beta(K_0(p))+p_*(\mathfrak{o}^3((\alpha,u),\beta))$. 
\end{proof}

\begin{proposition}\label{exi_Hirsch3'}
Let $G$ be a poly-$\Z$ group of Hirsch length three and 
let $A$ be a unital Kirchberg algebra. 
Suppose that $K_1(A)$ is trivial and 
the homomorphism $1_*:K_0(A_\flat)\to K_0(A)$ is surjective. 
Let $(\alpha,u):G\curvearrowright A$ be an outer cocycle action. 
Then there exists an outer action $\beta:G\curvearrowright A$ 
such that $KK(\alpha_g)=KK(\beta_g)$ for every $g\in G$ and 
$\mathfrak{o}^2((\alpha,u),\beta)=0$. 
\end{proposition}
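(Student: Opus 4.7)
The plan is to reduce to the Hirsch-length-two case via the subnormal structure of $G$. Choose a normal subgroup $N\triangleleft G$ with $G/N\cong\Z$ and $N$ a poly-$\Z$ group of Hirsch length two, together with $\xi\in G$ generating $G$ with $N$. Since $K_1(A)=0$, every unitary in $A$ lies in $U(A)_0$. By Lemma \ref{2cv_Hirsch2} applied to $(\alpha|N, u|N\times N)$ we may cocycle-perturb $(\alpha,u)$ so that $u(g,h)=1$ for $g,h\in N$; a further perturbation arranges $u(g\xi^l,\xi^m)=1$ for $g\in N$ and $l,m\in\Z$. Then $\alpha|N$ is a genuine action, and by Lemma \ref{check} the unitaries $\check u_g:=u(\xi,\xi^{-1}g\xi)\in U(A)_0$ form an $\alpha|N$-cocycle satisfying $\alpha_\xi\circ\alpha_{\xi^{-1}g\xi}\circ\alpha_\xi^{-1}=\Ad\check u_g\circ\alpha_g$.

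The natural obstruction to extending $\alpha|N$ to a genuine $G$-action on $A$ is the class $\kappa^2(\alpha|N,\check u)\in H^2(N,K_0(A))$. Both hypotheses enter to kill it. Since $1_*:K_0(A_\flat)\to K_0(A)$ is surjective and $N$ has cohomological dimension at most two (so $H^3(N,\ker 1_*)=0$), the induced map $1_*:H^2(N,K_0(A_\flat))\to H^2(N,K_0(A))$ is surjective; lift $\kappa^2(\alpha|N,\check u)$ to $\tilde c\in H^2(N,K_0(A_\flat))$. By Lemma \ref{AC'}(3), $\alpha|N$ on $A_\flat$ lies in $\AC(\mathcal{O}_\infty,\mu^N)$, so Lemma \ref{realize:ka2'} (applied with $N$ in place of the Hirsch-length-two group and $A_\flat$ in place of the algebra) yields an $\alpha|N$-cocycle $(r_g)_{g\in N}$ in $U(A_\flat)_0$ with $\kappa^2(\alpha|N,r)=\tilde c$. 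The product $\check u_g r_g^{-1}$ is then an $\alpha|N$-cocycle in $U(A^\flat)_0$ whose $\kappa^2$-class, after the identifications above, vanishes in $K_0(A^\flat)$. Applying Lemma \ref{1cv_Hirsch2} inside $A^\flat$ (using that $\alpha|N$ on $A^\flat$ is still in $\AC(\mathcal{O}_\infty,\mu^N)$) produces a continuous family $w:[0,\infty)\to U(A)$ such that $w(t)\alpha_g(w(t)^*)r_g(t)\to\check u_g$ as $t\to\infty$, where $r_g(t)$ is a bounded continuous lift of $r_g$.

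Set $\beta_g:=\alpha_g$ for $g\in N$, and use the asymptotic trivialization $w$ together with the Rohlin tower in $(A_\omega)^{\alpha|N}$ from Proposition \ref{equivRohlin} and the Evans-Kishimoto intertwining argument of Theorem \ref{equivNakamura} to construct an automorphism $\beta_\xi\in\Aut(A)$ with $KK(\beta_\xi)=KK(\alpha_\xi)$ satisfying $\beta_\xi\circ\alpha_h\circ\beta_\xi^{-1}=\alpha_{\xi h\xi^{-1}}$ for all $h\in N$. Extending by $\beta_{g\xi^l}:=\alpha_g\beta_\xi^l$ gives a genuine action $\beta:G\curvearrowright A$. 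The unitaries $v_g\in U(A^\flat)$ given by $v_g=r_g$ on $N$ and $v_\xi=w$ (extended consistently) satisfy $\Ad v_g\circ\alpha_g=\beta_g$ on $A$ and make $w(g,h):=v_g\alpha_g(v_h)u(g,h)v_{gh}^*\in U(A_\flat)_0$ by construction, yielding $\mathfrak{o}^2((\alpha,u),\beta)=0$. Outerness of $\beta$ is ensured by tensoring with $\mu^G:G\curvearrowright\mathcal{O}_\infty$ (absorbed via Theorem \ref{Oinfty_absorb}), which does not alter the $\mathfrak{o}^2$-vanishing.

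The main obstacle is the construction of $\beta_\xi$: upgrading the asymptotic coboundary $w\in A^\flat$ to a genuine automorphism of $A$ with the correct intertwining property requires the combined Rohlin-tower-plus-intertwining machinery analogous to Theorem \ref{equivNakamura}. The hypothesis that $1_*:K_0(A_\flat)\to K_0(A)$ is surjective is essential for lifting the $\kappa^2$-obstruction from $K_0(A)$ to $K_0(A_\flat)$ where it can be realized by a cocycle in $A_\flat$, and the assumption $K_1(A)=0$ ensures that all cocycles involved lie in connected components, making the $\kappa^2$-machinery and Lemma \ref{1cv_Hirsch2} applicable throughout.
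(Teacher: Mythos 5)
Your route is genuinely different from the paper's (which detours through $A\otimes\mathcal{O}$ via Theorem \ref{Cuntzstandard1}, invokes Theorem \ref{exi_Hirsch3} (2), and kills the obstruction to cutting back down to $A$ using the invariant $h^{0,3}$ and Lemma \ref{p*ofo3}). Your central idea is sound and is essentially the transgressed version of the paper's mechanism: by Lemma \ref{kappa3=kappa2}, your class $\kappa^2(\alpha|N,\check u)\in H^2(N,K_0(A))$ maps under $\bar j$ to $\kappa^3(\alpha,u)\in H^3(G,K_0(A))$, and lifting it to $H^2(N,K_0(A_\flat))$ (using surjectivity of $1_*$ and $H^3(N,\ker 1_*)=0$), realizing the lift by a central cocycle $(r_g)$ in $U(A_\flat)_0$, and exploiting $\Ad\check u_g=\Ad(\check u_g r_g^*)$ on $A$ is exactly the right way to use the hypothesis. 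There are fixable technical wrinkles (the sign of the lift; $\check u_gr_g^{-1}$ is a cocycle only after commuting $r_g$ past $\alpha_g(r_h)$; Lemma \ref{1cv_Hirsch2} gives a sequence in $U(A^\flat)_0$, and extracting a continuous path in $U(A)$ needs the lifting/reparametrization argument from Section 6; and the intertwining of Theorem \ref{equivNakamura} produces $\beta$ only after conjugating by an asymptotically inner automorphism $\theta'$, so $\beta|N$ is not literally $\alpha|N$ and the bookkeeping of the $v_g$ must be adjusted accordingly).

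The genuine gap is the last sentence of your construction: the assertion that $w(g,h)=v_g\alpha_g(v_h)u(g,h)v_{gh}^*$ lies in $U(A_\flat)_0$ ``by construction.'' Membership in $U(A_\flat)$ is automatic, but $\mathfrak{o}^2((\alpha,u),\beta)$ is computed in $K_1(A_\flat)\cong KK^1(A,A)$, which is generally nonzero even though $K_1(A)=0$ (e.g.\ $KK^1(\mathcal{O}_n,\mathcal{O}_n)\cong\Z_{n-1}$); these classes die in $K_1(A^\flat)\cong K_1(A)=0$, so no computation carried out inside $A^\flat$ can see them. Concretely, after your reductions $\mathfrak{o}^2((\alpha,u),\beta)=\bar j\bigl([g\mapsto K_1(\check w_g)]\bigr)$ with $\check w_g=v_\xi\check u_g\alpha_g(v_\xi^*)\in U(A_\flat)$, and this class in $H^1(N,KK^1(A,A))$ depends on which automorphism $\beta_\xi$ the intertwining happens to produce: two admissible choices of $\beta_\xi$ (both satisfying the exact conjugation relation with $KK(\beta_\xi)=KK(\alpha_\xi)$) differ by $\Ad v'$ with $v'\alpha_g(v'^*)\in U(A_\flat)$, and the resulting classes differ by $[g\mapsto K_1(v'\alpha_g(v'^*))]$, which ranges over a nontrivial set --- this is precisely the mechanism by which Theorem \ref{exi_Hirsch3} (1) produces genuine actions with distinct $\mathfrak{o}^2$. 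So either you must track the $K_1(A_\flat)$-classes through the Evans--Kishimoto scheme, or you must correct $\beta_\xi$ a posteriori by a cocycle realizing the negative of $[g\mapsto K_1(\check w_g)]$; but such a correcting cocycle must simultaneously have prescribed $K_1$-classes in $K_1(A_\flat)$ \emph{and} trivial $\kappa^2$ in $H^2(N,K_0(A))$ (else Lemma \ref{2cv_Hirsch3} fails and $\beta$ ceases to be genuine), and the compatibility of these two requirements is not addressed. The paper avoids this circle by building $\mathfrak{o}^2=0$ into the construction from the start (Lemma \ref{realization} arranges $w(\xi,g)=u_{\xi g\xi^{-1}}\otimes 1$ with $(u_g)\subset U(A_\flat)_0$) and handling genuineness separately via $h^{0,3}$.
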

\begin{proof}
Let $p\in\mathcal{O}$ be a projection 
such that $p\mathcal{O}p\cong\mathcal{O}_\infty$. 
By Theorem \ref{Cuntzstandard1}, 
$(\alpha\otimes\id,u\otimes1):G\curvearrowright A\otimes\mathcal{O}$ 
is $KK$-trivially cocycle conjugate to 
an outer action $\alpha':G\curvearrowright A\otimes\mathcal{O}$. 
Put 
\[
c=h^{0,3}_{\alpha'}(K_0(1\otimes p))\in H^3(G,K_0(A\otimes\mathcal{O})). 
\]
Since $1_*:K_0(A_\flat)\to K_0(A)$ is surjective, 
$(1\otimes p)_*:K_0((A\otimes\mathcal{O})_\flat)
\to K_0(A\otimes\mathcal{O})$ is surjective. 
Hence 
\[
(1\otimes p)_*:H^3(G,K_0((A\otimes\mathcal{O})_\flat))
\to H^3(G,K_0(A\otimes\mathcal{O}))
\]
is also surjective, because $G$ is a poly-$\Z$ group of Hirsch length three. 
Therefore we can find $c'\in H^3(G,K_0((A\otimes\mathcal{O})_\flat))$ 
such that $(1\otimes p)_*(c')=c$. 
It follows from 
Theorem \ref{exi_Hirsch3} (2) and Theorem \ref{Cuntzstandard1} that 
there exists an outer action $\beta:G\curvearrowright A\otimes\mathcal{O}$ 
such that $KK(\alpha'_g)=KK(\beta_g)$, $\mathfrak{o}^2(\beta,\alpha')=0$ 
and $\mathfrak{o}^3(\beta,\alpha')=-c'$. 
Thus, 
\[
(1\otimes p)_*(\mathfrak{o}^3(\beta,\alpha'))=(1\otimes p)_*(-c')
=-c=-h^{0,3}_{\alpha'}(K_0(1\otimes p)). 
\]
On the other hand, by Lemma \ref{p*ofo3}, 
\[
(1\otimes p)_*(\mathfrak{o}^3(\beta,\alpha'))
=h^{0,3}_\beta(K_0(1\otimes p))-h^{0,3}_{\alpha'}(K_0(1\otimes p)), 
\]
and so $h^{0,3}_\beta(K_0(1\otimes p))=0$. 
Moreover, 
\[
h^{0,3}_\beta(K_0(1-1\otimes p))
=h^{0,3}_\beta(K_0(1))-h^{0,3}_\beta(K_0(1\otimes p))=0-0=0. 
\]
Hence, by Lemma \ref{2cv_Hirsch3}, 
both $(\beta,1)^{1\otimes p}$ and $(\beta,1)^{1-1\otimes p}$ 
are cocycle conjugate to genuine actions. 
Therefore we can perturb $\beta$ by a $\beta$-cocycle 
so that $\beta_g(1\otimes p)=1\otimes p$ for every $g\in G$. 
Then from Lemma \ref{reduction} (2), we get 
\[
\mathfrak{o}^2((\alpha\otimes\id,u\otimes1)^{1\otimes p},\beta^{1\otimes p})
=\mathfrak{o}^2((\alpha',1)^{1\otimes p},\beta^{1\otimes p})
=\mathfrak{o}^2(\alpha',\beta)=0. 
\]
By Remark \ref{Oinfty_absorb'}, 
$(\alpha\otimes\id,u\otimes1)^{1\otimes p}:
G\curvearrowright A\otimes p\mathcal{O}p$ is 
cocycle conjugate to $(\alpha,u)$, which completes the proof. 
\end{proof}

\begin{remark}
Suppose that $A$ satisfies the UCT and $K_1(A)$ is trivial. 
Then $K_0(A_\flat)=KK(A,A)$ is isomorphic to $\Hom(K_0(A),K_0(A))$, 
and the homomorphism $1_*:K_0(A_\flat)\to K_0(A)$ is equal to 
\[
\Hom(K_0(A),K_0(A))\ni\psi\mapsto\psi(K_0(1))\in K_0(A). 
\]
In particular, the Cuntz algebras $\mathcal{O}_n$ 
satisfy the hypothesis of Proposition \ref{exi_Hirsch3'}. 
\end{remark}

The following corollary gives a complete classification of 
genuine actions of poly-$\Z$ groups of Hirsch length three 
on the algebras $\mathcal{O}_n$. 

\begin{corollary}\label{On'}
Let $G$ be a poly-$\Z$ group of Hirsch length three and 
let $A$ be the Cuntz algebra $\mathcal{O}_n$. 
The association $\alpha\mapsto\mathfrak{o}^2(\alpha,\id)$ 
gives a bijective correspondence 
between the set of $KK$-trivially cocycle conjugacy classes of 
outer actions $\alpha:G\curvearrowright A$ and $H^2(G,KK^1(A,A))$. 
\end{corollary}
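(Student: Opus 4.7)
The plan is to prove surjectivity and injectivity of the map $\alpha\mapsto\mathfrak{o}^2(\alpha,\id)$ separately, invoking Theorem \ref{uni_Hirsch3} for injectivity and combining Theorem \ref{exi_Hirsch3}(1) with Proposition \ref{exi_Hirsch3'} for surjectivity. The two $K$-theoretic features of $A=\mathcal{O}_n$ that drive the argument are: (i) $K_1(A)=0$, which gives $\tilde h^{1,3}_\alpha=0$ by Remark \ref{tildeh=0} and allows Proposition \ref{exi_Hirsch3'} to apply; and (ii) under the UCT, $K_0(A_\flat)\cong\Hom(K_0(A),K_0(A))$ with $1_*$ corresponding to evaluation at the generator $[1]$, so $1_*:K_0(A_\flat)\to K_0(A)$ is an isomorphism of (trivial) $G$-modules. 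Well-definedness of the map is immediate: since $KK(A,A)^{-1}_*=\{1_A\}$ any outer action is locally $KK$-trivial, and invariance under $KK$-trivial cocycle conjugacy follows from the genuine-action chain rule.

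For surjectivity, given $c\in H^2(G,KK^1(A,A))$, apply Theorem \ref{exi_Hirsch3}(1) with target action $\id$ to obtain an outer cocycle action $(\beta,v):G\curvearrowright A$ satisfying $\mathfrak{o}^2((\beta,v),\id)=c$. Because $A=\mathcal{O}_n$ meets the hypotheses of Proposition \ref{exi_Hirsch3'}, there is an outer genuine action $\gamma:G\curvearrowright A$ with $\mathfrak{o}^2((\beta,v),\gamma)=0$. The remaining step is a mild extension of the chain rule to the mixed cocycle/genuine setting: choosing compatible families of unitaries $z_g,w_g\in A^\flat$ with $\Ad z_g\circ\beta_g=\gamma_g$ and $\Ad w_g\circ\gamma_g=\id$ on $A$, one sets $y_g=w_gz_g$, expands $y_g\beta_g(y_h)v(g,h)y_{gh}^*$ as the product $w_g\gamma_g(w_h)w_{gh}^{*}\cdot w_{gh}z_g\beta_g(z_h)v(g,h)z_{gh}^{*}w_{gh}^{*}$, and takes $K_1$ to split the class into $\mathfrak{o}^2(\gamma,\id)+\mathfrak{o}^2((\beta,v),\gamma)$. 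This yields $\mathfrak{o}^2(\gamma,\id)=c$.

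For injectivity, suppose $\alpha,\beta$ are outer genuine actions with $\mathfrak{o}^2(\alpha,\id)=\mathfrak{o}^2(\beta,\id)$. The chain rule gives $\mathfrak{o}^2(\alpha,\beta)=0$. By Theorem \ref{uni_Hirsch3} (the $KK$-triviality condition is automatic), it remains to show $\mathfrak{o}^3(\alpha,\beta)=0$. Since $K_1(A)=0$ forces $\tilde h^{1,3}_\alpha=0$, the class $\mathfrak{o}^3(\alpha,\beta)$ is a genuine element of $H^3(G,K_0(A_\flat))$. Applying Lemma \ref{p*ofo3} with $p=1$,
\[
1_*(\mathfrak{o}^3(\alpha,\beta))=h^{0,3}_\alpha(K_0(1))-h^{0,3}_\beta(K_0(1))=\kappa^3(\alpha,1)-\kappa^3(\beta,1)=0,
\]
since $\kappa^3$ vanishes on genuine actions. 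As $1_*:K_0(A_\flat)\to K_0(A)$ is an isomorphism of trivial $G$-modules, the induced map on $H^3$ is an isomorphism, and we conclude $\mathfrak{o}^3(\alpha,\beta)=0$.

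The main obstacle is precisely this injectivity step: one must verify that the $H^3$ refinement appearing in Theorem \ref{uni_Hirsch3} carries no additional information beyond $\mathfrak{o}^2$ once we restrict to genuine actions. This rests on two features specific to $\mathcal{O}_n$, namely the vanishing of $\kappa^3$ on genuine actions and the isomorphism property of $1_*$ (both of which fail for cocycle actions, consistent with the richer $H^2\times H^3$ classification in Corollary \ref{On}). A secondary technical point is the mixed cocycle/genuine chain rule used in surjectivity, but this is a direct computation parallel to the genuine-action case.
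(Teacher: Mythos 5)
Your proposal is correct and follows essentially the same route as the paper: injectivity via Theorem \ref{uni_Hirsch3} after killing $\mathfrak{o}^3(\alpha,\beta)$ using Lemma \ref{p*ofo3} with $p=1$, the vanishing of $h^{0,3}$ at $K_0(1)$ for genuine actions, and the fact that $1_*:H^3(G,K_0(A_\flat))\to H^3(G,K_0(A))$ is an isomorphism; surjectivity via Theorem \ref{exi_Hirsch3}\,(1) combined with Proposition \ref{exi_Hirsch3'}. The extra detail you supply on the mixed cocycle/genuine chain rule is a correct elaboration of a step the paper leaves implicit.
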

\begin{proof}
By the remark above, $1_*:K_0(A_\flat)\to K_0(A)$ is an isomorphism. 
As $G$ is a poly-$\Z$ group of Hirsch length three, 
$1_*:H^3(G,K_0(A_\flat))\to H^3(G,K_0(A))$ is also an isomorphism. 
When $\alpha$ and $\beta$ are actions of $G$ on $A$ 
such that $\mathfrak{o}^2(\alpha,\beta)=0$, by Lemma \ref{p*ofo3}, 
we have 
\[
1_*(\mathfrak{o}^3(\alpha,\beta))
=h^{0,3}_\alpha(K_0(1))-h^{0,3}_\beta(K_0(1))=0-0=0. 
\]
Thus $\mathfrak{o}^3(\alpha,\beta)=0$. 
Therefore, 
the injectivity of $\alpha\mapsto\mathfrak{o}^2(\alpha,\id)$ follows 
from Theorem \ref{uni_Hirsch3}. 

The surjectivity of $\alpha\mapsto\mathfrak{o}^2(\alpha,\id)$ follows 
from Theorem \ref{exi_Hirsch3} (1) and Proposition \ref{exi_Hirsch3'}. 
\end{proof}

\begin{example}
\begin{enumerate}
\item For $G=\Z^3$, we have 
\[
H^2(\Z^3,KK^1(\mathcal{O}_n,\mathcal{O}_n))
\cong H^2(\Z^3,\Z_{n-1})
\cong(\Z_{n-1})^3
\]
and 
\[
H^3(\Z^3,KK(\mathcal{O}_n,\mathcal{O}_n))
\cong H^3(\Z^3,\Z_{n-1})
\cong\Z_{n-1}. 
\]
Hence, there exist exactly $(n{-}1)^3$ cocycle conjugacy classes 
of outer $\Z^3$-actions on $\mathcal{O}_n$, 
and there exist exactly $(n{-}1)^4$ cocycle conjugacy classes 
of outer cocycle actions of $\Z^3$ on $\mathcal{O}_n$. 
\item Let $G$ be the discrete Heisenberg group. 
We have 
\[
H^2(G,KK^1(\mathcal{O}_n,\mathcal{O}_n))
\cong H^2(G,\Z_{n-1})
\cong(\Z_{n-1})^2
\]
and 
\[
H^3(G,KK(\mathcal{O}_n,\mathcal{O}_n))
\cong H^3(G,\Z_{n-1})
\cong\Z_{n-1}. 
\]
Hence, there exist exactly $(n{-}1)^2$ cocycle conjugacy classes 
of outer $G$-actions on $\mathcal{O}_n$, 
and there exist exactly $(n{-}1)^3$ cocycle conjugacy classes 
of outer cocycle actions of $G$ on $\mathcal{O}_n$. 
\end{enumerate}
\end{example}

\begin{example}\label{On''}
Let $G$ be a poly-$\Z$ group of Hirsch length three. 
By Corollary \ref{On}, 
there exists a bijective correspondence between 
the set of cocycle conjugacy classes of outer cocycle actions 
$G\curvearrowright\mathcal{O}_\infty$ and $H^3(G,\Z)$, 
while outer actions $G\curvearrowright\mathcal{O}_\infty$ 
are unique up to cocycle conjugacy. 
\end{example}

\newcommand{\noopsort}[1]{}

\end{document}